\newtheorem{theorem}{Theorem}[section] 
\newtheorem{lemma}[theorem]{Lemma}   
\newtheorem{corollary}[theorem]{Corollary}
\newtheorem{proposition}[theorem]{Proposition}
\newtheorem{definition}[theorem]{Definition}
\newtheorem{main-theorem}[theorem]{Theorem}
\newtheorem{assumption}[theorem]{Assumption}
\newtheorem*{problem*}{Problem}
\theoremstyle{definition}
\newtheorem*{question*}{Question}
\newtheorem{example}[theorem]{Example}
\newtheorem{remark}[theorem]{Remark}
\renewcommand{\mod}{\operatorname{mod}}
\newcommand{\alg}{\operatorname{alg}}
\newcommand{\rad}{\operatorname{rad}}
\newcommand{\soc}{\operatorname{soc}}
\newcommand{\umod}{\operatorname{\underline{mod}}}
\newcommand{\Ext}{\operatorname{Ext}}
\newcommand{\Hom}{\operatorname{Hom}}
\newcommand{\Ker}{\operatorname{Ker}}
\newcommand{\T}{\operatorname{T}}
\newcommand{\Tr}{\operatorname{Tr}}
\newcommand{\GL}{\operatorname{GL}}
\newcommand{\op}{\operatorname{op}}
\renewcommand{\Im}{\operatorname{Im}}
\newcommand{\lcm}{\operatorname{lcm}}
\newcommand{\bA}{\mathbb{A}}
\newcommand{\bN}{\mathbb{N}}
\newcommand{\bP}{\mathbb{P}}
\newcommand{\bR}{\mathbb{R}}
\newcommand{\bZ}{\mathbb{Z}}
\newcommand{\cO}{\mathcal{O}}
\newcommand{\cB}{\mathcal{B}}
\newcommand{\La}{\Lambda}
\newcommand{\ba}{\bar{\alpha}}
\newcommand{\vf}{\varphi}
\newcommand{\ve}{\varepsilon}
\newcommand{\vect}[1]{%
  \vbox{\m@th \ialign {##\crcr
  \vectfill\crcr\noalign{\kern-\p@ \nointerlineskip}
  $\hfil\displaystyle{#1}\hfil$\crcr}}}
\def\vectfill{%
  $\m@th\smash-\mkern-7mu%
  \cleaders\hbox{$\mkern-2mu\smash-\mkern-2mu$}\hfill
  \mkern-7mu\raisebox{-3.81pt}[\p@][\p@]{$\mathord\mathchar"017E$}$}
\newcommand{\amsvect}{%
  \mathpalette {\overarrow@\vectfill@}}
\def\vectfill@{\arrowfill@\relbar\relbar{\raisebox{-3.81pt}[\p@][\p@]{$\mathord\mathchar"017E$}}}
\newcommand{\amsvectb}{%
  \mathpalette {\overarrow@\vectfillb@}}
\newcommand{\vecbar}{%
  \scalebox{0.8}{$\relbar$}}
\def\vectfillb@{\arrowfill@\vecbar\vecbar{\raisebox{-4pt}[\p@][\p@]{$\mathord\mathchar"017E$}}}
\newcommand{\tikzAngleOfLine}{\tikz@AngleOfLine}
\def\tikz@AngleOfLine(#1)(#2)#3{%
\pgfmathanglebetweenpoints{%
\pgfpointanchor{#1}{center}}{%
\pgfpointanchor{#2}{center}}
\pgfmathsetmacro{#3}{\pgfmathresult}%
}
\begin{document}

\title{Weighted surface algebras: general version}

{\def\thefootnote{}
\footnote{The research was supported by the program
``Research in Pairs'' by the 
Mathematisches Forschungsinstitut Oberwolfach in 2018, and also by the
	Faculty of Mathematics and Computer Science of the Nicolaus Copernicus University in Toru\'{n}.}
}

\author[K. Edrmann]{Karin Erdmann}
\address[Karin Erdmann]{Mathematical Institute,
   Oxford University,
   ROQ, Oxford OX2 6GG,
   United Kingdom}
\email{erdmann@maths.ox.ac.uk}

\author[A. Skowro\'nski]{Andrzej Skowro\'nski}
\address[Andrzej Skowro\'nski]{Faculty of Mathematics and Computer Science,
   Nicolaus Copernicus University,
   Chopina~12/18,
   87-100 Toru\'n,
   Poland}
\email{skowron@mat.uni.torun.pl}

\begin{abstract}
We introduce  general weighted surface algebras 
of triangulated surfaces
with arbitrarily oriented triangles and
describe their basic properties.
In particular, we prove that all these algebras,
except the singular disc, triangle, tetrahedral 
and spherical algebras,
are symmetric tame periodic algebras of period $4$.

\bigskip

\noindent
\textit{Keywords:}
Syzygy, Periodic algebra, Self-injective algebra, Symmetric algebra, 
Surface algebra, Tame algebra
 
\noindent
\textit{2010 MSC:}
16D50, 16E30, 16G20, 16G60, 16G70

\subjclass[2010]{16D50, 16E30, 16G20, 16G60, 16G70}
\end{abstract}


\maketitle

\begin{center}
\vspace*{-5mm}
\textit{Dedicated to Helmut Lenzing on the occasion of his 80th birthday}
\vspace*{5mm}
\end{center}

\section{Introduction and main results}\label{sec:intro}

We are interested in the representation theory of tame self-injective algebras.
In this paper,  all algebras are finite-dimensional associative, 
indecomposable as algebras, and basic, 
over an algebraically closed field $K$ of arbitrary characteristic.

Tame self-injective algebras of polynomial growth 
are currently well understood (see \cite{Sk1,Sk2}). 
For non-polynomial growth, much less is known. 
It would be interesting to describe the basic algebras 
of arbitrary tame self-injective algebras of non-polynomial growth. 
Our present project is a step in this direction.

In  the  modular representation theory of finite groups
representation-infinite tame blocks occur only
over fields of characteristic 2,  and their defect groups are dihedral,
semidihedral, or  (generalized) quaternion 2-groups.
In order to study such blocks,  algebras of dihedral, 
semidihedral and quaternion type were introduced
and investigated,  over algebraically closed fields of
arbitrary characteristic (see \cite{E3}). 
In particular, it was shown in \cite{ESk2,Ho}
that every algebra of quaternion type is a tame
periodic algebra of period $4$.

Recently cluster theory has led to  new directions. 
Inspired by this, we study in \cite{ESk-WSA} 
a  class of symmetric algebras defined in terms of surface
triangulations, which we call \emph{weighted surface algebras}. 
They  are tame and we show that they are periodic as algebras, 
of period 4 (with one exception, which we call
the singular tetrahedral algebra).
We observe that many algebras of quaternion type as described in \cite{E3} 
occur in this setting but the construction in \cite{ESk-WSA} only produces
algebras whose Gabriel quiver is 2-regular (that is, at each vertex, two
arrows start and two arrows end).

In this paper we extend and improve the results of 
\cite{ESk-WSA}. We generalize the previous definition slightly, and 
obtain a larger class of algebras. This new version also 
includes algebras whose Gabriel quiver is not 2-regular, in particular
we obtain now  
almost all algebras of quaternion type.
As well, we obtain the endomorphism algebras of cluster tilting
objects in the stable categories of maximal Cohen-Macaulay modules over
minimally elliptic
curve singularities, as  discussed in \cite{BIKR}.

An important further motivation for the generalisation is the study
of idempotent algebras. In \cite{ESk7} we show that
any Brauer graph algebra occurs as an idempotent algebra
of some weighted surface algebra. Analysing an arbitrary idempotent
algebra of a weighted surface algebra, we  discovered that 
it is natural to extend the original definition.
In a subsequent paper we will give a complete 
description of all idempotent algebras
of weighted surface algebras. 
%

\medskip

The main result in this paper shows that, with four exceptions, 
a general weighted surface algebra is periodic as an algebra, of period 4.
The  exceptions are the singular tetrahedral algebra
which already occured in \cite{ESk-WSA}, and three others,  which we call
singular disc algebra, singular triangle algebra, and singular spherical algebra. 

\medskip

Let $A$ be an algebra.
Given a module $M$ in  $\mod A$, its \emph{syzygy}
is defined to be the kernel $\Omega_A(M)$ of a minimal
projective cover of $M$ in $\mod A$.
The syzygy operator $\Omega_A$ is a very important tool
to construct modules in $\mod A$ and relate them.
For $A$ self-injective, it induces an equivalence
of the stable module category $\umod A$,
and its inverse is the shift of a triangulated structure
on $\umod A$ \cite{Ha1}.
A module $M$ in $\mod A$ is said to be \emph{periodic}
if $\Omega_A^n(M) \cong M$ for some $n \geq 1$, and if so
the minimal such $n$ is called the \emph{period} of $M$.
The action of $\Omega_A$ on $\mod A$ can effect
the algebra structure of $A$.
For example, if all simple modules in $\mod A$ are periodic,
then $A$ is a self-injective algebra.
Sometimes one can even recover the algebra $A$ and its
module category from the action of $\Omega_A$.
For example, the self-injective Nakayama algebras
are precisely the algebras $A$ for which $\Omega_A^2$ permutes
the isomorphism classes of simple modules in $\mod A$.
An algebra $A$ is defined to be \emph{periodic} if it is periodic
viewed as a module over the enveloping algebra
$A^e = A^{\op} \otimes_K A$, or equivalently,
as an $A$-$A$-bimodule.
It is known that if $A$ is a periodic algebra of period $n$ then
for any indecomposable non-projective module $M$
in $\mod A$  the syzygy $\Omega_A^n(M)$ is isomorphic to $M$.

Finding or possibly classifying periodic algebras is an important
problem, because of interesting connections with group theory,
topology, singularity theory, cluster algebras, cluster tilting theory
(we refer to \cite{ESk3,L4} and
the introduction of \cite{ESk-WSA} for some details).

\medskip

The following three theorems describe basic properties of the general
weighted surface algebras.

\begin{main-theorem}
\label{th:main1}
Let $\Lambda = \Lambda(S,\vec{T},m_{\bullet},c_{\bullet})$
be a weighted surface algebra over an algebraically
closed field $K$,
which is not isomorphic to a singular triangle or spherical algebra.
Then  $\Lambda$ is a symmetric algebra.
\end{main-theorem}

\begin{main-theorem}
\label{th:main2}
Let $\Lambda = \Lambda(S,\vec{T},m_{\bullet},c_{\bullet})$
be a weighted surface algebra over an algebraically
closed field $K$,
which is not isomorphic to a disc algebra,
triangle algebra,
tetrahedral algebra,
spherical algebra.
Then the following statements hold:
\begin{enumerate}[(i)]
 \item
  $\Lambda$ degenerates to the biserial
  weighted surface algebra
  $B(S,\vec{T},m_{\bullet},c_{\bullet})$.
 \item
  $\Lambda$ is a tame algebra of non-polynomial growth.
\end{enumerate}
\end{main-theorem}

\begin{main-theorem}
\label{th:main3}
Let $\Lambda = \Lambda(S,\vec{T},m_{\bullet},c_{\bullet})$
be a weighted surface algebra over an algebraically
closed field $K$.
Then the following statements are equivalent:
\begin{enumerate}[(i)]
 \item
  All simple modules in $\mod \Lambda$ are periodic of period $4$.
 \item
  $\Lambda$ is a periodic algebra of period $4$.
 \item
  $\Lambda$ is a weighted surface algebra other than  
  a singular disc, triangle,  tetrahedral or spherical algebra.
\end{enumerate}
\end{main-theorem}

This paper is organized as follows.
In Section~\ref{sec:context}
we introduce the algebras. 
This is  slightly more general
as needed for weighted surface algebras, 
in order to show how they fit into
a more general context of tame symmetric algebras; 
as well it will be needed for the study of idempotent algebras.
We review much as needed from \cite{ESk-WSA}, 
this is done in Section~\ref{sec:context},
and discuss the modifications of the definition needed.
Section~\ref{sec:exceptions}
introduces the algebras which play a special role:
the disc algebras, 
the tetrahedral algebras,
the triangle algebras,
and the spherical algebras.
In Section~\ref{sec:properties}
we prove some general results, in particular we show that
weighted surface algebras 
are symmetric (except for a few small cases which we identify).
Section~\ref{sec:per}
proves the periodicity result.
The final section proves tameness, and also classifies polynomial growth.
For general background on the relevant representation theory
we refer to the books
\cite{ASS,E3,SS,SY}.

\section{Weighted surface algebras, and the general context}
\label{sec:context}

Recall that a quiver is a quadruple $Q = (Q_0, Q_1, s, t)$ 
where $Q_0$ is a finite set of vertices, 
$Q_1$ is a finite set of arrows, and 
where $s, t$ are maps $Q_1\to Q_0$ associating 
to each arrow $\alpha\in Q_1$ its source $s(\alpha)$  
and its target $t(\alpha)$. 
We say that
$\alpha$ starts at $s(\alpha)$ and ends at $t(\alpha)$. 
We assume throughout that any quiver is connected. 

Denote by $KQ$ the path algebra of $Q$ over $K$. 
The underlying space has basis the set of all paths in $Q$. 
Let $R_Q$ be the ideal of $KQ$ generated by all paths of length $\geq 1$. 
For each vertex $i$, let $e_i$ be the path of length zero at $i$, then 
the $e_i$ are pairwise orthogonal, and their sum is the identity of $KQ$. 
We will consider algebras of the form $A=KQ/I$ where $I$ is an ideal of $KQ$ 
which contains $R_Q^m$ for some $m\geq 2$, so that
the algebra is finite-dimensional and basic.
The Gabriel quiver $Q_A$ of $A$ is then the full subquiver of $Q$ 
obtained from $Q$ by removing all arrows $\alpha$ which belong to the ideal $I$.

\medskip
The setting for weighted surface algebras and the algebras occuring 
in \cite{ESk6} and \cite{ESk7} (and also in future work) has unified description, 
which we
will now present.

A quiver $Q$ is \emph{$2$-regular} if for each vertex $i\in Q_0$ 
there are precisely two arrows starting
at $i$ and two arrows ending at $i$. 
All quivers we consider will be 2-regular. 
Such a quiver has an involution
on the arrows, $\alpha \mapsto \ba$,  
such that for each arrow $\alpha$, 
the arrow $\ba$ is the arrow $\neq \alpha$ such that $s(\alpha) = s(\ba)$.

 A \emph{biserial quiver} \  is a pair $(Q, f)$ where $Q$ 
is a  (finite) connected 2-regular quiver,  with at least two vertices, and where 
$f$ is a fixed 
permutation of the arrows such that  $t(\alpha) = s(f(\alpha))$ 
for each arrow $\alpha$. 
The permutation $f$ uniquely determines a permutation $g$ of the arrows, 
defined by $g(\alpha) := \overline{f(\alpha)}$ for any arrow $\alpha$. 
Let $(Q, f)$ be a biserial quiver. We say that $(Q, f)$ 
is a \emph{triangulation quiver}  if $f^3$ is the identity. 
That is, all cycles of $f$ have length $3$ or $1$.

In this paper we will focus on triangulation quivers.
As we have proved in \cite{ESk-WSA},
these are 
precisely the quivers $(Q(S, \vec{T}), f)$ constructed 
from a triangulation $T$ of a compact connected (real)
surface, with or without boundary, and where 
the orientation $\vec{T}$ in each triangle can be chosen arbitrarily. 
For details we refer to \cite{ESk-WSA}, we will not repeat this
since we will not use the geometric version in any essential way.

\bigskip

We fix an algebraically closed field $K$, and we introduce some notation. 
This will be used throughout. For each arrow $\alpha$, we fix 
\begin{align*} 
m_{\alpha}\in \mathbb{N}^* &&& \mbox{ a multiplicity, constant on $g$-cycles, and }\cr
c_{\alpha} \in K^*  &&&  \mbox{ a weight,  constant on $g$-cycles, and define }\cr
n_{\alpha}:=   &&&  \mbox{ the length of the $g$-cycle of $\alpha$, } \cr
B_{\alpha}:=  \alpha g(\alpha)\ldots g^{n_{\alpha}-1}(\alpha) &&&   \mbox{ the path  along the $g$-cycle of $\alpha$ 
                     of length $m_{\alpha}n_{\alpha}$} \cr
 A_{\alpha}:=  \alpha g(\alpha)\ldots g^{n_{\alpha}-2}(\alpha) &&&   \mbox{ the path  along the $g$-cycle of $\alpha$ 
                     of length $m_{\alpha}n_{\alpha}-1$.}
\end{align*}

\noindent 
For the algebras of the form $A= KQ/I$, 
we will fix relations such that: 
\begin{enumerate}[(1)]
 \item
   Each paths $\alpha f(\alpha)$ of length 2 
   occurs in some distinguished element in $I$. 
 \item
   We will  ensure that in  the algebra, 
   $c_{\alpha}B_{\alpha} = c_{\ba}B_{\ba}$, 
   and that the  elements $B_{\alpha}$  
   span the socle of $A$.
 \item
   We will ensure that $A$ has a basis consisting 
   of initial subwords of elements $B_{\alpha}$ and $B_{\ba}$. 
\end{enumerate}
%

That is, the cycles of $f$ describe minimal relations, 
and the cycles of
$g$ describe a basis for the algebra. 
There are two types of distinguished  relations,
\begin{enumerate}
 \item[(Q)]
   $\alpha f(\alpha)  = c_{\ba}A_{\ba}$  \  in $\La$, 
   only when  $f^3(\alpha)=\alpha$ (`quaternion' relations);  
 \item[(B)]
   $\alpha f(\alpha)=0$ \ in $\La$ \ (`biserial' relations).
\end{enumerate}
In addition, one needs   zero relations so that (2) is satisfied.

\bigskip

This includes Brauer graph algebras: Take an algebra 
	$R=KQ/I$ where $(Q, f)$ is a  biserial quiver
and where $I$ is generated by biserial relations, 
for all arrows $\alpha$, together with  relations 
$B_{\alpha} = B_{\ba}$, for all arrows $\alpha$, 
taking as the  weight function has $c_{\alpha}=1$ for all $\alpha$. 
For details we refer to \cite{ESk7}. 
This includes Brauer tree algebra, and motivated by this 
we think of the cycles of $f$ as `Green walks'. 
	
	 In \cite{ESk-WSA} and \cite{ESk6} we have studied biserial weighted surface algebras,
these are the Brauer graph algebras where in addition $f^3=1$, that is, 
	$(Q, f)$ is a triangulation quiver. These occur for blocks
	with dihedral defect groups. In this case, the Green walks
	are in bijection with tubes of rank 3 in the stable Auslander-Reiten quiver. In fact, this suggests that the condition $f^3=1$ should play a
	special role.
	
 On the other extreme, if all distinguished relations are quaternion 
relations, we get weighted surface algebras, which we will study in detail
in this paper.

To deal with tameness, we use special biserial algebras, and we only
need those which are symmetric, for the general definition we refer to the literature. It is known that special biserial symmetric
algebras are precisely the Brauer graph algebras as described above, for a 
detailed discussion see \cite{ESk7}.
We have
the following (proved in this generality in \cite{WW}, see also \cite{BR, DS}
for alternative proofs).

\begin{proposition}
\label{prop:2.1}
Every special biserial algebra is tame.
\end{proposition}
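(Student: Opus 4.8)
The plan is to reduce the statement to the case of \emph{string algebras}, for which the finite-dimensional indecomposable modules admit an explicit combinatorial description, and then to bound, in each fixed dimension, the number of one-parameter families that occur. Throughout one works over the algebraically closed field $K$.

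First I would carry out the standard reduction of a special biserial algebra to a string algebra. Let $A = KQ/I$ be special biserial. Up to isomorphism there are only finitely many indecomposable projective $A$-modules which are at the same time injective and non-uniserial; denote this finite set by $\mathcal{P}$. Let $I'$ be the ideal of $A$ generated by the socles $\soc(P)$, $P \in \mathcal{P}$, and set $B = A/I'$. Then $B$ is a string algebra, and one checks that an indecomposable $A$-module $M$ with $I'M \neq 0$ must have a module from $\mathcal{P}$ as a direct summand, hence — a projective-injective summand forcing equality in the indecomposable case — $M \in \mathcal{P}$. Consequently every indecomposable $A$-module is either one of the finitely many modules in $\mathcal{P}$ or is annihilated by $I'$, i.e.\ is an indecomposable $B$-module. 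Since adjoining finitely many indecomposables to a tame module category keeps it tame, it suffices to prove that the string algebra $B$ is tame.

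Next I would invoke the classification of indecomposable modules over a string algebra (Gelfand--Ponomarev, Donovan--Freislich, Butler--Ringel): every indecomposable $B$-module is isomorphic either to a \emph{string module} $M(w)$ attached to a string (a reduced walk in $Q$ avoiding the relations), determined by $w$ up to inversion, or to a \emph{band module} attached to a band $b$ together with a finite-dimensional $K[x,x^{-1}]$-module, determined up to cyclic rotation of $b$; over $K$ the indecomposable such data are $K[x,x^{-1}]/((x-\lambda)^m)$ with $\lambda \in K^*$ and $m \geq 1$, giving the modules $M(b,\lambda,m)$. The substantive content — and the step I expect to be the main obstacle — is the \emph{completeness} of this list, which is proved by a functorial-filtration (covering) argument; I would take this as the cited input rather than reprove it, since it is precisely what \cite{WW}, \cite{BR}, \cite{DS} establish.

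Finally I would count. Fix $d \in \mathbb{N}$. Since $Q$ is finite there are only finitely many strings $w$ with $\dim_K M(w) = d$, so all but finitely many indecomposable $B$-modules of dimension $d$ are band modules. For each band $b$ there is a $B$-$K[x,x^{-1}]$-bimodule $N_b$, free of finite rank over $K[x,x^{-1}]$, such that the band modules built from $b$ are exactly the $N_b \otimes_{K[x,x^{-1}]} W$ with $W$ a finite-dimensional $K[x,x^{-1}]$-module; since $\dim_K(N_b \otimes_{K[x,x^{-1}]} W) = \ell(b)\,\dim_K W$, only the finitely many bands $b$ with $\ell(b) \mid d$ contribute in dimension $d$, and from the corresponding $N_b$ one extracts finitely many $B$-$K[x]$-bimodules $M_1,\dots,M_r$, free of finite rank over $K[x]$, whose one-parameter families $\lambda \mapsto M_i \otimes_{K[x]} K[x]/(x-\lambda)$ together account for all but finitely many indecomposable $B$-modules of dimension $d$. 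This is exactly the definition of tameness for $B$, hence for $A$.
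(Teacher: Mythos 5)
The paper offers no proof of this proposition at all: it simply cites \cite{WW} (with \cite{BR,DS} as alternatives), and your sketch is precisely the standard argument of those references --- reduction to a string algebra by killing the socles of the non-uniserial projective--injectives, the string/band classification of indecomposables taken as the cited input, and the dimension-by-dimension count of one-parameter families. Your outline is correct (the only point left implicit, appropriately for a sketch, is that the band modules $M(b,\lambda,m)$ with $m>1$ also need their own rank-$m\ell(b)$ bimodules over $K[x]$), so there is nothing to compare against beyond the citation.
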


For a positive integer $d$, we denote by $\alg_d(K)$ the affine
variety of associative $K$-algebra structures with identity on
the affine space $K^d$.
Then the general linear group $\GL_d(K)$ acts on $\alg_d(K)$
by transport of the structures, and the $\GL_d(K)$-orbits in
$\alg_d(K)$ correspond to the isomorphism classes of $d$-dimensional
algebras (see \cite{Kr} for details). We identify a $d$-dimensional
algebra $A$ with the point of $\alg_d(K)$ corresponding to it.
For two $d$-dimensional algebras $A$ and $B$, we say that $B$
is a \emph{degeneration} of $A$ ($A$ is a \emph{deformation} of $B$)
if $B$ belongs to the closure of the $\GL_d(K)$-orbit
of $A$ in the Zariski topology of $\alg_d(K)$.

Geiss' Theorem \cite{Ge} shows that if $A$ and $B$ are two
$d$-dimensional algebras, $A$ degenerates to $B$ and $B$ is a tame
algebra, then $A$ is also a tame algebra (see also \cite{CB2}).
We will apply this theorem in the following special situation.

\begin{proposition}
\label{prop:2.2}
Let $d$ be a positive integer, and $A(t)$, $t \in K$,
be an algebraic family in $\alg_d(K)$ such that $A(t) \cong A(1)$
for all $t \in K \setminus \{0\}$.
Then $A(1)$ degenerates to $A(0)$.
In particular, if $A(0)$ is tame, then $A(1)$ is tame.
\end{proposition}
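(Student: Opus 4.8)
The plan is to produce an explicit algebraic degeneration inside $\alg_d(K)$ by letting the parameter $t$ tend to $0$, and then invoke Geiss' Theorem. First I would set up the situation properly: by hypothesis $A(t)$, $t \in K$, is an algebraic family in $\alg_d(K)$, meaning the structure constants of $A(t)$ with respect to the fixed basis of $K^d$ are regular functions of $t$ (polynomial, or at worst Laurent-polynomial after clearing denominators — but since the family is defined on all of $K$ they are polynomial in $t$). The point $A(0)$ is then simply the limit of this curve $t \mapsto A(t)$ as $t \to 0$, i.e. the value of these structure-constant polynomials at $t = 0$, and it lies in the closure of $\{A(t) : t \in K\}$.

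The key step is to identify the curve $\{A(t) : t \neq 0\}$ with (a subset of) a single $\GL_d(K)$-orbit. Since $A(t) \cong A(1)$ for all $t \in K \setminus \{0\}$, the point $A(t)$ lies in the $\GL_d(K)$-orbit $\mathcal{O}$ of $A(1)$ for every nonzero $t$; hence the image of the morphism $\varphi : \mathbb{A}^1 \setminus \{0\} \to \alg_d(K)$, $t \mapsto A(t)$, is contained in $\mathcal{O}$. Taking closures in the Zariski topology, $\overline{\varphi(\mathbb{A}^1 \setminus \{0\})} \subseteq \overline{\mathcal{O}}$. Now $A(0) = \varphi(0) = \lim_{t\to 0}\varphi(t)$ belongs to the closure of $\varphi(\mathbb{A}^1 \setminus \{0\})$: indeed $\mathbb{A}^1 \setminus \{0\}$ is dense in $\mathbb{A}^1$, the extended map $t \mapsto A(t)$ is a morphism on all of $\mathbb{A}^1$ (the structure constants being regular at $0$), hence continuous, so $A(0)$ lies in the closure of the image of the dense open subset. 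Therefore $A(0) \in \overline{\mathcal{O}}$, which by definition means $A(1)$ degenerates to $A(0)$.

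Once the degeneration $A(1) \rightsquigarrow A(0)$ is established, the final assertion is immediate from Geiss' Theorem as recalled just before the statement: $A(1)$ and $A(0)$ are both $d$-dimensional algebras, $A(1)$ degenerates to $A(0)$, and if $A(0)$ is tame then $A(1)$ is tame as well. (One should note that $A(0)$ and $A(1)$ are genuinely algebras with identity, i.e. honest points of $\alg_d(K)$, which is guaranteed because the whole family $A(t)$ lives in $\alg_d(K)$ by hypothesis, so no separate verification of associativity or of the existence of a unit is needed at $t=0$.)

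I do not expect any serious obstacle here; the content of the proposition is essentially a formal consequence of the definition of degeneration together with Geiss' Theorem. The only point requiring a little care is the passage from "the punctured curve lies in one orbit" to "the limit point lies in the orbit closure", which rests on the elementary topological fact that for a continuous map $f$ from $\mathbb{A}^1$ to a topological space and a dense subset $U \subseteq \mathbb{A}^1$, one has $f(\overline{U}) \subseteq \overline{f(U)}$; applied with $U = \mathbb{A}^1 \setminus \{0\}$ this gives $A(0) \in \overline{\{A(t) : t \neq 0\}} \subseteq \overline{\mathcal{O}}$.
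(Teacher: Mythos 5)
Your proposal is correct and is exactly the intended argument: the paper states Proposition~\ref{prop:2.2} without proof as a direct consequence of the definition of degeneration and Geiss' Theorem, and your verification --- that the morphism $t \mapsto A(t)$ maps the punctured line into the $\GL_d(K)$-orbit of $A(1)$, so that by continuity $A(0)$ lies in the orbit closure --- is the standard way to fill in that gap. No issues.
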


A family of algebras $A(t)$, $t \in K$, in $\alg_d(K)$
is said to be \emph{algebraic} if the induced map
$A(-) : K \to \alg_d(K)$ is a regular map of affine varieties.

An important combinatorial and homological invariant
of the module category $\mod A$ of an algebra $A$
is its Auslander-Reiten quiver $\Gamma_A$.
Recall that $\Gamma_A$ is the translation quiver whose
vertices are the isomorphism classes of indecomposable
modules in $\mod A$, the arrows correspond
to irreducible homomorphisms, and the translation
is the Auslander-Reiten translation $\tau_A = D \Tr$.
For $A$ self-injective, we denote by $\Gamma_A^s$
the stable Auslander-Reiten quiver of $A$, obtained
from $\Gamma_A$ by removing the isomorphism classes
of projective modules and the arrows attached to them.
By a stable tube we mean a translation quiver $\Gamma$
of the form $\mathbb{Z} \mathbb{A}_{\infty}/(\tau^r)$,
for some $r \geq 1$, and we call $r$ the rank of $\Gamma$.
We note that, for a symmetric algebra $A$, we have
$\tau_A = \Omega_A^2$ (see \cite[Corollary~IV.8.6]{SY}).
In particular, we have the following equivalence.

\begin{proposition}
\label{prop:2.3}
Let $A$ be an indecomposable, representation-infinite
symmetric algebra.
The following statements are equivalent:
\begin{enumerate}[(i)]
 \item
  $\Gamma_A^s$ consists of stable tubes.
 \item
  All indecomposable non-projective modules in $\mod A$
  are periodic.
\end{enumerate}
\end{proposition}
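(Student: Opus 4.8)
The plan is to prove the two implications separately, using the structure theory of the stable Auslander–Reiten quiver of a self-injective algebra together with the identity $\tau_A = \Omega_A^2$ valid for symmetric algebras.

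First I would treat the implication (i) $\Rightarrow$ (ii). Assume $\Gamma_A^s$ consists of stable tubes. A stable tube of rank $r$ has the form $\mathbb{Z}\mathbb{A}_\infty/(\tau^r)$, and in particular every vertex lies on a $\tau$-orbit of finite length $r$; thus $\tau_A^r M \cong M$ for each indecomposable non-projective $M$. Since $A$ is symmetric, $\tau_A = \Omega_A^2$, so $\Omega_A^{2r} M \cong M$, which says precisely that $M$ is $\Omega_A$-periodic. Hence every indecomposable non-projective module is periodic.

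Next, the implication (ii) $\Rightarrow$ (i). Assume every indecomposable non-projective $M$ in $\mod A$ is periodic; then $\Omega_A^n M \cong M$ for some $n$, and again using $\tau_A = \Omega_A^2$ we get $\tau_A^{2n} M \cong M$, so every vertex of $\Gamma_A^s$ lies on a finite (periodic) $\tau_A$-orbit. Now I invoke the general classification of the possible shapes of connected components of the stable Auslander–Reiten quiver of a self-injective algebra: by the results of Riedtmann and the structure theory for self-injective (indeed arbitrary) algebras, a connected component of $\Gamma_A^s$ is either of the form $\mathbb{Z}\Delta/G$ for $\Delta$ a Dynkin or Euclidean diagram or one of the infinite diagrams $\mathbb{A}_\infty$, $\mathbb{B}_\infty$, $\mathbb{C}_\infty$, $\mathbb{D}_\infty$, $\mathbb{A}_\infty^\infty$, or it is a stable tube. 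Since $A$ is representation-infinite, no component can be finite, so the Dynkin case is excluded; and a component $\mathbb{Z}\Delta/G$ with $\Delta$ one of the infinite diagrams other than $\mathbb{A}_\infty$, or $\Delta$ Euclidean, necessarily contains vertices lying on infinite $\tau_A$-orbits — contradicting periodicity. Therefore every component is a stable tube, i.e. $\Gamma_A^s$ consists of stable tubes.

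The main obstacle is the second implication: it relies on the full classification of shapes of stable components of self-injective algebras (the theorem that the stable part of each component is $\mathbb{Z}\Delta/G$ for $\Delta$ in the list above), together with the observation that periodicity of every module forces the tree class to be $\mathbb{A}_\infty$ and the admissible group $G$ to be infinite cyclic generated by a power of $\tau$. I would cite this structure theory from the literature (e.g. \cite{SY} or the references therein) rather than reprove it; once it is granted, the argument is a short case analysis ruling out all shapes except tubes.
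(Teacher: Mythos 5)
Your two implications are the standard argument, and the overall structure is sound; note that the paper itself offers no proof of Proposition~\ref{prop:2.3} --- it is stated as a known consequence of the identity $\tau_A = \Omega_A^2$ for symmetric algebras together with general Auslander--Reiten theory, so there is no in-paper argument to compare against. Your direction (i)~$\Rightarrow$~(ii) is complete: finite $\tau$-orbits in a tube give $\tau_A^r M \cong M$, hence $\Omega_A^{2r} M \cong M$.

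In direction (ii)~$\Rightarrow$~(i) there is one step stated more strongly than it is true. You claim that a component of shape $\mathbb{Z}\Delta/G$ with $\Delta$ Euclidean or an infinite tree other than $\mathbb{A}_\infty$ ``necessarily contains vertices lying on infinite $\tau_A$-orbits.'' As a statement about abstract stable translation quivers this is false: for instance $\mathbb{Z}\mathbb{A}_\infty^\infty/(\tau^r)$ is a perfectly good stable translation quiver in which every $\tau$-orbit is finite. What actually rules these shapes out as Auslander--Reiten components containing a periodic module is the Happel--Preiser--Ringel theorem: a connected stable translation quiver admitting a subadditive function (here, $M \mapsto \dim_K M$, or the length function) and containing a $\tau$-periodic vertex has all vertices periodic and is either finite with Dynkin tree class or a stable tube $\mathbb{Z}\mathbb{A}_\infty/(\tau^r)$. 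Combined with Auslander's theorem (a finite component of a connected algebra forces representation-finiteness, which is where indecomposability and representation-infiniteness of $A$ enter), this gives exactly the conclusion you want, and it bypasses the full Riedtmann--Liu classification of component shapes entirely. So the fix is to replace your shape-by-shape exclusion with a citation of Happel--Preiser--Ringel (or the corresponding statement in \cite{SY}); with that substitution the proof is correct.
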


Therefore, we conclude that, if $A$ is an indecomposable,
representation-infinite, symmetric, periodic algebra (of period $4$)
then $\Gamma_A^s$ consists of stable tubes (of ranks $1$ and $2$).
We also note that, if $A$ is a representation-infinite special
biserial symmetric algebra, then $\Gamma_A^s$ admits
an acyclic component (see \cite{ESk1}),
and consequently $A$ is not a periodic algebra.

Let $A$ be an algebra over $K$ and $\sigma$ a $K$-algebra
automorphism of $A$. Then for any $A$-$A$-bimodule $M$
we denote by ${}_1M_{\sigma}$ the $A$-$A$-bimodule with
the underlying $K$-vector space $M$ and action defined
as $a m b = a m \sigma(b)$ for all $a, b \in A$ and $m \in M$.

The following has been proved in \cite[Theorem~1.4]{GSS}.

\begin{theorem}
\label{th:2.4}
Let $A$ be an algebra over $K$ and $d$ a positive integer.
Then the following statements are equivalent:
\begin{enumerate}[(i)]
 \item
  $\Omega_A^d(S) \cong S$ in $\mod A$ for every simple
  module $S$ in $\mod A$.
 \item
  $\Omega_{A^e}^d(S) \cong {}_1A_{\sigma}$ in $\mod A^e$ for
  some $K$-algebra automorphism $\sigma$ of $A$ such that
  $\sigma(e) A \cong e A$ for any primitive idempotent
  $e$ of $A$.
\end{enumerate}
Moreover, if $A$ satisfies these conditions, then $A$ is self-injective.
\end{theorem}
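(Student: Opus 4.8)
The plan is to set up a dictionary between the syzygies of $A$ over the enveloping algebra $A^e = A^{\op}\otimes_K A$ and the syzygies of the simple $A$-modules, and then read off both implications (and the self-injectivity clause) from it.

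\emph{The bridge.} First I would fix a minimal projective resolution $\cdots \to P_1 \to P_0 \to A \to 0$ of $A$ in $\mod A^e$. Each $P_i$ is a sum of bimodules $Ae\otimes_K e'A$, hence free, in particular flat, as a right $A$-module; so for every $M$ in $\mod A$ the complex $P_\bullet\otimes_A M \to M$ is a projective resolution in $\mod A$, and the relevant $\Tor$ groups vanish, so $\Omega_{A^e}^n(A)\otimes_A M$ is exactly the $n$-th syzygy appearing in it. Since the differentials of $P_\bullet$ have entries in $\rad(A^e)$ and the right-radical part of such an entry annihilates a \emph{simple} right factor, one checks that $P_\bullet\otimes_A S \to S$ is in fact the \emph{minimal} projective resolution of a simple module $S$. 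This yields the key formula $\Omega_A^n(S) \cong \Omega_{A^e}^n(A)\otimes_A S$ for all $n\ge 0$ and all simple $S$, together with the weaker statement that $\Omega_{A^e}^d(A)\otimes_A M\cong \Omega_A^d(M)$ in $\umod A$ for arbitrary $M$.

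\emph{The direction (ii)$\Rightarrow$(i), and self-injectivity.} Assuming $\Omega_{A^e}^d(A)\cong {}_1A_\sigma$ with $\sigma(e)A\cong eA$ for every primitive idempotent $e$, the bridge gives $\Omega_A^d(S)\cong {}_1A_\sigma\otimes_A S$, which is the simple module $S$ twisted by $\sigma$; it is isomorphic to $S$ precisely because the hypothesis on $\sigma$ says that twisting permutes the isomorphism classes of indecomposable projectives — equivalently of simples — trivially. For the self-injectivity clause I would argue (under either hypothesis, after having proved (i)$\Leftrightarrow$(ii)) that ${}_1A_\sigma\otimes_A -$ is an autoequivalence of $\mod A$ which on $\umod A$ agrees with $\Omega_A^d$; hence $\Omega_A^d$, and therefore $\Omega_A$, is essentially surjective on $\umod A$. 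Thus every indecomposable non-projective module embeds into a projective module, which forces every indecomposable injective module to be projective, i.e. $A$ is self-injective.

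\emph{The direction (i)$\Rightarrow$(ii).} Put $N:=\Omega_{A^e}^d(A)$. I would first show that $N$ is projective \emph{as a one-sided module}: a projective bimodule resolution of $N$ restricts to a resolution by free right (resp. left) $A$-modules, and shifting the resolution of $A$ shows $\Tr$-free, i.e. $\Tor^A_i(N,M)=0$ for all $i\ge 1$ and all $M$, so $N$ is flat, hence projective, on each side. Next, the bridge together with (i) gives $N\otimes_A S_e\cong \Omega_A^d(S_e)\cong S_e$ for every primitive idempotent $e$; comparing this with the decomposition of the projective right module $N_A$ into indecomposables forces $N_A\cong A_A$, and symmetrically ${}_AN\cong {}_AA$. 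A bimodule that is free of rank one on each side is isomorphic to ${}_1A_\sigma$ for a (unique up to inner) algebra automorphism $\sigma$ of $A$ — the cyclic generator on the left makes the ring endomorphism of $A$ induced by the left action surjective, hence bijective. Finally $\sigma(e)A\cong eA$ follows because ${}_1A_\sigma\otimes_A S_e\cong \Omega_A^d(S_e)\cong S_e$ says exactly that twisting by $\sigma$ fixes each simple up to isomorphism, and self-injectivity is obtained as in the previous paragraph.

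\emph{Main obstacle.} The delicate part is the one-sided analysis of $N=\Omega_{A^e}^d(A)$: checking that $P_\bullet\otimes_A S\to S$ stays \emph{minimal} for simple $S$, so that its syzygies really are the $\Omega_A^n(S)$, and then the (perhaps surprising) flatness of the syzygies of $A$ over $A^e$ as one-sided modules, which is what upgrades the purely numerical input $\dim_K\Omega_A^d(S_e)=1$ to the structural conclusion $N\cong {}_1A_\sigma$. Most of the remaining work is careful bookkeeping of left versus right actions, and of which tensor factor of $A^e=A^{\op}\otimes_K A$ carries which structure, throughout all of the above.
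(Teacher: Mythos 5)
The paper does not prove this statement itself: Theorem~\ref{th:2.4} is quoted from \cite[Theorem~1.4]{GSS}, so there is no in-text proof to compare against. Judged on its own, your architecture is the standard one (and, as far as I recall, close to the one in \cite{GSS}): the ``bridge'' $\Omega_{A^e}^n(A)\otimes_A S\cong\Omega_A^n(S)$ obtained by tensoring a minimal bimodule resolution with a simple module, the one-sided projectivity of the bimodule syzygies, and the identification of a bimodule that is free of rank one on each side with a twisted bimodule ${}_1A_{\sigma}$. The direction (ii)$\Rightarrow$(i) and the self-injectivity clause (every indecomposable non-projective module is, up to projective summands, in the image of the autoequivalence $-\otimes_A{}_1A_{\sigma}\simeq\Omega_A^d$ on $\umod A$, hence embeds into a projective module, so every indecomposable injective is projective) are correct as you present them.

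The genuine gap is in (i)$\Rightarrow$(ii), at the words ``and symmetrically ${}_AN\cong{}_AA$.'' Hypothesis (i) concerns simple modules on \emph{one} side only. Writing $N=\Omega_{A^e}^d(A)$ and $N_A\cong\bigoplus_e(eA)^{m_e}$, ${}_AN\cong\bigoplus_e(Ae)^{n_e}$, the computation $S_e\otimes_A N\cong\Omega_A^d(S_e)\cong S_e$ for the simple \emph{right} modules gives $\dim_K(S_e\otimes_A N)=n_e=1$, i.e.\ it pins down only the \emph{left} module structure of $N$; the multiplicities $m_e$ are instead $\dim_K\Omega_A^d(T_e)$ for the simple \emph{left} modules $T_e$, and nothing in (i) tells you these are $1$ — or even nonzero, which is what you need to run the length count $\sum_e m_e\dim_K eA=\dim_K A$ to its conclusion $m_e=1$. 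One cannot recover the missing side by comparing composition factors either, since that only yields $(m-\mathbf{1})^{T}C_A=0$, and the Cartan matrix $C_A$ may be singular — indeed it \emph{is} singular for the weighted surface algebras to which this theorem is applied (Corollary~\ref{cor:5.11}). Closing this left/right asymmetry (for instance by first deducing self-injectivity, or equivalently the faithfulness of $N_A$, directly from (i)) is precisely the non-formal content of \cite[Theorem~1.4]{GSS}, and your sketch does not supply it. Everything else is, as you say, bookkeeping.
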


The \emph{Cartan matrix} $C_A$ of an algebra $A$ is the matrix
$(\dim_K \Hom_A(P_i,P_j))_{1 \leq i,j \leq n}$
for a complete family $P_1,\dots,P_n$
of a pairwise non-isomorphic indecomposable
projective modules in $\mod A$.
The following main result from \cite{E0}
shows why the original class of algebras of quaternion type is very
restricted compared with the algebras which we will study in this paper.

\begin{theorem}
\label{th:2.5}
Let $A$ be an indecomposable, representation-infinite tame
symmetric algebra
with non-singular Cartan matrix
such that every non-projective
indecomposable module in $\mod A$
is periodic of period dividing $4$.
Then $\mod A$ has at most three
pairwise non-isomorphic simple modules.
\end{theorem}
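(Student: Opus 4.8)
The plan is to exploit the Cartan matrix constraint coming from $4$-periodicity together with the theory of the projectively-stable Grothendieck group. First I would record that, since every non-projective indecomposable $M$ satisfies $\Omega_A^4(M)\cong M$, the same holds in particular for the simple modules; by Theorem~\ref{th:2.4} (with $d=4$) the algebra $A$ is self-injective, and being tame symmetric it fits the hypotheses of Proposition~\ref{prop:2.3}, so $\Gamma_A^s$ consists of stable tubes of rank $1$ or $2$. The key numerical input is that $\Omega_A$ acts on the stable Grothendieck group: writing $n$ for the number of simple modules and identifying $K_0(A)\cong\bZ^n$ via dimension vectors, the syzygy operator induces an integral matrix whose square, after a sign, can be compared with the Coxeter-type transformation. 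Concretely, minimal projective presentations give, for each simple $S_i$, an exact sequence $0\to\Omega_A(S_i)\to P_i\to S_i\to 0$, so $[\Omega_A(S_i)]=[P_i]-[S_i]$ in $K_0(A)$; iterating, $[\Omega_A^2(S_i)]$ is expressed through the Cartan matrix $C_A$, and $[\Omega_A^4(S_i)]=[S_i]$ forces a matrix identity involving $C_A$ and the permutation $\nu$ of simples induced by the Nakayama functor.

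Next I would use symmetry: for a symmetric algebra the Nakayama permutation is trivial and $C_A$ is symmetric, and $\tau_A=\Omega_A^2$. Because $A$ is representation-infinite with all non-projective indecomposables $\tau$-periodic, standard results on self-injective algebras with this property (or a direct computation with the bilinear form $\langle x,y\rangle = x^{\mathrm{tr}}C_A^{-1}y$, which is well-defined since $C_A$ is non-singular) show that $C_A^{-1}$ restricted to the relevant lattice is a non-negative form of a very restricted type. The periodicity $\Omega_A^4=\mathrm{id}$ on simples translates into $(C_A^{-1})$ being, up to the obvious symmetrisation, the Cartan matrix of a union of extended Dynkin (affine) diagrams or Dynkin diagrams; the tame, representation-infinite, non-polynomial-growth behaviour — together with the period being exactly $4$ rather than $1$ or $2$ — pins this down further. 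I would then invoke the classification of symmetric matrices arising this way: such a $C_A$ (or its inverse) must be built from components of type $\widetilde{\bA}_1$ / $\widetilde{\bA}_2$ / $A_1$ / $A_2$ / $A_3$, and connectedness of $A$ (indecomposability as an algebra) forces a single block of size at most $3$.

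The main obstacle I expect is the passage from the abstract period-$4$ condition to a rigid statement about $C_A$: the identity $[\Omega^4_A(S_i)]=[S_i]$ only controls $\Omega_A$ up to classes that vanish in $K_0(A)$, and one must rule out that large cancellation in the Grothendieck group hides a big algebra. The cleanest route is probably to combine the dimension-vector identity with the structural fact from Proposition~\ref{prop:2.3} that $\Gamma_A^s$ is a union of tubes of rank $\le 2$, and then quote the classification of representation-infinite symmetric algebras whose stable Auslander–Reiten quiver is all tubes (these are closely tied to the algebras studied by Bia\l kowski–Erdmann–Skowro\'nski and to tubular/domestic families); the non-singularity of $C_A$ excludes the tubular (polynomial-growth) families which have singular Cartan matrix, leaving exactly the short list with at most three simples. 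Verifying that each such candidate indeed has the claimed periodicity, and that everything larger is excluded, is the place where one genuinely needs the hypothesis $\det C_A\neq 0$; without it the tubular algebras with arbitrarily many simples would be counterexamples, and flagging that contrast is, as the text notes, precisely the point of the theorem.
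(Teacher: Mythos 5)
First, a point of reference: the paper does not prove Theorem~\ref{th:2.5} at all --- it is imported from \cite{E0} --- so there is no internal proof to compare your plan against, and it has to be judged on its own terms. Judged that way, it has a genuine gap at its centre. The identity $[\Omega_A(M)]=[P_0(M)]-[M]$ only descends to the stable Grothendieck group $K_0(\mod A)/C_A\bZ^n$, on which $\Omega_A$ acts as $-\operatorname{id}$; hence $[\Omega_A^4(S_i)]\equiv[S_i]$ holds modulo the image of $C_A$ for \emph{every} self-injective algebra and carries no information, and $[\Omega_A^2(S_i)]$ is not ``expressed through $C_A$'' because the projective cover of $\Omega_A(S_i)$ is not determined by the class of $S_i$ in $K_0$. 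You acknowledge this, but your fallback --- quoting ``the classification of representation-infinite symmetric algebras whose stable Auslander--Reiten quiver consists of tubes'' --- is not available: no such classification exists, and this very paper shows why your route cannot close. The weighted surface algebras of Theorem~\ref{th:main3} are indecomposable, tame, symmetric, representation-infinite, have arbitrarily many simple modules, all non-projective indecomposables periodic of period dividing $4$, and stable Auslander--Reiten quiver a union of tubes of ranks $1$ and $2$. Thus every structural consequence you extract (tubes of rank $\le 2$, symmetry of $C_A$, $\tau_A=\Omega_A^2$) is satisfied by algebras with arbitrarily many simples; the only hypothesis separating them from the conclusion is $\det C_A\neq 0$ (this is exactly how Corollary~\ref{cor:5.11} is deduced), and your plan uses it only to ``exclude tubular families'', which is far from sufficient. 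The assertion that $C_A^{-1}$ must be a non-negative form of Dynkin or affine type is precisely where the argument would have to happen, and no justification is offered.

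Where the hypotheses actually get traction, in \cite{E0}, is quantitative rather than structural: tameness and representation-infiniteness provide, in infinitely many dimensions, one-parameter families of indecomposables lying in homogeneous tubes, hence satisfying $\Omega_A^2(M)\cong M$. For such $M$ the class computation in $K_0$ gives $\underline{\dim}\,P_1(M)=\underline{\dim}\,P_0(M)$ for the minimal projective presentation $P_1(M)\to P_0(M)\to M\to 0$, and it is exactly the nonsingularity of $C_A$ that upgrades this equality of dimension vectors to an isomorphism $P_1(M)\cong P_0(M)$; comparing the number of parameters available for cokernels of radical maps $P\to P$ with the number of one-parameter families then forces the bound on the number of simples. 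If you want to repair your plan, this is the mechanism you need to build in; the Coxeter-matrix/Grothendieck-group identity alone cannot see the difference between three simples and thirty.
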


In \cite{ESk-WSA} we define a weighted surface algebra, where the quiver
is constructed in terms of a triangulation $T$ of a surface $S$, 
with arbitrarily oriented $\vec{T}$ triangles, 
and such a quiver is denoted by $Q(S, \vec{T})$. 
Such a quiver is a triangulation quiver $(Q, f)$ 
as we have defined above. Moreover, it was proved that triangulation quivers
are the same as quivers of the form $Q(S, \vec{T})$. 
We have also at that stage
distinguished between weighted surface algebras (which use $Q(S, \vec{T})$)
and weighted triangulation algebra (which use $(Q, f)$).

In the present
paper we will almost entirely use triangulation quivers, but we will refer
to weighted surface algebras for the algebras constructed.
We will now give the general definition, and we use the notation which
we have introduced earlier.

Roughly speaking, the modification of the definition consists
of 
\begin{enumerate}[(i)]
 \item
	allowing quivers with $\geq 2$ vertices 
	(previously we excluded the case of two vertices),
 \item
	allowing $m_{\alpha}n_{\alpha}\geq 2$ 
	(previously we assumed $m_{\alpha}n_{\alpha}\geq 3$).
\end{enumerate}

We require socle conditions as described in part (3) of the notation, 
as well since  $m_{\alpha}n_{\alpha}=2$
we have to  modify  the zero relations, and exclude
a few degenerate cases. 

\bigskip

\begin{definition}\label{def:virtual} We say that an arrow $\alpha$ of
	$Q$ is virtual if $m_{\alpha}n_{\alpha}=2$. Note that
	this condition is preserved under the permutation $g$, and that
	virtual arrows form $g$-orbits of sizes 1 or 2. 
\end{definition}

\medskip

\begin{assumption}\label{ass} 
For the general 
weighted surface algebra
we assume that the following conditions are
satisfied:
\begin{enumerate}[(1)]
 \item
	$m_{\alpha}n_{\alpha}\geq 2$ for all arrows $\alpha$,
 \item
	$m_{\alpha}n_{\alpha}\geq 3$ for all arrows $\alpha$ such
	that $\ba$ is virtual and  $\ba$ is not a loop,
 \item
	$m_{\alpha}n_{\alpha}\geq 4$ for all arrows $\alpha$ 
	such that $\ba$ is virtual and  $\ba$ is a loop.
\end{enumerate}
\normalfont
Condition (1) is a general assumption, and (2) and (3) are needed to eliminate
	two small algebras, see below. In particular we exclude the
	possibility that
	both arrows starting at a vertex are virtual, and also
	that both arrows ending at a vertex are virtual.
\end{assumption}

The definition of a weighted surface algebra
is now as follows.

\begin{definition} \label{def:2.8}
The algebra $\La = \La(Q, f, m_{\bullet}, c_{\bullet}) = KQ/I$ 
is a weighted surface algebra if
$(Q, f)$ is a triangulation quiver, with $|Q_0| \geq 2$, 
and $I= I(Q, f, m_{\bullet}, c_{\bullet})$ is the ideal of 
$KQ$ generated by:
\begin{enumerate}[(1)]
 \item
	$\alpha f(\alpha) - c_{\ba}A_{\ba}$ for all arrows
	$\alpha$ of $Q$,
 \item
	$\alpha f(\alpha) g(f(\alpha)$ \ for all arrows $\alpha$ of $Q$
	such that $f^2(\alpha)$ is not virtual,
 \item
	$\alpha g(\alpha)f(g(\alpha))$ for all arrows $\alpha$ of $Q$
	such that $f(\alpha)$ is not virtual.
\end{enumerate}
\end{definition}

Note that the ideal is not admissible in general. 
Namely if an arrow $\bar{\alpha}$ (say) is virtual then by (1) 
it lies in the square of the radical. In 
fact, we can see from the relations  that the Gabriel quiver $Q_{\La}$ of
$\La$ is obtained from $Q$ by removing
all virtual arrows. 

\medskip

As long as we do not
have any special conditions on 
other scalars, we can assume that for a virtual arrow $\alpha$, the 
weight $c_{\alpha}$ is equal to $1$, namely we may replace $\alpha$ (and
$g(\alpha)$) by $c_{\alpha}^{-1}\alpha$ (and $c_{\alpha}^{-1}g(\alpha)$), 
the $g$-orbit of $\alpha$ has length two, and this scalar does not occur
anywhere else. In the first part of this paper we will keep $c_{\alpha}$ since
it will clarify proofs. On the other hand, in the last part we will take
$c_{\alpha}=1$ for a virtual arrow $\alpha$ 
since it will simplify the formulae.

\medskip

We recall some elementary consequences of the definition, which we will use
freely throughout.

\begin{lemma}
\label{lem:2.9}
Let $\alpha$ be an arrow in $Q$.
We have in $\Lambda$ the identities:
\begin{enumerate}[(i)]
 \item
  $f^2(\alpha) = g^{n_{\bar{\alpha}} - 1}(\bar{\alpha})$ 
  so that $g(f^2(\alpha) = \ba$.
 \item
  $A_{\bar{\alpha}}  f^2(\alpha) = B_{\bar{\alpha}}$.
 \item
  $\alpha A_{g(\alpha)}  = B_{{\alpha}}$.
 \item
  $c_{{\alpha}} B_{{\alpha}}
   = \alpha  f(\alpha) f^2(\alpha)
   = \bar{\alpha}  f(\bar{\alpha}) f^2(\bar{\alpha})
   = c_{\bar{\alpha}} B_{\bar{\alpha}}$.
 \item
 $A'_{\alpha} f^2(\bar{\alpha}) = A_{g(\alpha)}$.
\end{enumerate}
\end{lemma}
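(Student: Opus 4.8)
The plan is to establish the five identities in Lemma~\ref{lem:2.9} by systematically unwinding the definitions of $n_\alpha$, $B_\alpha$, $A_\alpha$ (and the implicit $A'_\alpha$) in terms of the permutation $g$, together with the two relations $f^2 = g^{n_{\ba}-1}(\ba)$ that must first be pinned down. The key structural fact, which all parts rest on, is the relationship between the permutations $f$ and $g$: since $g(\alpha) = \overline{f(\alpha)}$ and the bar-involution fixes sources, one computes that the $g$-cycle through $\ba$ consists precisely of $\ba, g(\ba), \ldots$, and its last element before returning is $f^2(\alpha)$. I would prove (i) first, purely combinatorially: starting from $f(\ba)$ and applying $g = \overline{f(\cdot)}$ repeatedly, one walks around and the identity $f^3 = \mathrm{id}$ on $3$-cycles (or $f = \mathrm{id}$ on fixed points) forces $g^{n_{\ba}-1}(\ba) = f^2(\alpha)$; applying $g$ once more gives $g^{n_{\ba}}(\ba) = \ba$ by definition of $n_{\ba}$, which is the parenthetical claim $g(f^2(\alpha)) = \ba$. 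This part is a direct consequence of how triangulation quivers are built and does not use the relations in $I$ at all.

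With (i) in hand, parts (ii) and (iii) are immediate concatenation statements about paths along $g$-cycles. For (ii): $A_{\ba} = \ba g(\ba)\cdots g^{n_{\ba}-2}(\ba)$ has length $m_{\ba}n_{\ba}-1$, and by (i) the next arrow in the $g$-cycle, namely $g^{n_{\ba}-1}(\ba)$, equals $f^2(\alpha)$; appending it yields the path of length $m_{\ba}n_{\ba}$ along the $g$-cycle of $\ba$, which is exactly $B_{\ba}$. Part (iii) is the ``other end'' version: $\alpha A_{g(\alpha)}$ prepends $\alpha$ to the path $g(\alpha)g^2(\alpha)\cdots$ of length $m_\alpha n_\alpha - 1$ starting at $g(\alpha)$ — and since $m_\alpha, n_\alpha$ are constant on $g$-cycles, $m_{g(\alpha)} = m_\alpha$ and $n_{g(\alpha)} = n_\alpha$, so this gives the path of length $m_\alpha n_\alpha$ along the $g$-cycle of $\alpha$ starting at $\alpha$, which is $B_\alpha$. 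Both are formal once one is careful that the multiplicity and length are genuinely $g$-cycle invariants, which is built into the notation.

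For (iv), I would combine the relation (1) from Definition~\ref{def:2.8}, $\alpha f(\alpha) = c_{\ba} A_{\ba}$, with part (ii): multiplying on the right by $f^2(\alpha)$ gives $\alpha f(\alpha) f^2(\alpha) = c_{\ba} A_{\ba} f^2(\alpha) = c_{\ba} B_{\ba}$. Doing the same computation with $\ba$ in place of $\alpha$ (note $\overline{\ba} = \alpha$) gives $\ba f(\ba) f^2(\ba) = c_\alpha B_\alpha$. So it remains to see these two paths are equal in $\Lambda$; this is precisely the socle identification $c_\alpha B_\alpha = c_{\ba} B_{\ba}$ that was imposed as condition (2) in the list of requirements on the relations (and is a consequence of relation (1) applied twice, since $\alpha f(\alpha) f^2(\alpha)$ and $\ba f(\ba) f^2(\ba)$ both represent the socle element). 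Part (v) is again a path-concatenation identity in the spirit of (ii)–(iii), once one reads off the definition of $A'_\alpha$ (the ``shifted'' path analogous to $A_\alpha$ but ending so that appending $f^2(\ba)$ lands in the $g$-cycle of $g(\alpha)$); applying (i) to identify $f^2(\ba)$ as the appropriate power of $g$ applied to $\overline{\overline\alpha} = \alpha$ closes it.

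The main obstacle I anticipate is part (i): getting the bookkeeping exactly right for how $f^2$ sits inside a $g$-cycle, since this mixes the two permutations and one has to track sources and targets carefully through the bar-involution, handling the $f$-fixed-point case (loops) alongside the generic $3$-cycle case. Everything downstream — (ii) through (v) — is then essentially formal substitution plus one appeal to the socle relation, so I would spend most of the care on (i) and on making sure the $g$-cycle-invariance of $m_\bullet$, $n_\bullet$ is invoked correctly wherever paths are reindexed. One should also double-check that these identities, which are asserted ``in $\Lambda$'', genuinely hold in the quotient and not merely in $KQ$ — for (ii), (iii), (v) they hold already in $KQ$ as equalities of paths, while (iv) is the one place where passing to $\Lambda$ (i.e.\ using the ideal $I$) is essential.
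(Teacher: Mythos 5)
Your proposal is correct and is essentially the standard verification (the paper gives no proof of this lemma, deferring to Lemma~5.3 of \cite{ESk-WSA}, and what you outline is exactly that argument): part (i) is cleanest via $g(f^2(\alpha)) = \overline{f(f^2(\alpha))} = \overline{f^3(\alpha)} = \ba$, which identifies $f^2(\alpha)$ as the $g$-predecessor of $\ba$, and then (ii), (iii), (v) are concatenation identities already in $KQ$, using that $m_\bullet$, $n_\bullet$ are constant on $g$-cycles. For (iv), do spell out the step you compress into ``relation (1) applied twice'': bracket the single path $\alpha f(\alpha) f^2(\alpha)$ in two ways, getting $(\alpha f(\alpha))f^2(\alpha) = c_{\ba}A_{\ba}f^2(\alpha) = c_{\ba}B_{\ba}$ by (ii) and $\alpha(f(\alpha)f^2(\alpha)) = c_{g(\alpha)}\alpha A_{g(\alpha)} = c_{\alpha}B_{\alpha}$ by (iii) and $c_{g(\alpha)}=c_{\alpha}$ --- applying relation (1) separately to $\alpha$ and to $\ba$ does not by itself link $c_{\alpha}B_{\alpha}$ to $c_{\ba}B_{\ba}$, and the informal ``socle condition (2)'' is a consequence to be derived, not a generator of $I$.
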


This is the same as Lemma 5.3 in \cite{ESk-WSA}.


\begin{definition} \label{def:2.10}
The algebra $B = B(Q, f, m_{\bullet}, c_{\bullet}) = KQ/J$ 
is a biserial weighted triangulation algebra if
$(Q, f)$ is a triangulation quiver, with $|Q_0| \geq 2$, 
and $J= J(Q, f, m_{\bullet}, c_{\bullet})$ is the ideal of 
$KQ$ generated by:
\begin{enumerate}[(1)]
 \item
	$\alpha f(\alpha)$ for all arrows
	$\alpha$ of $Q$,
 \item
	$c_{\alpha}B_{\alpha}  - c_{\ba}B_{\ba}$ for all arrows
	$\alpha$ of $Q$.
\end{enumerate}
\end{definition}

We note that 
$B(Q, f, m_{\bullet}, c_{\bullet}) \cong B(Q, f, m_{\bullet}, \mathds{1})$,
where $\mathds{1}$ is the constant parameter function
taking only value $1$.
We have the following consequence of
\cite[Proposition~5.2]{ESk6}
(see also \cite[Proposition~2.3]{ESk7}).

\begin{proposition}
\label{prop:2.11}
Let $(Q,f)$ be a triangulation quiver,
$m_{\bullet}$ and $c_{\bullet}$
weight and parameter functions of $(Q,f)$,
and
$B = B(Q, f, m_{\bullet}, c_{\bullet})$.
The following statements hold:
\begin{enumerate}[(i)]
 \item
  $B$ is finite-dimensional with 
  $\dim_K B = \sum_{\cO \in \cO(g)}  m_{\cO} n_{\cO}^2$.
 \item
  $B$ is a symmetric sperical biserial algebra.
\end{enumerate}
In particular, $B$ is a tame algebra.
\end{proposition}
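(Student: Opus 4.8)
The plan is to recognise $B = B(Q,f,m_\bullet,c_\bullet)$ as a Brauer graph algebra and then read all three assertions off the structure theory of such algebras, keeping the bookkeeping behind the dimension formula explicit. By the rescaling remarked after Definition~\ref{def:2.10} we may assume $c_\bullet = \mathds{1}$, so $B = KQ/J$ with $J$ generated by the length-two paths $\alpha f(\alpha)$ and the elements $B_\alpha - B_{\ba}$; since $(Q,f)$ is a triangulation quiver we have $f^3 = 1$, and $B$ is then a Brauer graph algebra in the sense recalled in the discussion preceding Proposition~\ref{prop:2.1} (see also \cite{ESk7}). Hence (i), (ii) and tameness are instances of \cite[Proposition~5.2]{ESk6} and \cite[Proposition~2.3]{ESk7}; I sketch the argument I would give.

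\emph{Finite-dimensionality and the dimension formula.} Modulo the biserial relations, a path of $KQ$ survives only if no two consecutive arrows $\delta,\delta'$ satisfy $\delta' = f(\delta)$; since $Q$ is $2$-regular the only alternative is $\delta' = g(\delta)$, so the surviving paths are precisely the initial subwords of the cyclic paths $B_\alpha$, $\alpha \in Q_1$. Using $f^3 = 1$ one checks that $B_\alpha\gamma = 0$ in $B$ for every arrow $\gamma$: the two arrows starting at $s(\alpha) = t(B_\alpha)$ are $\alpha$ and $\ba$; by Lemma~\ref{lem:2.9}(i) the word $B_\alpha\ba$ ends in $f^2(\ba)\,f^3(\ba) = f^2(\ba)\,f(f^2(\ba))$, a biserial relation, while $B_\alpha\alpha = B_{\ba}\alpha$ ends in $f^2(\alpha)\,f(f^2(\alpha))$, again a biserial relation. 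Therefore $R_Q^{\,N+1} \subseteq J$ for $N = \max_\alpha m_\alpha n_\alpha$, so $B$ is finite dimensional, with spanning set the $e_i$ ($i \in Q_0$), the proper initial subwords of the $B_\alpha$ of length $\geq 1$, and one representative of each of the $|Q_0|$ pairs $\{B_\alpha, B_{\ba}\}$. Since $\sum_{\cO \in \cO(g)} n_\cO = |Q_1| = 2|Q_0|$, counting gives $|Q_0| + \sum_{\cO \in \cO(g)} (m_\cO n_\cO - 1)\,n_\cO + |Q_0| = \sum_{\cO \in \cO(g)} m_\cO n_\cO^2$. The substantive point, which is \cite[Proposition~5.2]{ESk6}, is that this spanning set is linearly independent, i.e. the biserial and socle relations have no further consequences; one would establish this by a confluent normal-form (rewriting) argument on the paths of $KQ$.

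\emph{Special biserial and symmetric.} That $B$ is special biserial is immediate from the relations: $Q$ is $2$-regular, and for each arrow $\alpha$ the relation $\alpha f(\alpha) \in J$ leaves $\alpha g(\alpha)$ as the only length-two path through $\alpha$ that is nonzero in $B$ and, dually, $g^{-1}(\alpha)\alpha$ as the only nonzero length-two path ending with $\alpha$ ($J$ being generated by monomial and socle relations). For symmetry one exhibits the symmetrising form $\vf\colon B \to K$ sending each socle generator (the common value of $B_\alpha$ and $B_{\ba}$, which spans the one-dimensional space $e_{s(\alpha)}\soc(B)e_{s(\alpha)}$) to $1$ and every other basis element to $0$, and checks that $\vf$ is well defined and that $\ker\vf$ contains no nonzero left or right ideal, so that $(x,y) \mapsto \vf(xy)$ is a nondegenerate, symmetric, associative bilinear form. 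Finally, a special biserial algebra is tame by Proposition~\ref{prop:2.1}.

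The one real obstacle is establishing that the displayed relations are a \emph{complete} set of relations --- equivalently, pinning down the exact value of $\dim_K B$ --- since once $B$ has been identified as a Brauer graph algebra, special biseriality, the symmetrising form, and tameness are all routine. That completeness is precisely what \cite[Proposition~5.2]{ESk6} supplies.
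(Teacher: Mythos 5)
Your proposal is correct and follows essentially the same route as the paper, which gives no independent proof but derives the statement directly from \cite[Proposition~5.2]{ESk6} and \cite[Proposition~2.3]{ESk7}; your sketch of the spanning set, the count $|Q_0| + \sum_{\cO}(m_\cO n_\cO - 1)n_\cO + |Q_0| = \sum_{\cO} m_\cO n_\cO^2$, the symmetrising form, and the appeal to Proposition~\ref{prop:2.1} for tameness all match the intended argument. You correctly identify that the only substantive point is the linear independence of the spanning set, and that this is exactly what the cited references supply.
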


Let $T$ be a triangulation of a surface $S$,
$\vec{T}$ an orientation of triangles in $T$,
$(Q(S,\vec{T}),f)$ the associated triangulation quiver,
and
$m_{\bullet}$ and $c_{\bullet}$
weight and parameter functions of $(Q(S,\vec{T}),f)$.
Then 
$\Lambda(S, \vec{T}, m_{\bullet}, c_{\bullet}) 
  = \Lambda(Q(S,\vec{T}), f, m_{\bullet}, c_{\bullet})$ 
is said to be a \emph{weighted surface algebra},
and
$B(S, \vec{T}, m_{\bullet}, c_{\bullet}) 
  = B(Q(S,\vec{T}), f, m_{\bullet}, c_{\bullet})$ 
a \emph{biserial weighted surface algebra}.

\section{Exceptional weighted surface algebras}\label{sec:exceptions}

In this section we
present several families of
weighted surface algebras,
which have exceptional properties, and explain also the assumptions
\ref{ass}, and show why some algebras must be excluded.
For the examples we will use surface triangulations, but only to motivate
the names for the algebras. The background is explained in \cite{ESk-WSA} and
we will not repeat this.

\begin{example}
\label{ex:3.1}
We introduce \emph{disc algebras}.
Let $T$ be the self-folded triangulation
\[
\begin{tikzpicture}[scale=1,auto]
\coordinate (c) at (0,0);
\coordinate (a) at (1,0);
\coordinate (b) at (-1,0);
\draw (c) to node {$1$} (a);
\draw (b) arc (-180:180:1) node [left] {$2$};
\node (a) at (1,0) {$\bullet$};
\node (c) at (0,0) {$\bullet$};
\end{tikzpicture}
\]
of the unit disc $D = D^2$ in $\bR^2$,
and $\vec{T}$ the canonical orientation
of the edges of $T$.
Then the associated triangulation quiver
$(Q,f) = (Q(D,\vec{T}),f)$ is the quiver
\[
  \xymatrix{
    1
    \ar@(ld,ul)^{\alpha}[]
    \ar@<.5ex>[r]^{\beta}
    & 2
    \ar@<.5ex>[l]^{\gamma}
    \ar@(ru,dr)^{\sigma}[]
  }
\]
with $f$-orbits
$(\alpha\ \beta\ \gamma)$
and
$(\sigma)$.
Then the $g$-orbits are
$\cO(\alpha) = (\alpha)$
and
$\cO(\beta) = (\beta\ \sigma\ \gamma)$.
Let $c_{\bullet} : \cO(g) \to K^*$
be a parameter function,
and let $a = c_{\cO(\alpha)}$
and  $b = c_{\cO(\beta)}$.
We consider special cases of weight functions
$m_{\bullet} : \cO(g) \to \bN^*$, the first special
case gives the disc algebras, and the second
 needs to be excluded.

\smallskip

	(1) \  
Assume  that $m_{\cO(\alpha)} = 3$
and $m_{\cO(\beta)} = 1$.
Then the associated weighted surface algebra
$D(a,b) = \Lambda(D,\vec{T},m_{\bullet},c_{\bullet})$
is given by the quiver $Q$ and the relations:
\begin{align*}
 \alpha\beta &= b \beta \sigma ,
 &
 \beta \gamma &= a \alpha^2 ,
 &
 \gamma \alpha &= b \sigma \gamma ,
 &
 \sigma^2 &= b \gamma \beta ,
 &
 \alpha\beta\sigma &= 0,
 &
 \beta\gamma\beta &= 0 ,
\\
 \gamma \alpha^2 &= 0,
 &
 \sigma^2 \gamma &= 0 ,
 &
 \alpha^2 \beta &= 0,
 &
 \beta \sigma^2 &= 0 ,
 &
 \sigma \gamma \alpha &= 0,
 &
 \gamma \beta \gamma &= 0 .
\end{align*}
An algebra $D(a,b)$, with $a,b \in K^*$,
is said to be a \emph{disc algebra}.
We note that the algebra $D(a,b)$
is isomorphic to the algebra $D(ab,1)$.
Indeed, there is an isomorphism of algebras
$\varphi : D(a b, 1) \to D(a,b)$
given by
$\varphi(\alpha) = \alpha$,
$\varphi(\beta) = \beta$,
$\varphi(\gamma) = b \gamma$,
$\varphi(\sigma) = b \sigma$.
For $\lambda \in K^*$,
we set
$D(\lambda) = D(\lambda, 1)$.
A disc algebra $D(\lambda)$
with $\lambda \in K \setminus \{0,1\}$
is said to be a \emph{non-singular disc algebra},
and $D(1)$ the \emph{singular disc algebra}.

\smallskip

(2) \  
Assume that $m_{\cO(\alpha)} = 2$
and $m_{\cO(\beta)} = 1$.
Then the associated weighted surface algebra
$\Lambda = \Lambda(D,\vec{T},m_{\bullet},c_{\bullet})$
is given by the quiver $Q$ and the relations:
\begin{align*}
 \gamma \alpha &= b \sigma \gamma ,
 &
 \alpha\beta &= b \beta \sigma ,
 &
 \beta \sigma^2 &= 0 ,
 &
 \gamma \alpha^2 &= 0,
 &
 \alpha\beta\sigma &= 0,
\\
\beta \gamma &= a \alpha ,
 &
 \sigma^2 &= b \gamma \beta ,
 &
 \sigma^2 \gamma &= 0 ,
 &
 \alpha^2 \beta &= 0,
 &
 \sigma \gamma \alpha &= 0.
\end{align*}
Then $\Lambda$ is isomorphic to the algebra $A$
given by the quiver
\[
  \xymatrix{
    1
    \ar@<.5ex>[r]^{\beta}
    & 2
    \ar@<.5ex>[l]^{\gamma}
    \ar@(ru,dr)^{\sigma}[]
  }
\]
and the relations:
$\beta \sigma = 0$,
$\sigma \gamma = 0$,
$\sigma^2 = \gamma \beta$.
Hence $\Lambda$ is a $7$-dimensional symmetric
representation-finite algebra of Dynkin type $\bA_4$.
Observe that we have
$m_{\beta} n_{\beta} = 3$
and
$m_{\bar{\beta}} n_{\bar{\beta}} = m_{\alpha} n_{\alpha} = 2$.
Therefore, we do not consider such an algebra $\Lambda$,
by the general assumption \ref{ass}. 
\end{example}

\begin{example}
\label{ex:3.2} 
We recall the \emph{tetrahedral algebras} introduced in
\cite[Example~6.1]{ESk-WSA}.
Let $T$ be the tetrahedron
\[
\begin{tikzpicture}
[scale=1]
\node (A) at (-2,0) {$\bullet$};
\node (B) at (2,0) {$\bullet$};
\node (C) at (0,.85) {$\bullet$};
\node (D) at (0,2.8) {$\bullet$};
\coordinate (A) at (-2,0) ;
\coordinate (B) at (2,0) ;
\coordinate (C) at (0,.85) ;
\coordinate (D) at (0,2.8) ;
\draw[thick]
(A) edge node [left] {3} (D)
(D) edge node [right] {6} (B)
(D) edge node [right] {2} (C)
(A) edge node [above] {5} (C)
(C) edge node [above] {4} (B)
(A) edge node [below] {1} (B) ;
\end{tikzpicture}
\]
with the coherent orientation $\vec{T}$ of triangles:
$(1\ 5\ 4)$,
$(2\ 5\ 3)$,
$(2\ 6\ 4)$,
$(1\ 6\ 3)$.
Then the associated triangulation quiver
$(Q,f) = (Q(T,\vec{T}),f)$ is of the form
\[
\begin{tikzpicture}
[scale=.85]
\node (1) at (0,1.72) {$1$};
\node (2) at (0,-1.72) {$2$};
\node (3) at (2,-1.72) {$3$};
\node (4) at (-1,0) {$4$};
\node (5) at (1,0) {$5$};
\node (6) at (-2,-1.72) {$6$};
\coordinate (1) at (0,1.72);
\coordinate (2) at (0,-1.72);
\coordinate (3) at (2,-1.72);
\coordinate (4) at (-1,0);
\coordinate (5) at (1,0);
\coordinate (6) at (-2,-1.72);
\fill[fill=gray!20]
      (0,2.22cm) arc [start angle=90, delta angle=-360, x radius=4cm, y radius=2.8cm]
 --  (0,1.72cm) arc [start angle=90, delta angle=360, radius=2.3cm]
     -- cycle;
\fill[fill=gray!20]
    (1) -- (4) -- (5) -- cycle;
\fill[fill=gray!20]
    (2) -- (4) -- (6) -- cycle;
\fill[fill=gray!20]
    (2) -- (3) -- (5) -- cycle;

\node (1) at (0,1.72) {$1$};
\node (2) at (0,-1.72) {$2$};
\node (3) at (2,-1.72) {$3$};
\node (4) at (-1,0) {$4$};
\node (5) at (1,0) {$5$};
\node (6) at (-2,-1.72) {$6$};
\draw[->,thick] (-.23,1.7) arc [start angle=96, delta angle=108, radius=2.3cm] node[midway,right] {$\nu$};
\draw[->,thick] (-1.87,-1.93) arc [start angle=-144, delta angle=108, radius=2.3cm] node[midway,above] {$\mu$};
\draw[->,thick] (2.11,-1.52) arc [start angle=-24, delta angle=108, radius=2.3cm] node[midway,left] {$\sigma$};
\draw[->,thick]
 (1) edge node [right] {$\delta$} (5)
(2) edge node [left] {$\varepsilon$} (5)
(2) edge node [below] {$\varrho$} (6)
(3) edge node [below] {$\beta$} (2)
 (4) edge node [left] {$\gamma$} (1)
(4) edge node [right] {$\alpha$} (2)
(5) edge node [right] {$\xi$} (3)
 (5) edge node [below] {$\eta$} (4)
(6) edge node [left] {$\omega$} (4)
;
\end{tikzpicture}
\]
where $f$ is the permutation of arrows of order $3$ described
by the shaded triangles.
Then $g$ is the permutation of arrows of $Q$ of order $3$
described by the four white triangles.
Let
$m_{\bullet} : \cO(g) \to \bN^*$
be the trivial weight function
and
$c_{\bullet} : \cO(g) \to K^*$
an arbitrary parameter function.
It was shown in \cite[Section~6]{ESk-WSA}
that the  weighted surface algebra
$\Lambda(T,\vec{T},m_{\bullet},c_{\bullet})$
is isomorphic to the weighted triangulation algebra
$\Lambda(\lambda) = \Lambda(Q,f,m_{\bullet},c_{\bullet}^{\lambda})$,
with $\lambda \in K^*$,
and
the parameter function
$c_{\bullet}^{\lambda} : \cO(g) \to K^*$
given by
$c_{\cO(\alpha)}^{\lambda} = \lambda$,
$c_{\cO(\beta)}^{\lambda}  = 1$,
$c_{\cO(\gamma)}^{\lambda}  = 1$,
$c_{\cO(\sigma)}^{\lambda}  = 1$.
Observe that
$\Lambda(\lambda)$
is given by the quiver $Q$ and
the relations:
\begin{align*}
  \gamma \delta &= \lambda \alpha \varepsilon ,
  &
  \delta \eta &= \nu \omega ,
  &
  \eta \gamma &= \xi \sigma ,
  &
  \alpha \varrho &= \gamma \nu ,
  &
  \varrho \omega &= \lambda \varepsilon \eta ,
  &
  \omega \alpha &= \mu \beta ,
  \\
  \beta \varepsilon &= \sigma \delta ,
  &
  \varepsilon \xi &= \varrho \mu ,
  &
  \xi \beta &= \lambda \eta \alpha ,
  &
  \sigma \nu &= \beta \varrho ,
  &
  \nu \mu &= \delta \xi ,
  &
\mu \sigma &= \omega \gamma ,
  \\
  && \!\!\!\!\!\!\!\!\!\!\!\!\!\!\!\!\!\!\!\!\!
\theta f(\theta) g\big(f(\theta)\big) &= 0 &
\!\!\!\!\mbox{ and } \ && \!\!\!\!\!\!\!\!\!\!\!\!\!\!\!\!\!\!\!\!
\theta g(\theta) f\big(g(\theta)\big) &= 0 &
 \!\!\!\!\!\!\!\!\!\!\!\! \mbox{ for all arrows } \!\!\!\!\!\!\!\!\!\!\!\!\!\!\!\!\! &&
 \!\!\!\!\!\theta \in Q_1 .&
\end{align*}
Moreover, by  \cite[Lemma~6.2]{ESk-WSA},
the algebra $\Lambda(\lambda)$
is isomorphic to the trivial extension algebra
$\T(B(\lambda))$
of the algebra $B(\lambda)$
given by the quiver
\[
  \xymatrix@C=4.5pc@R=2pc{
    1 &
    3 \ar[l]_{\sigma} \ar[ld]^(.2){\beta} &
    5 \ar[l]_{\xi} \ar[ld]^(.2){\eta}
    \\
    2 &
    4 \ar[l]^{\alpha} \ar[lu]_(.2){\gamma} &
    6 \ar[l]^{\omega} \ar[lu]_(.2){\mu}
  }
\]
and the relations:
\begin{align*}
 \eta \gamma &= \xi \sigma,
 &
 \xi \beta &= \lambda \eta \alpha,
 &
 \mu \sigma &= \omega \gamma,
 &
 \omega \alpha &= \mu \beta.
\end{align*}
We note that, for $\lambda \in K \setminus \{ 0,1\}$,
$B(\lambda)$ is a tubular algebra of type $(2,2,2,2)$
in the sense of \cite{R},
and hence it is an algebra of polynomial growth.
On the other hand,
$B(1)$ is the tame minimal non-polynomial growth
algebra (30)
from \cite{NoS}.
Following \cite{ESk-WSA},
an algebra $\Lambda(\lambda)$
with $\lambda \in K^*$
is said to be a \emph{tetrahedral algebra}.
Further,
an algebra $\Lambda(\lambda)$ with
with $\lambda \in K \setminus \{ 0,1\}$
is called to be a \emph{non-singular tetrahedral algebra},
while the algebra $\Lambda(1)$
the \emph{singular tetrahedral algebra}.

There is a natural connection between
the disc algebra $D(\lambda)$ and the
tetrahedral algebra $\Lambda(\lambda)$,
for any $\lambda \in K^*$.
Namely, the  cyclic group $H$
of order $3$ acts on  $\Lambda(\lambda)$  by  cyclic
rotation of vertices and arrows of the quiver
$Q = Q(T,\vec{T})$:
\begin{align*}
 (1\ 6\ 3),
  &&
 (4\ 2\ 5),
  &&
 (\alpha\ \varepsilon\ \eta),
  &&
  (\beta\ \delta\ \omega),
  &&
  (\gamma\ \varrho\ \xi),
  &&
  (\sigma\ \nu\ \mu) .
\end{align*}
Then $D(\lambda)$ is the orbit
algebra $\Lambda(\lambda)/H$.
\end{example}

\begin{example}
	\label{ex:3.3} 
	We introduce {\it triangle algebras}, and also
	describe an algebra which we must exclude.
	Let $T$ be the triangulation
\[
\begin{tikzpicture}[auto]
\coordinate (a) at (0,1.6);
\coordinate (b) at (-1,0);
\coordinate (c) at (1,0);
\draw (a) to node {$2$} (c)
(c) to node {$3$} (b);
\draw (b) to node {$1$} (a);
\node (a) at (0,1.6) {$\bullet$};
\node (b) at (-1,0) {$\bullet$};
\node (c) at (1,0) {$\bullet$};
\end{tikzpicture}
\]
of the sphere $S^2$ in $\bR^3$ given by two
unfolded triangles and
$\vec{T}$ the coherent orientation
$(1\ 2\ 3)$
and
$(2\ 1\ 3)$
of the triangles in $T$.
Then the associated triangulation quiver
$(Q,f) = (Q(S^2,\vec{T}),f)$ is of the form
\[
  \xymatrix@R=3.pc@C=1.8pc{
    1
    \ar@<.35ex>[rr]^{\alpha_1}
    \ar@<.35ex>[rd]^{\beta_3}
    && 2
   \ar@<.35ex>[ll]^{\beta_1}
    \ar@<.35ex>[ld]^{\alpha_2}
    \\
    & 3
    \ar@<.35ex>[lu]^{\alpha_3}
    \ar@<.35ex>[ru]^{\beta_2}
  }
\]
with  $f$-orbits
$(\alpha_1\ \alpha_2\ \alpha_3)$ and $(\beta_1\ \beta_3\ \beta_2)$.
Then $\cO(g)$ consists of the three $g$-orbits
$\cO(\alpha_1) = (\alpha_1\ \beta_1)$,
$\cO(\alpha_2) = (\alpha_2\ \beta_2)$,
$\cO(\alpha_3) = (\alpha_3\ \beta_3)$.
Let $m_{\bullet} : \cO(g) \to \bN^*$
be the weight function
with
$m_{\cO(\alpha_1)} = 2$,
$m_{\cO(\alpha_2)} = 2$,
and
$m_{\cO(\alpha_3)} = 1$.
Moreover,
let $c_{\bullet} : \cO(g) \to K^*$
be an arbitrary parameter function
and
$c_1 = c_{\cO(\alpha_1)}$,
$c_2 = c_{\cO(\alpha_2)}$,
$c_3 = c_{\cO(\alpha_3)}$.
Then the associated weighted surface algebra
$T(c_1,c_2,c_3) = \Lambda(S^2,\vec{T},m_{\bullet},c_{\bullet})$
is given by the quiver $Q$ and the relations:
\begin{align*}
 \alpha_1 \alpha_2 &= c_3 \beta_3 ,
 &
 \alpha_2 \alpha_3 &= c_1\beta_1 \alpha_1 \beta_1 ,
 &
 \alpha_3 \alpha_1 &= c_2 \beta_2 \alpha_2 \beta_2 ,
\\
 \beta_2 \beta_1 &= c_3 \alpha_3 ,
 &
 \beta_1 \beta_3 &= c_2 \alpha_2 \beta_2 \alpha_2 ,
 &
 \beta_3 \beta_2 &= c_1 \alpha_1 \beta_1 \alpha_1 ,
\\
 \alpha_2 \alpha_3 \beta_3 &= 0,
 &
 \alpha_3 \alpha_1 \beta_1 &= 0,
 &
 \beta_1 \beta_3 \alpha_3 &= 0,
 &
 \beta_3 \beta_2 \alpha_2 &= 0,
\\
\alpha_1 \beta_1 \beta_3 &= 0,
 &
 \alpha_3 \beta_3 \beta_2 &= 0,
 &
 \beta_2 \alpha_2 \alpha_3 &= 0,
 &
 \beta_3 \alpha_3 \alpha_1 &= 0.
\end{align*}
An algebra
$T(c_1,c_2,c_3)$, with
$c_1,c_2,c_3 \in K^*$,
is said to be a \emph{triangle algebra}.
We note that the algebra
$T(c_1,c_2,c_3)$
is isomorphic to the algebra
$T(c_1 c_2 c_3^2,1,1)$.
Indeed, there is an isomorphism of algebras
$\varphi : T(c_1 c_2 c_3^2,1,1) \to T(c_1,c_2,c_3)$
given by
\begin{align*}
 \varphi(\alpha_1) &= (c_2 c_3)^{-\tfrac{1}{4}} \alpha_1 , &
 \varphi(\alpha_2) &= (c_2 c_3)^{\tfrac{1}{4}} \alpha_ 2, &
 \varphi(\alpha_3) &= c_3 \alpha_ 3, \\
 \varphi(\beta_1) &= (c_2 c_3)^{-\tfrac{1}{4}} \beta_1 , &
 \varphi(\beta_2) &= (c_2 c_3)^{\tfrac{1}{4}} \beta_ 2, &
 \varphi(\beta_3) &= c_3 \beta_ 3.
\end{align*}
For $\lambda \in K^*$,
we set
$T(\lambda) = T(\lambda, 1, 1)$.
A triangle algebra $T(\lambda)$
with $\lambda \in K \setminus \{0,1\}$
is said to be a \emph{non-singular triangle algebra},
and $T(1)$ the \emph{singular triangle algebra}.

The triangle algebra $T(\lambda)$
is isomorphic to the algebra $T(\lambda)^{0}$
given by the Gabriel quiver $Q_{T(\lambda)}$
\[
  \xymatrix{
    1
    \ar@<.5ex>[r]^{\alpha_1}
    & 2
    \ar@<.5ex>[l]^{\beta_1}
 \ar@<.5ex>[r]^{\alpha_2}
    & 3
    \ar@<.5ex>[l]^{\beta_2}
  } 
\]
of $T(\lambda)$
and the induced relations:
\begin{align*}
 \alpha_2 \beta_2 \beta_1  &= \lambda \beta_1 \alpha_1 \beta_1 ,
 &
 \alpha_2 \beta_2 \beta_1 \alpha_1 \alpha_2  &= 0 ,
 &
 \alpha_1 \beta_1 \alpha_1 \alpha_2  &= 0 ,
 \\
 \beta_2 \beta_1 \alpha_1  &= \beta_2 \alpha_2 \beta_2 ,
 &
 \beta_2 \beta_1 \alpha_1 \beta_1  &= 0 ,
 &
 \beta_2 \beta_1 \alpha_1 \alpha_2 \beta_1 &= 0 ,
 \\
 \beta_1 \alpha_1 \alpha_2  &=  \alpha_2 \beta_2 \alpha_2  ,
 &
 \beta_1 \alpha_1 \alpha_2 \beta_2 \beta_1 &= 0 ,
&
 \beta_2 \alpha_2 \beta_2 \beta_1 &= 0 ,
 \\
 \alpha_1 \alpha_2 \beta_2  &= \lambda \alpha_1 \beta_1 \alpha_1 ,
 &
 \alpha_1 \alpha_2 \beta_2 \alpha_2  &= 0 ,
&
 \alpha_1 \alpha_2 \beta_2 \beta_1 \alpha_1 &= 0 .
\end{align*}

We also note that $T(1)^0$
is not a symmetric (even self-injective) algebra.
Indeed, if $\lambda = 1$,
then
$\beta_1 \alpha_1 - \beta_2 \alpha_2$
and
$\beta_1 \alpha_1 \beta_1 \alpha_1 = \beta_2 \alpha_2 \beta_2 \alpha_2$
are independent elements of the indecomposable
projective module $P_2$ at the vertex $2$,
which are annihilated by the radical of $T(\lambda)^0$,
and hence are in the socle of $P_2$.
Therefore,  $T(1) \cong T(1)^0$ is excluded here.

\smallskip

(2) \ 
It follows from our general assumption that if
$m_{\alpha_3} n_{\alpha_3} = 2$
then
$m_{\alpha_1} n_{\alpha_1} \geq 3$
and
$m_{\alpha_2} n_{\alpha_2} \geq 3$,
and hence
$m_{\alpha_1} n_{\alpha_1} \geq 4$
and
$m_{\alpha_2} n_{\alpha_2} \geq 4$.
The reason for such restriction is as follows:
if we would allow that two  or three of the numbers
$m_{\alpha_1} n_{\alpha_1}$,
$m_{\alpha_2} n_{\alpha_2}$,
$m_{\alpha_3} n_{\alpha_3}$,
are equal to $2$ then the associated
triangulation algebra
$\Lambda(Q,f,m_{\bullet},c_{\bullet})$
would be infinite dimensional.
\end{example}

\begin{example}\label{ex:3.4} The following example will give 
another construction 
of some triangle algebras, as well it is related to algebras 
of quaternion type in \cite{E3}.
Consider the triangulation $T$
\[
\begin{tikzpicture}[scale=1,auto]
\coordinate (c) at (0,0);
\coordinate (a) at (1,0);
\coordinate (d) at (2,0);
\coordinate (b) at (-1,0);
\draw (c) to node {$1$} (a);
\draw (a) to node {$3$} (d);
\draw (b) arc (-180:180:1) node [left] {$2$};
\node (a) at (1,0) {$\bullet$};
\node (c) at (0,0) {$\bullet$};
\node (d) at (2,0) {$\bullet$};
\end{tikzpicture}
\]
of the sphere $S^2$ in $\bR^3$
given by two self-folded triangles,
and the canonical orientation $\vec{T}$
of triangles of $T$.
Then the associated triangulation quiver
$(Q,f) = (Q(S^2,\vec{T}),f)$ is the quiver
\[
  \xymatrix{
    1
    \ar@(ld,ul)^{\alpha}[]
    \ar@<.5ex>[r]^{\beta}
    & 2
    \ar@<.5ex>[l]^{\gamma}
    \ar@<.5ex>[r]^{\sigma}
    & 3
    \ar@<.5ex>[l]^{\delta}
    \ar@(ru,dr)^{\eta}[]
  }
\]
with $f$-orbits
$(\alpha\ \beta\ \gamma)$
and
$(\eta\ \delta\ \sigma)$.
Then $\cO(g)$ consists of the $g$-orbits:
$\cO(\alpha) = (\alpha)$,
$\cO(\beta) = (\beta\ \sigma\ \delta\ \gamma)$,
$\cO(\eta) = ( \eta)$.
Let
$m_{\bullet} : \cO(g) \to \bN^*$
be the weight function with
$m_{\cO(\alpha)} = 2$,
$m_{\cO(\beta)} = 1$
and
$m_{\cO(\eta)} = 2$.
Moreover, let
$c_{\bullet} : \cO(g) \to K^*$
be an arbitrary parameter function.
Write 
$a = c_{\cO(\alpha)}$,
$b = c_{\cO(\beta)}$,
$c = c_{\cO(\eta)}$.
Then the associated weighted surface algebra
$\Sigma(a,b,c) = \Lambda(S^2,\vec{T},m_{\bullet},c_{\bullet})$
is given by the quiver $Q$ and the relations:
\begin{align*}
  \alpha \beta &= b \beta \sigma \delta ,&
  \beta \gamma &= a \alpha ,&
  \gamma \alpha &= b \sigma \delta \gamma ,
 &
 \eta \delta &= b \delta \gamma \beta ,&
  \delta \sigma &= c \eta ,&
  \sigma \eta &= b \gamma \beta \sigma,
 \\
  &&
  \alpha \beta \sigma &= 0 ,&
  \gamma \alpha^2 &= 0 ,&
  \eta \delta \gamma &= 0,&
  \sigma \eta^2 &= 0 ,
\\  &&
  \alpha^2 \beta &= 0 ,
 &
  \beta \sigma \eta &= 0 ,
 &
  \eta^2 \delta &= 0 ,&
  \delta \gamma \alpha &= 0 .
\end{align*}
Observe now that there is an isomorphism of algebras
$\psi : \Sigma(1, b(a c)^{\tfrac{1}{2}},1) \to \Sigma(a,b,c)$
given by
\begin{align*}
 \psi(\alpha) &= (a c)^{\tfrac{1}{2}} \alpha , &
 \psi(\beta) &= a^{-\tfrac{1}{2}} \beta , &
 \psi(\gamma) &= c^{\tfrac{1}{2}} \gamma , &
 \psi(\eta) &= c \eta , &
 \psi(\delta) &= \delta , &
 \psi(\sigma) &= \sigma .
\end{align*}
For $\lambda \in K^*$,
we set
$\Sigma(\lambda) = \Sigma(1, \lambda, 1)$.
We observe that $\Sigma(\lambda)$
is isomorphic to the algebra
$\Sigma(\lambda)^0$
given by the Gabriel quiver $Q_{\Sigma(\lambda)}$
\[
  \xymatrix{
    1
    \ar@<.5ex>[r]^{\beta}
    & 2
    \ar@<.5ex>[l]^{\gamma}
    \ar@<.5ex>[r]^{\sigma}
    & 3
    \ar@<.5ex>[l]^{\delta}
  } 
\]
of $\Sigma(\lambda)$
and the induced relations:
\begin{align*}
  \beta \gamma \beta &= \lambda \beta \sigma \delta ,&
  \gamma \beta \gamma  &= \lambda \sigma \delta \gamma ,
 &
  \delta \sigma \delta &= \lambda \delta \gamma \beta ,&
  \sigma \delta \sigma &= \lambda \gamma \beta \sigma,
 \\
  \beta \gamma  \beta \sigma &= 0 ,&
  \gamma \beta \gamma \beta \gamma &= 0 ,&
  \delta \sigma \delta \gamma &= 0,&
  \sigma \delta \sigma \delta \sigma &= 0 ,
\\
  \beta \gamma \beta \gamma \beta &= 0 ,
 &
  \delta \sigma \delta \sigma \delta &= 0 ,&
  \delta \gamma  \beta \gamma &= 0 .
\end{align*}
We note that for any $\lambda \in K^*$,
there is an isomorphism of algebras
$\theta : T(\lambda)^0 \to \Sigma(-\lambda)^0$
given by
$\theta(\beta) = -\beta$,
$\theta(\gamma) = \gamma$,
$\theta(\sigma) = \sigma$,
$\theta(\delta) = \delta$.
\end{example}

We have the following lemma.

\begin{lemma}
\label{lem:3.5}
For each $\lambda \in K^*$,
the algebras $T(\lambda^{-2})$ and $\Sigma(\lambda)$
are isomorphic.
\end{lemma}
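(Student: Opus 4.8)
The plan is to chain together the two isomorphisms already recorded in the excerpt. From Example~\ref{ex:3.3} we know that every triangle algebra $T(c_1,c_2,c_3)$ is isomorphic to $T(c_1c_2c_3^2,1,1)$, and in particular $T(\mu)^0$ is the explicit string-with-relations presentation of $T(\mu)$ on the Gabriel quiver $Q_{T(\mu)}$. From Example~\ref{ex:3.4} we likewise know that $\Sigma(a,b,c) \cong \Sigma(1,b(ac)^{1/2},1) = \Sigma(b(ac)^{1/2})$, and that $\Sigma(\mu)^0$ is the analogous presentation of $\Sigma(\mu)$ on $Q_{\Sigma(\mu)}$. The crucial bridge is the isomorphism $\theta : T(\lambda)^0 \to \Sigma(-\lambda)^0$ given at the end of Example~\ref{ex:3.4} by $\theta(\beta)=-\beta$, $\theta(\gamma)=\gamma$, $\theta(\sigma)=\sigma$, $\theta(\delta)=\delta$. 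Composing the identifications $T(\mu)\cong T(\mu)^0$ and $\Sigma(\nu)\cong\Sigma(\nu)^0$ with $\theta$ gives $T(\mu)\cong\Sigma(-\mu)$ for every $\mu\in K^*$.

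So the first step is simply to invoke $T(\lambda^{-2})\cong T(\lambda^{-2})^0$ and, via $\theta$ with the parameter taken to be $-\lambda^{-2}$, conclude $T(\lambda^{-2})^0\cong\Sigma(-\lambda^{-2})^0\cong\Sigma(-\lambda^{-2})$. It then remains to show $\Sigma(-\lambda^{-2})\cong\Sigma(\lambda)$. This is exactly the scaling freedom in $\Sigma$: by Example~\ref{ex:3.4}, $\Sigma(\mu)$ depends on $\mu$ only up to the rescalings used to prove $\Sigma(a,b,c)\cong\Sigma(1,b(ac)^{1/2},1)$, so $\Sigma(\mu)\cong\Sigma(\mu')$ whenever $\mu' = \mu t$ for a scalar $t$ realized by a diagonal change of basis on the arrows $\beta,\gamma,\sigma,\delta,\alpha,\eta$. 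The second step, then, is to check that $-\lambda^{-2}$ and $\lambda$ differ by such an admissible scalar; equivalently, to exhibit an explicit algebra isomorphism $\Sigma(-\lambda^{-2})^0\to\Sigma(\lambda)^0$ of the form $\beta\mapsto u\beta$, $\gamma\mapsto v\gamma$, $\sigma\mapsto w\sigma$, $\delta\mapsto z\delta$ with $u,v,w,z\in K^*$, and to solve the resulting multiplicative conditions coming from the four defining relations $\beta\gamma\beta=\mu\,\beta\sigma\delta$, $\gamma\beta\gamma=\mu\,\sigma\delta\gamma$, $\delta\sigma\delta=\mu\,\delta\gamma\beta$, $\sigma\delta\sigma=\mu\,\gamma\beta\sigma$ (the zero relations are automatically preserved by any diagonal substitution). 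Comparing the $\beta\gamma\beta$ and $\beta\sigma\delta$ sides forces $u^2v\cdot\mu' = u w z\cdot\mu$ with $\mu'=-\lambda^{-2}$, $\mu=\lambda$, and the other three relations give the symmetric constraints; one finds a consistent solution after possibly choosing a fourth root, which is where the parameter shrinks from $c_1c_2c_3^2$-type products to a single value.

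The main obstacle is bookkeeping the scalars correctly: one must track how the parameter in $\Sigma$ transforms under diagonal rescaling and confirm that the target value $\lambda$ (rather than, say, $-\lambda$ or $\lambda^{-1}$) is the one that arises from $-\lambda^{-2}$ after the allowed substitutions. Concretely, the exponent $-2$ and the sign must be absorbed by a substitution involving a square root (as in the $\psi$ of Example~\ref{ex:3.4}, where the parameter picks up factors $(ac)^{1/2}$), so the computation naturally produces $\lambda$ from $-\lambda^{-2}$ by setting the scaling factor equal to $-\lambda^{3}$ or a fourth root thereof and checking the four relations are simultaneously satisfied. Once the diagonal isomorphism $\Sigma(-\lambda^{-2})^0\cong\Sigma(\lambda)^0$ is written down, the chain $T(\lambda^{-2})\cong T(\lambda^{-2})^0\xrightarrow{\theta}\Sigma(-\lambda^{-2})^0\cong\Sigma(-\lambda^{-2})\cong\Sigma(\lambda)$ completes the proof. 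I would present the argument by first recalling the three cited isomorphisms, then doing the one short diagonal-scaling computation, and finally composing.
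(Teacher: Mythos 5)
There is a genuine gap, and it sits in the step you defer to a computation: the claim that $\Sigma(-\lambda^{-2})^0\cong\Sigma(\lambda)^0$ via a diagonal substitution $\beta\mapsto u\beta$, $\gamma\mapsto v\gamma$, $\sigma\mapsto w\sigma$, $\delta\mapsto z\delta$. Writing out the constraints you describe, the relations $\beta\gamma\beta=\mu'\beta\sigma\delta$ and $\gamma\beta\gamma=\mu'\sigma\delta\gamma$ in the source both force $uv\,\mu=\mu'\,wz$ in the target $\Sigma(\mu)^0$, while $\delta\sigma\delta=\mu'\delta\gamma\beta$ and $\sigma\delta\sigma=\mu'\gamma\beta\sigma$ both force $wz\,\mu=\mu'\,uv$. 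Multiplying the two conditions gives $\mu^2=\mu'^2$, so a diagonal rescaling can change the parameter of $\Sigma$ only by a sign; no choice of roots of unity or fourth roots produces a solution for $\mu'=-\lambda^{-2}$, $\mu=\lambda$ unless $\lambda^6=1$. Your first step is also unsafe: the map $\theta$ at the end of Example~\ref{ex:3.4} is written on the arrows $\beta,\gamma,\sigma,\delta$ of $\Sigma$, and if one reads it as a map $T(\lambda)^0\to\Sigma(-\lambda)^0$ via the evident arrow correspondence, the same bookkeeping shows it respects the relations only for $\lambda=1$; it is really the sign-flip isomorphism $\Sigma(\lambda)^0\cong\Sigma(-\lambda)^0$. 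So neither link of your chain holds for general $\lambda$.

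What makes the lemma true is precisely the feature your route ignores: the relation patterns of $T$ and $\Sigma$ are not the same. In $T(\nu)^0$ only two of the four commutativity relations carry the scalar $\nu$ (the other two have coefficient $1$), whereas in $\Sigma(\mu)^0$ all four carry $\mu$. A diagonal map $\alpha_1\mapsto u\beta$, $\beta_1\mapsto v\gamma$, $\alpha_2\mapsto w\sigma$, $\beta_2\mapsto z\delta$ from $T(\nu)^0$ to $\Sigma(\mu)^0$ therefore imposes $wz=\nu\,uv\,\mu$ and $uv=wz\,\mu$, whence $\nu\mu^2=1$; this is exactly where the exponent $-2$ comes from, and for $\nu=\lambda^{-2}$, $\mu=\lambda$ one may take $u=\lambda$, $v=w=z=1$. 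This one-line direct isomorphism $\varphi(\alpha_1)=\lambda\beta$, $\varphi(\beta_1)=\gamma$, $\varphi(\alpha_2)=\sigma$, $\varphi(\beta_2)=\delta$ is the paper's proof, and the detour through $\theta$ followed by a $\Sigma$-to-$\Sigma$ rescaling cannot be made to reproduce it.
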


\begin{proof}
Fix $\lambda \in K^*$.
Then there is an isomorphism of algebras
$\varphi : T(\lambda^{-2})^0 \to \Sigma(\lambda)^0$
given by
$\varphi(\alpha_1) = \lambda \beta$,
$\varphi(\beta_1) = \gamma$,
$\varphi(\alpha_2) = \sigma$,
$\varphi(\beta_2) = \delta$.
\end{proof}

In particular, we conclude that
$\Sigma(1)^0 \cong T(1)^0$
and
$\Sigma(-1)^0 \cong T(1)^0$
are not symmetric (even self-injective) algebras.
Hence $\Sigma(1)$ and $\Sigma(-1)$ are excluded here. 

\begin{example}
	\label{ex:3.6} We will now define 
	{\it spherical algebras}.
Consider the following triangulation $T(2)$
of the sphere $S^2$ in $\bR^3$
(compare \cite[Example~7.5]{ESk6})
\[
\begin{tikzpicture}[xscale=2,yscale=1.5,auto]
\coordinate (o) at (0,0);
\coordinate (a) at (0,1);
\coordinate (b) at (-.5,0);
\coordinate (c) at (.5,0);
\coordinate (d) at (0,-1);
\coordinate (l) at (-1,0);
\coordinate (r) at (1,0);
\draw (a) to node {4} (c)
(c) to node {6} (d)
(d) to node {5} (b)
(b) to node {2} (a);
\draw (d) to node {3} (a);
\node (a) at (a) {$\bullet$};
\node (b) at (b) {$\bullet$};
\node (c) at (c) {$\bullet$};
\node (d) at (d) {$\bullet$};
\draw (l) arc (-180:180:1) node [left] {$1$};
\draw (r) node [right] {$1$};
\end{tikzpicture}
\]
and $\amsvectb{T(2)}$ is the coherent orientation
of triangles in $T(2)$:
\[
\mbox{
  (1 2 5), (2 3 5), (3 4 6),
  (4 1 6).
}
\]
Then the associated triangulation quiver
$(Q,f) = (Q(S^2,\amsvectb{T(2)}),f)$ is of the form
\[
\begin{tikzpicture}
[->,scale=.9]
\coordinate (1) at (0,2);
\coordinate (2) at (-1,0);
\coordinate (2u) at (-.925,.15);
\coordinate (2d) at (-.925,-.15);
\coordinate (3) at (0,-2);
\coordinate (4) at (1,0);
\coordinate (4u) at (.925,.15);
\coordinate (4d) at (.925,-.15);
\coordinate (5) at (-3,0);
\coordinate (5u) at (-2.775,.15);
\coordinate (5d) at (-2.775,-.15);
\coordinate (6) at (3,0);
\coordinate (6u) at (2.775,.15);
\coordinate (6d) at (2.775,-.15);
\fill[fill=gray!20] (1) -- (5u) -- (2u) -- cycle;
\fill[fill=gray!20] (1) -- (4u) -- (6u) -- cycle;
\fill[fill=gray!20] (2d) -- (5d) -- (3) -- cycle;
\fill[fill=gray!20] (3) -- (6d) -- (4d) -- cycle;
\node [fill=white,circle,minimum size=4.5] (1) at (0,2) {\ \quad};
\node [fill=white,circle,minimum size=4.5] (2) at (-1,0) {\ \quad};
\node [fill=white,circle,minimum size=4.5] (3) at (0,-2) {\ \quad};
\node [fill=white,circle,minimum size=4.5] (4) at (1,0) {\ \quad};
\node [fill=white,circle,minimum size=4.5] (5) at (-3,0) {\ \quad};
\node [fill=white,circle,minimum size=4.5] (6) at (3,0) {\ \quad};
\node (1) at (0,2) {1};
\node (2) at (-1,0) {2};
\node (2u) at (-1,0.15) {\ \quad};
\node (2d) at (-1,-0.15) {\ \quad};
\node (3) at (0,-2) {3};
\node (4) at (1,0) {4};
\node (4u) at (1,0.15) {\ \quad};
\node (4d) at (1,-0.15) {\ \quad};
\node (5) at (-3,0) {5};
\node (5u) at (-2.775,0.15) {};
\node (5d) at (-2.775,-0.15) {};
\node (6) at (3,0) {6};
\node (6u) at (2.775,0.15) {};
\node (6d) at (2.775,-0.15) {};
\draw[thick,->]
(1) edge node[below right]{\footnotesize$\alpha$} (2)
(2u) edge node[above]{\footnotesize$\xi$} (5u)
(5u) edge node[above left]{\footnotesize$\delta$} (1)
(5d) edge node[below]{\footnotesize$\eta$} (2d)
(2) edge node[above right]{\footnotesize$\beta$} (3)
(3) edge node[below left]{\footnotesize$\nu$} (5d)
(1) edge node[above right]{\footnotesize$\varrho$} (6u)
(6u) edge node[above]{\footnotesize$\varepsilon$} (4u)
(4) edge node[below left]{\footnotesize$\sigma$} (1)
(4d) edge node[below]{\footnotesize$\mu$} (6d)
(6d) edge node[below right]{\footnotesize$\omega$} (3)
(3) edge node[above left]{\footnotesize$\gamma$} (4)
;
\end{tikzpicture}
\]
where
the four shaded triangles denote the
$f$-orbits.
Then $g$ has four orbits
\begin{align*}
 \cO(\alpha)  &= ( \alpha\ \beta\ \gamma\ \sigma), &
 \cO(\varrho)  &= ( \varrho\ \omega\ \nu\ \delta), &
 \cO(\xi)  &= ( \xi\ \eta), &
 \cO(\mu)  &= ( \mu\ \varepsilon ).
\end{align*}
Let
$m_{\bullet} : \cO(g) \to \bN^*$
be the weight function which takes all values $1$. Moreover,  
let
$c_{\bullet} : \cO(g) \to K^*$
be an arbitrary parameter function
and
$a = c_{\cO(\alpha)}$,
$b = c_{\cO(\varrho)}$,
$c = c_{\cO(\xi)}$,
$d = c_{\cO(\mu)}$.
Then the associated weighted surface algebra
$S(a,b,c,d) = \Lambda(S^2,\vec{T(2)},m_{\bullet}^r,c_{\bullet})$
is given by the quiver $Q$ and the relations:
\begin{align*}
  \alpha \xi &= b \varrho \omega \nu ,
  &
  \xi \delta &= a \beta \gamma \sigma ,
  &
  \delta \alpha &= c \eta ,
  &
  \beta \nu &= c \xi ,
  &
  \nu \eta &= a \gamma \sigma \alpha ,
\!\!\!\!\!\!\!\!
\\
  \eta \beta &=  b \delta \varrho \omega ,
  &
  \gamma \mu &=  b \nu \delta \varrho ,
  &
  \mu \omega &= a \sigma \alpha \beta ,
&
  \omega \gamma &= d \varepsilon ,
  &
  \sigma \varrho &= d \mu ,
\!\!\!\!\!\!\!\!
\\
  \varrho \varepsilon &= a \alpha \beta \gamma ,
  &
  \varepsilon \sigma &= b \omega \nu \delta ,
  &
  \alpha \xi \eta &= 0 ,
  &
  \xi \delta \varrho &= 0 ,
  &
  \nu \eta \xi &= 0 ,
  &
  \eta \beta \gamma &= 0 ,
\\
  \gamma \mu \varepsilon &= 0 ,
  &
  \mu \omega \nu &= 0 ,
  &
  \varrho \varepsilon \mu &= 0 ,
  &
  \varepsilon \sigma \alpha &= 0 ,
  &
  \beta \gamma \mu &= 0 ,
  &
  \sigma \alpha \xi &= 0 ,
\\
  \delta \varrho \varepsilon &= 0 ,
  &
  \omega \nu \eta &= 0 ,
  &
  \xi \eta \beta &= 0 ,
  &
  \eta \xi \delta &= 0 ,
  &
  \mu \varepsilon \sigma &= 0 ,
  &
  \varepsilon \mu \omega &= 0 .
\end{align*}
An algebra $S(a,b,c,d)$
with $a,b,c,d \in K^*$
is said to be a \emph{spherical algebra}.
We observe now that the algebra $S(a,b,c,d)$
is isomorphic to the algebra $S(abcd,1,1,1)$.
Indeed, there is an isomorphism of algebras
$\varphi : S(abcd,1,1,1) \to S(a,b,c,d)$
given by
\begin{align*}
 \varphi(\alpha) &= \alpha , &
 \varphi(\beta) &= \beta , &
 \varphi(\gamma) &= \gamma , &
 \varphi(\sigma) &= \sigma ,
\\
 \varphi(\varrho) &= (b c)^{\tfrac{1}{2}} \varrho , &
 \varphi(\omega) &= (b c)^{\tfrac{1}{2}} \omega , &
 \varphi(\nu) &= (b d)^{\tfrac{1}{2}} \nu , &
 \varphi(\delta) &= (b d)^{\tfrac{1}{2}} \delta ,
\\
 \varphi(\xi) &= (b d)^{\tfrac{1}{2}} c \xi , &
 \varphi(\eta) &= (b d)^{\tfrac{1}{2}} c \eta , &
 \varphi(\mu) &= (b c)^{\tfrac{1}{2}} d \mu , &
 \varphi(\varepsilon) &= (b c)^{\tfrac{1}{2}} d \varepsilon .
\end{align*}
For $\lambda \in K^*$,
we set
$S(\lambda) = S(\lambda,1,1,1)$.
A spherical algebra $S(\lambda)$
with $\lambda \in K \setminus \{0,1\}$
is said to be a \emph{non-singular spherical algebra},
and $S(1)$ the \emph{singular spherical algebra}.

We observe now that a spherical algebra $S(\lambda)$
is isomorphic to the algebra $S(\lambda)^0$
given by the Gabriel quiver $Q_{S(\lambda)}$
\[
  \xymatrix@R=3.pc@C=1.2pc{
    &&& 1
    \ar[ld]^{\alpha}
    \ar[rrrd]^{\varrho}
    \\   
    5
    \ar[rrru]^{\delta}
    && 2
    \ar[rd]^{\beta}
    && 4
    \ar[lu]^{\sigma}
    && 6
    \ar[llld]^{\omega}
    \\
   &&& 3
    \ar[lllu]^{\nu}
    \ar[ur]^{\gamma}
  }
\]
of $S(\lambda)$
and the induced relations:
\begin{align*}
  \alpha \beta \nu &= \varrho \omega \nu ,
  &
  \beta \nu \delta &= \lambda \beta \gamma \sigma ,
  &
  \nu \delta \alpha &= \lambda \gamma \sigma \alpha ,
  &
  \delta \alpha \beta &=  \delta \varrho \omega ,
\\
  \gamma \sigma \varrho &=  \nu \delta \varrho ,
&
  \sigma \varrho \omega &= \lambda \sigma \alpha \beta ,
  &
 \varrho \omega \gamma &= \lambda \alpha \beta \gamma ,
  &
  \omega \gamma \sigma &= \omega \nu \delta ,
\\
  \alpha \beta \nu \delta \alpha &= 0 ,
  &
  \beta \nu \delta \varrho &= 0 ,
  &
  \nu \delta \alpha \beta \nu &= 0 ,
  &
  \delta \alpha \beta \gamma &= 0 ,
\\
 \gamma \sigma \varrho \omega \gamma &= 0 ,
  &
  \sigma \varrho \omega \nu &= 0 ,
  &
  \varrho \omega \gamma \sigma \varrho &= 0 ,
  &
  \omega \gamma \sigma \alpha &= 0 ,
\\
  \beta \gamma \sigma \varrho &= 0 ,
  &
  \sigma \alpha \beta \nu &= 0 ,
  &
  \delta \varrho \omega \gamma &= 0 ,
  &
  \omega \nu \delta \alpha &= 0 ,
\\
  \beta \nu \delta \alpha \beta &= 0 ,
  &
  \delta \alpha \beta \nu \delta &= 0 ,
 &
  \sigma \varrho \omega \gamma \sigma &= 0 ,
  &
  \omega \gamma \sigma \varrho \omega &= 0 .
\end{align*}
Moreover, a minimal set of relations defining
$S(\lambda)^0$
is given by the above eight commutativity relations and the
four zero relations:
\begin{align*}
  \beta \nu \delta \varrho &= 0 ,
  &
  \delta \alpha \beta \gamma &= 0 ,
  &
  \sigma \varrho \omega \nu &= 0 ,
  &
  \omega \gamma \sigma \alpha &= 0 .
\end{align*}

We also note that $S(1)^0$,
and hence $S(1)$,
is not a symmetric (even self-injective) algebra.
Indeed, if $\lambda = 1$,
then
$\alpha \beta - \varrho \omega$
and
$\alpha \beta \nu \delta = \delta \omega \gamma \sigma$
are independent elements of the indecomposable
projective module $P_1$ at the vertex $1$,
which are annihilated by the radical of $S(1)^0$,
and hence are in the socle of $P_2$.
Therefore, we exclude  $S(1)$.

For each $\lambda \in K \setminus \{ 0,1 \}$,
we denote by $C(\lambda)$ the $K$-algebra
given by the quiver
\[
  \xymatrix@C=2.5pc@R=1pc{
    & 2 \ar[ld]_{\beta} && 5 \ar[ld]_{\delta} \\
    3 && 1 \ar[lu]_{\alpha} \ar[ld]^{\varrho} \\
    & 6 \ar[lu]^{\omega} && 4 \ar[lu]^{\sigma} 
  }
\]
and the relations:
$\delta \alpha \beta = \delta \varrho \omega$
and
$\delta \varrho \omega = \lambda \sigma \alpha \beta$.
We note that $C(\lambda)$ is the double
one-point extension algebra of the path algebra
$H = K \Delta$ of the quiver $\Delta$
\[
  \xymatrix@C=2.5pc@R=1pc{
    & 2 \ar[ld]_{\beta} \\
    3 && 1 \ar[lu]_{\alpha} \ar[ld]^{\varrho} \\
    & 6 \ar[lu]^{\omega}  
  }
\]
of Euclidean type $\widetilde{\bA}_3$
by two indecomposable modules
\[
  R_{1} :
\vcenter{
  \xymatrix@C=2.5pc@R=1pc{
    & K \ar[ld]_{1} \\
    K && K \ar[lu]_{1} \ar[ld]^{1} \\
    & K \ar[lu]^{1}  
  }
}
\qquad
 \mbox{and}
 \qquad
  R_{\lambda} :
\vcenter{
  \xymatrix@C=2.5pc@R=1pc{
    & K \ar[ld]_{1} \\
    K && K \ar[lu]_{1} \ar[ld]^{1} \\
    & K \ar[lu]^{\lambda}  
  }
}
\]
lying on the mouth of stable tubes of rank $1$ in $\Gamma_H$.
For $\lambda \in K \setminus \{0,1\}$,
the modules $R_1$ and $R_{\lambda}$ are not isomorphic,
and then $C(\lambda)$ is a tubular algebra of type
$(2,2,2,2)$ in the sense of \cite{R},
and consequently it is an algebra of polynomial growth.
On the other hand, $C(1)$ is a tame algebra
of non-polynomial growth
(see \cite{NoS}).
\end{example}

\begin{lemma}
\label{lem:3.7}
For any $\lambda \in K \setminus \{0,1\}$,
the algebras $S(\lambda)$
and $\T(C(\lambda))$ are isomorphic.
\end{lemma}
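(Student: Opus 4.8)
The plan is to exhibit an explicit isomorphism between $S(\lambda)^0$, the Gabriel-quiver presentation of $S(\lambda)$ computed in Example~\ref{ex:3.6}, and the trivial extension $\T(C(\lambda))$. First I would recall that for a finite-dimensional algebra $C$, the trivial extension $\T(C) = C \ltimes D(C)$ has Gabriel quiver obtained from the quiver of $C$ by adjoining, for each relation in a minimal set of relations for $C$, a "backwards" arrow; here $C(\lambda)$ has quiver of Euclidean type $\widetilde{\bA}_3$ doubled (six vertices $1,\dots,6$, arrows $\alpha,\beta,\varrho,\omega$ downward and $\delta,\sigma$ downward as drawn) and the two relations $\delta\alpha\beta = \delta\varrho\omega$ and $\delta\varrho\omega = \lambda\sigma\alpha\beta$. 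One checks that $\T(C(\lambda))$ then has exactly the quiver $Q_{S(\lambda)}$ displayed in Example~\ref{ex:3.6}: the new arrows $\nu,\gamma$ (from $3$) correspond precisely to the two minimal relations, and $|Q_0|=6$ with the correct arrow count. This matches the observation in Example~\ref{ex:3.6} that a minimal set of relations for $S(\lambda)^0$ consists of the eight commutativity relations together with the four zero relations, and that the underlying biserial/dimension data is governed by the $g$-orbits of sizes $4,4,2,2$.

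Next I would write down the candidate map $\varphi : S(\lambda)^0 \to \T(C(\lambda))$ on the arrows of $Q_{S(\lambda)}$: send $\alpha,\beta,\varrho,\omega,\delta,\sigma$ to the corresponding arrows of $C(\lambda) \subseteq \T(C(\lambda))$, and send the two "closing" arrows $\nu$ and $\gamma$ to the elements of $D(C(\lambda))$ dual to suitable paths (the basis elements of the dual corresponding to the two minimal relations), with scalar adjustments to absorb $\lambda$. Concretely, $\nu$ should map to the functional dual to the path $\delta\varrho\omega$ (equivalently $\delta\alpha\beta$) of $C(\lambda)$ composed appropriately around the cycle, and $\gamma$ to the functional dual to a path through vertex~$4$; the precise choice is dictated by requiring that the commutativity relations of $S(\lambda)^0$, e.g. $\alpha\beta\nu = \varrho\omega\nu$ and $\beta\nu\delta = \lambda\beta\gamma\sigma$, become the defining relations of $\T(C(\lambda))$ coming from $\delta\alpha\beta=\delta\varrho\omega$ and $\delta\varrho\omega=\lambda\sigma\alpha\beta$. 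Then I would verify that every relation in the given presentation of $S(\lambda)^0$ is satisfied by the images: the eight commutativity relations reduce to the two relations of $C(\lambda)$ (and their cyclic rotates, which hold automatically in a trivial extension by the symmetry of the bilinear form), and the twelve zero relations hold because the relevant paths, interpreted in $\T(C(\lambda))$, land in degree $\geq 2$ with respect to the $\bZ$-grading $C \oplus D(C)$ and are killed. This shows $\varphi$ is a well-defined algebra homomorphism.

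Finally I would argue $\varphi$ is an isomorphism by a dimension count: $\dim_K \T(C(\lambda)) = 2\dim_K C(\lambda)$, and $\dim_K C(\lambda)$ is easily computed from its quiver with the two relations (the number of paths not killed); this must equal $\dim_K S(\lambda)^0 = \dim_K S(\lambda) = \sum_{\cO \in \cO(g)} m_{\cO} n_{\cO}^2 = 1\cdot 4^2 + 1\cdot 4^2 + 1\cdot 2^2 + 1\cdot 2^2 = 40$, using Proposition~\ref{prop:2.11}(i) applied to the biserial companion (or the analogous count for $\La$). Once the dimensions agree, surjectivity of $\varphi$ (clear, since the arrows and the socle-type elements generate) forces bijectivity. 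The main obstacle I anticipate is pinning down the exact scalars and the exact dual-basis elements to which $\nu$ and $\gamma$ must be sent so that all four zero relations and all eight commutativity relations check out simultaneously; the symmetry of $\T(C(\lambda))$ (so that $\T(C(\lambda))$ is symmetric, matching that $S(\lambda)$ is symmetric for $\lambda \neq 1$ by Theorem~\ref{th:main1}) helps, because it lets one deduce the "rotated" relations from a single one, but the bookkeeping around the two length-$4$ $g$-cycles still needs care.
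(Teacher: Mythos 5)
Your plan is correct in substance but takes a genuinely different route from the paper. The paper identifies $\T(C(\lambda))$ with the orbit algebra $\widehat{C(\lambda)}/(\nu_{\widehat{C(\lambda)}})$ of the repetitive category, exhibits a full convex subcategory of $\widehat{C(\lambda)}$ spanning a fundamental domain plus one extra copy, and reads off the relations of $S(\lambda)^0$ from the relations of that subcategory; no dual-basis bookkeeping is needed, since the repetitive category turns the socle pairing into ordinary path composition. You instead work directly in $C(\lambda)\ltimes D(C(\lambda))$: the new arrows $\nu,\gamma$ correspond to the duals of $\delta\alpha\beta$ and $\sigma\alpha\beta$, which span $\soc_{C(\lambda)^e}(C(\lambda))$ (the hypothesis $\lambda\neq 1$ enters exactly here, in showing no combination $a\,\alpha\beta+b\,\varrho\omega$ is a socle element, so the quiver of the trivial extension is the right one), and your dimension count $\dim_K\T(C(\lambda))=2\cdot 20=40=\sum_{\cO}m_{\cO}n_{\cO}^2$ is correct. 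Two small cautions. First, the paper's displayed relation $\delta\varrho\omega=\lambda\sigma\alpha\beta$ for $C(\lambda)$ is a typo (the sources differ); the intended relation is $\sigma\varrho\omega=\lambda\sigma\alpha\beta$, matching the one-point extension by $R_\lambda$, and your identification of $\nu$ with the relation at vertex $5$ and $\gamma$ with the one at vertex $4$ should be read accordingly. Second, your claim that the twelve zero relations of $S(\lambda)^0$ die because their images lie in degree $\geq 2$ of the grading $C\oplus D(C)$ is only accurate for those containing two occurrences of $\nu$ or $\gamma$; relations such as $\beta\nu\delta\varrho=0$ or $\delta\alpha\beta\gamma=0$ have images in degree $1$ and vanish instead because the relevant spaces $e_jC(\lambda)e_i$ are zero (there are no paths back into a source), so the functional $c\mapsto(\delta\alpha\beta)^*(u c v)$ is identically zero. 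With that correction the verification goes through, and the argument you outline is a legitimate alternative to the paper's repetitive-category computation; the trade-off is that your approach is more self-contained (no appeal to the Hughes--Wasch\-b\"usch description of $\T(C)$ as an orbit algebra) but requires more careful hands-on checking of the socle pairing, which the paper's route avoids.
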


\begin{proof}
By general theory (see \cite{Sk2}),
the trivial extension algebra $\T(C(\lambda))$
is isomorphic to the orbit algebra
$\widehat{C(\lambda)} / (\nu_{\widehat{C(\lambda)}})$
of the repetitive category $\widehat{C(\lambda)}$
of $C(\lambda)$ with respect to the infinite
cyclic group $(\nu_{\widehat{C(\lambda)}})$
generated by the Nakayama automorphism
$\nu_{\widehat{C(\lambda)}}$ of $\widehat{C(\lambda)}$.
One checks directly  that $\widehat{C(\lambda)}$
contains the full convex subcategory
given by the quiver
\[
  \xymatrix@C=2.5pc@R=1pc{
    & 2 \ar[ld]_{\beta} && 5 \ar[ld]_{\delta} 
      && 2' \ar[ld]_{\beta'} && 5' \ar[ld]_{\delta'} \\
    3 && 1 \ar[lu]_{\alpha} \ar[ld]^{\varrho} 
&&  3' \ar[lu]_{\nu} \ar[ld]^{\gamma} 
      && 1' \ar[lu]_{\alpha'} \ar[ld]^{\varrho'} \\
    & 6 \ar[lu]^{\omega} && 4 \ar[lu]^{\sigma} 
      && 6' \ar[lu]^{\omega'} && 4' \ar[lu]^{\sigma'} 
  }
\]
and the relations:
\begin{align*}
  \delta \alpha \beta &= \delta \varrho \omega ,
  &
  \sigma \varrho \omega &= \lambda \sigma \alpha \beta ,
  &
  \nu \delta \alpha &= \lambda \gamma \sigma \alpha ,
  &
  \gamma \sigma \varrho &= \nu \delta \varrho ,
 \\
  \beta' \nu \delta &= \lambda \beta' \gamma \sigma ,
  &
  \omega' \gamma \delta &= \omega' \nu \delta ,
  &
  \alpha' \beta' \nu &= \varrho' \omega' \nu ,
 &
  \varrho' \omega' \gamma &= \lambda \alpha' \beta' \gamma ,
 \\
  \beta' \nu \delta \varrho &= 0 ,
  &
  \omega' \gamma \delta \alpha &= 0 ,
  &
  \delta' \alpha' \beta' \gamma &= 0 ,
  &
  \sigma' \varrho' \omega' \nu &= 0 ,
\end{align*}
where
$\nu_{\widehat{C(\lambda)}}(i) = i'$ for any vertex $i \in \{1,2,3,4,5,6\}$
and
$\nu_{\widehat{C(\lambda)}}(\theta) = \theta'$ for any arrow
$\theta \in \{\alpha,\beta,\omega,\varrho,\delta,\sigma\}$.
We conclude that $\T(C(\lambda))$ is isomorphic
to the algebra
$S(\lambda)^{0}$,
and hence to the spherical algebra
$S(\lambda)$.
\end{proof}

We also note that
there is a natural action of the cyclic group $G$
of order $2$ on $S(\lambda)^{0}$
given by the cyclic rotation of vertices and arrows
of the quiver
$Q_{S(\lambda)}$:
\begin{align*}
 (1\ 3),
  &&
 (2\ 4),
  &&
 (5\ 6),
  &&
 (\alpha\ \gamma),
  &&
  (\beta\ \sigma),
  &&
  (\varrho\ \nu),
&&
  (\omega\ \delta) .
\end{align*}
Then the orbit algebra $S(\lambda)^0/G$
is isomorphic to the basic algebra
$T(\lambda)^{0}$ of the triangle algebra $T(\lambda)$,
for any  $\lambda \in K \setminus \{0,1\}$.

\smallskip

We describe now some 
special properties of the exceptional weighted surface algebras
introduced above.

\begin{proposition}
\label{prop:3.8}
Let $\Lambda$ be a non-singular algebra
$D(\lambda)$,
$\Lambda(\lambda)$,
$T(\lambda)$,
$S(\lambda)$,
$\lambda \in K \setminus \{0,1\}$.
Then the following  hold:
\begin{enumerate}[(i)]
 \item
  $\Lambda$ is an algebra of polynomial growth.
 \item
  The simple modules in $\mod \Lambda$ are periodic of period $4$.
 \item
  $\Lambda$ is a periodic algebra of period $4$.
\end{enumerate}
\end{proposition}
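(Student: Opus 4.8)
The plan is to treat the four families uniformly by exhibiting each non-singular exceptional algebra as a "nice" covering/orbit construction over a well-understood polynomial-growth algebra, and then to deduce (i), (ii), (iii) in that order, with (ii) doing most of the work. For (i), I would invoke the tubular-algebra descriptions already recorded in Section~\ref{sec:exceptions}: for $\Lambda(\lambda)$ one has $\Lambda(\lambda)\cong\T(B(\lambda))$ with $B(\lambda)$ tubular of type $(2,2,2,2)$ (Example~\ref{ex:3.2}), for $S(\lambda)$ one has $S(\lambda)\cong\T(C(\lambda))$ with $C(\lambda)$ tubular of type $(2,2,2,2)$ (Lemma~\ref{lem:3.7}), and the trivial extension of a tubular algebra is tame of polynomial growth by the general theory of Skowro\'nski (cf.\ \cite{Sk1,Sk2}). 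For $T(\lambda)$ and $D(\lambda)$ I would pass to the orbit-algebra relations: $T(\lambda)^0$ is $S(\lambda)^0/G$ for the $\bZ/2$-action listed just before Proposition~\ref{prop:3.8}, and $D(\lambda)$ is $\Lambda(\lambda)/H$ for the $\bZ/3$-action listed at the end of Example~\ref{ex:3.2}; since passing to an orbit algebra under a finite group action preserves polynomial growth, this gives (i) for all four.

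For (ii) I would argue that all simple modules are $\Omega^4$-periodic. Here I would use that a representation-infinite symmetric algebra of polynomial growth whose stable Auslander--Reiten quiver consists entirely of stable tubes has all indecomposable non-projective modules periodic (Proposition~\ref{prop:2.3}), and that $\tau_A=\Omega_A^2$ for symmetric $A$ (recalled before Proposition~\ref{prop:2.3}); hence once one knows $\Gamma_\Lambda^s$ is a tubular family it suffices to bound the $\tau$-periods of the simples, i.e.\ the ranks of the tubes containing them, by $2$. Concretely: $\T(B(\lambda))$ and $\T(C(\lambda))$ are trivial extensions of tubular algebras, so their stable categories are well-known to be tubular $\mathbb{P}^1(K)$-families of stable tubes, and a direct inspection of where the simple $\T(-)$-modules sit (they are contained in the two exceptional tubes coming from the two "copies" of the tubular algebra, of ranks dividing those tubular weights) shows $\tau$-period dividing $2$, hence $\Omega^4$-periodicity. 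Then for $D(\lambda)$ and $T(\lambda)$, the orbit-algebra projections are Galois coverings, so every simple module over the orbit algebra is a summand of the push-down of a simple over $\Lambda(\lambda)$ resp.\ $S(\lambda)$, and syzygies commute with the push-down; since $\Omega^4$ fixes the simples upstairs, it fixes their push-downs, and thus each simple downstairs is $\Omega^4$-periodic. Alternatively — and this may in the end be cleaner for $D(\lambda)$ and $T(\lambda)$ — one can simply compute the minimal projective resolutions of the at most three simple modules directly from the quiver and relations written out in Examples~\ref{ex:3.1}, \ref{ex:3.3} and verify $\Omega^4 S\cong S$ by hand; the algebras are small.

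Finally, (iii) follows from (ii) together with Theorem~\ref{th:2.4}: since $\Omega_\Lambda^4(S)\cong S$ for every simple $S$, $\Lambda$ is self-injective and $\Omega_{\Lambda^e}^4({}_1\Lambda_\sigma)\cong{}_1\Lambda_\sigma$ for a suitable automorphism $\sigma$ with $\sigma(e)\Lambda\cong e\Lambda$; because $\Lambda$ is moreover symmetric (here one uses that all four non-singular families are symmetric — this is part of Main Theorem~\ref{th:main1}, or can be checked directly from the given relations since the socle conditions (2) hold), one has $\sigma$ inner and hence ${}_1\Lambda_\sigma\cong\Lambda$ as bimodules, giving $\Omega_{\Lambda^e}^4(\Lambda)\cong\Lambda$, i.e.\ $\Lambda$ is periodic of period dividing $4$. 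That the period is exactly $4$ and not a proper divisor follows because a symmetric algebra of period $1$ or $2$ would force $\tau=\Omega^2$ to act trivially (resp.\ with order $1$) on all of $\umod\Lambda$, which is impossible for a representation-infinite algebra whose stable AR-quiver contains tubes of rank $2$ — and these four families do have rank-$2$ tubes (visible already from the structure of the tubular trivial extensions).

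The main obstacle I anticipate is the precise bookkeeping in step (ii): identifying exactly which stable tubes contain the simple modules and checking their ranks divide $2$. For $\T(B(\lambda))$ and $\T(C(\lambda))$ this rests on knowing the shape of the stable AR-quiver of a trivial extension of a tubular algebra (a $\mathbb{Z}$-family of $\mathbb{P}^1(K)$-tubular families, glued), and then locating the simples within it; this is where one must be careful rather than invoke a black box. For the orbit algebras $D(\lambda)$, $T(\lambda)$ the covering-theory argument transfers this cleanly, provided one verifies the group actions are free on objects so that the projections are genuine Galois coverings — which they are, by inspection of the vertex permutations $(1\ 6\ 3),(4\ 2\ 5)$ and $(1\ 3),(2\ 4),(5\ 6)$.
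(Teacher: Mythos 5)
Parts (i) and (ii) of your proposal follow essentially the same route as the paper: (i) is exactly the paper's argument (tubular trivial extensions $\T(B(\lambda))$, $\T(C(\lambda))$ plus orbit algebras under $H$ and $G$, citing polynomial growth of such self-injective algebras), and for (ii) the paper likewise invokes the fact that for self-injective algebras of tubular type $(2,2,2,2)$ all simple modules lie in stable tubes of rank $2$, then uses $\tau=\Omega^2$; your more hands-on locating of the simples in the tubular families is a reasonable substitute for the citations to \cite{NeS} and \cite{Sk1}.

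The genuine gap is in (iii). From Theorem~\ref{th:2.4} you only get $\Omega_{\Lambda^e}^4(\Lambda)\cong{}_1\Lambda_{\sigma}$ for some automorphism $\sigma$ with $\sigma(e)\Lambda\cong e\Lambda$ for every primitive idempotent $e$. Your claim that symmetry of $\Lambda$ forces $\sigma$ to be inner is not justified and is false in general: the condition $\sigma(e)\Lambda\cong e\Lambda$ only says that $\sigma$ fixes the isomorphism classes of the indecomposable projectives, and a non-inner ``diagonal'' automorphism (e.g.\ one rescaling arrows) satisfies this while ${}_1\Lambda_{\sigma}\not\cong\Lambda$ as bimodules. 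Whether periodicity of all simple modules implies periodicity of the algebra is precisely the nontrivial content of Theorem~\ref{th:main3}, not a formal consequence of Theorem~\ref{th:2.4}; indeed the entire bimodule computation of Section~\ref{sec:per} (culminating in Theorem~\ref{th:5.10}, where the untwisting is achieved by an explicit monomorphism $\Lambda\to\Omega_{\Lambda^e}^4(\Lambda)$ built from dual bases) exists because this implication must be proved, not quoted. The paper's actual proof of (iii) is entirely different: it starts from the known bimodule periodicity of $D(\lambda)$ (\cite[Proposition~7.1]{BES4}), transfers it to $\Lambda(\lambda)=D(\lambda)$'s $H$-cover via Dugas's theorem \cite[Theorem~3.7]{Du1}, passes to $S(\lambda)\cong\T(C(\lambda))$ via the derived equivalence of the tubular algebras $B(\lambda)$ and $C(\lambda)$ together with derived invariance of periodicity for symmetric algebras, and finally to $T(\lambda)\cong S(\lambda)/G$ by Dugas again. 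Your argument that the period is exactly $4$ (rather than $1$ or $2$) via rank-$2$ tubes is fine, but it only becomes relevant once periodicity of the algebra is actually established.
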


\begin{proof}
(i)
It follows from the above discussion that
$\Lambda(\lambda) \cong \T(B(\lambda))$,
$D(\lambda) \cong \Lambda(\lambda)/H$,
$S(\lambda) \cong \T(C(\lambda))$,
$T(\lambda) \cong S(\lambda)/G$,
where
$B(\lambda)$ and $C(\lambda)$
are tubular algebras of type $(2,2,2,2)$,
and $G$ and $H$ are cyclic groups of orders
$2$ and $3$, respectively.
Then the fact that $\Lambda$ is of polynomial growth
follows from \cite[Theorem]{Sk1}.

\smallskip

(ii)
It follows from general theory of self-injective
algebras of type $(2,2,2,2)$ that all simple modules
in $\mod \Lambda$ lie in stable tubes of rank $2$
in $\Gamma^s_{\Lambda}$
(see \cite[Section~3]{NeS} and \cite[Section~3]{Sk1}).
Since $\Lambda$ is a symmetric algebra,
we conclude that all simple modules in $\mod \Lambda$
are periodic of period $4$.

\smallskip

(iii)
It has been proved in \cite[Proposition~7.1]{BES4}
that $D(\lambda)$ is a periodic algebra of period $4$.
Then, applying \cite[Theorem~3.7]{Du1},
	we concluded in \cite[Proposition~5.8]{ESk-WSA}
that $\Lambda(\lambda)$ is a periodic algebra of period $4$.
Further, it follows from \cite{HR}
(see also \cite[5.2(5)]{R})
that the tubular algebras
$B(\lambda)$ and $C(\lambda)$
are derived equivalent,
and hence their trivial extension algebras
$\T(B(\lambda))$ and $\T(C(\lambda))$
are derived equivalent,
by \cite[Theorem~3.1]{Ric2}.
Then, since
$\Lambda(\lambda) \cong \T(B(\lambda))$
is a periodic algebra of period $4$,
we conclude that
$S(\lambda) \cong \T(C(\lambda))$
is also a periodic algebra of period $4$
(see \cite[Theorem~2.9]{ESk3}).
Finally, applying again \cite[Theorem~3.7]{Du1},
we infer that
$T(\lambda) \cong S(\lambda)/G$
is also a periodic algebra of period $4$.
\end{proof}

\begin{proposition}
\label{prop:3.9}
Let $\Lambda$ be a singular algebra
$D(1)$ or $\Lambda(1)$.
Then the following hold:
\begin{enumerate}[(i)]
 \item
  $\Lambda$ is a tame algebra of non-polynomial growth.
 \item
$\mod \Lambda$ does not have a simple periodic module.
 \item
  $\Lambda$ is not a periodic algebra.
\end{enumerate}
\end{proposition}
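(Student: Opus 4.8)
The plan is to reduce the whole statement to the identifications $\Lambda(1) \cong \T(B(1))$ and $D(1) \cong \Lambda(1)/H$ recorded in Examples~\ref{ex:3.1} and \ref{ex:3.2}, where $H$ is the cyclic group of order $3$ and $B(1)$ is the tame minimal non-polynomial growth algebra (30) of \cite{NoS}, and then to exploit the structure theory of trivial extensions and orbit algebras. Both $D(1)$ and $\Lambda(1)$ are symmetric by Theorem~\ref{th:main1}, since they are neither singular triangle nor spherical algebras, so throughout we may use that $\tau = \Omega^2$.

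For part (i) I would first treat $\Lambda(1) = \T(B(1))$. Tameness follows from the tameness of $B(1)$ together with the fact that the trivial extension of such a tilted-type tame algebra is again tame (by the general theory of \cite{Sk2}). For the growth I would invoke \cite[Theorem]{Sk1}: a trivial extension $\T(B)$ is of polynomial growth precisely when $B$ is a tilted algebra of Dynkin or Euclidean type, or a tubular algebra; since $B(1)$ is minimal non-polynomial growth it is of none of these types, so $\T(B(1))$ is tame of non-polynomial growth. For $D(1) = \Lambda(1)/H$ I would transfer both properties along the Galois covering $\Lambda(1) \to D(1)$ using the orbit-algebra techniques of \cite{Du1}, with direct attention to the case $\operatorname{char} K = 3$.

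Part (ii) is the heart of the matter, and the essential point is the universal quantifier: I must show that \emph{every} simple module is non-periodic, not merely produce one. Since the algebras are symmetric, a simple module $S$ is $\Omega$-periodic if and only if it is $\tau$-periodic, that is, if and only if it lies in a stable tube of the stable Auslander--Reiten quiver. I would therefore locate all simple modules inside $\Gamma^s_{\Lambda(1)}$ using the known shape of the stable Auslander--Reiten quiver of a tame self-injective algebra of non-polynomial growth \cite{Sk1,Sk2}, and show that each of them lies in a component that is not a stable tube, of type $\mathbb{Z}\mathbb{A}_\infty^\infty$. To cover all six simples economically I would use the $H$-action by cyclic rotation from Example~\ref{ex:3.2}, which permutes the simples in the two orbits $\{1,3,6\}$ and $\{2,4,5\}$: since an algebra automorphism carries periodic modules to periodic modules, it then suffices to treat one representative of each orbit. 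The two simples of $D(1)$ correspond, under the covering $\Lambda(1) \to D(1)$, to these two orbits, and a periodic simple $D(1)$-module would pull back to a periodic simple $\Lambda(1)$-module; hence non-periodicity descends from $\Lambda(1)$ to both simples of $D(1)$.

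Finally, part (iii) is immediate from part (ii): as recalled in the introduction, if $\Lambda$ were a periodic algebra of period $n$ then $\Omega^n_\Lambda(M) \cong M$ for every indecomposable non-projective module $M$, in particular for every simple module, contradicting (ii). The main obstacle is part (ii), and specifically the universal quantifier, namely ruling out periodicity for all simples rather than exhibiting a single non-periodic one; I expect to resolve it by combining the explicit description of the non-tubular components of $\Gamma^s_{\T(B(1))}$ with the $H$-symmetry to reduce to the two orbit representatives.
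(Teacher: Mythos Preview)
Your overall plan is reasonable and parts (i) and (iii) are handled essentially as the paper does (though for (i) the paper cites \cite[Theorem~2]{ESk-HTA} and \cite[Section~5]{ESk-HTA} directly rather than reconstructing the trivial-extension argument; your citation of \cite{Du1} for transferring tameness and growth along the orbit map is off, since \cite{Du1} concerns periodicity of resolutions, not representation type).

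The substantive divergence is in part (ii) for $D(1)$. You propose to pull non-periodicity down along the covering $\Lambda(1)\to \Lambda(1)/H=D(1)$, arguing that a periodic simple $D(1)$-module would lift to a periodic simple $\Lambda(1)$-module. This step is delicate: restriction along a $G$-Galois covering need not send simples to simples, and the standard push-down/pull-up correspondence between stable AR components breaks down precisely when $\operatorname{char}K$ divides $|G|$, here $|H|=3$. You flag this but do not resolve it, so as written there is a genuine gap in characteristic $3$.

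The paper bypasses the covering entirely and gives a direct, characteristic-free argument for $D(1)$: one checks that $\rad P_1/S_1\cong \rad P_2/S_2$ (both are $4$-dimensional with the same Loewy structure), and this immediately forces both $S_1$ and $S_2$ to be non-periodic. The point is that the heart $\rad P_i/\soc P_i$ sits as the non-projective middle term of the almost split sequence attached to $P_i$; if this common heart is indecomposable it would acquire four distinct irreducible neighbours $\Omega^{\pm 1}(S_1),\Omega^{\pm 1}(S_2)$ in $\Gamma^s_\Lambda$, which is impossible in a stable tube. (For $\Lambda(1)$ the paper simply invokes \cite[Proposition~6.4]{ESk-WSA}.) This is shorter than your AR-component analysis and, crucially, avoids the characteristic-$3$ obstacle; I would recommend replacing your covering argument for $D(1)$ by this direct computation.
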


\begin{proof}
(i)
The fact that $\Lambda(1)$
is tame algebra of non-polynomial growth
follows from \cite[Theorem~2]{ESk-HTA}.
Applying arguments from
\cite[Section~5]{ESk-HTA}, we conclude
similarly that the orbit algebra
$D(1) \cong \Lambda(1)/H$
is also a tame algebra of non-polynomial growth.
For  $\Lambda = \Lambda(1)$,
the statement (ii) follows from
\cite[Proposition~6.4]{ESk-WSA}.
Let $\Lambda = D(1)$.
We note that for the indecomposable projective
$\Lambda$-modules $P_1$ and $P_2$
at  vertices $1$ and $2$,
we have
$\rad P_1/S_1 \cong \rad P_2/S_2$,
and hence the simple modules
$S_1$ and $S_2$ are non-periodic.
Part (iii) follows from (ii) and general theory 
(see Theorem IV.11.19 of \cite{SY}.
 \end{proof}

\section{Properties of general weighted surface algebras}%
\label{sec:properties}

We will first discuss the assumptions, and special cases, 
and then analyse positions of virtual arrows. 
We determine a basis of a  weighted surface algebra. 
Then we prove that a weighted
surface algebra is, 
other than the singular triangle,
or spherical algebra,
is symmetric.

\smallskip

Let $\Lambda = \Lambda(Q,f,m_{\bullet},c_{\bullet})$
be a  weighted triangulation algebra.

\begin{remark}\label{rem:4.1}
(i) 
We have excluded Example \ref{ex:3.1}, part (2). 
Namely  part (3) of Assumption~\ref{ass} requires that $m_{\beta}n_{\beta}\geq 4$
since the arrow $\alpha$ of the example  is a virtual loop.

(ii) 
In Example \ref{ex:3.3} we have that arrows $\alpha_3, \beta_3$ are
virtual and $m_{\alpha}n_{\alpha}=4$ for any arrow $\alpha \neq 
\alpha_3, \beta_3$. If one would allow (say) $m_{{\alpha}_2}=1$ then 
by Lemma \ref{lem:4.4} (below) 
the  algebra would not be finite-dimensional. 
Therefore we exclude it, which is done by condition (2) of 
Assumption \ref{ass}.

(iii) 
There are some special cases when certain parameters must
be excluded: For the triangle algebra, we exclude $1$. 
   In Example 3.4, we must exclude parameters $\pm 1$.
For the spherical algebra, we also exclude $1$. For these
parameters the algebras are not symmetric, this will be proved 
in Proposition  \ref{prop:4.9}. 
\end{remark}

\medskip

\begin{remark}\label{rem:4.2}
\normalfont
We analyse possible configurations near some virtual arrow.
As we have already seen, conditions (2) and (3) of \ref{ass} show that
it is not possible that $\alpha, \ba$ are both virtual, and using that
$g$ takes virtual arrows to virtual arrows, also $f^2(\alpha)$ and
$f^2(\ba)$ cannot be both virtual.

\smallskip

(i) 
Assume $\ba$ is a virtual loop. 
Then by the above, no other $f^j(\alpha)$ or $f^j(\ba)$ is virtual. 
In fact, the $g$-cycle of $\alpha$ has length
at least three. 
(If it has length three so that $Q$ has two vertices
then we have by condition (3) of \ref{ass}, $m_{\alpha}\geq 2$.)
The arrows $g(\alpha)$ and $g^{-1}(f^2(\ba)$ are therefore not virtual. 
We may have that $f(g(\alpha))$ is virtual. 
If it is a loop then $Q$ is the quiver as in Example \ref{ex:3.4}. 
Otherwise $|Q_0|\geq 5$.

\smallskip

(ii) 
Now assume $\ba$ is virtual but not a loop, then $g$ has cycle
$(\ba \ f^2(\alpha))$ and also $f^2(\alpha)$ is virtual. 
By conditions	(2) and (3) of \ref{ass} no other 
$f^j(\alpha)$ or $f^j(\ba)$ is virtual. 
Also none of these arrows can be a loop since otherwise 
$Q$ would not be 2-regular. 
So $Q$ has a subquiver
\[
  \xymatrix@R=3.pc@C=1.8pc{
    & j
    \ar[rd]^{f(\alpha)}
    \\
    i
    \ar[ru]^{\alpha}
    \ar@<-.5ex>[rr]_{\ba}
    && \bullet
    \ar@<-.5ex>[ll]_{f^2(\alpha)}
 \ar[ld]^{f(\ba)}
    \\
    & y
    \ar[lu]^{f^2(\ba)}
 }
\]
where $j$ and $y$ could be equal. 
If $j=y$ then $|Q_0|=3$ and there are no further virtual arrow. 
If $j\neq y$ there may or may not be loops at $j$
or/and $y$ but they cannot be virtual, for example by part (i).

\smallskip

(iii) 
We note that if $\ba$ is a loop fixed by $g$, then in any 
case $n_{\alpha}\geq 3$. 
\end{remark}

We mention a few consequences.

\begin{lemma}\label{lem:4.3} 
Let $i \in Q_0$ and let $\alpha, \ba$ 
be the arrows starting at $i$. 
\begin{enumerate}[(i)]
 \item
	Assume $f(\alpha)$ is virtual, then $\ba$ is not virtual.
 \item
	If $f^2(\alpha)$ is virtual then $\ba$ is virtual and $g(f(\alpha))= f(\ba)$.
 \item
	If $\alpha, \ba$ are double arrows then they are both not virtual.
\end{enumerate}
\end{lemma}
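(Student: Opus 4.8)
The plan is to read off each of the three assertions directly from the combinatorial setup and the three parts of Assumption~\ref{ass}, using that the permutation $g$ carries virtual arrows to virtual arrows (Definition~\ref{def:virtual}). Throughout I write $\alpha,\ba$ for the two arrows starting at $i$, so that by definition of the involution they share source $i$, and I recall from Lemma~\ref{lem:2.9}(i) that $f^2(\alpha)=g^{n_{\ba}-1}(\ba)$, hence $f^2(\alpha)$ lies on the $g$-cycle of $\ba$; symmetrically $f^2(\ba)$ lies on the $g$-cycle of $\alpha$.

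For part (i): suppose $f(\alpha)$ is virtual. Since being virtual is a property of the $g$-cycle (it just says $m_\theta n_\theta=2$ for every $\theta$ on that cycle), the arrow $\ba$ starting at $i$ is virtual precisely when $m_{\ba}n_{\ba}=2$. I would argue by contradiction: if $\ba$ were virtual, then $m_{\ba}n_{\ba}=2$, so by Assumption~\ref{ass}(2) and~(3) applied with the roles reversed — i.e.\ taking the arrow whose "$\ba$" is virtual — one is forced into a configuration excluded by those conditions. More concretely, $f(\alpha)$ virtual with $\alpha$ an arrow means $\bar{\theta}=f(\alpha)$ is virtual where $\theta:=\overline{f(\alpha)}=g(\alpha)$; but if also $\ba$ (hence its $g$-cycle) is virtual, one checks that both arrows ending at $t(\alpha)$, or both arrows starting at some vertex, become virtual, which Assumption~\ref{ass} explicitly rules out ("we exclude the possibility that both arrows starting at a vertex are virtual, and also that both arrows ending at a vertex are virtual"). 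That contradiction gives~(i).

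For part (ii): assume $f^2(\alpha)$ is virtual. By Lemma~\ref{lem:2.9}(i), $f^2(\alpha)=g^{n_{\ba}-1}(\ba)$, so $f^2(\alpha)$ lies on the $g$-cycle of $\ba$; since virtuality is constant on $g$-cycles, $\ba$ is virtual. For the identity $g(f(\alpha))=f(\ba)$: $f^2(\alpha)$ virtual means $m_{\ba}n_{\ba}=2$ (as $\ba$ shares the $g$-cycle), so $n_{\ba}\in\{1,2\}$; the case $n_{\ba}=1$ would make $\ba$ a virtual loop fixed by $g$, contradicting Remark~\ref{rem:4.2}(iii) (which forces $n_\alpha\ge 3$ there, so also $m_{\ba}n_{\ba}\ge 3$, impossible). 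Hence $n_{\ba}=2$ and the $g$-cycle of $\ba$ is exactly $(\ba\ f^2(\alpha))$, so $g(f^2(\alpha))=\ba$ and $g(\ba)=f^2(\alpha)$. Now $g=\overline{f(\cdot)}$ by definition, so $g(f(\alpha))=\overline{f(f(\alpha))}=\overline{f^2(\alpha)}$, and since $g(\ba)=f^2(\alpha)$ we get $\overline{f^2(\alpha)}$ is the other arrow starting at $s(f^2(\alpha))=t(f(\alpha))$; tracing through $f(\ba)$ has source $t(\ba)$, and using $t(\ba)=s(f(\ba))$ together with the 2-regularity bookkeeping one identifies $g(f(\alpha))=f(\ba)$. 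This is essentially the computation already recorded in Remark~\ref{rem:4.2}(ii), where the displayed subquiver exhibits $g(f(\alpha))=f(\ba)$ directly.

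For part (iii): if $\alpha,\ba$ are double arrows, there is a vertex $j$ with $\alpha,\ba\colon i\to j$ and two arrows $j\to i$, one of which is $f(\alpha)$ and the other $f(\ba)$ (up to labelling), and $t(f(\alpha))=i=s(\alpha)$. If $\alpha$ were virtual then $m_\alpha n_\alpha=2$, and because $g=\overline{f(\cdot)}$ the $g$-cycle of $\alpha$ would have length two and consist of $\alpha$ together with an arrow out of $j$ — but the only arrows out of $j$ are $f(\alpha),f(\ba)$, forcing $f(\alpha)$ or $f(\ba)$ to be virtual as well and pinning down $Q$ to a two-vertex double-arrow quiver; one then checks that in that quiver the relevant $m_\theta n_\theta$ fail Assumption~\ref{ass}(2)/(3) (the virtual non-loop case requires $m_\alpha n_\alpha\ge 3$ for the partner arrow, which is violated). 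Hence neither $\alpha$ nor $\ba$ can be virtual.

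The main obstacle I anticipate is part~(ii): getting the equality $g(f(\alpha))=f(\ba)$ cleanly requires careful bookkeeping of sources and targets under the two permutations $f$ and $g$ and the involution, rather than any deep idea — it is exactly the content of the subquiver picture in Remark~\ref{rem:4.2}(ii), so the cleanest writeup is probably to invoke that remark and spell out only the source/target matching, while parts (i) and (iii) are short contradiction arguments against Assumption~\ref{ass}.
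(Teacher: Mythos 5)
Part (i) is essentially the paper's argument (two virtual arrows cannot share a source), though you leave the reader to find the right vertex: the correct instance is $t(f(\alpha))$, where the two outgoing arrows are $f^2(\alpha)$ and $g(f(\alpha))=\overline{f^2(\alpha)}$; the latter is virtual because $f(\alpha)$ is, and the former is virtual exactly when $\ba$ is, by Lemma~\ref{lem:2.9}(i). Your alternative ``both arrows ending at $t(\alpha)$'' does not actually work. The real problems are in (ii) and (iii). In (ii) you dismiss the case $n_{\ba}=1$ by citing Remark~\ref{rem:4.2}(iii), but that remark asserts $n_{\alpha}\geq 3$ for the \emph{other} arrow at the vertex; it says nothing about $n_{\ba}$, and the inference ``so also $m_{\ba}n_{\ba}\geq 3$'' is a non sequitur. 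A virtual loop fixed by $g$ ($m_{\ba}=2$, $n_{\ba}=1$, with $f^2(\alpha)=\ba$) is not excluded and genuinely occurs: in Example~\ref{ex:3.4} the loop at vertex $1$ is virtual and equals $f^2(\beta)$, so the hypothesis of (ii) is met with $n_{\ba}=1$. Your proof declares this impossible and therefore fails there, even though the conclusion still holds (in that case $f(\ba)=f^3(\alpha)=\alpha$ and $g(f(\alpha))=\overline{f^2(\alpha)}=\overline{\ba}=\alpha$). The paper avoids the case split entirely: $g(f(\alpha))$, $f(\ba)$ and $f^2(\alpha)$ all start at $t(\ba)$, only two arrows start there, $f^2(\alpha)$ is virtual while the other two cannot be, hence $g(f(\alpha))=f(\ba)$.

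In (iii) your opening structural claim --- that the two arrows out of $j$ are arrows $j\to i$, i.e.\ $t(f(\alpha))=i$ --- is unjustified and false in general: in the triangulation quiver on three vertices with double arrows $1\to 2\to 3\to 1$ and $f$ cycling each triple, $f(\alpha)$ leaves $j$ for the third vertex. The ensuing step (``pinning down $Q$ to a two-vertex double-arrow quiver; one then checks \dots'') is not a proof: a two-vertex quiver with four non-loop arrows is not even a triangulation quiver (all $f$-cycles of non-loops have length $3$, which does not divide $4$), and the promised check against Assumption~\ref{ass} is never carried out. What is actually needed, and what the paper does, is: if $\ba$ were virtual then $g$ would have the $2$-cycle $(\ba\ f^2(\alpha))$, so $f^2(\alpha)$ starts at $t(\ba)=t(\alpha)$, forcing $f(\alpha)$ to be a loop at $t(\alpha)$ fixed by $g$, which contradicts Remark~\ref{rem:4.2}(iii) (equivalently, it forces a third arrow to end at $t(\alpha)$). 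You should replace your (iii) with an argument of this kind.
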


\begin{proof}
	(i) 
We  know that $f^2(\alpha)$ is not virtual  and then
$g(f^2(\alpha)) = \ba$ also is not virtual.

\smallskip

	(ii) 
Let $j=t(\ba)$, then $g(f(\alpha), f(\ba)$ start at $j$
and also $f^2(\alpha)$ starts at $j$. 
Now, $f^2(\alpha)$ is virtual but $g(f(\alpha)), f(\ba)$ 
are not virtual, so they must be equal.

\smallskip
	
	(iii) 
Assume $\alpha, \ba$ are double arrows. 
Assume for a contradiction that $\ba$ (say) is virtual.
Recall $\ba = g(f^2(\alpha))$, so $g$ has a 2-cycle 
$(\ba \ f^2(\alpha))$. 
It follows that $f(\alpha)$ is a loop at $t(\alpha)$. 
It is necessarily fixed	by $g$ and we have a contradiction to 
\ref{rem:4.2} part (iii). 
\end{proof}

The following, already announced, explains why condition (2) 
of Assumption~\ref{ass} is necessary.

\begin{lemma}\label{lem:4.4} 
Suppose there exists a pair of virtual arrows $\alpha$ and $\ba$.
Then $\La$ is not finite-dimensional.
\end{lemma}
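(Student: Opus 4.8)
The plan is to read off the defining relations of $\La$ at the vertex where $\alpha$ and $\ba$ start, turn them into a cyclic identity, and then show that $\rad\La$ cannot be nilpotent.

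\emph{Step 1 (reduction of the local data).} Since $\alpha$ and $\ba$ are virtual, $m_\alpha n_\alpha = m_{\ba}n_{\ba} = 2$. Using Lemma~\ref{lem:2.9} together with the fact that $g$ permutes virtual arrows, I would first check that necessarily $n_\alpha = n_{\ba} = 2$ (the alternative $n_\alpha = 1$ forces $g(\alpha)=\alpha$, hence $f(\alpha)=\ba$ and a loop at $s(\alpha)$, which is incompatible with $Q$ being connected and $2$-regular). Consequently $A_\alpha = \alpha$ and $A_{\ba} = \ba$, both of length $1$, so the relations of type~(1) in Definition~\ref{def:2.8} for $\alpha$ and for $\ba$ read
\[
  \alpha f(\alpha) = c_{\ba}\,\ba, \qquad \ba f(\ba) = c_{\alpha}\,\alpha .
\]

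\emph{Step 2 (the cyclic identity).} Substituting the first identity into the second gives, in $\La$,
\[
  \alpha = c^{-1}\,\alpha\,\omega, \qquad \omega := f(\alpha)\,f(\ba), \quad c := c_{\alpha}c_{\ba} \in K^{*},
\]
where $\omega$ is a path of length $2$ which is a cyclic path based at the vertex $t(\alpha)$ (one reads off $t(f(\alpha)) = t(\ba)$ and $t(f(\ba)) = t(\alpha)$ from the two displayed relations, so the composition makes sense). Iterating, $\alpha = c^{-n}\,\alpha\,\omega^{\,n}$ for every $n \ge 0$, so $\alpha \in \bigcap_{k \ge 1}\rad^{k}\La$, and if $\alpha \ne 0$ then $\omega^{\,n} \ne 0$ for all $n$. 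In particular $\La$ is infinite-dimensional as soon as $\alpha$ is a nonzero element of $\La$; conversely, were $\La$ finite-dimensional, then $\rad\La$ would be nilpotent and $\alpha$ would vanish.

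\emph{Step 3 (the main point: $\alpha \ne 0$).} It remains to rule out $\alpha = 0$. Here I would analyse $Q$ near $s(\alpha)$: the relations of Step~1, together with $\overline{f(\alpha)} = g(\alpha)$ and $\overline{f(\ba)} = g(\ba)$ being virtual, pin down the $f$-orbits of $\alpha$ and $\ba$ completely and exhibit $Q$ as a small explicit triangulation quiver; one then verifies that $\omega$ is not nilpotent in $\La$ — equivalently that $\alpha \ne 0$ — either by producing a $\La$-module on which $\alpha$ acts nontrivially, or by extracting an infinite linearly independent family of paths via a normal-form argument. The delicate point, and the step I expect to be the main obstacle, is precisely this verification: one must confirm that the remaining relations of type~(1) at the other arrows, and the zero relations of type~(2) and~(3) of Definition~\ref{def:2.8}, do not collapse the cyclic path $\omega$.
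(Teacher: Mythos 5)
Your Steps 1--2 set up essentially the same mechanism as the paper's proof: the paper also reduces to the relations $\alpha_2\alpha_3 = c_{\alpha_1}(\beta_1\alpha_1)^{m-1}\beta_1$ and $\beta_3\beta_2 = c_{\alpha_1}(\alpha_1\beta_1)^{m-1}\alpha_1$ on the three-vertex triangle quiver, and the non-nilpotence of the $2$-cycle $\alpha_1\beta_1$ is exactly your $\omega$ not being nilpotent. But your Step 3 is the entire content of the lemma, and you have not carried it out --- you explicitly defer ``the main obstacle.'' Without it nothing is proved: a priori the ideal $I$ could contain $\alpha$ (your cyclic identity $\alpha(e - c^{-1}\omega)=0$ would then be vacuous), so the lemma reduces precisely to the claim you leave open. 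What the paper does here is (a) use Remark~\ref{rem:4.2} and Lemma~\ref{lem:4.3} to rule out loops and double arrows and pin down $Q$ as the triangle quiver of Example~\ref{ex:3.3} with $g=(\alpha_1\,\beta_1)(\alpha_2\,\beta_2)(\alpha_3\,\beta_3)$; (b) split into the two sub-cases (all three $g$-orbits virtual, so there are no zero relations at all, versus exactly two virtual orbits); and (c) in the second case write out all generators of $I$ and observe that $\alpha_1,\beta_1$ occur in no zero relation and that the remaining relations are consistent, so the powers $(\beta_1\alpha_1)^r$ are linearly independent in $\Lambda$. You would need to supply (a)--(c), or an equivalent normal-form/module construction, for the proof to stand.

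Two smaller points. In Step 1 your reason for excluding $n_\alpha=1$ is wrong: a virtual loop fixed by $g$ is perfectly compatible with a connected $2$-regular quiver (see Example~\ref{ex:3.4}); the correct argument is that a virtual loop at $i$ forces the other arrow at $i$ to be non-virtual (Remark~\ref{rem:4.2}(i)), contradicting the hypothesis. In Step 2, the inference ``finite-dimensional $\Rightarrow$ $\rad\Lambda$ nilpotent $\Rightarrow$ $\alpha=0$'' silently assumes that the image of the arrow ideal lies in $\rad\Lambda$; since $I$ is not admissible here, that inclusion is not automatic, and likewise ``$\omega^n\neq 0$ for all $n$'' does not by itself yield infinite-dimensionality. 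These issues evaporate if Step 3 is done by exhibiting an infinite linearly independent family of paths, as the paper does, which is another reason that route should be made explicit.
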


\begin{proof}
By \ref{rem:4.2} and \ref{lem:4.3}, the
arrows $\alpha, \ba$ are not loops or double arrows. Then using
Remark \ref{rem:4.2}(ii), we see that $Q$ has a subquiver
\[
  \xymatrix{
    1
    \ar@<.5ex>[r]^{f^2(\ba)}
    & 2
    \ar@<.5ex>[l]^{\alpha}
    \ar@<.5ex>[r]^{\ba}
    & 3
    \ar@<.5ex>[l]^{f^2(\alpha)}
  }
.
\]

Since  $g(f^2(\alpha)) = \ba$, it follows that  
$f(\alpha)$ is an arrow $1\to 3$, and similarly
$f(\ba)$ is an arrow $3\to 1$.
The subquiver with vertices $ 1, 2, 3$ is 2-regular 
and hence it is equal to $Q$. 
This is the triangulation quiver as in Example \ref{ex:3.3}, 
and we use the labelling from \ref{ex:3.3}, and with this
we have 
$$
   g= (\alpha_1 \ \beta_1)(\alpha_2 \ \beta_2)(\alpha_3 \ \beta_3).
$$
Assume (say) $\alpha_2, \beta_2$ and $\alpha_3, \beta_3$ are virtual. 
If $\alpha_1, \beta_1$ are also virtual then we 
do not have any zero relations and the algebra 
is not finite-dimensional. 
Suppose now that $\alpha_1, \beta_1$ are not virtual. 
Then the zero-relations are
\begin{align*} 
 \alpha_2\alpha_3\beta_3 &=0, & \beta_3\beta_2\alpha_2 &=0, \cr
 \alpha_3\beta_3\beta_2&=0, & \beta_2\alpha_2\alpha_3 &= 0.
\end{align*}
Hence $\alpha_1, \beta_1$ do not occur in a zero relation. 
We have other relations, in particular
$$
 \alpha_2\alpha_3 = c_{\alpha_1}(\beta_1\alpha_1)^{m-1}\beta_1, \ \
 \beta_3\beta_2 = c_{\alpha_1}(\alpha_1\beta_1)^{m-1}\alpha_1
$$
(where $m=m_{\alpha_1}$),
and one checks that they are consistent with the other relations, 
and do not cause a zero relation for $\beta_1\alpha_1$. 
It follows that the powers $(\beta_1\alpha_1)^r$ for $r=1, 2, \ldots$ are
linearly independent in $\La$ and the algebra is not finite-dimensional.

\medskip

We note that directly 
imposing nilpotence relations for $\alpha_1, \beta_1$ would not
produce an algebra as we wish. Namely, suppose we add the relations
$$
  (\beta_1\alpha_1)^{m-1}\beta_1=0, \ \ (\alpha_1\beta_1)^{m-1}\alpha_1=0.
$$
Then the resulting algebra has Gabriel quiver consisting of one isolated vertex
together with  
$$
\xymatrix{
  1  \ar@<+.5ex>[r]^{}
   & 2 \ar@<+.5ex>[l]^{}
 }
$$
and the algebra is the product of an algebra of finite type with a 1-dimensional
simple algebra.
\end{proof}

Let $\Lambda = \Lambda(Q,f,m_{\bullet},c_{\bullet})$ 
be a weighted surface algebra, and
$I = I(Q,f,m_{\bullet},c_{\bullet})$.
In order to study properties of $\Lambda$, and modules,
we work towards  specifying  a suitable basis of the algebra $\Lambda$,
defined in terms of cycles of  $g$. In the following,
we will as usual identify an element of $K Q$ with its
residue class in $\Lambda = KQ/I$.
In addition to the elements  $A_{\alpha}$ occuring in the
definition, we will also use monomials   of length 
$m_{\alpha}n_{\alpha}-2 (\geq 0)$. 
If $\alpha$ is an arrow, define $A_{\alpha}'$ by
$$\alpha A_{\alpha}' = A_{\alpha}.$$
If $\alpha$ is virtual then $A_{\alpha}'$ is the idempotent 
$e_{t(\alpha)}$.

\begin{lemma}
\label{lem:4.5}
Let $\alpha$ be an arrow in $Q$.
Then the following hold:
\begin{enumerate}[(i)]
 \item
  $B_{\alpha} \rad \Lambda = 0$.
 \item
  $B_{\alpha}$ is non-zero.
\item If $\alpha$ is not virtual then $A_{\alpha} \rad^2\La = 0$.
\end{enumerate}
\end{lemma}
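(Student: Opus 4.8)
The plan is to work directly from the defining relations of $\La = KQ/I$ in Definition~\ref{def:2.8}, together with the combinatorial identities in Lemma~\ref{lem:2.9}. For part (i), recall that $B_\alpha = \alpha g(\alpha)\cdots g^{n_\alpha-1}(\alpha)$ is the full path along the $g$-cycle of $\alpha$, of length $m_\alpha n_\alpha$. To show $B_\alpha\,\rad\La = 0$ it suffices to show $B_\alpha\,\gamma = 0$ in $\La$ for every arrow $\gamma$ with $s(\gamma) = t(B_\alpha)$. By Lemma~\ref{lem:2.9}(i), the last arrow of $B_\alpha$ is $f^2(\beta)$ where $\beta$ is (essentially) $\bar\alpha$ shifted appropriately; more usefully, $B_\alpha = A_\alpha\cdot f^2(g^{-1}(\alpha))$-type expressions let one identify which two arrows can follow $B_\alpha$. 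These are $f$-successors and $g$-successors of the terminal arrow; the $f$-successor composes to zero by relation (2) or (3) of Definition~\ref{def:2.8} (the zero relations $\alpha f(\alpha)g(f(\alpha)) = 0$ and $\alpha g(\alpha)f(g(\alpha)) = 0$), after rewriting $B_\alpha$ using relation (1) as a length-two-times-path expression, and the $g$-successor yields a path of length $m_\alpha n_\alpha + 1$ along the $g$-cycle, which is $B_\alpha$ times an arrow and must be shown to vanish — here one uses that $c_\alpha B_\alpha$ spans the socle (item (2) of the notation conventions), or more self-containedly, that relation (1) forces $B_\alpha g(f^2(\cdot)) = A_\alpha f^2(\cdot)\,\bar\alpha = B_\alpha \bar\alpha$-type loops which collapse. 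I would phrase this carefully: write $B_\alpha = c_{g^{n_\alpha-1}(\alpha)}^{-1}\big(g^{n_\alpha-1}(\alpha)\big)\cdot$(stuff) is not quite it — rather, use $B_{\bar\alpha} = A_{\bar\alpha} f^2(\alpha)$ from Lemma~\ref{lem:2.9}(ii) and the relation $\alpha f(\alpha) = c_{\bar\alpha}A_{\bar\alpha}$, so $c_{\bar\alpha}B_{\bar\alpha} = \alpha f(\alpha)f^2(\alpha)$, which already displays $B_{\bar\alpha}$ as a path ending in the $f$-cycle; then any further arrow after it is handled by the zero relations of Definition~\ref{def:2.8}(2),(3).

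For part (ii), that $B_\alpha \neq 0$: I would invoke the basis statement. By the conventions (item (3) of the notation, and as made precise via Proposition~\ref{prop:2.11} for the biserial quotient and the degeneration/spanning arguments), $\La$ has a basis consisting of initial subwords of the $B_\alpha$ and $B_{\bar\alpha}$, with $B_\alpha$ itself (of maximal length in its family, up to the socle identification $c_\alpha B_\alpha = c_{\bar\alpha}B_{\bar\alpha}$) being a basis element, hence nonzero. If one does not want to presuppose the full basis result at this point in the paper, an alternative is a dimension count: the biserial weighted triangulation algebra $B(Q,f,m_\bullet,c_\bullet)$ has $\dim_K B = \sum_{\cO}m_\cO n_\cO^2$ by Proposition~\ref{prop:2.11}(i), and $\La$ surjects onto it (or degenerates to it) with $B_\alpha$ mapping to the nonzero socle generator of $B$; therefore $B_\alpha\neq 0$ in $\La$. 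I expect the paper intends the latter, cleaner route, but either works.

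For part (iii), suppose $\alpha$ is not virtual, so $m_\alpha n_\alpha \geq 3$ (indeed $\geq 2$ always, and $=2$ exactly for virtual arrows, so non-virtual means $\geq 3$), and $A_\alpha = \alpha g(\alpha)\cdots g^{n_\alpha-2}(\alpha)$ has length $m_\alpha n_\alpha - 1 \geq 2$. We must show $A_\alpha\,\rad^2\La = 0$, i.e.\ $A_\alpha\,\gamma\delta = 0$ for all composable arrows $\gamma,\delta$ with $s(\gamma) = t(A_\alpha)$. The terminal vertex of $A_\alpha$ is $s(f^2(\text{last arrow}))$; the two arrows starting there are $f^2$ of the last arrow of $A_\alpha$ (call it $\mu := g^{n_\alpha-2}(\alpha)$, so $f^2(\mu)$... ) — more cleanly, by Lemma~\ref{lem:2.9} the arrow $g^{n_\alpha-1}(\alpha)$ is one of them and $f^2(\alpha)$-related is the other. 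Composing $A_\alpha$ with $g^{n_\alpha-1}(\alpha)$ gives exactly $B_\alpha$ (Lemma~\ref{lem:2.9}(iii): $\alpha A_{g(\alpha)} = B_\alpha$, equivalently $A_\alpha\cdot g^{n_\alpha-1}(\alpha) = B_\alpha$), so $A_\alpha\cdot g^{n_\alpha-1}(\alpha)\cdot(\text{any arrow}) = B_\alpha\,\rad\La = 0$ by part (i); composing $A_\alpha$ with the \emph{other} arrow starting at its terminal vertex, then a further arrow, is a path of length $m_\alpha n_\alpha+1$ which one shows is zero using relation (1) to convert $A_\alpha\cdot(\text{other arrow})$ into a shorter product and then the zero relations of Definition~\ref{def:2.8}(2),(3), or simply notes it is not an initial subword of any $B_\bullet$ and hence is zero in the basis. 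The main obstacle is the bookkeeping in part (iii): correctly identifying the two arrows out of $t(A_\alpha)$ and checking both continuations vanish, being careful about the case $n_\alpha = 1$ (a loop fixed by $g$, where $A_\alpha = \alpha^{m_\alpha-1}$, $m_\alpha\geq 3$) versus $n_\alpha = 3$ with $|Q_0|=2$ and $m_\alpha\geq 2$, and the case where the "other arrow" is itself virtual — this last is where Remark~\ref{rem:4.2} and Lemma~\ref{lem:4.3} get used to control the local configuration. I would organize part (iii) around the two choices of continuing arrow and dispatch the virtual subcase by citing Lemma~\ref{lem:4.3} to limit how virtual arrows can sit next to $t(A_\alpha)$.
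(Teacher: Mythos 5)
Your overall strategy for (i) and (iii) is the same as the paper's: identify the two arrows that can follow the path in question (for $B_{\alpha}$ these are $\alpha$ and $\ba$; for $A_{\alpha}$ they are $f^2(\ba)=g^{n_{\alpha}-1}(\alpha)$, which completes $A_{\alpha}$ to $B_{\alpha}$, and $\overline{f^2(\ba)}=g(f(\ba))$), rewrite $c_{\ba}B_{\ba}=\alpha f(\alpha)f^2(\alpha)$ and $c_{\alpha}A_{\alpha}=\ba f(\ba)$ via relation (1) of Definition~\ref{def:2.8} and Lemma~\ref{lem:2.9}, and then invoke the zero relations. Part (iii) of your plan is essentially the paper's argument; the one point to nail down is that relation (2) applies to $\ba$ precisely because $f^2(\ba)$ lies in the $g$-cycle of the non-virtual arrow $\alpha$ and is therefore itself not virtual. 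However, there is a genuine gap in (i): the zero relations of Definition~\ref{def:2.8} are \emph{conditional}. To kill $\alpha f(\alpha)f^2(\alpha)\ba=\alpha\cdot\big(f(\alpha)f^2(\alpha)g(f^2(\alpha))\big)$ by relation (2) applied to $f(\alpha)$, one needs $f^3(\alpha)=\alpha$ to be non-virtual, so your argument only covers the case where $\alpha$ is not virtual. When $\alpha$ is virtual the paper runs a separate case analysis ($\alpha$ a virtual loop versus a virtual non-loop), rewriting $B_{\alpha}\ba$ as $\alpha g(\alpha)f(g(\alpha))$ and applying relation (3), whose hypothesis ($f(\alpha)$ not virtual) is guaranteed by Assumption~\ref{ass}. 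That case is absent from your plan. You should also state explicitly that $B_{\alpha}\alpha$ is disposed of by the identity $c_{\alpha}B_{\alpha}=c_{\ba}B_{\ba}$ and symmetry.

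The second problem is part (ii). Both of your proposed justifications fail. Appealing to the basis of initial subwords is circular here: that basis is only established in Lemma~\ref{lem:4.7} and Proposition~\ref{prop:4.9}, which depend on Lemma~\ref{lem:4.5}. And there is no algebra surjection $\La\to B(Q,f,m_{\bullet},c_{\bullet})$: the degeneration in Theorem~\ref{th:main2} is a Zariski-closure statement, not a homomorphism, and any arrow-preserving map $\La\to B$ would have to send $c_{\ba}A_{\ba}=\alpha f(\alpha)$ to $0$, whereas $A_{\ba}$ is a nonzero basis element of $B$ by Proposition~\ref{prop:2.11}; so $B_{\alpha}$ cannot be tested for non-vanishing this way. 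The paper's own proof of (ii) is admittedly only the one-liner that it ``follows from the relations defining $\La$'', but your substitutes do not repair it.
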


\begin{proof}
(i)
We must show that $B_{\alpha} \ba = 0$
and $B_{{\alpha}} \alpha = 0$
in $\Lambda$.
It follows from (i) and (iv) of Lemma~\ref{lem:2.9}
and the relations in $\Lambda$ that
\begin{align*}
 c_{{\alpha}} B_{{\alpha}} \ba
  &= \alpha f(\alpha) f^2(\alpha) \ba.
\end{align*}
If $\alpha$ is not virtual then $f(\alpha)f^2(\alpha)\ba=0$
since  $\ba = g(f^2(\alpha))$. 	
Now assume $\alpha$ is virtual. Then 
	$$B_{\alpha}\ba = \left\{\begin{array}{ll} \alpha^2\ba & \alpha \ \mbox{is a loop}  \cr
\alpha g(\alpha) \ba & \mbox{else.}
\end{array}\right.
$$
In the first case, $\alpha = g(\alpha)$ and $f(\alpha) = \ba$, and 
$$
  \alpha^2\ba = \alpha g(\alpha) f(g(\alpha)) = 0
$$
	noting that $f(\alpha) = \ba$ cannot be virtual (by the general
	assumption).
In the second case, $g(\alpha) = f^2(\ba)$ which is virtual, and then 
$f(\alpha)$ is not virtual since there would be otherwise two 
virtual arrows starting at the same vertex. Therefore
$$\alpha g(\alpha)\ba = \alpha f^2(\ba) f^3(\ba) = 0.$$
Furthermore, by interchanging the roles of $\alpha$ and $\ba$ 
we obtain also $c_{\alpha}B_{\alpha}\alpha = c_{\ba}B_{\ba} \alpha = 0$.

\smallskip

(ii)
This follows from the relations defining $\Lambda$.

\smallskip

(iii) 
We have $A_{\alpha}f^2(\ba) = B_{\alpha}$ which is in the
socle by the previous. 
It remains to show that $A_{\alpha}g(f(\ba))=0$.
By the relations this is a non-zero scalar multiple of
$\ba f(\ba) g(f(\ba))$.
Since $f^2(\ba)$ is in the $g$-orbit of $\alpha$, it is not
virtual by the assumption. 
Hence by the relation (2) of the definition
we have $\ba f(\ba)g(f(\ba)) = 0$ as required.
\end{proof}

\begin{remark}\label{rem:4.6} 
\normalfont
We can motivate the zero relations of Definition \ref{def:2.8}, 
and also see that the relations give rise to zero conditions.

\smallskip

(i) 
\ Consider $\alpha f(\alpha) g(f(\alpha))$ when $f^2(\alpha)$ is virtual. 
Then also $g(f^2(\alpha))$ is virtual but $g(f^2(\alpha)) = \ba$, 
and then $g(f(\alpha))= f(\ba)$.  
So we have
$$
  \alpha f(\alpha)g(f(\alpha))
   = c_{\ba}\ba g(f(\alpha)) 
   = c_{\ba}\ba f(\ba) 
   = c_{\ba}c_{\alpha}A_{\alpha},
$$
and $A_{\alpha}$ is non-zero, because $B_{\alpha}$ is non-zero.
Since $\ba$ is virtual, $\alpha$ is not virtual (see Assumption \ref{ass}).  
So $A_{\alpha} \rad^2\La = 0$, by Lemma \ref{lem:4.5}.

\smallskip

(ii) 
In the original  version, the relation $\alpha g(\alpha)f(g(\alpha)) = 0$ 
in $\La$ is a consequence of the definition. 
This is now
not the case, and we must add but not always.
Suppose $f(\alpha)$ is virtual. 
By relation
(1) we have
$$\alpha g(\alpha) f(g(\alpha)) = \alpha c_{f(\alpha)}A_{f(\alpha)} = c_{f(\alpha)}c_{\ba}A_{\ba},$$
and $A_{\ba}$ is non-zero, because $B_{\ba}$ is non-zero.
We know $\ba$ is not virtual (since $f(\alpha)$ is virtual, see Lemma~\ref{lem:4.3}). 
So again $A_{\ba}\rad^2\La = 0$. 
\end{remark}

\smallskip


Lemma \ref{lem:4.5} only shows that
$\langle B_{\alpha} \rangle \subseteq \soc \La$.
We will now prove that equality holds. On the way, we see that
for some of the algebras certain parameters need to be excluded.


\begin{lemma} \label{lem:4.7} 
\begin{enumerate}[(i)]
 \item
\ Assume $\alpha$ starting at $i$ is virtual. 
Then $\soc_2(e_i\La)$ is generated by $A_{\ba}$.
The module $e_i\La$ has basis 
$$
 \{ e_i, \ba, \ba g(\ba), \ldots, \  A_{\ba}, B_{\ba}, \ \ba f(\ba)\}.
$$

 \item
\ Assume $\alpha, \ba$ are not virtual and 
$\soc (e_i\La)= \langle B_{\alpha}\rangle$. 
Then $e_i\La$ has basis all proper initial
submonomials of $B_{\alpha}$ and $B_{\ba}$ 
together with $e_i$ and $B_{\alpha}$.
\end{enumerate}
\end{lemma}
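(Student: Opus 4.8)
The plan is to analyse the right module $e_i\La$ by walking along the $g$-cycle of the relevant arrow, using the distinguished relations of Definition~\ref{def:2.8} to rewrite any path not of this ``$g$-form'' and the socle information from Lemma~\ref{lem:4.5} together with the dimension count of Proposition~\ref{prop:2.11} to pin down exactly which monomials survive. Throughout I would identify $\La$ with its quotient presentation and keep in mind that, by relation (1), $\alpha f(\alpha) = c_{\ba}A_{\ba}$, so every length-$2$ path $\alpha f(\alpha)$ is absorbed into the $g$-cycle of $\ba$; hence a spanning set for $e_i\La$ is given by the paths starting at $i$ that at each step continue along a $g$-cycle, i.e. the initial submonomials of $B_\alpha$ and $B_{\ba}$, possibly with one ``$f$-step'' $\alpha f(\alpha)$ or $\ba f(\ba)$ appended. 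The zero relations (2) and (3) then kill the appended $f$-step precisely when the next arrow is not virtual.

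\textbf{Part (ii).} Here neither $\alpha$ nor $\ba$ is virtual. First I would observe that the two maximal paths out of $i$ are $B_\alpha$ (the $g$-cycle of $\alpha$, of length $m_\alpha n_\alpha$) and $B_{\ba}$ (that of $\ba$), and that $\alpha f(\alpha) = c_{\ba}A_{\ba}$ is already an initial submonomial of $B_{\ba}$; likewise $\ba f(\ba) = c_\alpha A_\alpha$. So the candidate basis is: $e_i$, the proper initial submonomials of $B_\alpha$, the proper initial submonomials of $B_{\ba}$, and the single socle element $B_\alpha \;(=\,(c_\alpha/c_{\ba})^{\pm}B_{\ba}$ up to scalar by Lemma~\ref{lem:2.9}(iv)). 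These clearly span $e_i\La$: any path out of $i$ of length $\geq 2$ either follows a $g$-cycle (and is on the list) or contains a subpath $\theta f(\theta)$, which I rewrite via relation (1) into a path that stays longer along a $g$-cycle; iterating, and using that any path of length $> m_\alpha n_\alpha$ hits the socle and is then killed by Lemma~\ref{lem:4.5}(i), reduces everything to the list. For linear independence I would count: the list has $1 + (m_\alpha n_\alpha - 1) + (m_{\ba} n_{\ba} - 1) + 1 = m_\alpha n_\alpha + m_{\ba}n_{\ba}$ elements, and this matches $\dim_K e_i\La$, which one reads off from $\dim_K\La = \sum_{\cO\in\cO(g)} m_\cO n_\cO^2$ (Proposition~\ref{prop:2.11}, valid for the biserial degeneration and hence for $\La$, which has the same dimension) together with the symmetry $\dim e_i\La = \dim \La e_i$ and the projective cover structure; the hypothesis $\soc(e_i\La) = \langle B_\alpha\rangle$ is exactly what guarantees the socle contributes only $1$ rather than $2$.

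\textbf{Part (i).} Now $\alpha$ starting at $i$ is virtual, so $m_\alpha n_\alpha = 2$ and, by Remark~\ref{rem:4.2}, $\ba$ is not virtual and $n_\alpha \in\{1,2\}$. I would treat $e_i\La$ exactly as above but note two changes: the $g$-cycle of $\alpha$ now has total length only $2$, so $B_\alpha = \alpha g(\alpha)$ (or $\alpha^2$ if $\alpha$ is a loop) lies in $\rad^2$ and in fact $B_\alpha = c_\alpha^{-1}c_{\ba}\,\ba f(\ba)$ up to the scalar normalisation, so it does not give an independent basis vector in the $\alpha$-direction; and the $f$-step path $\ba f(\ba)$ does survive because its continuation $g(f(\ba))$ need not be killed (indeed $\ba f(\ba) = c_\alpha A_\alpha$ which is a proper initial submonomial of $B_\alpha$, and since $\ba$ is not virtual Lemma~\ref{lem:4.5}(iii) gives $A_{\ba}\rad^2\La=0$ but says nothing forcing $\ba f(\ba)=0$). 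The spanning list is therefore $e_i, \ba, \ba g(\ba), \dots, A_{\ba}, B_{\ba}, \ba f(\ba)$: the initial submonomials of $B_{\ba}$ (there are $m_{\ba}n_{\ba}$ of them including $e_i$ and $B_{\ba}$) plus the one extra element $\ba f(\ba)$, giving $m_{\ba}n_{\ba}+1$ elements, which I would again match against $\dim_K e_i\La$. The claim that $\soc_2(e_i\La)$ is generated by $A_{\ba}$ follows since $B_{\ba}$ generates $\soc(e_i\La)$ by Lemma~\ref{lem:4.5} and the radical of $e_i\La$ modulo $\soc$ is uniserial below $A_{\ba}$ once the $\alpha$-branch has collapsed into the $\ba$-branch at the socle.

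\textbf{Main obstacle.} The genuinely delicate point is \emph{linear independence}, i.e. confirming that the proposed spanning set is not too big. In part (ii) this is handled cleanly by the global dimension formula, but one must be careful that $\dim_K\La = \dim_K B(Q,f,m_\bullet,c_\bullet)$ and that no unexpected relation collapses two of the listed monomials --- in particular that the two branches $B_\alpha$ and $B_{\ba}$ only ever meet in the socle, which is precisely the content of the hypothesis $\soc(e_i\La) = \langle B_\alpha\rangle$. In part (i) the subtlety is to verify that the extra element $\ba f(\ba)$ is genuinely not a scalar multiple of any initial submonomial of $B_{\ba}$ and is not zero; this is where the refined Assumptions~\ref{ass}(2),(3) enter, ruling out the degenerate small configurations (the excluded Example~\ref{ex:3.1}(2) and the non-finite-dimensional case of Lemma~\ref{lem:4.4}) in which such an identity or vanishing would occur. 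I would do the bookkeeping by writing out, once and for all, the four or five possible local pictures around a virtual arrow as catalogued in Remark~\ref{rem:4.2}, and checking the claim in each.
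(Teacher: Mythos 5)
Your spanning argument (rewrite any path containing a subword $\theta f(\theta)$ via relation (1) of Definition~\ref{def:2.8} until it lies along a $g$-cycle, then kill long paths using Lemma~\ref{lem:4.5}) is essentially the paper's, and your identification of $\ba f(\ba)=c_{\alpha}A_{\alpha}$ as the one extra basis element in part (i) is correct. The genuine gap is in how you establish \emph{linear independence}. You propose to match the size of the spanning set against $\dim_K e_i\La$, obtained from $\dim_K\La=\sum_{\cO}m_{\cO}n_{\cO}^2$ via Proposition~\ref{prop:2.11} ``valid for the biserial degeneration and hence for $\La$, which has the same dimension.'' But at this point of the development the equality $\dim_K\La=\dim_K B(Q,f,m_{\bullet},c_{\bullet})$ is not available: the dimension formula for $\La$ is Proposition~\ref{prop:4.13}(i), which is deduced \emph{from} Lemma~\ref{lem:4.7} via Corollary~\ref{cor:4.12}, and the degeneration of $\La$ to $B$ is only proved in Section~\ref{sec:reptype} (and the algebraic families there are placed in $\alg_d(K)$ using the very dimension formula in question). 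So your independence argument is circular; the spanning set only gives the upper bound $\dim_K e_i\La\leq m_{\alpha}n_{\alpha}+m_{\ba}n_{\ba}$, and you have no independent lower bound.

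The paper closes this differently: in part (ii) linear independence is extracted directly from the hypothesis $\soc(e_i\La)=\langle B_{\alpha}\rangle$ (a nontrivial dependence among initial submonomials of $B_{\alpha}$ and $B_{\ba}$ would, after right multiplication by suitable arrows, produce either a relation $A_{\alpha}+tA_{\ba}\in I$, excluded by Lemma~\ref{lem:4.8}, or a socle element outside $\langle B_{\alpha}\rangle$, excluded by hypothesis); in part (i) it is a direct computation with the relations, the key identity being $\ba f(\ba)\cdot f(\alpha)=c_{\alpha}\alpha f(\alpha)=c_{\alpha}c_{\ba}A_{\ba}$ (respectively $\ba f(\ba)\ba=c_{\alpha}c_{\ba}A_{\ba}$ when $\alpha$ is a loop), which simultaneously gives spanning and shows that $\soc_2(e_i\La)$ is generated by $A_{\ba}$ --- a computation you gesture at but do not carry out. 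Two smaller points: in part (i) your list has $m_{\ba}n_{\ba}+2$ elements, not $m_{\ba}n_{\ba}+1$ (which is consistent with $m_{\alpha}n_{\alpha}=2$ and Corollary~\ref{cor:4.12}); and the fact that $\ba$ is not virtual when $\alpha$ is comes from Assumption~\ref{ass}, not merely from Remark~\ref{rem:4.2}.
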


\begin{proof} 
	(i)
Let $\alpha$ be a virtual arrow starting at $i$, 
then $\ba$ is not virtual (see \ref{ass}).
Since virtual arrows are unions of $g$-cycles, 
no virtual arrow occurs as a factor of $B_{\ba}$.
We express $B_{\alpha}$ in terms of arrows 
of the Gabriel quiver. 
If $\alpha$ is a loop then we get
$$
  B_{\alpha} = \alpha^2  = (c_{\alpha}^{-1}\ba f(\ba))^2.
$$
Otherwise we get
$$
  B_{\alpha} = \alpha g(\alpha) = 
  (c_{\alpha}^{-1}\ba f(\ba))(c_{g(\alpha)}^{-1}f(\alpha)f^2(\alpha)).
$$
We claim that
$e_i\La$ has basis
$$
  \{ e_i, \ba, \ba g(\ba), \ldots, \ A_{\ba}, B_{\ba}, \ \ba f(\ba)\}.
$$
Assume first $\alpha$ is a loop.  Then
$$
  \ba f(\ba)\ba 
   = c_{\alpha}\alpha\ba
   = c_{\alpha} \alpha f(\alpha) 
   = c_{\alpha}c_{\ba}A_{\ba}
$$
and it follows
that the set spans $e_i\La$.
Suppose $\alpha$ is not a loop then
$$
 \ba f(\ba) f(\alpha) 
  = c_{\alpha}\alpha f(\alpha) 
  = c_{\alpha}c_{\ba}A_{\ba}
$$
and the set spans $e_i\La$. One checks that the set is linearly independent.

\smallskip

    (ii) 
The set is a spanning set, by the relations. 
Using the assumption on the socle, one checks that
it is linearly independent.
\end{proof}

We will now show that the condition on the socle in part (b) is satisfied
except for those algebras which we have excluded anyway, in particular
this will give us the required basis. For the proof,
we use the following preparation.


\begin{lemma}\label{lem:4.8} 
Assume $\alpha, \ba$ start at vertex $i$ and are both not virtual. 
Then $A_{\alpha}$ and $A_{\ba}$ are linearly independent in $\La$.
\end{lemma}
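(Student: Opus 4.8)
The plan is to argue by contradiction, assuming a nontrivial $K$-linear relation $\lambda A_\alpha + \mu A_{\bar\alpha} = 0$ in $\Lambda$ with $(\lambda,\mu)\neq(0,0)$, and then derive a forced equality among parameters of one of the excluded exceptional algebras. First I would observe that, since $A_\alpha$ starts with $\alpha$ and $A_{\bar\alpha}$ starts with $\bar\alpha$, and $\alpha,\bar\alpha$ are distinct arrows from $i$, such a relation can only be nontrivial if both coefficients are nonzero; indeed, multiplying on the left by $e_i$ is harmless and multiplying by the appropriate idempotent at $t(\alpha)$ versus $t(\bar\alpha)$ separates the two monomials unless $t(\alpha)=t(\bar\alpha)$, so one is immediately reduced to the case where $\alpha$ and $\bar\alpha$ are double arrows $i\to j$. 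By Lemma~\ref{lem:4.3}(iii), in the double-arrow case neither arrow is virtual, which is consistent with the hypothesis but also tells us the $g$-cycles of $\alpha$ and $\bar\alpha$ have length $n_\alpha, n_{\bar\alpha}\geq 2$ with no virtual arrows appearing.

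Next I would use the defining relations to rewrite $A_\alpha$ and $A_{\bar\alpha}$ and track where a collision can occur. Recall $A_\alpha = \alpha g(\alpha)\cdots g^{n_\alpha-2}(\alpha)$ has length $m_\alpha n_\alpha - 1$, and by Lemma~\ref{lem:2.9} we have identities like $A_\alpha f^2(\bar\alpha) = A_{g(\alpha)}$ and $\alpha A_{g(\alpha)} = B_\alpha$. The key point is that $A_\alpha$ and $A_{\bar\alpha}$ are, up to the quaternion relations $\alpha f(\alpha) = c_{\bar\alpha} A_{\bar\alpha}$, the two ``long paths'' landing in the socle; a linear dependence between them would mean that some initial submonomial of $B_\alpha$ equals (a scalar times) an initial submonomial of $B_{\bar\alpha}$ modulo $I$, which by the basis description (Lemma~\ref{lem:4.7}, or rather the spanning part that holds unconditionally) forces a degenerate configuration of the quiver. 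I would make this precise by examining the Gabriel quiver locally: since $\alpha,\bar\alpha: i\to j$ are double arrows, $f(\alpha)$ and $f(\bar\alpha)$ both start at $j$, and $f^2(\alpha)=g^{n_{\bar\alpha}-1}(\bar\alpha)$, $f^2(\bar\alpha)=g^{n_\alpha-1}(\alpha)$ both end at $j$ — so $\{\alpha,\bar\alpha\}$ together with $f(\alpha),f(\bar\alpha),f^2(\alpha),f^2(\bar\alpha)$ determine a small subquiver, and one checks the triangulation quiver must then be (a version of) the one in Example~\ref{ex:3.3} or Example~\ref{ex:3.4}.

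Having pinned down the quiver, I would compute $A_\alpha$ and $A_{\bar\alpha}$ explicitly in the relevant small algebra — e.g. in the notation of Example~\ref{ex:3.3}, $A_{\alpha_1} = (\beta_1\alpha_1)^{m-1}\beta_1$-type expressions — and show that a nonzero linear relation between them holds precisely when the relevant parameter equals $1$ (resp. $\pm 1$ for Example~\ref{ex:3.4}). But those parameter values are exactly the ones excluded by Remark~\ref{rem:4.1}(iii) (the singular triangle algebra $T(1)$, and $\Sigma(\pm 1)$), hence do not occur for a genuine weighted surface algebra as in the hypothesis. This contradiction establishes the linear independence. The main obstacle I expect is the bookkeeping in the double-arrow case: one must rule out every local configuration of virtual/non-virtual neighbouring arrows and every orientation, and confirm that outside the handful of excluded small algebras the monomials $A_\alpha$ and $A_{\bar\alpha}$ genuinely lie in a common spanning set of distinct basis elements — i.e. that no relation of $I$ identifies an initial submonomial of $B_\alpha$ with one of $B_{\bar\alpha}$. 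This is where I would lean on Lemma~\ref{lem:4.5}(iii) (that $A_\alpha\rad^2\Lambda=0$ when $\alpha$ is not virtual) and the socle computations to keep the degree-by-degree comparison under control.
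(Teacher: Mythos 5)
Your reduction step is where the argument breaks. The element $A_{\alpha}$ is the path of length $m_{\alpha}n_{\alpha}-1$ along the $g$-cycle of $\alpha$; its target is $t(f(\ba))$, the source of the last arrow $g^{n_{\alpha}-1}(\alpha)=f^2(\ba)$ of $B_{\alpha}$ --- it is \emph{not} $t(\alpha)$. So multiplying a putative relation $\lambda A_{\alpha}+\mu A_{\ba}=0$ on the right by idempotents separates the two terms unless $t(f(\ba))=t(f(\alpha))$; it does not reduce you to the case where $\alpha,\ba$ are double arrows $i\to j$. Your whole case analysis (Lemma~\ref{lem:4.3}(iii), the quivers of Examples~\ref{ex:3.3} and~\ref{ex:3.4}, the explicit parameter computations) is therefore aimed at the wrong local configuration. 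Moreover, the endgame you propose --- showing that a dependence forces the excluded parameter values $1$ (resp.\ $\pm 1$) --- is really the proof of Proposition~\ref{prop:4.9} via Lemmas~\ref{lem:4.10} and~\ref{lem:4.11}, which concerns socle elements $\zeta_1+a\zeta_2$ built from \emph{proper initial} submonomials of $B_{\alpha}$ and $B_{\ba}$. Lemma~\ref{lem:4.8} itself holds unconditionally, even for $T(1)$ and $S(1)$: no parameters are excluded, and the paper's proof never mentions any. There is also a circularity risk in leaning on the basis description of Lemma~\ref{lem:4.7}(ii), since that statement assumes the socle condition which is only established afterwards via Proposition~\ref{prop:4.9}, whose proof uses Lemma~\ref{lem:4.8}.

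The paper's actual argument is short and purely combinatorial, with no case distinction on parameters. By the quaternion relations, $c_{\alpha}A_{\alpha}=\ba f(\ba)$ and $c_{\ba}A_{\ba}=\alpha f(\alpha)$ in $\La$, and since $\alpha,\ba$ are not virtual, no arrow along $A_{\alpha}$ or $A_{\ba}$ is virtual; a nontrivial identity $A_{\alpha}+tA_{\ba}\in I$ can then only arise if at least one of $f(\alpha)$, $f(\ba)$ is virtual. Say $f(\alpha)$ is virtual. The agreement of targets forces $f^2(\alpha)$ and $f^2(\ba)$ to be the two arrows starting at $t(f(\alpha))$; since $g$ preserves virtuality and $g(f(\alpha))=\overline{f^2(\alpha)}\neq f^2(\alpha)$, one gets that $g(f(\alpha))=f^2(\ba)$ is virtual, hence so is $g(f^2(\ba))=\alpha$ (Lemma~\ref{lem:2.9}(i)) --- contradicting the hypothesis. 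You would need to replace your reduction and the entire double-arrow branch by an argument of this kind.
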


\begin{proof}
Assume (for a contradiction) that for some $t\in K$ we have
$$A_{\alpha} + tA_{\ba} \in I. \leqno(*)$$
We have $c_{\ba}A_{\ba} - \alpha f(\alpha)\in I$ and $c_{\alpha}A_{\alpha} - \ba f(\ba) \in I$.  
Note that arrows along $A_{\alpha}, A_{\ba}$ are not virtual.
If an identity (*) exists, and given that $\alpha, \ba$ are not virtual, 
it follows that at least one of $f(\alpha)$ or  $f(\ba)$ is virtual.


\smallskip

The last arrows of $A_{\alpha}$ and $A_{\ba}$ end at $t(f(\ba))$ and $t(f(\alpha))$, 
which must be the same  (by (*)).
Suppose (say) $f(\alpha)$ is virtual, then also $g(f(\alpha))$ is virtual. 
The arrows starting at $t(f(\alpha))$ are $f^2(\alpha)$ and $f^2(\ba)$, and
$g(f(\alpha))\neq f^2(\alpha)$. So $g(f(\alpha)) = f^2(\ba)$. 
That is, $f^2(\ba)$ is virtual, and then also $g(f^2(\ba))$.  
But $g(f^2(\ba)) = \alpha$ and $\alpha$ is
not virtual by assumption, a contradiction.
Similarly $f(\ba)$ cannot be virtual. Hence  there is no identity (*).
\end{proof}


\begin{proposition}\label{prop:4.9} 
Assume $e_i\La e_j$ has an element $\zeta$ with $\zeta J=0$ 
but $\zeta\not\in \langle B_{\alpha}\rangle$.
Then $\La$ is isomorphic 
to the singular triangular algebra $T(1)$ 
or the singular spherical algebra $S(1)$.
Conversely, $T(1)$ and $S(1)$ have this property.
\end{proposition}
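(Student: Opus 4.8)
The plan is to analyze the structure of $\La$ near the element $\zeta$, showing that the existence of such a $\zeta$ forces a very restrictive local configuration on the triangulation quiver and on the weight function. First I would record that, by Lemma~\ref{lem:4.5}(i), $\langle B_\alpha\rangle\subseteq\soc\La$, and by Lemma~\ref{lem:4.7}(i) the stronger hypothesis $\zeta J=0$, $\zeta\notin\langle B_\alpha\rangle$ cannot occur when one of the arrows starting at $i$ is virtual (there the socle is one-dimensional, spanned by $B_{\ba}$). So both arrows $\alpha,\ba$ starting at $i$ are not virtual, and likewise one wants to rule out virtual arrows ending at $i$: the basis of $e_i\La$ from Lemma~\ref{lem:4.7}(ii) (once we know the socle is as expected) would then be unavailable, so the argument must instead use that $\soc(e_i\La)$ is \emph{strictly larger} than $\langle B_\alpha\rangle$. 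Concretely, $e_i\La$ is spanned by initial submonomials of $B_\alpha$ and $B_{\ba}$ together with $e_i$; an element killed by $J$ that is not a scalar multiple of $B_\alpha$ must be a linear combination involving the two "penultimate" monomials $A_\alpha$ and $A_{\ba}$ (or terms of the same length along the two $g$-cycles), and $A_\alpha J\ne 0$, $A_{\ba}J\ne 0$ in general.

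Next I would pin down when $A_\alpha$ (or a combination $A_\alpha+tA_{\ba}$) can be annihilated by $J$. By Lemma~\ref{lem:4.5}(iii) and Remark~\ref{rem:4.6}, if $\alpha$ is not virtual but $f(\alpha)$ \emph{is} virtual, then $A_{\ba}$ already lies in $\soc_2$, and dually if $f^2(\alpha)$ is virtual. So the presence of a nontrivial socle element beyond $\langle B_\alpha\rangle$ is tied to the presence of virtual arrows in the $f$-orbits through $i$. Lemma~\ref{lem:4.8} tells us $A_\alpha$ and $A_{\ba}$ are linearly independent whenever both $\alpha,\ba$ are non-virtual, so the extra socle element is genuinely two-dimensional in character and its existence constrains the combinatorics severely: tracking which $f^j(\alpha)$, $f^j(\ba)$ are virtual, using that $g$ preserves virtuality and that $g(f^2(\alpha))=\ba$ (Lemma~\ref{lem:2.9}(i)), one is forced into exactly the configurations of Example~\ref{ex:3.3} (triangle, with $\alpha_3,\beta_3$ virtual) or Example~\ref{ex:3.6} (spherical, with the rank-$2$ $g$-orbits $\cO(\xi),\cO(\mu)$ virtual). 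The quiver is then one of the two distinguished $3$-vertex or $6$-vertex quivers, and the weight function is the one appearing there.

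Within that configuration I would then compute the socle of the relevant indecomposable projective explicitly as a function of the parameter $\lambda$. This is precisely the computation already sketched in Example~\ref{ex:3.3} and Example~\ref{ex:3.6}: for the triangle algebra, $\beta_1\alpha_1-\beta_2\alpha_2$ and $\beta_1\alpha_1\beta_1\alpha_1=\beta_2\alpha_2\beta_2\alpha_2$ are radical-annihilated elements of $P_2$ exactly when $\lambda=1$, forcing $\La\cong T(1)$; for the spherical algebra, $\alpha\beta-\varrho\omega$ and $\alpha\beta\nu\delta=\delta\omega\gamma\sigma$ are radical-annihilated elements of $P_1$ exactly when $\lambda=1$, forcing $\La\cong S(1)$. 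Conversely, reading those two computations the other way gives the stated converse: in $T(1)^0$ and $S(1)^0$ these explicit elements are socle elements not lying in $\langle B_\bullet\rangle$. The main obstacle will be the combinatorial case analysis of the middle paragraph — showing that a two-dimensional-type socle overflow forces exactly these two quivers with exactly these virtual arrows, handling the sub-cases where various $f^j$-images coincide (the $j=y$ degeneration in Remark~\ref{rem:4.2}(ii), loops, double arrows) via Lemmas~\ref{lem:4.3} and~\ref{lem:4.4}; once the quiver and weights are fixed, the remaining verification that only $\lambda=1$ survives is the short explicit computation already in the examples.
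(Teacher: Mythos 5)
Your overall strategy matches the paper's: reduce to the case where both arrows $\alpha,\ba$ at $i$ are non-virtual, write $\zeta=\zeta_1+a\zeta_2$ with $\zeta_1,\zeta_2$ monomials along $B_\alpha$ and $B_{\ba}$, show that virtual arrows in the $f$-triangles through $i$ are forced, identify the quiver as the triangle or spherical one, and then compute that $\zeta J=0$ pins the parameter to $1$. The converse direction is, as you say, exactly the socle computations already recorded in Examples~\ref{ex:3.3} and~\ref{ex:3.6}.

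However, there is a genuine gap: the combinatorial core of the argument is asserted rather than proved, and the one concrete intermediate claim you do make points in the wrong direction. You say $\zeta$ ``must be a linear combination involving the two penultimate monomials $A_\alpha$ and $A_{\ba}$''; the paper's Lemma~\ref{lem:4.10} establishes the opposite, namely that $\zeta_1\ne A_\alpha$ (indeed $A_\alpha\, g^{n_\alpha-1}(\alpha)=B_\alpha\ne 0$, so no combination of $A_\alpha$ and $A_{\ba}$ is killed by $J$), and instead pins down $\zeta_1=\alpha g(\alpha)$ and $\zeta_2=\ba g(\ba)$ exactly, with $f(\alpha)$ and $f(\ba)$ virtual. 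It is this identification that forces $m_\alpha n_\alpha=4=m_{\ba}n_{\ba}$ and makes the quiver classification in Lemma~\ref{lem:4.11} finite: one then splits into $i=j$ (giving the triangle quiver of Example~\ref{ex:3.3}, or the quiver of Example~\ref{ex:3.4} whose singular members are again isomorphic to $T(1)$ --- a case your sketch omits) and $i\ne j$ (giving the spherical quiver). None of these steps is routine: the proof of Lemma~\ref{lem:4.10} is a delicate chase through the relations, showing first that $\zeta_1 g(\delta_1)=A_\alpha$ rather than $B_\alpha$, then that $g(\delta_2)=\bar{\beta}$, and finally that $\bar{\delta}_1,\bar{\delta}_2$ and $\bar{\eta}$ are virtual --- and you explicitly defer all of it as ``the main obstacle.'' As it stands the proposal is a plausible plan rather than a proof: without the precise identification of $\zeta_1,\zeta_2$ and the resulting bound $m_\alpha n_\alpha=4$, you cannot rule out configurations with longer $g$-cycles and a single virtual $f$-neighbour, so the reduction to exactly the two exceptional algebras is not established.
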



We split the proof into three parts, first a reduction, and then
two lemmas.

\medskip

For the reduction, if
one of the arrows starting at $i$ is virtual,  then no such $\zeta$ exists,
we can see this from the basis of $e_i\La$.
So
assume the arrows  $\alpha$ and $\ba$ starting at $i$ are both not virtual. 
Then there are no virtual arrows occuring
in $B_{\alpha},  B_{\ba}$ and we get a  spanning set  of $e_i\La$  
consisting of initial subwords of $B_{\alpha}, B_{\ba}$ (in particular
the last arrows which are $f^2(\ba)$ and $f^2(\alpha)$ are also not virtual).
Then we can write $\zeta$ in terms of the spanning set as
$$\zeta = \zeta_1 + a\zeta_2\in e_i\La e_j$$
with $a\in K$ and $\zeta_1$ a linear combination of paths along $B_{\alpha}$ and 
$\zeta_2$ a linear combination of paths along $B_{\ba}$.
Then the lowest terms of $\zeta_1, \zeta_2$  satisfy the same property, 
so we may assume that the $\zeta_i$ are monomials.
They are then two   paths from $i$ to $j$, and are linearly independent in $\La$.

\begin{lemma}\label{lem:4.10} 
Assume $\alpha, \ba$ are not virtual. 
Let $\zeta = \zeta_1 + a\zeta_2$ in $e_i\La e_j$, 
such that $\zeta \rad \La = 0$ but 
$\zeta \not\in \langle B_{\alpha}\rangle$. 
Assume $\zeta_1$ is a monomial along $B_{\alpha}$ 
and $\zeta_2$ is a monomial along $B_{\ba}$. 
Then
$$
 \zeta_1 = \alpha g(\alpha), \ \ \zeta_2 = \ba g(\ba),
$$ 
and moreover $f(\alpha)$ and $f(\ba)$ are virtual.
\end{lemma}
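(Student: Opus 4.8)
The plan is to exploit the socle condition $\zeta\rad\Lambda=0$ together with the multiplicative structure of initial subwords of $B_\alpha$ and $B_{\ba}$, and to push the two monomials $\zeta_1,\zeta_2$ all the way down the $g$-cycles while keeping track of where virtual arrows can sit. First I would record the basic shape: $\zeta_1$ is an initial subword $\alpha g(\alpha)\cdots g^{k-1}(\alpha)$ of $B_\alpha$ of some length $k\ge 1$ and $\zeta_2$ an initial subword $\ba g(\ba)\cdots g^{\ell-1}(\ba)$ of $B_{\ba}$ of some length $\ell\ge 1$; the condition $\zeta\in e_i\Lambda e_j$ forces the endpoints to agree, namely $t(g^{k-1}(\alpha))=t(g^{\ell-1}(\ba))=j$. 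Since $\zeta\notin\langle B_\alpha\rangle$, neither $\zeta_1$ nor $\zeta_2$ equals $B_\alpha$ or $B_{\ba}$ (using Lemma~\ref{lem:2.9}(iv) to identify the two socle generators), so both $k$ and $\ell$ are strictly less than the respective $g$-cycle path lengths; in particular neither monomial is in the socle on its own.

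The heart of the argument is then: right-multiply $\zeta$ by a single arrow. Because $\zeta\rad\Lambda=0$ we get $\zeta\cdot g^{k-1+1}(\alpha)$-type products vanishing; more precisely, $\zeta_1\cdot g^{k}(\alpha)$ is the next initial subword of $B_\alpha$ (still nonzero unless $\zeta_1$ was already $A_\alpha$), and similarly for $\zeta_2$. If we multiply $\zeta$ on the right by the arrow $\beta$ starting at $j$ which continues $\zeta_1$ along its $g$-cycle, then $\zeta_1\beta\ne 0$ unless $\zeta_1=A_\alpha$; but $\zeta\beta=0$, so $a\zeta_2\beta=-\zeta_1\beta$, forcing $\zeta_2\beta\ne 0$ as well — yet $\beta$ continues the $g$-cycle of $\alpha$, not of $\ba$, so a product $\zeta_2\beta$ can only be nonzero if $\beta=f(\ba)$, i.e. if the path $\zeta_2\beta$ is rewritten via a relation of type (1). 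This is exactly where the hypothesis that $f(\alpha)$ or $f(\ba)$ is virtual will be forced, arguing as in Lemma~\ref{lem:4.8}: the only way two distinct initial subwords along the two different $g$-cycles can be linearly dependent after one more step is through a relation $\alpha f(\alpha)=c_{\ba}A_{\ba}$ with one of $f(\alpha),f(\ba)$ virtual, and then by Remark~\ref{rem:4.2} and Lemma~\ref{lem:4.3} the virtual arrow is determined and its partner is not virtual. Iterating (or running the same right-multiplication argument from the other side, multiplying by the arrow continuing $\zeta_2$) pins down that in fact \emph{both} $f(\alpha)$ and $f(\ba)$ are virtual, since if only one of them were, the other monomial could be extended by a genuine arrow without any compensating term on the other side, contradicting $\zeta\rad\Lambda=0$.

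Once both $f(\alpha)$ and $f(\ba)$ are known to be virtual, a length count finishes the job. Virtual arrows have $m\cdot n=2$, and by Lemma~\ref{lem:4.3}(ii) and the configuration analysis in Remark~\ref{rem:4.2}, the virtual $g$-cycle through $f(\alpha)$ (respectively $f(\ba)$) has the form $(f(\alpha)\ f^2(\ba))$ (resp. $(f(\ba)\ f^2(\alpha))$), so $g(\alpha)=f(\alpha)$ — wait, more carefully: $g(\alpha)=\overline{f(\alpha)}$, and since $f(\alpha)$ is virtual its $g$-orbit is tiny, which constrains $n_\alpha$ and hence the total length $m_\alpha n_\alpha$ of $B_\alpha$. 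Using Assumption~\ref{ass}(2),(3) one sees that $B_\alpha$ has length at least $3$ and $A_\alpha$ has length at least $2$, and the only initial subword $\zeta_1$ strictly between length $1$ and $A_\alpha$ that can be annihilated by $\rad\Lambda$ after the rewriting is $\zeta_1=\alpha g(\alpha)$ (length $2$); any shorter $\zeta_1=\alpha$ would extend freely, any longer one would already be $A_\alpha$ or $B_\alpha$, which are excluded or handled separately. The same applies to $\zeta_2=\ba g(\ba)$. This yields exactly $\zeta_1=\alpha g(\alpha)$ and $\zeta_2=\ba g(\ba)$ as claimed.

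The main obstacle I anticipate is the bookkeeping in the middle step: making the "one more arrow" argument precise when the extra arrow might itself be virtual (so that the product is rewritten by relation (1) rather than simply vanishing or staying a monomial), and ruling out all intermediate lengths $k,\ell$ cleanly rather than case-by-case. Handling the loop versus non-loop alternatives for the virtual arrows (cf.\ the two cases in Lemma~\ref{lem:4.5}(i) and Remark~\ref{rem:4.2}(i)) will require care, and it may be cleaner to invoke Lemma~\ref{lem:4.7}(ii)'s explicit basis of $e_i\Lambda$ to see directly which pairs of initial subwords of $B_\alpha,B_{\ba}$ of equal target can possibly be combined into a non-socle element killed by the radical.
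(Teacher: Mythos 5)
Your opening move is the right one and coincides with the paper's: multiply $\zeta$ on the right by the arrow $\beta=g(\delta_1)$ continuing $\zeta_1$ along its $g$-cycle, note that $\zeta_1\beta$ is a nonzero initial submonomial of $B_{\alpha}$, and use $\zeta\beta=0$ to force $\zeta_2\beta$ to be a nonzero multiple of it, which (after excluding $g(\delta_2)=\beta$ via the linear independence of $A_{\alpha}$ and $A_{\ba}$ from Lemma~\ref{lem:4.8}) can only happen through a type-(1) relation $\delta_2 f(\delta_2)=c_{\bar{\delta}_2}A_{\bar{\delta}_2}$ applied to the \emph{last} arrow $\delta_2$ of $\zeta_2$ (not to $\ba$ itself, as your phrasing suggests). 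The genuine gap is the step that pins down the lengths of $\zeta_1$ and $\zeta_2$. Your ``length count'' asserts that, once $f(\alpha)$ and $f(\ba)$ are known to be virtual, the only admissible $\zeta_1$ strictly between $\alpha$ and $A_{\alpha}$ is $\alpha g(\alpha)$, because ``any longer one would already be $A_{\alpha}$ or $B_{\alpha}$.'' That holds only when $m_{\alpha}n_{\alpha}=4$; for $m_{\alpha}n_{\alpha}\geq 5$ there are initial submonomials of every intermediate length and nothing in your argument excludes them. Moreover $m_{\alpha}n_{\alpha}=4$ is itself part of what must be established (it is exactly the extra hypothesis fed into Lemma~\ref{lem:4.11}), so the count as stated is circular.

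The paper closes this gap by a second-socle argument, run before any virtuality of $f(\alpha),f(\ba)$ is known, and in the opposite logical order to yours. One first shows $\zeta_1\beta=A_{\alpha}$ (not $B_{\alpha}$), so $\zeta_2\beta$ is a nonzero multiple of $A_{\alpha}$, an element killed by $\rad\La$ on the right but not lying in $\soc(e_i\La)$. Writing $\zeta_2=\zeta_2'\delta_2$ and using $\beta=f(\delta_2)$ gives $\zeta_2\beta=c_{\bar{\delta}_2}\zeta_2'A_{\bar{\delta}_2}$; if $\bar{\delta}_2$ were not virtual then $A_{\bar{\delta}_2}$ would lie in the second socle, forcing $\zeta_2'=e_i$ and leading to the contradiction $f(\beta)=g(\beta)$. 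Hence $\bar{\delta}_2$ is virtual, $A_{\bar{\delta}_2}=\bar{\delta}_2$, and a second rewriting with $\eta=g^{-1}(\delta_2)$, $\bar{\delta}_2=f(\eta)$, yields $\zeta_2\beta=c_{\bar{\delta}_2}c_{\bar{\eta}}\zeta_2''A_{\bar{\eta}}$ with $\bar{\eta}$ \emph{not} virtual; now $A_{\bar{\eta}}\in\soc_2$ forces $\zeta_2''=e_i$, so $\zeta_2=\ba g(\ba)$ has length exactly two, and only then does $f(\ba)=\bar{\delta}_2$ emerge as virtual. Your proposal never acquires this upper bound on the lengths, and it also leaves two loose ends: ruling out $\zeta_1=A_{\alpha}$ (note $A_{\alpha}\beta=B_{\alpha}\neq 0$, so your claim that $\zeta_1\beta$ vanishes in that case is not what happens; a separate small case analysis is needed), and ruling out $\zeta_2=\ba$ of length one, which is where the $f(\beta)=g(\beta)$ contradiction is used.
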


\begin{proof}
Write $\delta_i$ for the last arrow in $\zeta_i$.
Let $\beta = g(\delta_1)$, this is not virtual and it starts at $j$.
Then $\zeta_1\beta$ is an initial submonomial of $B_{\alpha}$ 
and it occurs in some relation.
Therefore $\zeta_1\beta = A_{\alpha}$ or possibly $B_{\alpha}$.

\smallskip

{\sc Claim: } $\zeta_1\beta\neq B_{\alpha}$, that is $\zeta_1 \neq A_{\alpha}$.
%
%
 Assume  $\zeta_1=A_{\alpha}$, then  $\beta = f^2(\ba)$.
We also have either $\beta = g(\delta_2)$ or $\beta = f(\delta_2)$ and we will
show that both lead to contradictions.

\smallskip

(i) Suppose $\beta = g(\delta_2)$, then $\zeta_2\beta$ 
is a monomial along $B_{\ba}$, and occurs in a relation. 
So it is either $A_{\ba}$ or $B_{\ba}$ and
then is $B_{\alpha}$. 
But then $\beta = f^2(\alpha)$ and $f^2(\ba)=f^2(\alpha)$,
a contradiction.

\smallskip

(ii) Suppose $\beta = f(\delta_2)$, then $\delta_2 = f(\ba)$ and we have
$$\zeta_2\beta = \zeta_2' f(\ba)f^2(\ba) = \zeta_2'\cdot c_{g(\ba)}A_{g(\ba)}
$$
and $\zeta_2'$ must be equal to $\ba$. 
However then we
have $\zeta_2 = \ba \delta_2 = \ba f(\ba)$ which is not a path along
a $g$-cycle, a contradiction.
So we must have that  $\zeta_1\beta = A_{\alpha}$.

\smallskip

{\sc Claim: } $g(\delta_2) = \bar{\beta}$ (and is not virtual).
Assume not, then  $g(\delta_2) = \beta$.  
This means that $\zeta_2\beta$ is an initial submonomial 
of $B_{\ba}$ and occurs in a relation. 
So it must be $A_{\ba}$ or $B_{\ba}$, and then 
by using the previous argument it must be $A_{\ba}$.  
But it is also a scalar multiple of $A_{\alpha}$, and this  
contradicts Lemma~\ref{lem:4.8}.
So $g(\delta_2) = \bar{\beta}$.

\smallskip

We can now use the previous argument again, and get that $\zeta_2\bar{\beta} = A_{\ba}$.

As well $\beta = f(\delta_2)$. 
Therefore
$$
  \zeta_2\beta = \zeta_2' \delta_2f(\delta_2)
    = c_{\bar{\delta_2}}\zeta_2' A_{\bar{\delta}_2}.
$$

We claim that $\bar{\delta}_2$ (and $\bar{\delta}_1$) are virtual.

Suppose $\bar{\delta}_2$ is not virtual, then 
$A_{\bar{\delta}_2}$ is in the second socle. 
It follows that
$\zeta_2' = e_i$ and $\zeta_2 = \ba$, 
and $\beta = f(\ba)$.
This implies $f(\beta) = f^2(\ba)$.
On the other hand, $\beta$ is the last arrow 
of $A_{\alpha}$ and therefore $g(\beta) = f^2(\ba)$,  
that is $f(\beta) = g(\beta)$, a contradiction.
The same reasoning using $\zeta_1$ and 
$\bar{\beta} = f(\delta_1)$ shows that
$\bar{\delta}_1$ is virtual.

\smallskip

Let $\eta = g^{-1}(\delta_2) = f^{-1}(\bar{\delta}_2)$. We have
$$\zeta_2\beta = c_{\bar{\delta}} \zeta_2'\bar{\delta}_2 = c_{\bar{\delta}}\zeta_2''\eta f(\eta)
= c_{\bar{\delta}_2}c_{\bar{\eta}} \zeta_2''A_{\bar{\eta}}
\leqno{(*)}$$
and this is a scalar multiple of $A_{\alpha} = \zeta_1 \beta$.

The arrow $\bar{\eta}$ is not virtual.
If $\bar{\delta}_2$ is a virtual loop 
then $g$ has cycle 
$(\eta \ \delta_2 \ \bar{\eta}\ \ldots )$ 
and $\bar{\eta}$ is not virtual.
If $\bar{\delta}_2$ is virtual but not a loop, 
we see that $f$ has cycle 
$(\eta \ \bar{\delta}_2 \ f^2(\eta))$ 
and $f^2(\eta)$ is not virtual. 
In this case $\bar{\eta}= g(f^2(\eta))$ 
and hence it is not virtual.

Now $A_{\bar{\eta}}$ is in the second socle, 
and therefore $\zeta_2''=e_i$ 
and $\eta = g^{-1}(\delta_2) = \ba$, 
and $\delta_2= g(\ba)$. 
We have proved
$\zeta_2= \ba g(\ba)$, 
and it follows that $f(\ba) = \bar{\delta}_2$ 
and we have proved that it is virtual.

The same reasoning, for $\zeta_1$ and $\bar{\beta}$ 
shows $\zeta_1 = \alpha g(\alpha)$ 
and it follows that $f(\alpha) = \bar{\delta}_1$, 
so it is virtual.
\end{proof}

\begin{lemma}\label{lem:4.11} 
Assume $\zeta = \zeta_1+a\zeta_2$ with $\zeta J=0$ but
$\zeta \not\in\langle B_{\alpha}\rangle$. 
Assume $\zeta_1=\alpha g(\alpha)$ and $\zeta_2=\ba g(\ba)$, and
moreover $f(\alpha)$ and $f(\ba)$ are
virtual, and $A_{\alpha}, A_{\ba}$ have length three. 
Then $\La$ is isomorphic to $T(1)$ or $S(1)$.
\end{lemma}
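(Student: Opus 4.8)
The plan is to extract from $\zeta\rad\La=0$ a multiplicative relation among the weights, to pin down the triangulation quiver $(Q,f)$ from the combinatorial hypotheses, and then to combine the two.

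\emph{The weight relation.} Put $j=t(g(\alpha))=t(g(\ba))$, so that $\zeta\in e_i\La e_j$, and observe that the two arrows starting at $j$ are $g^2(\alpha)$ and $g^2(\ba)$. Since $\overline{g(\beta)}=f(\beta)$ for every arrow $\beta$, we have $g^2(\ba)=\overline{g^2(\alpha)}=f(g(\alpha))$ and, symmetrically, $g^2(\alpha)=f(g(\ba))$. Because $m_\alpha n_\alpha=m_{\ba}n_{\ba}=4$, the length-three monomials $\alpha g(\alpha)g^2(\alpha)$ and $\ba g(\ba)g^2(\ba)$ equal $A_\alpha$ and $A_{\ba}$; moreover Remark~\ref{rem:4.6}(ii), applicable since $f(\alpha)$ and $f(\ba)$ are virtual, gives $\alpha g(\alpha)g^2(\ba)=\alpha g(\alpha)f(g(\alpha))=c_{f(\alpha)}c_{\ba}A_{\ba}$ and $\ba g(\ba)g^2(\alpha)=\ba g(\ba)f(g(\ba))=c_{f(\ba)}c_{\alpha}A_{\alpha}$. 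Hence
\[
 \zeta\,g^2(\alpha)=\bigl(1+a\,c_{\alpha}c_{f(\ba)}\bigr)A_{\alpha},
 \qquad
 \zeta\,g^2(\ba)=\bigl(a+c_{\ba}c_{f(\alpha)}\bigr)A_{\ba}.
\]
As $A_{\alpha},A_{\ba}\neq 0$ by Lemma~\ref{lem:4.5}, and as these are the only products $\zeta\delta$ with $\delta$ an arrow that can be non-zero, $\zeta\rad\La=0$ forces $a=-c_{\ba}c_{f(\alpha)}$ together with $a\,c_{\alpha}c_{f(\ba)}=-1$, hence
\[
 c_{\alpha}\,c_{\ba}\,c_{f(\alpha)}\,c_{f(\ba)}=1.\qquad(\star)
\]

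\emph{The quiver.} Since $f(\alpha)$ is virtual it is either a loop or not, and interchanging $\alpha$ and $\ba$ shows $f(\alpha)$ is a loop if and only if $f(\ba)$ is. If both are loops, then as in Remark~\ref{rem:4.2} the $f$-cycles are $(\alpha\ f(\alpha)\ g(\alpha))$ and $(\ba\ f(\ba)\ g(\ba))$, so that $g(\alpha)$ and $g(\ba)$ both end at $i$ and $j=i$; moreover the vertices $i$, $t(\alpha)$, $t(\ba)$ are pairwise distinct, since any coincidence would put two virtual arrows at one vertex, against Assumption~\ref{ass}. Thus $(Q,f)$ is precisely the quiver of Example~\ref{ex:3.4}. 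If neither $f(\alpha)$ nor $f(\ba)$ is a loop, then by Remark~\ref{rem:4.2}(ii) and Lemma~\ref{lem:4.3} the $g$-orbits $\{f(\alpha),g(f(\alpha))\}$ and $\{f(\ba),g(f(\ba))\}$ each consist of virtual arrows, and tracing $f$ and $g$ through them shows that when these two orbits coincide (that is, $f(\ba)=g(f(\alpha))$) the quiver is the three-vertex triangulation quiver of Example~\ref{ex:3.3}, while when they are distinct it is the six-vertex quiver of Example~\ref{ex:3.6}. In each case the weight function is forced by $m_\alpha n_\alpha=m_{\ba}n_{\ba}=4$ and $m_{f(\alpha)}n_{f(\alpha)}=2$, so $\La$ is an algebra $\Sigma(\cdot)$ as in Example~\ref{ex:3.4}, a triangle algebra $T(c_1,c_2,c_3)$, or a spherical algebra as in Example~\ref{ex:3.6}.

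\emph{Conclusion.} It remains to read off $(\star)$ in each case, using the identification of $\alpha,\ba,f(\alpha),f(\ba)$ with the named arrows of the relevant example. For the triangle algebra these are $\alpha_2,\beta_1,\alpha_3,\beta_3$, so $(\star)$ becomes $c_1c_2c_3^2=1$ and $\La\cong T(c_1c_2c_3^2,1,1)=T(1)$. For the spherical algebra they are $\alpha,\varrho,\xi,\varepsilon$, so $(\star)$ becomes the product of the four weight parameters equal to $1$ and $\La\cong S(1)$. For the Example~\ref{ex:3.4} quiver, $(\star)$ forces the defining parameter $\lambda$ of the corresponding $\Sigma(\lambda)$ to satisfy $\lambda^2=1$, so $\La\cong\Sigma(\pm1)\cong T(1)$ by Lemma~\ref{lem:3.5}. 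This proves the lemma. I expect the real work to be the quiver classification in the non-loop case: one must carefully follow the $f$- and $g$-orbits through the virtual arrows and rule out accidental coincidences of vertices, to see that no triangulation quiver other than those of Examples~\ref{ex:3.3} and \ref{ex:3.6} can occur.
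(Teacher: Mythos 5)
Your overall strategy matches the paper's: pin down the triangulation quiver from the combinatorial hypotheses, then use $\zeta\,\rad\Lambda=0$ to force the parameter to the singular value. The one genuine improvement is your relation $(\star)$: by noting that the two arrows out of $j$ are $g^2(\alpha)=f(g(\ba))$ and $g^2(\ba)=f(g(\alpha))$ and invoking Remark~\ref{rem:4.6}(ii), you extract the single normalization-free constraint $c_{\alpha}c_{\ba}c_{f(\alpha)}c_{f(\ba)}=1$ once, before any case distinction; the paper instead redoes this computation three times after normalizing the weights in each case. Your computation is correct, and the evaluations of $(\star)$ in the three cases (giving $c_1c_2c_3^2=1$, $abcd=1$, and $\lambda^2=1$ respectively) do recover the paper's conclusions.

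The gap is exactly where you say you expect it: the quiver classification in the non-loop case is asserted (``tracing $f$ and $g$ through them shows\ldots'') rather than proved, and this is the bulk of the paper's proof. Concretely, when the virtual $g$-orbits $(f(\alpha)\ g(f(\alpha)))$ and $(f(\ba)\ g(f(\ba)))$ coincide you must actually show $f(\ba)=g(f(\alpha))$ forces $g(\alpha)=f^2(\ba)$, $g(\ba)=f^2(\alpha)$, hence $n_{\alpha}=n_{\ba}=2$, $j=i$, and three pairwise distinct vertices carrying exactly the data of Example~\ref{ex:3.3}; when they are distinct you must show $n_{\alpha}=n_{\ba}=4$, that the arrows $\alpha, g(\alpha), f(\alpha), f^2(\alpha)$ (and their barred analogues) are pairwise distinct and none is a loop, that the six vertices $i$, $t(\alpha)$, $t(f(\alpha))$, $t(\ba)$, $t(f(\ba))$, $j$ are pairwise distinct, and that the remaining arrows $g^2(\alpha)=f(g(\ba))$, $g^2(\ba)=f(g(\alpha))$ close the quiver up into the spherical configuration of Example~\ref{ex:3.6} with trivial weights. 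This is what the paper's Case 1(a) and Case 2 carry out, using Remark~\ref{rem:4.2}, Lemma~\ref{lem:4.3} and Lemma~\ref{lem:2.9}(i). A smaller point: your claim that $f(\alpha)$ is a loop if and only if $f(\ba)$ is does not follow by ``interchanging $\alpha$ and $\ba$''; it needs the observation that $g(\alpha)$ and $g(\ba)$ share the target $j$, so if $f(\alpha)$ is a loop then $j=i$ and $g(\ba)$ must be $f^2(\ba)$, forcing $f(\ba)$ to be a loop as well. With these verifications supplied, your argument is complete and slightly more economical than the paper's.
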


\begin{proof}
Since $A_{\alpha}$ and $A_{\ba}$ have length 3, 
we have $m_{\alpha}n_{\alpha}=4 = m_{\ba}n_{\ba}$.

\smallskip

{\sc Case 1}.
Assume $i=j$. 
Then the arrows ending at $i$ are
$\{ g(\alpha), g(\ba)\} = \{ f^2(\alpha), f^2(\ba)\}$. 
So we have two cases
to consider.

\smallskip

(a) 
Assume $g(\alpha) = f^2(\ba)$ and $g(\ba) = f^2(\alpha)$. 
Let $x=t(\ba)$ and $y=t(\alpha)$. 
Then $f(\ba)$ is an arrow $x\to y$ and $f(\alpha)$ is an
arrow $y\to x$. 
The full subquiver with vertices $i, x, y$ is $2$-regular 
and hence is equal to $Q$.
Moreover we have
$g = (\alpha \ f^2(\ba))(\ba \  f^2(\alpha)) (f(\alpha) \ f(\ba))$ 
and hence $m_{\alpha}=2$ and $m_{\ba}=2$.
Also  $f= (\alpha \ f(\alpha) \ \delta_2)(\ba \ f(\ba) \ \delta_1)$, 
so these are precisely the data for the triangle algebra 
(Example~\ref{ex:3.3}),
with the quiver
\[
  \xymatrix@R=3.5pc@C=2.5pc{
    x
    \ar@<.35ex>[rr]^{f^2(\alpha)}
    \ar@<.35ex>[rd]^{f(\bar{\alpha})}
    && i
   \ar@<.35ex>[ll]^{\bar{\alpha}}
    \ar@<.35ex>[ld]^{\alpha}
    \\
    & y
    \ar@<.35ex>[lu]^{f(\alpha)}
    \ar@<.35ex>[ru]^{f^2(\bar{\alpha})}
  }
\]
Moreover, it follows from Example~\ref{ex:3.3}
that we may take (up to algebra isomorphism)
$c_{\ba}=\lambda$, 
$c_{\alpha}=1$,
$c_{f(\alpha)}=1$,
for some $\lambda \in K^*$.
Since $\zeta J = 0$,
we obtain the equalities
\begin{align*}
 0&= \big( \alpha f^2(\ba) + a \ba f^2 (\alpha)\big) \alpha
   =  \alpha f^2(\ba)  \alpha + a \ba f^2 (\alpha) \alpha
   =  \ba f(\ba)  + a \ba f (\ba) 
\\&
   = (1+a) \ba f(\ba) , 
\\
 0&= \big( \alpha f^2(\ba) + a \ba f^2 (\alpha)\big) \ba
   =  \alpha f^2(\ba)  \ba + a \ba f^2 (\alpha) \ba
   =  \alpha f(\alpha)  + a \ba f^2 (\alpha) \ba
\\&
   = (1+a \lambda^{-1}) \alpha f(\alpha) ,
\end{align*}
and hence $a = -1$ and $\lambda = 1$.
In particular, we conclude that 
$\Lambda$ is isomorphic to $T(1)$.

\smallskip

(b)  Assume $g(\alpha) = f^2(\alpha)$ and $g(\ba) = f^2(\ba)$.
Then $f(\alpha)$ and $f(\ba)$ are virtual loops and again $Q$ has
three vertices, and moreover 
$g = (\alpha \ f^2(\alpha) \ \ba \ f^2(\ba))(f(\alpha))(f(\ba))$ and $m_{\alpha}=1$. 
These are the data which 
determine the algebra $\Sigma$ introduced in Example~\ref{ex:3.4},
with the quiver
\[
  \xymatrix@C=2.8pc{
    x
    \ar@(ld,ul)^{f(\bar{\alpha})}[]
    \ar@<.5ex>[r]^{f^2(\bar{\alpha})}
    & i
    \ar@<.5ex>[l]^{\bar{\alpha}}
    \ar@<.5ex>[r]^{\alpha}
    & y
    \ar@<.5ex>[l]^{f^2(\alpha)}
    \ar@(ru,dr)^{f(\alpha)}[]
  } 
.
\]
Moreover, it follows from Example~\ref{ex:3.4}
that we may take (up to algebra isomorphism)
$c_{\alpha}=\lambda$, 
$c_{f(\alpha)}=1$,
$c_{f(\ba)}=1$,
for some $\lambda \in K^*$,
that is
$\Lambda$ is isomorphic to $\Sigma(\lambda)$.
Since $\zeta J = 0$,
we obtain the equalities
\begin{align*}
 0&= \big( \alpha f^2(\alpha) + a \ba f^2 (\ba)\big) \alpha
   =  \alpha f^2(\alpha)  \alpha + a \ba f^2 (\ba) \alpha
   =  \alpha f(\alpha)  + a \lambda^{-1} \alpha f (\alpha) 
\\&
   = (1+a \lambda^{-1}) \alpha f(\alpha) ,
\\
 0&= \big( \alpha f^2(\alpha) + a \ba f^2 (\ba)\big) \ba
   =  \alpha f^2(\alpha)  \ba + a \ba f^2 (\ba) \ba
   =  \lambda^{-1} \ba f(\ba)  + a \ba f (\ba) 
\\&
   = (\lambda^{-1}+a) \ba f(\ba) , 
\end{align*}
and hence $a = -\lambda^{-1}$ and $\lambda^2 = 1$.
In particular, we conclude that 
$\Lambda$ is isomorphic to $\Sigma(1) \cong \Sigma(-1)$,
and consequently to $T(1)$ (see  Example~\ref{ex:3.4}).

\medskip

{\sc Case 2}.
Assume $i \neq j$. 
Note first that $\alpha, f^2(\alpha)$  and $\ba, f^2(\ba)$ cannot be loops by
\ref{rem:4.2} and \ref{lem:4.3}.

We claim that arrows $\alpha, g(\alpha), f(\alpha)$ and $f^2(\alpha)$ are pairwise distinct.
Clearly $f(\alpha)$ is different from the other three since
it is the only one of these which is virtual.
If $\alpha = g(\alpha)$ then $\alpha$ ends at $j$ and $g(\alpha)$ is a loop,
and $\alpha$ is a loop, a contradiction.
Clearly $\alpha\neq f^2(\alpha)$ since otherwise
$f(\alpha) =\alpha$.
Finally $f^2(\alpha)$ ends at $i$ and $g(\alpha)$ does not end at $i$.
We observe that $f(\alpha)$ is not a (virtual) loop.
Otherwise we would have $f^2(\alpha)=g(\alpha)$ which we had excluded.
Similarly $f(\ba)$ is not a loop.
Similarly, we show that 
$\ba$, $g(\ba)$, $f(\ba)$ and $f^2(\ba)$ 
are pairwise distinct.

Let $x=t(f(\alpha))$ and $y=t(f(\ba))$. 
Since $m_{\alpha}n_{\alpha}=4$ we have the
arrow $g^2(\alpha): j\to y$. 
Similarly $g^2(\ba): j\to x$. 

It follows that $g^2(\alpha) = f(g(\ba))$  
and $g^2(\ba) = f(g(\alpha))$, 
and $f(g^2(\alpha)) = g(f(\ba))$ is virtual, 
and $f(g^2(\ba)) = g(f(\alpha))$ is virtual. 
By \ref{rem:4.2} and \ref{lem:4.3}, 
the arrows $g^2(\alpha)$ and $g^2(\ba)$ are not loops. 
Hence $Q$ is the quiver
\[
  \xymatrix@R=3.5pc@C=2pc{
    &&& i
    \ar[ld]^{\alpha}
    \ar[rrrd]^{\bar{\alpha}}
    \\   
    x
    \ar[rrru]^{f^2(\alpha)}
    \ar@<-.5ex>[rr]_(.6){g(f(\alpha))}
    && z
    \ar@<-.5ex>[ll]_(.4){f(\alpha)}
    \ar[rd]^(.4){g(\alpha)}
    && y
    \ar[lu]^{f^2(\bar{\alpha})}
    \ar@<-.5ex>[rr]_(.4){g(f(\bar{\alpha}))}
    && u
    \ar@<-.5ex>[ll]_(.6){f(\bar{\alpha})}
    \ar[llld]^{g(\bar{\alpha})}
    \\
    &&& j
    \ar[lllu]^{g^2(\bar{\alpha})}
    \ar[ur]^(.6){g^2(\alpha)}
  }
\]
of a spherical algebra,
with two pairs of virtual arrows,
and in fact the data 
are those for the spherical algebra,
namely $m_{\alpha} = 1$ and $m_{\bar{\alpha}} = 1$.
Moreover, it follows from Example~\ref{ex:3.6}
that we may take (up to algebra isomorphism)
$c_{\alpha}=\lambda$, 
$c_{\ba}=1$,
$c_{f(\alpha)}=1$,
$c_{f(\ba)}=1$,
for some $\lambda \in K^*$.
Now, since $\zeta J = 0$,
we obtain the equalities
\begin{align*}
 0&= \big( \alpha g(\alpha) + a \ba g (\ba)\big) g^2(\alpha)
   =  \alpha g(\alpha)  g^2(\alpha) + a \ba g(\ba) g^2(\alpha)
\\&
   =  \lambda^{-1} \ba f(\ba)  + a \ba f (\ba) 
   = (\lambda^{-1}+a) \ba f(\ba) , 
\\
 0&= \big( \alpha g(\alpha) + a \ba g(\ba)\big) g^2(\ba)
   =  \alpha g(\alpha)  g^2(\ba) + a \ba g(\ba) g^2(\ba)
   =  \alpha f(\alpha) + a \alpha f(\alpha)
\\&
   = (1+a) \alpha f(\alpha) ,
\end{align*}
and hence $a = -1$ and $\lambda = 1$.
In particular, we conclude that 
$\Lambda$ is isomorphic to $S(1)$.
\end{proof}

Since we exclude these algebras,  Lemma~\ref{lem:4.7} 
shows that we have a basis of $e_i\La$ in terms of initial submonomials
of $B_{\alpha}$ and $B_{\ba}$. 
Hence we get the expected formula for the dimension.

\begin{corollary}
\label{cor:4.12}
Let $i$ be a vertex of $Q$ and
$\alpha,\bar{\alpha}$ the two arrows in $Q$
with source $i$.
Then
$\dim_K e_i \Lambda =
  m_{\alpha} n_{\alpha}
  + m_{\bar{\alpha}} n_{\bar{\alpha}}$.
\end{corollary}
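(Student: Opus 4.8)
The plan is to read the dimension off the explicit bases of $e_i\Lambda$ produced in Lemma~\ref{lem:4.7}, splitting into the two cases of that lemma according to whether one of the two arrows at $i$ is virtual. By Assumption~\ref{ass}, $\alpha$ and $\bar\alpha$ cannot both be virtual, so the two cases are exhaustive. Moreover, since the singular triangle algebra $T(1)$ and the singular spherical algebra $S(1)$ have been excluded, Proposition~\ref{prop:4.9} guarantees that whenever $\alpha,\bar\alpha$ are both non-virtual we have $\soc(e_i\Lambda) = \langle B_\alpha\rangle$, so that Lemma~\ref{lem:4.7}(ii) applies; and Lemma~\ref{lem:4.7}(i) applies directly when one of the two arrows at $i$ is virtual.

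In the case where one of the arrows, say $\alpha$, is virtual, the arrow $\bar\alpha$ is not virtual (Assumption~\ref{ass}), and Lemma~\ref{lem:4.7}(i) gives the basis $\{e_i,\bar\alpha,\bar\alpha g(\bar\alpha),\dots,A_{\bar\alpha},B_{\bar\alpha},\bar\alpha f(\bar\alpha)\}$ of $e_i\Lambda$. Here $e_i,\bar\alpha,\dots,A_{\bar\alpha},B_{\bar\alpha}$ are exactly the initial subwords of $B_{\bar\alpha}$ of lengths $0,1,\dots,m_{\bar\alpha}n_{\bar\alpha}$, contributing $m_{\bar\alpha}n_{\bar\alpha}+1$ basis vectors, and $\bar\alpha f(\bar\alpha)$ contributes one more; hence $\dim_K e_i\Lambda = m_{\bar\alpha}n_{\bar\alpha}+2$. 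Since $\alpha$ is virtual we have $m_{\alpha}n_{\alpha}=2$, so this equals $m_{\alpha}n_{\alpha}+m_{\bar\alpha}n_{\bar\alpha}$, as required.

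In the remaining case $\alpha,\bar\alpha$ are both non-virtual, and Lemma~\ref{lem:4.7}(ii) gives a basis of $e_i\Lambda$ consisting of $e_i$, the initial subwords of $B_\alpha$ of lengths $1,\dots,m_{\alpha}n_{\alpha}-1$, the initial subwords of $B_{\bar\alpha}$ of lengths $1,\dots,m_{\bar\alpha}n_{\bar\alpha}-1$, and the element $B_\alpha$. Since $\alpha\neq\bar\alpha$, the positive-length subwords of $B_\alpha$ begin with $\alpha$ and those of $B_{\bar\alpha}$ begin with $\bar\alpha$, so the two families are disjoint (and disjoint from $\{e_i\}$), and $B_{\bar\alpha}$ is not listed separately because $c_{\alpha}B_{\alpha}=c_{\bar\alpha}B_{\bar\alpha}$ in $\Lambda$ by Lemma~\ref{lem:2.9}(iv). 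Counting gives $1+(m_{\alpha}n_{\alpha}-1)+(m_{\bar\alpha}n_{\bar\alpha}-1)+1 = m_{\alpha}n_{\alpha}+m_{\bar\alpha}n_{\bar\alpha}$.

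The only delicate point is the bookkeeping: one must not double-count the idempotent $e_i$, which is common to the two ``branches'' at $i$, nor treat $B_\alpha$ and $B_{\bar\alpha}$ as independent, since they span the same one-dimensional socle. Beyond matching the lists of Lemma~\ref{lem:4.7} with the lengths $m_{\bullet}n_{\bullet}$ there is no real obstacle here; the substantive work — constructing those bases and ruling out $T(1)$ and $S(1)$ — has already been carried out in Lemmas~\ref{lem:4.7}, \ref{lem:4.8} and Proposition~\ref{prop:4.9}.
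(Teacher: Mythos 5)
Your proposal is correct and follows exactly the route the paper intends: the corollary is stated there as an immediate consequence of the bases exhibited in Lemma~\ref{lem:4.7}, with Proposition~\ref{prop:4.9} (plus the exclusion of $T(1)$ and $S(1)$) supplying the socle hypothesis needed for part (ii), and your case split and counting reproduce this faithfully.
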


\begin{proposition}
\label{prop:4.13}
Let $(Q,f)$ be a triangulation quiver,
$m_{\bullet}$ and $c_{\bullet}$
weight and parameter functions of $(Q,f)$,
and 
$\Lambda = \Lambda(Q,f,m_{\bullet},c_{\bullet})$. 
Then the following statements hold:
\begin{enumerate}[(i)]
 \item
  $\Lambda$ is  finite-dimensional  with
  $\dim_K \Lambda = \sum_{\cO \in \cO(g)} m_{\cO} n_{\cO}^2$.
 \item
  $\Lambda$ is a symmetric algebra, except when $\La$ is the
		singular triangle, or spherical algebra. 
\end{enumerate}
\end{proposition}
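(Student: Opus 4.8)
For part~(i) I would argue as follows. Assume first that $\La$ is not isomorphic to the singular triangle algebra $T(1)$ or the singular spherical algebra $S(1)$; then Proposition~\ref{prop:4.9} gives $\soc(e_i\La)=\langle B_{\alpha_i}\rangle$ at every vertex $i$, where $\alpha_i,\bar\alpha_i$ denote the two arrows with source $i$, so Lemma~\ref{lem:4.7} and Corollary~\ref{cor:4.12} apply and yield $\dim_K e_i\La = m_{\alpha_i}n_{\alpha_i}+m_{\bar\alpha_i}n_{\bar\alpha_i}$. Summing over $i\in Q_0$ and using that every arrow of $Q$ occurs exactly once as some $\alpha_i$ (namely for $i=s(\alpha)$), this gives $\dim_K\La=\sum_{\alpha\in Q_1}m_\alpha n_\alpha$; regrouping the arrows by $g$-orbit, where an orbit $\cO$ has $n_\cO$ arrows each with $m_\alpha=m_\cO$ and $n_\alpha=n_\cO$, each orbit contributes $n_\cO m_\cO n_\cO=m_\cO n_\cO^2$, which is the claimed formula, and in particular $\La$ is finite-dimensional. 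For the two excluded algebras the same formula is checked directly from the presentations in Examples~\ref{ex:3.3} and~\ref{ex:3.6}.

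For part~(ii), the first point is that $T(1)$ and $S(1)$ genuinely have to be excluded: as recorded in Examples~\ref{ex:3.3} and~\ref{ex:3.6}, the algebras $T(1)\cong T(1)^0$ and $S(1)\cong S(1)^0$ are not symmetric --- not even self-injective --- since the relevant indecomposable projective has two-dimensional socle. So from now on assume $\La$ is neither. Using the bases of the $e_i\La$ provided by Lemma~\ref{lem:4.7}, I would define a linear form $\varphi\colon\La\to K$ by declaring $\varphi(c_{\alpha_i}B_{\alpha_i})=1$ on the one-dimensional socle of each $e_i\La$ --- this is unambiguous since $c_{\alpha_i}B_{\alpha_i}=c_{\bar\alpha_i}B_{\bar\alpha_i}$ by Lemma~\ref{lem:2.9}(iv) --- and $\varphi=0$ on the remaining basis elements; equivalently $\varphi(B_\alpha)=c_\alpha^{-1}$ for every arrow $\alpha$, while $\varphi$ vanishes on the idempotents $e_i$, on every proper initial submonomial of a $B_\alpha$, and on the extra basis elements $\bar\alpha f(\bar\alpha)$ occuring at vertices meeting a virtual arrow.

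It then remains to show that $\varphi$ is a symmetrizing form, i.e.\ that the associated bilinear form $(x,y)\mapsto\varphi(xy)$ is non-degenerate and symmetric. Non-degeneracy is the easy half: given $0\neq x\in\La$, the right ideal $x\La$ has a nonzero socle, which by the above lies in $\bigoplus_i\langle B_{\alpha_i}\rangle$; hence there are $y\in\La$ and a vertex $i_0$ with $xye_{i_0}=\lambda B_{\alpha_{i_0}}$ for some $\lambda\in K^*$, so $\varphi\bigl(x(ye_{i_0})\bigr)=\lambda c_{\alpha_{i_0}}^{-1}\neq0$, showing $\ker\varphi$ contains no nonzero right ideal. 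For symmetry it suffices to check $\varphi(uv)=\varphi(vu)$ on basis monomials $u,v$. If $uv$ is zero, or a nonzero element not lying in the socle, then $\varphi(uv)=0$; and the combinatorial step below, applied with $u,v$ interchanged, shows that then $vu$ is likewise zero or not in the socle, so $\varphi(vu)=0$ too. If $uv$ is a nonzero socle element, then with $i=s(u)$ one has $uv=c_{\alpha_i}B_{\alpha_i}$ up to a scalar (equal to $1$ after normalisation), and a direct analysis of how the relations of Definition~\ref{def:2.8} rewrite the concatenated path $u\cdot v$ as a full traversal of a $g$-cycle shows that $vu=c_\delta B_\delta$ for some arrow $\delta$ in the same $g$-orbit; since $c_\bullet$ is constant on $g$-orbits, $\varphi(uv)=1=\varphi(vu)$. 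Hence $\varphi$ is a symmetrizing form and $\La$ is symmetric.

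The substantive work is exactly that last combinatorial step: proving that a nonzero product $uv$ lies in the socle precisely when $u$ and $v$ are ``complementary arcs'' of a $g$-cycle after the relations have been applied, and that swapping them again produces such a complementary pair, cyclically rotated along the same orbit. This is an induction on the length of $u$, peeling off one arrow at a time via the relation $\alpha f(\alpha)=c_{\bar\alpha}A_{\bar\alpha}$ and the identities of Lemma~\ref{lem:2.9}, and it is where configurations near virtual arrows must be handled separately, using the special shape of the basis of $e_i\La$ in Lemma~\ref{lem:4.7}(i). Everything else --- the dimension count and the non-degeneracy of $\varphi$ --- is formal once Proposition~\ref{prop:4.9}, Corollary~\ref{cor:4.12} and Lemma~\ref{lem:4.7} are available.
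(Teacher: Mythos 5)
Your proof follows essentially the same route as the paper's: the dimension count via Lemma~\ref{lem:4.7} and Corollary~\ref{cor:4.12}, regrouped over $g$-orbits, and the very same symmetrizing form $\varphi$ with $\varphi(B_\alpha)=c_\alpha^{-1}$ and $\varphi=0$ on all other basis elements. In fact you supply more than the published argument, which merely defines $\varphi$ and asserts it is symmetrizing: you sketch the non-degeneracy via the socle and the (admittedly only outlined) combinatorial verification of symmetry on pairs of basis monomials, and you also check the dimension formula directly for the excluded algebras $T(1)$ and $S(1)$, a point the paper glosses over.
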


\begin{proof}
Let 
$I = I(Q,f,m_{\bullet},c_{\bullet})$.

\smallskip

(i)
It follows from Corollary~\ref{cor:4.12} that,
for each vertex $i$ of $Q$,
the indecomposable projective right
$\Lambda$-module $P_i$ at the vertex $i$
has the dimension
$\dim_K P_i = m_{\alpha} n_{\alpha} + m_{\bar{\alpha}} n_{\bar{\alpha}}$,
where $\alpha, \bar{\alpha}$ are the two arrows in $Q$ with source $i$.
Then we get
\[
  \dim_K \Lambda = \sum_{\cO \in \cO(g)} m_{\cO} n_{\cO}^2 .
\]

\smallskip

(ii)
	For each vertex $i \in Q_0$, we denote by
$\cB_i$ the basis of $e_i \Lambda$ consisting of $e_i$,
all initial subwords of $A_{\alpha}$ and $A_{\bar{\alpha}}$,
and
$\omega_i = c_{\alpha} B_{\alpha} = c_{\bar{\alpha}} B_{\bar{\alpha}}$
(see Lemma~\ref{lem:4.7} and Corollary~\ref{cor:4.12}).
We know that $\omega_i$ generates the socle of $e_i \Lambda$.
Then $\cB = \bigcup_{i \in Q_0} \cB_i$ is a $K$-linear basis of $\Lambda$.
We  defined a symmetrizing $K$-linear form
$\varphi : \Lambda \to K$ which assigns
to the coset $u + I$ of a path $u$ in $Q$ the element in $K$
\[
   \varphi(u+I) = \left\{ \begin{array}{cl}
      c_{\alpha}^{-1} & \mbox{if $u = \cB_{\alpha}$ for an arrow $\alpha \in Q_1$}, \\
	   0 & \mbox{if $u \in \cB$ otherwise}
   \end{array} \right.
\]
and extending linearly.
\end{proof}

\section{Periodicity of weighted surface algebras}\label{sec:per}

In this section we will prove that every weighted surface algebra
with at least  two simple modules,
not isomorphic to a 
disc, triangle, 
tetrahedral, spherical 
algebra, 
is a periodic algebra
of period $4$.

We recall briefly what we need from \cite{ESk-WSA}, for proofs and details
we refer to  \cite{ESk-WSA}.
Let $A = KQ/I$ be a bound quiver algebra, and let $A^e$ be the enveloping algebra.
Then  the $(e_i\otimes e_j)A^e= A(e_i\otimes e_j)A$
for $i,j \in Q_0$, form a complete set of pairwise non-isomorphic
indecomposable projective modules in $\mod A^e$
(see \cite[Proposition~IV.11.3]{SY}).

The following result by Happel \cite[Lemma~1.5]{Ha2} describes
the terms of a minimal projective resolution of $A$ in $\mod A^e$.

\begin{proposition}
\label{prop:5.1}
Let $A = K Q/I$ be a bound quiver algebra, where $Q$ is the Gabriel quiver of $A$.
Then there is in $\mod A^e$ a minimal projective resolution of $A$
of the form
\[
  \cdots \rightarrow
  \bP_n \xrightarrow{d_n}
\bP_{n-1} \xrightarrow{ }
  \cdots \rightarrow
  \bP_1 \xrightarrow{d_1}
  \bP_0 \xrightarrow{d_0}
  A \rightarrow 0,
\]
where
\[
  \bP_n = \bigoplus_{i,j \in Q_0}
	  A(e_i\otimes e_j)A^{\dim_K \Ext_A^n(S_i,S_j)}
\]
for any $n \in \bN$.
\end{proposition}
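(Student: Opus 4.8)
The plan is to obtain the resolution by the standard device of building \emph{any} minimal projective resolution of $A$ in $\mod A^e$ and then reading off the multiplicity with which each indecomposable projective occurs in each degree. Recall that $A^e$-modules are the $A$-$A$-bimodules, that $\{A(e_i\otimes e_j)A \cong Ae_i\otimes_K e_jA : i,j\in Q_0\}$ is a complete irredundant list of the indecomposable projective $A^e$-modules, and that the top of $A(e_i\otimes e_j)A$ is the one-dimensional simple bimodule $E_{ij}$ (on which $e_i$ acts as the identity from the left and $e_j$ as the identity from the right). Since $A^e$ is finite-dimensional and $A$ is a finitely generated $A^e$-module, a minimal projective resolution $\cdots\to\bP_1\to\bP_0\to A\to 0$ exists and is unique up to isomorphism, so it remains to compute $[\bP_n : A(e_i\otimes e_j)A]$ for all $i,j\in Q_0$ and $n\in\bN$.

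For this I would invoke the general multiplicity principle: in a minimal projective resolution $\bP_\bullet\to M$ over a finite-dimensional algebra $B$, applying $\Hom_B(-,T)$ for a simple module $T$ yields a complex with zero differentials, because minimality forces each image $\operatorname{im} d_{n+1}$ into $\rad\bP_n$; hence $\Ext^n_B(M,T)\cong\Hom_B(\bP_n,T)$, and since $\Hom_B(P,T)\cong\Hom_B(\topp P,T)$ and $\End_B(T)=K$ ($K$ being algebraically closed), $\dim_K\Ext^n_B(M,T)$ equals the multiplicity of the projective cover of $T$ in $\bP_n$. Taking $B=A^e$, $M=A$ and $T=E_{ij}$ gives $[\bP_n:A(e_i\otimes e_j)A]=\dim_K\Ext^n_{A^e}(A,E_{ij})$.

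The only non-formal step is then the identification $\Ext^n_{A^e}(A,E_{ij})\cong\Ext^n_A(S_i,S_j)$. I would fix a projective bimodule resolution $\bP_\bullet\to A$ and note that, since $A$ is projective over the field $K$, the augmented complex is a bounded-below exact complex of projectives, hence split exact as a complex of one-sided $A$-modules; consequently $S_i\otimes_A\bP_\bullet\to S_i$ is a projective resolution of the right module $S_i$, because $S_i\otimes_A(Ae_k\otimes_K e_lA)\cong (S_ie_k)\otimes_K e_lA$ is a direct sum of copies of $e_lA$. Combining this with the tensor--hom adjunction $\Hom_{A^e}(M,\Hom_K(S_i,S_j))\cong\Hom_A(S_i\otimes_A M,S_j)$ and the identification of $\Hom_K(S_i,S_j)$, with its natural bimodule structure, with the simple bimodule $E_{ij}$, and passing to cohomology, gives the claim. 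For $n=0,1$ one can also bypass this and write $\bP_\bullet$ explicitly ($\bP_0=\bigoplus_i A(e_i\otimes e_i)A$ with augmentation the multiplication map, $\bP_1=\bigoplus_{\alpha\in Q_1}A(e_{s(\alpha)}\otimes e_{t(\alpha)})A$ with $e_{s(\alpha)}\otimes e_{t(\alpha)}\mapsto\alpha\otimes e_{t(\alpha)}-e_{s(\alpha)}\otimes\alpha$), checking exactness at $A$ and minimality directly.

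I expect the main obstacle to be purely bookkeeping: one must keep the left/right conventions straight so that the source--target labelling of arrows and of the bimodules $A(e_i\otimes e_j)A$ matches the stated formula (interchanging $i$ and $j$ only reindexes the direct sum), and one must justify the contractibility of the augmented bimodule resolution as a complex of one-sided modules in order to make the tensor identity legitimate at the level of resolutions rather than only in homology. Alternatively, the whole isomorphism $\Ext^n_{A^e}(A,\Hom_K(S_i,S_j))\cong\Ext^n_A(S_i,S_j)$ may simply be quoted from standard references on Hochschild cohomology (Cartan--Eilenberg), reducing the proof to the multiplicity principle above.
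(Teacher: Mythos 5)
Your argument is correct and complete in outline. The paper itself offers no proof of this statement --- it is quoted as Happel's Lemma~1.5 from \cite{Ha2} --- and your two-step argument (the multiplicity principle for a minimal projective resolution over $A^e$ applied to the simple bimodules $E_{ij}$, followed by the identification $\Ext^n_{A^e}(A,\Hom_K(S_i,S_j))\cong\Ext^n_A(S_i,S_j)$ obtained from the one-sided splitting of the augmented bimodule resolution and tensor--hom adjunction) is precisely the standard proof underlying that reference, with the only delicate points (left/right conventions and the legitimacy of applying $S_i\otimes_A-$ at the level of resolutions) correctly identified and handled.
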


We will need details for the first  three differentials.
We have
\[
  \bP_0  = \bigoplus_{i \in Q_0} A e_i \otimes e_i A .
\]
The homomorphism $d_0 : \bP_0 \to A$ in $\mod A^e$ defined by
$d_0 (e_i \otimes e_i) = e_i$ for all $i \in Q_0$
is a minimal projective cover of $A$ in $\mod A^e$.
Recall that, for two vertices $i$ and $j$ in $Q$,
the number of arrows from $i$ to $j$ in $Q$ is equal
to $\dim_K \Ext_A^1(S_i,S_j)$
(see \cite[Lemma~III.2.12]{ASS}).
Hence we have
\[
  \bP_1
= \bigoplus_{\alpha \in Q_1} A e_{s(\alpha)} \otimes e_{t(\alpha)} A
        .
\]
Then we have the following known fact (see \cite[Lemma~3.3]{BES4}
for a proof).

\begin{lemma}
\label{lem:5.2}
Let $A = K Q/I$ be a bound quiver algebra, and
$d_1 : \bP_1 \to \bP_0$ the homomorphism in $\mod A^e$
defined by
\[
 d_1(e_{s(\alpha)} \otimes e_{t(\alpha)}) =
   \alpha \otimes e_{t(\alpha)} - e_{s(\alpha)} \otimes \alpha
\]
for any arrow $\alpha$ in $Q$.
Then $d_1$ induces a minimal projective cover
$d_1 : \bP_1 \to \Omega_{A^e}^1(A)$ of
$\Omega_{A^e}^1(A) = \Ker d_0$ in $\mod A^e$.
In particular, we have
$\Omega_{A^e}^2(A) \cong \Ker d_1$ in $\mod A^e$.
\end{lemma}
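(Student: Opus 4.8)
The plan is to check by hand that $d_1$ is a well-defined morphism in $\mod A^e$ with image $\Omega_{A^e}^1(A) = \Ker d_0$, and then to deduce minimality from Happel's description of the resolution in Proposition~\ref{prop:5.1}. For well-definedness, note that since $\alpha = e_{s(\alpha)}\alpha e_{t(\alpha)}$ in $A$, the element $\alpha\otimes e_{t(\alpha)} - e_{s(\alpha)}\otimes\alpha$ lies in $e_{s(\alpha)}\bP_0\, e_{t(\alpha)}$, which is exactly where the canonical generator $e_{s(\alpha)}\otimes e_{t(\alpha)}$ of the summand $Ae_{s(\alpha)}\otimes e_{t(\alpha)}A$ of $\bP_1$ may be sent; hence the assignment extends uniquely to an $A^e$-homomorphism. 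Composing with $d_0$ (the multiplication map, $ae_i\otimes e_ib\mapsto ab$) gives $d_0 d_1(e_{s(\alpha)}\otimes e_{t(\alpha)}) = \alpha e_{t(\alpha)} - e_{s(\alpha)}\alpha = 0$, so $\Im d_1\subseteq\Ker d_0 = \Omega_{A^e}^1(A)$.

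The main step, and the one needing the most care, is to show that $d_1$ maps onto $\Ker d_0$. The key device is a telescoping identity: for a path $p = \alpha_1\cdots\alpha_n$ from $i$ to $j$ in $Q$ one has
\[
 p\otimes e_j - e_i\otimes p
 = \sum_{k=1}^{n}(\alpha_1\cdots\alpha_{k-1})\, d_1\big(e_{s(\alpha_k)}\otimes e_{t(\alpha_k)}\big)\, (\alpha_{k+1}\cdots\alpha_n),
\]
which one verifies by expanding the right-hand side. Writing $M = \Im d_1$, this gives $p\otimes e_j\equiv e_i\otimes p\pmod M$ for every path $p$, and multiplying on the right by an arbitrary path $q$ with $s(q) = t(p)$ yields $p\otimes q\equiv e_i\otimes pq\pmod M$. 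Since the elements $p\otimes q$ (cosets of paths with $t(p) = s(q)$) span $\bP_0$ over $K$, an arbitrary $\xi\in\Ker d_0$ is congruent modulo $M$ to an element of the form $\sum_{r\in Q_0}e_r\otimes w_r$ with $w_r\in e_rA$; applying $d_0$ and comparing components in the decomposition $A = \bigoplus_{r}e_rA$ forces every $w_r = 0$, so $\xi\in M$. Thus $d_1\colon\bP_1\to\Omega_{A^e}^1(A)$ is onto.

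Finally, to see that this epimorphism is a \emph{minimal} projective cover, I would invoke Proposition~\ref{prop:5.1} together with $\dim_K\Ext_A^1(S_i,S_j) = \#\{\text{arrows } i\to j\}$ (recall $Q$ is the Gabriel quiver of $A$): these identify $\bP_1 = \bigoplus_{\alpha\in Q_1}Ae_{s(\alpha)}\otimes e_{t(\alpha)}A$ with the projective cover of $\Omega_{A^e}^1(A)$ in $\mod A^e$. Then a standard argument applies: an epimorphism from the projective-cover module onto a module lifts, by projectivity, to an endomorphism of $\bP_1$, which is surjective by Nakayama's lemma and hence an automorphism since $A^e$ is finite-dimensional; it therefore carries $\Ker d_1$ into $\rad\bP_1$, so $d_1$ is a minimal projective cover and $\Omega_{A^e}^2(A)\cong\Ker d_1$. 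The only genuinely delicate point is the bookkeeping in the telescoping computation and the reduction of a general element of $\Ker d_0$; the rest is formal.
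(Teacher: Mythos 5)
Your argument is correct. Note that the paper does not actually prove this lemma: it is quoted as a known fact with a pointer to \cite[Lemma~3.3]{BES4}, so there is no in-text proof to compare against. Your route is the standard one and meshes exactly with the machinery the paper sets up immediately afterwards: your telescoping identity is precisely the statement $d_1(\varrho(p)) = p\otimes e_{t(p)} - e_{s(p)}\otimes p$ for the $K$-linear map $\varrho$ defined before Lemma~\ref{lem:5.3}, and your reduction of an element of $\Ker d_0$ to the normal form $\sum_r e_r\otimes w_r$ followed by applying $d_0$ is the usual way surjectivity is established in \cite{BES4}. The well-definedness check (the target lies in $e_{s(\alpha)}\bP_0 e_{t(\alpha)}$, which is exactly $\Hom_{A^e}(Ae_{s(\alpha)}\otimes e_{t(\alpha)}A,\bP_0)$) and the minimality step (identifying $\bP_1$ with the first term of Happel's minimal resolution via $\dim_K\Ext^1_A(S_i,S_j)=\#\{\text{arrows }i\to j\}$, which requires $I\subseteq R_Q^2$ so that $Q$ is the Gabriel quiver --- an assumption the paper makes in Proposition~\ref{prop:5.1} --- and then the lift-plus-Nakayama argument) are both sound. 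So your proof is a complete, self-contained substitute for the citation.
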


For the algebras $A$ we will consider, the kernel
$\Omega_{A^e}^2(A)$ of $d$ will be generated,
as an $A$-$A$-bimodule, by some elements of $\bP_1$
associated to a set of relations generating the
admissible ideal $I$.
Recall that a relation in the path algebra $KQ$
is an element of the form
\[
  \mu = \sum_{r=1}^n c_r \mu_r
  ,
\]
where $c_1, \dots, c_r$ are non-zero elements of $K$ and
$\mu_r = \alpha_1^{(r)} \alpha_2^{(r)} \dots \alpha_{m_r}^{(r)}$
are paths in $Q$ of length $m_r \geq 2$, $r \in \{1,\dots,n\}$,
having a common source and a common target.
The admissible ideal $I$ can be generated by a finite set
of relations in $K Q$ (see \cite[Corollary~II.2.9]{ASS}).
In particular, the bound quiver algebra $A = K Q/I$ is given
by the path algebra $K Q$ and a finite number of identities
$\sum_{r=1}^n c_r \mu_r = 0$ given by a finite set of generators of
the ideal $I$.
Consider the $K$-linear homomorphism $\varrho : K Q \to \bP_1$
which assigns to a path $\alpha_1 \alpha_2 \dots \alpha_m$ in $Q$
the element
\[
  \varrho(\alpha_1 \alpha_2 \dots \alpha_m)
   = \sum_{k=1}^m \alpha_1 \alpha_2 \dots \alpha_{k-1}
                  \otimes \alpha_{k+1} \dots \alpha_m
\]
in $\bP_1$, where $\alpha_0 = e_{s(\alpha_1)}$
and $\alpha_{m+1} = e_{t(\alpha_m)}$.
Observe that
$\varrho(\alpha_1 \alpha_2 \dots \alpha_m) \in e_{s(\alpha_1)} \bP_1 e_{t(\alpha_m)}$.
Then, for a relation $\mu = \sum_{r=1}^n c_r \mu_r$
in $K Q$ lying in $I$, we have an element
\[
  \varrho(\mu) = \sum_{r=1}^n c_r \varrho(\mu_r) \in e_i \bP_1 e_j ,
\]
where $i$ is the common source and $j$ is the common
target of the paths $\mu_1,\dots,\mu_r$.
The following lemma shows that relations always
produce elements in the kernel of $d_1$;  the proof
is straightforward.

\begin{lemma}
\label{lem:5.3}
Let $A = K Q/I$ be a bound quiver algebra and
$d_1 : \bP_1 \to \bP_0$ the homomorphism in $\mod A^e$
defined in Lemma~\ref{lem:5.2}.
Then for any relation $\mu$ in $K Q$ lying in $I$,
we have $d_1(\varrho(\mu)) = 0$.
\end{lemma}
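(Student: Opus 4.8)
The plan is to reduce to the case of a single path by linearity and then perform a telescoping computation. Since $\varrho$ and $d_1$ are both $K$-linear and $\mu = \sum_{r=1}^n c_r \mu_r$, it is enough to evaluate $d_1(\varrho(p))$ for an arbitrary path $p = \alpha_1 \alpha_2 \cdots \alpha_m$ in $Q$ from a vertex $i$ to a vertex $j$, and then recombine the contributions of the $\mu_r$.

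First I would compute the effect of $d_1$ on a single summand of $\varrho(p)$. The monomial $\alpha_1 \cdots \alpha_{k-1} \otimes \alpha_{k+1} \cdots \alpha_m$ belongs to the direct summand $A e_{s(\alpha_k)} \otimes e_{t(\alpha_k)} A$ of $\bP_1$ indexed by the arrow $\alpha_k$ (the left-hand factor ends at $s(\alpha_k)$ and the right-hand factor starts at $t(\alpha_k)$). Since $d_1$ is a homomorphism of $A$-$A$-bimodules which on that summand is given by $e_{s(\alpha_k)} \otimes e_{t(\alpha_k)} \mapsto \alpha_k \otimes e_{t(\alpha_k)} - e_{s(\alpha_k)} \otimes \alpha_k$, one obtains
\[
  d_1\big(\alpha_1 \cdots \alpha_{k-1} \otimes \alpha_{k+1} \cdots \alpha_m\big)
  = \alpha_1 \cdots \alpha_k \otimes \alpha_{k+1} \cdots \alpha_m
  - \alpha_1 \cdots \alpha_{k-1} \otimes \alpha_k \cdots \alpha_m .
\]
Summing over $k = 1, \dots, m$ (with the conventions $\alpha_0 = e_{s(\alpha_1)}$ and $\alpha_{m+1} = e_{t(\alpha_m)}$), the right-hand sides telescope, so that all intermediate terms cancel and only the two extreme terms survive:
\[
  d_1(\varrho(p)) = \alpha_1 \cdots \alpha_m \otimes e_{t(\alpha_m)} - e_{s(\alpha_1)} \otimes \alpha_1 \cdots \alpha_m = p \otimes e_j - e_i \otimes p
\]
in $\bP_0 = \bigoplus_{\ell \in Q_0} A e_\ell \otimes e_\ell A$.

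Finally I would reassemble: since the paths $\mu_1, \dots, \mu_n$ all have common source $i$ and common target $j$, linearity gives
\[
  d_1(\varrho(\mu)) = \sum_{r=1}^n c_r \big(\mu_r \otimes e_j - e_i \otimes \mu_r\big)
   = \mu \otimes e_j - e_i \otimes \mu .
\]
Because $\mu$ lies in $I$, its image in $A = KQ/I$ is zero, hence both $\mu \otimes e_j$ and $e_i \otimes \mu$ vanish in $\bP_0$, and therefore $d_1(\varrho(\mu)) = 0$. No genuine difficulty arises here; the only points requiring some care are keeping track of which summand of $\bP_1$ each monomial occupies, so that the bimodule action of $d_1$ is applied to the correct $e_{s(\alpha_k)} \otimes e_{t(\alpha_k)}$, and noting that the telescoping sum leaves exactly $\mu \otimes e_j - e_i \otimes \mu$, which is where membership of $\mu$ in $I$ enters.
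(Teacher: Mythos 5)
Your proof is correct. The paper omits this argument entirely (it states only that ``the proof is straightforward''), and your telescoping computation --- applying the bimodule map $d_1$ summand by summand, cancelling the intermediate terms, and observing that the surviving boundary terms $\mu\otimes e_j - e_i\otimes\mu$ vanish because the image of $\mu$ in $A=KQ/I$ is zero --- is exactly the standard argument the authors are alluding to.
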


For an algebra $A = K Q/I$ in our context, we will define 
a family of relations $\mu^{(1)},\dots,\mu^{(q)}$
such that the associated elements
$\varrho(\mu^{(1)}), \dots, \varrho(\mu^{(q)})$ generate
the $A$-$A$-bimodule $\Omega_{A^e}^2(A) = \Ker d_1$.
In fact, using Lemma~\ref{lem:5.3},
we will  show that
\[
  \bP_2 = \bigoplus_{j = 1}^q A e_{s(\mu^{(j)})} \otimes e_{t(\mu^{(j)})} A
        ,
\]
and the homomorphism $d_2 : \bP_2 \to \bP_1$ in $\mod A^e$ such that
\[
  d_2 \big(e_{s(\mu^{(j)})} \otimes e_{t(\mu^{(j)})}\big) = \varrho(\mu^{(j)})
  ,
\]
for $j \in \{1,\dots,q\}$, defines a projective cover
of $\Omega_{A^e}^2(A)$ in $\mod A^e$.
In particular, we have $\Omega_{A^e}^3(A) \cong \Ker d_2$ in $\mod A^e$.
We will denote  this homomorphism $d_2$ by $R$.
The differential  $S:=d_3 : \bP_3 \to \bP_2$ will be defined later.

\bigskip

Now we fix 
a weighted surface algebra $\Lambda = \Lambda(Q,f,m_{\bullet},c_{\bullet})$
for a triangulation quiver $(Q,f)$
with at least  two vertices,
a weight function
$m_{\bullet}$
and
a parameter function
$c_{\bullet}$.
Moreover, we assume that 
$\Lambda$ is not a (non-singular) tetrahedral, or disc, or triangle, or spherical 
algebra (they are already dealt with in Proposition \ref{prop:3.8}).

We fix a vertex $i$ of $Q$, and we show that the simple module
$S_i$ is periodic of period four. 
Let $\alpha$ and $\ba$ be the arrows starting at $i$.

\begin{proposition}
\label{prop:5.4}
Assume that the arrows $\alpha, \ba$ are not virtual.
Then there is an exact sequence in $\mod \Lambda$
\[
  0 \rightarrow
  S_i \rightarrow
  P_i \xrightarrow{\pi_3}
  P_{t(f(\alpha))} \oplus P_{t(f(\bar{\alpha}))} \xrightarrow{\pi_2}
  P_{t(\alpha)} \oplus P_{t(\bar{\alpha})} \xrightarrow{\pi_1}
  P_i \rightarrow
  S_i \rightarrow
  0,
\]
which give rise to a minimal projective resolution of $S_i$ in $\mod \Lambda$.
In particular, $S_i$ is a periodic module of period $4$.
\end{proposition}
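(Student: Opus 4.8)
The plan is to construct the claimed maps explicitly and verify exactness by a dimension count together with the identification of syzygies. First I would compute $P_i$: by Corollary~\ref{cor:4.12} it has $K$-basis the initial subwords of $B_\alpha$ and $B_{\bar\alpha}$ (plus $e_i$ and the socle element $\omega_i$), and its radical is generated by the two arrows $\alpha,\bar\alpha$. Hence $\Omega_\Lambda(S_i)=\rad P_i$, and the natural surjection $P_{t(\alpha)}\oplus P_{t(\bar\alpha)}\xrightarrow{\pi_1}\rad P_i\hookrightarrow P_i$ sending the generators $e_{t(\alpha)},e_{t(\bar\alpha)}$ to $\alpha,\bar\alpha$ respectively is a projective cover. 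The real work is to determine $\Omega_\Lambda^2(S_i)=\Ker\pi_1$.

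The key computation is that $\Ker\pi_1$ is generated by two elements: the relation $\alpha f(\alpha)-c_{\bar\alpha}A_{\bar\alpha}$ (and its counterpart for $\bar\alpha$) forces $\alpha f(\alpha)$ and $c_{\bar\alpha}A_{\bar\alpha}$ to agree in $\Lambda$, so inside $P_{t(\alpha)}\oplus P_{t(\bar\alpha)}$ the element $u_1:=f(\alpha)\cdot e_{t(f(\alpha))}$ sitting in the first summand and $A'_{\bar\alpha}$ (the path with $\bar\alpha A'_{\bar\alpha}=A_{\bar\alpha}$, using the notation introduced before Lemma~\ref{lem:4.5}) in the second summand map to elements of $P_i$ that cancel after scaling; more precisely the combination $u_1 - c_{\bar\alpha}(\text{path along }\bar\alpha\text{-}g\text{-cycle})$ lies in $\Ker\pi_1$, and symmetrically there is $u_2$ built from $f(\bar\alpha)$ and $A'_\alpha$. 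I would then show $\{u_1,u_2\}$ generates $\Ker\pi_1$ as a right $\Lambda$-module and that the induced map $P_{t(f(\alpha))}\oplus P_{t(f(\bar\alpha))}\xrightarrow{\pi_2}P_{t(\alpha)}\oplus P_{t(\bar\alpha))}$ sending the two generators to $u_1,u_2$ is a projective cover of $\Ker\pi_1$; minimality follows because $u_1,u_2$ lie in the radical (each involves an arrow). The same analysis one step further — using the zero relations (2) and (3) of Definition~\ref{def:2.8}, which say $\alpha f(\alpha)g(f(\alpha))=0$ when $f^2(\alpha)$ is not virtual, and $\alpha g(\alpha)f(g(\alpha))=0$ when $f(\alpha)$ is not virtual, together with Remark~\ref{rem:4.6} for the cases when these are not virtual — shows $\Omega_\Lambda^3(S_i)=\Ker\pi_2$ is generated by a single element, giving the projective cover $P_i\xrightarrow{\pi_3}\Ker\pi_2$.

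Finally I would close the loop: by Proposition~\ref{prop:4.13} $\Lambda$ is symmetric (here $\Lambda$ is not a singular triangle or spherical algebra), hence $P_i$ is self-dual, so $\soc P_i\cong S_i$ and the image of $\pi_3$, being the socle-kernel situation, gives $\Omega_\Lambda^4(S_i)\cong S_i$. The cleanest way to see $\Omega_\Lambda^4(S_i)\cong S_i$ rather than just a periodic-with-some-period statement is the dimension count: adding $\dim_K P_i + \dim_K(P_{t(f(\alpha))}\oplus P_{t(f(\bar\alpha))}) + \dim_K(P_{t(\alpha)}\oplus P_{t(\bar\alpha)}) + \dim_K P_i$ with alternating signs must vanish on the exact sequence, and by Corollary~\ref{cor:4.12} the two middle terms combine, using Lemma~\ref{lem:2.9}(i), to exactly $\dim_K P_i + \dim_K P_i$ minus the contributions at the two ends; tracking this shows the alternating sum is $0$, confirming the sequence is exact with $S_i$ at both ends. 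The main obstacle I expect is the bookkeeping in identifying the generators of $\Ker\pi_1$ and $\Ker\pi_2$ and proving these are \emph{minimal} projective covers — in particular checking that no generator can be omitted — which requires careful use of the explicit basis from Lemma~\ref{lem:4.7} and attention to whether $f(\alpha)$, $f(\bar\alpha)$, $f^2(\alpha)$ or $f^2(\bar\alpha)$ happen to be virtual (the cases handled separately via Remark~\ref{rem:4.6} and, for virtual $\alpha$ or $\bar\alpha$, in the companion proposition not stated here).
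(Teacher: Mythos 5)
Your outline is correct and reconstructs the intended argument; note that the paper itself gives no proof of this case, but simply records that for $\alpha,\ba$ both non-virtual the statement is Proposition~7.1 of \cite{ESk-WSA}, whose proof proceeds by exactly the explicit syzygy computation you describe: $\Omega_\Lambda(S_i)=\rad P_i$ with top $S_{t(\alpha)}\oplus S_{t(\ba)}$, the two generators of $\Ker\pi_1$ obtained from the relations $\alpha f(\alpha)=c_{\ba}A_{\ba}$ and $\ba f(\ba)=c_{\alpha}A_{\alpha}$ (namely $(f(\alpha),-c_{\ba}A'_{\ba})$ and $(-c_{\alpha}A'_{\alpha},f(\ba))$ up to signs), a single generator of $\Ker\pi_2$, and the identification of the fourth syzygy with $S_i\cong\soc P_i$; your dimension check $\dim_K P_{t(f(\alpha))}+\dim_K P_{t(f(\ba))}=\dim_K P_{t(\alpha)}+\dim_K P_{t(\ba)}$ does indeed follow from Corollary~\ref{cor:4.12} together with Lemma~\ref{lem:2.9}(i). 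The one substantive point you raise that the bare citation glosses over is that in the present generality the arrows $f(\alpha)$, $f(\ba)$, $f^2(\alpha)$, $f^2(\ba)$ may be virtual even though $\alpha,\ba$ are not, so the zero relations entering the computation of $\Omega^2(S_i)$ and $\Omega^3(S_i)$ take the modified form recorded in Remark~\ref{rem:4.6}; the paper handles only virtual $\alpha$ or $\ba$ separately (Lemmas~\ref{lem:5.5} and~\ref{lem:5.6}) and leaves this adaptation of the argument of \cite{ESk-WSA} implicit, so carrying out the case analysis you flag is the only real work beyond the citation, and your plan for it (via Lemma~\ref{lem:4.7} and Remark~\ref{rem:4.6}) is the right one.
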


If the arrows 
$\alpha$ and $\ba$ 
are both not virtual then this is
Proposition~7.1 of \cite{ESk-WSA}.


\medskip

Now assume that $\ba$ is  a virtual loop, then $\alpha$ is not virtual.
Note that by Assumption \ref{ass} we have $m_{\alpha}n_{\alpha}\geq 4$. 
The quiver $Q$ contains a subquiver
\[
 \xymatrix{
  i \ar@(dl,ul)[]^{\ba} \ar@<+.5ex>[r]^{\alpha}
   & j \ar@<+.5ex>[l]^{f(\alpha)}  
 }
\]
and $f$ has a cycle $(\ba \ \alpha \ f(\alpha))$. Let $\gamma$ be the other arrow 
starting at vertex $j$, and $\delta$ be the other arrow ending at $j$. 

\begin{lemma}\label{lem:5.5} There is an exact sequence of $\La$-modules
	$$0 \to \Omega^{-1}(S_i) \to P_j \to P_j \to  \Omega(S_i)\to 0
	$$
	which gives rise to a periodic minimal projective resolution of $S_i$ 
	in ${\rm mod}\La$. In particular $S_i$ is periodic of period $4$.
\end{lemma}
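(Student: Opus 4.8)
The plan is to compute a minimal projective resolution of $S_i$ directly from the quiver relations, exploiting the fact that one of the arrows at $i$ is a virtual loop. Since $\ba$ is virtual, it does not appear in the Gabriel quiver $Q_\La$; by Lemma~\ref{lem:4.7}(i) the projective $P_i = e_i\La$ has basis obtained from the $g$-cycle of $\alpha$, so $P_i$ is uniserial-like with $\rad P_i$ generated by $\alpha$ alone. Thus the minimal projective cover of $S_i$ is $P_i$ with kernel $\Omega(S_i) = \alpha\La \subseteq P_j$, where $j = t(\alpha)$. Similarly $P_j = e_j\La$ has top $S_j$ and its radical is generated by the two arrows $\gamma$ and $f(\alpha)$ starting at $j$ (note $f(\alpha)$ is not virtual, by the general Assumption~\ref{ass}, since $\ba$ virtual forces $\alpha,f(\alpha)$ non-virtual). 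So I would first identify $\Omega(S_i)$ as a submodule of $P_j$ and then build the resolution two more steps to the right, and symmetrically use the self-duality (symmetry of $\La$, Proposition~\ref{prop:4.13}) to get the cosyzygy $\Omega^{-1}(S_i)$ as a quotient of $P_j$.

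\textbf{Key steps.} First, from the relation $\alpha f(\alpha) - c_{\ba}A_{\ba}$ of Definition~\ref{def:2.8}(1): since $\ba$ is a virtual loop, $A_{\ba} = e_j$ would be the wrong length — rather $m_{\ba}n_{\ba}=2$ gives $B_{\ba} = \ba^2$ and $A_{\ba} = \ba$, so in $\La$ we have $\alpha f(\alpha) = c_{\ba}\ba$, but $\ba$ is expressed via the Gabriel quiver as $\ba = c_{\ba}^{-1}\alpha f(\alpha)$ — so this is consistent and tells us $\ba$ is redundant. More usefully, the relevant relation at $i$ reading into $P_j$ is the one of type Definition~\ref{def:2.8}(1) applied to $\ba$: $\ba\ba = c_{\alpha}A_{\alpha}$, i.e. $\ba^2 = c_\alpha A_\alpha$. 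Expressing $\ba^2$ via $\ba = c_\ba^{-1}\alpha f(\alpha)$ gives $\alpha f(\alpha)\alpha f(\alpha) = c_\ba^2 c_\alpha A_\alpha$, which shows $\alpha(f(\alpha)\alpha f(\alpha) - c_\ba^2 c_\alpha A'_\alpha) = 0$, i.e. a generator of $\Omega^2(S_i)$ sitting inside $e_j\otimes e_i\La$ of the bimodule picture, or at the module level: the kernel of $P_j \xrightarrow{\cdot\alpha\cdot} \Omega(S_i)$ is generated by $f(\alpha)\alpha f(\alpha) - c_\ba^2 c_\alpha A'_\alpha$ together with the other relation forced by $\gamma$ (namely $\gamma$ must satisfy $\gamma\cdot(\text{path to }j) = 0$ coming from the zero relation Definition~\ref{def:2.8}(2) or (3) at $j$). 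Carrying this through, the second syzygy should again be generated by a single element landing in $P_j$, giving the middle map $P_j \to P_j$ in the claimed sequence; I would verify it has the stated form $0 \to \Omega^{-1}(S_i) \to P_j \xrightarrow{} P_j \to \Omega(S_i) \to 0$ by checking exactness at each spot via dimension count using Corollary~\ref{cor:4.12}.

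\textbf{Cosyzygy and periodicity.} For the left end, I would use that $\La$ is symmetric, so $\Omega^{-1}(S_i) \cong D\,\Omega(D S_i) $ computed in $\La^{\op}$, which by the same argument (the quiver $(Q,f^{-1})$ is again a triangulation quiver with $\bar\alpha$ still virtual) is a quotient of $P_j$. Splicing the length-one resolution of $\Omega(S_i)$ with the length-one coresolution gives the four-term exact sequence, and its periodicity — i.e. that iterating produces $\Omega^4(S_i)\cong S_i$ — follows because the two maps $P_j\to P_j$ are "transpose" to each other under the symmetrizing form $\varphi$ of Proposition~\ref{prop:4.13}, so the complex is self-dual of period $4$. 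Concretely, once the four-term sequence is established and all terms are projective except the ends, minimality is automatic (no split summands, as all maps are radical maps), and $S_i$ is periodic of period dividing $4$; that the period is exactly $4$ and not $1$ or $2$ follows since $\Omega(S_i)$ and $\Omega^2(S_i)$ are not isomorphic to $S_i$ (different dimensions/tops).

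\textbf{Expected main obstacle.} The delicate point is pinning down the \emph{exact} generator of $\Omega^2(S_i)$ inside $P_j$ and showing the resulting map $P_j\to P_j$ is well-defined and that its image is precisely $\Omega(S_i)$ — this requires careful bookkeeping with the virtual arrow $\ba$ being rewritten via $c_\ba^{-1}\alpha f(\alpha)$ and with the interplay between the relation $\ba^2 = c_\alpha A_\alpha$ and the zero relations at $j$ involving $\gamma,\delta$. In particular one must check that no \emph{extra} relations at $j$ (coming from $\gamma$ or from $f^2(\alpha)$) contribute further generators, so that $\bP_2$ really is a single copy of $\La e_j\otimes e_j\La$ and not larger; this is where the hypothesis $m_\alpha n_\alpha \ge 4$ (rather than $\ge 3$) of Assumption~\ref{ass} is used, to guarantee enough room along the $g$-cycle of $\alpha$ that the monomials $A'_\alpha$, $A_\alpha$ are genuinely shorter than $B_\alpha$ and the dimension counts close up correctly.
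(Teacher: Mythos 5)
Your overall strategy --- computing the syzygies of $S_i$ directly inside $P_j$ and splicing with a coresolution --- is the same as the paper's, but your identification of the generator of $\Omega^2(S_i)$ rests on a misread relation. In the $f$-cycle $(\ba\ \alpha\ f(\alpha))$ one has $f(\ba)=\alpha$, not $\ba$, so the quaternion relation attached to $\ba$ reads $\ba\,\alpha = c_{\alpha}A_{\alpha}$, not $\ba^2=c_{\alpha}A_{\alpha}$ (indeed $\ba^2=B_{\ba}$ is a socle element, not $c_\alpha A_\alpha$). The correct generator of $\ker\big(P_j\to\alpha\La\big)$ is therefore $\vf = f(\alpha)\alpha - c_{\ba}c_{\alpha}A'_{\alpha}$, with leading term of length $2$. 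Your candidate $f(\alpha)\alpha f(\alpha)-c_{\ba}^{2}c_{\alpha}A'_{\alpha}$ is not even in that kernel: $\alpha\cdot f(\alpha)\alpha f(\alpha) = (c_{\ba}\ba)^2 = c_{\ba}^{2}B_{\ba}=c_{\ba}c_{\alpha}B_{\alpha}$ lies in the socle, whereas $\alpha\cdot c_{\ba}^{2}c_{\alpha}A'_{\alpha}=c_{\ba}^{2}c_{\alpha}A_{\alpha}$ does not (since $m_{\alpha}n_{\alpha}\geq 4$), so the difference is nonzero.

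Beyond this, the step you flag as the ``main obstacle'' is precisely the substance of the argument and is left unproved in your sketch: one must show $\Omega^2(S_i)$ is cyclic, generated by $\vf$ alone, by verifying $\dim_K \vf\La = m_{\alpha}n_{\alpha}-1$. This requires (a) $\vf f(\alpha)=0$, so that $\vf\rad\La$ is generated by $\vf\gamma$ with $\gamma=g(\alpha)$, and (b) the fact that $A'_{\alpha}\gamma$ lies in the socle, which needs a short case analysis on whether $f(g^{-1}(\delta))$ is virtual and uses the exclusion of the triangle algebra; only then does the basis $\{\vf,\ \vf\gamma,\ \vf\gamma g(\gamma),\ \ldots\}$ close up at the right size and force $\bP_2$ to be a single copy of $P_j$. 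Your appeal to the symmetry of $\La$ for the cosyzygy end is a workable alternative to the paper's direct argument (which shows $\Omega^3(S_i)\supseteq f(\alpha)\La\cong\Omega^{-1}(S_i)$ and equates dimensions), but without the corrected generator and the dimension count the four-term exact sequence is not established.
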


\begin{proof}
	We have $\Omega(S_i) = \alpha \La$, and we take $\Omega^2(S_i)$ as 
$$
  \Omega^2(S_i) = \{ x\in e_j\La \mid \alpha x = 0\}.
$$
We have the following relations in $\La$:
\begin{enumerate}[(i)]
 \item
    $\alpha f(\alpha) = c_{\ba} \ba$, 
 \item
    $\ba \alpha = c_{\alpha}A_{\alpha}$.
\end{enumerate}
Hence 
$c_{\alpha}A_{\alpha} = \ba\alpha  = c_{\ba}^{-1} \alpha f(\alpha)\alpha$ 
and if we set
$$\vf:= f(\alpha)\alpha  - c_{\ba}c_{\alpha} A_{\alpha}'$$
(where $\alpha A_{\alpha}' = A_{\alpha}$), 
then $\vf \La \subseteq \Omega^2(S_i)$.
The module $\Omega^2(S_i)$ has dimension $m_{\alpha}n_{\alpha}-1$. 
We will now show that 
$\vf \La$ has the same dimension which will give equality.

\smallskip

(1) \ First we observe that $\vf f(\alpha)=0$. Namely
$$\vf f(\alpha) = f(\alpha)\alpha f(\alpha) - c_{\alpha}c_{\ba}A_{\alpha}'f(\alpha)
= f(\alpha)c_{\ba}\ba - c_{\alpha}c_{\ba}A_{\gamma} =0$$
since $f(\alpha)\ba = c_{\gamma}A_{\gamma}$ and $c_{\alpha}=c_{\gamma}$. 
Hence $\vf \rad \La$ is generated by $\vf \gamma$. 

\smallskip

(2) \ We show that $A_{\alpha}'\gamma$ lies in the socle. We have
$\gamma = f(\delta)$ and $A_{\alpha}'$ has length $\geq 2$. 
Therefore  $A_{\alpha}'\gamma = A_{\alpha}'' g^{-1}(\delta)\delta f(\delta)$.
The product of the last three arrows is zero unless possibly 
$f(g^{-1}(\delta))$ is virtual, and if so then it lies in the second socle,
by Lemma~\ref{lem:4.5}~(iii). 
Moreover, in this case,
$A_{\alpha}''$ is in the radical,
because $\Lambda$ is not the triangle
algebra considered in Example~\ref{ex:3.4}.
Hence, in any case 
$A_{\alpha}'\gamma$ lies in the socle. 

\smallskip

(3) \ We can now compute the dimension of $\vf \La$. 
By (1) and (2) the radical of $\vf \La$ is generated by
$\vf \gamma = f(\alpha) \alpha\gamma + u$, for an element $u$ in the socle. 
Now $f(\alpha)\alpha\gamma$ is a monomial along $B_{f(\alpha)}$ 
which has length $\geq 4$ and hence
$f(\alpha)\alpha \gamma$ is not in the socle. 
It follows that $\vf \La$ has basis
$$\{ \vf, \ \vf\gamma, \ \vf\gamma g(\gamma) = f(\alpha)\alpha\gamma g(\gamma), \ldots, B_{f(\alpha)}\}
$$
of size $m_{\alpha}n_{\alpha}-1$. 
Hence $\vf \La = \Omega^2(S_i)$. Note that this is not simple.

\smallskip

(4) \ We identify $\Omega^3(S_i)$ with $\{ x\in e_j\La\mid \vf x=0\}$, 
which has dimension $m_{\alpha}n_{\alpha}-1$.
By (1) we know that this contains $f(\alpha)\La$. 
Moreover $f(\alpha)\La$ is isomorphic to $\Omega^{-1}(S_i)$ 
which has the same dimension.
Hence we have $\Omega^3(S_i)\cong \Omega^{-1}(S_i)$ and $S_i$ has period $4$.
\end{proof}

Now assume $\ba$ is virtual but not a loop. Then $\alpha$ is not virtual,
and it cannot be a loop (see \ref{rem:4.2} and \ref{lem:4.3}).
We have the following diagram:
\[
  \xymatrix@R=3.pc@C=1.8pc{
    & j
    \ar[rd]^{f(\alpha)}
    \\
    i
    \ar[ru]^{\alpha}
    \ar@<-.5ex>[rr]_{\ba}
    && \bullet
    \ar@<-.5ex>[ll]_{f^2(\alpha)}
 \ar[ld]^{f(\ba)}
    \\
    & y
    \ar[lu]^{f^2(\ba)}
  }
\]

\medskip

\begin{lemma}\label{lem:5.6}
There is an exact sequence of $\La$-modules
	$$0\to \Omega^{-1}(S_i)\to P_y \to P_j \to \Omega(S_i)\to 0
	$$
	which gives rise to a minimal projective resolution of period $4$.
\end{lemma}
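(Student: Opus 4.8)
The plan is to mimic the structure of the proof of Lemma~\ref{lem:5.5}, working explicitly with the module category rather than with the bimodule resolution. We already know $\Omega(S_i) = \rad P_i = \alpha\Lambda + \ba\Lambda$; since $\ba$ is virtual, $\ba\Lambda \subseteq \rad^2 P_i$ is in fact a submodule of $\alpha\Lambda$ (because $\ba = c_\alpha^{-1}\alpha f(\alpha)$ by relation (1), up to the chosen normalization of $c_\alpha$), so $\Omega(S_i) = \alpha\Lambda$, a quotient of $P_j$. Thus $\Omega^2(S_i)$ is realised inside $e_j\Lambda$ as $\{x \in e_j\Lambda \mid \alpha x = 0\}$, and I would compute its dimension from Corollary~\ref{cor:4.12}: $\dim_K \Omega^2(S_i) = \dim_K e_j\Lambda - \dim_K \alpha\Lambda$. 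The first step is therefore to pin down a generator of this submodule. Using the relations $\alpha f(\alpha) = c_{\ba}A_{\ba}$ and, on the other side, $f^2(\alpha)\alpha = \ba \cdot$ (something), together with Lemma~\ref{lem:2.9}, I expect the generator to be an element of the form $\vf := f(\alpha)\cdot(\text{path}) - (\text{scalar})\,(\text{shorter path})$ analogous to the $\vf = f(\alpha)\alpha - c_{\ba}c_{\alpha}A'_{\alpha}$ of Lemma~\ref{lem:5.5}, but now starting at $j$ and living inside $e_j\Lambda e_y$ — indeed the diagram shows $f^2(\ba): y \to j$ and one expects $\Omega^2(S_i) = \vf\Lambda$ to be a quotient of $P_y$.

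The second step is to show $\vf\Lambda$ has the right dimension, by the same three-stage argument as in Lemma~\ref{lem:5.5}: (1) show $\vf$ kills the ``wrong'' arrow out of $y$ (the one not continuing along the relevant $g$-cycle), using relation (1) and the fact that the relevant arrows are not virtual — here I would invoke Remark~\ref{rem:4.2}(ii) and Lemma~\ref{lem:4.3} to control which neighbouring arrows can be virtual; (2) show that the ``tail correction term'' of $\vf$, once multiplied by the surviving arrow, lands in the socle, using Lemma~\ref{lem:4.5}(iii) and the fact that $\Lambda$ is not one of the excluded small algebras; (3) conclude that $\vf\Lambda$ has a basis consisting of initial subwords of a $B$-path down to its socle element, of total length matching $\dim_K\Omega^2(S_i)$, hence $\vf\Lambda = \Omega^2(S_i)$. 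Then for $\Omega^3(S_i)$ I would identify it with $\{x \in e_y\Lambda \mid \vf x = 0\}$, show this contains $f^2(\ba)\Lambda$ (equivalently the image of the map $P_y \to P_j$ is exactly $\vf\Lambda$ and its kernel is $f^2(\ba)\Lambda$), observe $f^2(\ba)\Lambda \cong \Omega^{-1}(S_i)$ by symmetry (the injective envelope / left-module version of the same computation), compare dimensions, and conclude equality — giving the four-term exact sequence $0 \to \Omega^{-1}(S_i) \to P_y \to P_j \to \Omega(S_i) \to 0$ and periodicity of period $4$.

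The main obstacle I anticipate is step (2): verifying that the correction term of $\vf$ multiplied by the continuing arrow genuinely lies in the socle. This is where the geometry of virtual arrows near the configuration in the displayed diagram matters most, and where the exclusion of the triangle and spherical algebras (Example~\ref{ex:3.4}, Example~\ref{ex:3.6}) gets used — precisely as in the parenthetical remark in step (2) of Lemma~\ref{lem:5.5}. One must check case-by-case whether $f(\ba)$, or an arrow one step further along, can itself be virtual; Remark~\ref{rem:4.2}(ii) says $f^2(\alpha)$ is the only other virtual arrow in this local picture and there are no virtual loops at $j$ or $y$, which should be enough to force the product of the last three arrows in the relevant monomial to vanish except when it lands in the second socle, and in that exceptional case the excluded-algebra hypothesis makes the remaining prefix lie in the radical. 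A secondary, purely bookkeeping, obstacle is getting the scalars in the definition of $\vf$ exactly right so that $\alpha\vf = 0$ in $\Lambda$; this is a direct computation with relation (1) and Lemma~\ref{lem:2.9}(ii)–(v), and I would not belabour it.
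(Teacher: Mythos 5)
Your plan is correct and follows essentially the same route as the paper's proof: the paper identifies $\Omega^2(S_i)=\{x\in e_j\La\mid \alpha x=0\}$, takes the generator $\vf=f(\alpha)f(\ba)-c_{\ba}c_{\alpha}A'_{\alpha}$ (the explicit form of your "analogous" element), shows $\vf f^2(\ba)=0$, analyses $A'_{\alpha}\gamma$ case-by-case (which is exactly the step you flag as the main obstacle, and where the exclusion of the spherical and triangle algebras enters, the latter in the subcase $j=y$), and concludes by the dimension count $\dim\vf\La=m_{f(\alpha)}n_{f(\alpha)}-1$ together with $f^2(\ba)\La\cong\Omega^{-1}(S_i)$. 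The only point not spelled out in your plan is the separate treatment of the degenerate case where $Q$ has three vertices ($j=y$), but your remarks on where the excluded algebras intervene cover precisely that situation.
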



\begin{proof} 
	We identify $\Omega(S_i) = \alpha \La$ and then 
$\Omega^2(S_i) = \{ x\in e_j\La\mid \alpha x = 0\}$. 
We have the following relations in $\La$:
\begin{enumerate}[(i)]
 \item
    $\alpha f(\alpha) = c_{\ba} \ba$,
 \item
    $\ba f(\ba) = c_{\alpha}A_{\alpha}$.
\end{enumerate}
Hence 
$c_{\alpha}A_{\alpha} = \ba f(\ba) 
 = c_{\ba}^{-1} \alpha f(\alpha)f(\ba)$ 
and if we set
$$\vf:= f(\alpha)f(\ba)   - c_{\ba}c_{\alpha} A_{\alpha}'$$
(where $\alpha A_{\alpha}' = A_{\alpha}$), 
then $\vf \La \subseteq \Omega^2(S_i)$.

The module $\Omega^2(S_i)$ has dimension 
$m_{f(\alpha)}n_{f(\alpha)}-1$. 
We want to show that $\vf \La$ has the same dimension.
Assume first that  $j\neq y$. 
Let $\gamma = g(f(\bar{\alpha}))$ and $\delta = f^{-1}(\gamma)$.

\smallskip

	(1) \ First we observe that $\vf f^2(\ba)=0$: Namely
	$$\vf f^2(\ba) = f(\alpha)f(\ba) f^2(\ba) - c_{\alpha}c_{\ba}A_{\alpha}'f^2(\ba).$$
The first term is 
	$$f(\alpha) c_{f^2(\alpha)}A_{f^2(\alpha)} = 
	c_{f^2(\alpha)}f(\alpha)f^2(\alpha) = c_{f^2(\alpha)}c_{g(\alpha)}A_{g(\alpha)}$$
	and the second term is the same since $A_{\alpha}'f^2(\ba)= A_{g(\alpha)}$
and $c_{\alpha}=c_{g(\alpha)}$, and $c_{f^2(\alpha)} = c_{\ba}$. 
So we get zero.
Hence $\vf \rad \La$ is generated by $\vf \gamma$.

\smallskip

(2) \ We analyse  $A_{\alpha}'\gamma$ and compute the dimension of
	$\vf\La$.

\smallskip
	
(2a) \ Assume $m_{\alpha}n_{\alpha}=3$. 
Then we claim that $A_{\alpha}'\gamma$ is in the second socle, 
and moreover $m_{f(\alpha)}n_{f(\alpha)} \geq 5$. 

	With this assumption, $A_{\alpha} = \alpha\delta$ where $\delta=g(\alpha)$, 
         so $A_{\alpha}'\gamma = \delta\gamma = c_{f(\alpha)}A_{f(\alpha)}$ 
	which is in the second socle. Consider the $g$-orbit of $f(\alpha)$, 
	it is $(f(\alpha) \ f(\ba) \ \gamma \  \ldots)$ and the last
	arrow in this orbit is the arrow $g^{-1}(f(\alpha)) \neq \alpha$ ending at $j$.

	We know that $\gamma$ cannot end at $j$ since then 
	the $f$-cycle of $g(\alpha)$ would not have length three. 
	Let $z=t(\gamma)$. Since $\gamma = f(g(\alpha))$, the arrow $f(\gamma)$ is an  arrow
	$z\to j$ and this is then the last arrow in the $g$-cycle in question. 
	This cannot be $g(\gamma)$ since it is $f(\gamma)$. 
	At best, $g(\gamma)$ is a loop at $z$ and then the $g$-cycle of $f(\alpha)$ has length 5, and
	in general it has length $\geq 5$.

	In this case $\vf\gamma = f(\alpha)f(\ba)\gamma + v$, 
         for an element $v$ in the second socle. 
	We postmultiply by $g(\gamma)g^2(\gamma)$ and get
	$$f(\alpha)f(\ba)\gamma g(\gamma)g^2(\gamma) + 0$$
	and this is a non-zero monomial along $B_{f(\alpha)}$. 
	We deduce using also (1) that $\dim \vf\La = m_{f(\alpha)}n_{f(\alpha)} - 1$,  and 
	we are done in this case.

\smallskip

(2b)  
Assume $m_{\alpha}n_{\alpha}\geq 5$, then $A_{\alpha}'\gamma
= A_{\alpha}'' g^{-1}(\delta)\delta f(\delta) = 0$
since $A_{\alpha}''$ is in the square of the radical and the other factor
is at least in the second socle.

It follows that the dimension of $\vf\La$ is equal to 
$m_{f(\alpha)}n_{f(\alpha)}-1$
 as required.

\smallskip

(2c) 
Assume $m_{\alpha}n_{\alpha} = 4$. 
Consider $A_{\alpha}'\gamma$. 

In this case $A_{\alpha}' = g(\alpha)\delta$ and
$A_{\alpha}'\gamma = g(\alpha)\delta f(\delta)$.
This is zero if $f(g(\alpha))$ is not virtual. 
We are left with the case where
$f(g(\alpha))$ is virtual,  and then 
$g(\alpha)\delta f(\delta)$ is a scalar multiple of
$g(\alpha)\bar{\delta} = c_{f(\alpha)}A_{f(\alpha)}$.

So $\vf \gamma = f(\alpha)f(\ba)\gamma - \lambda A_{f(\alpha)}$ for
a non-zero scalar $\lambda$. 
Because $\Lambda$ is not a spherical algebra, we have
$m_{f(\alpha)}n_{f(\alpha)}\geq 5$, then it follows as before
that $\dim \vf\La = m_{f(\alpha)}n_{f(\alpha)} -1$.

\smallskip

(3) By (1) we know $\Omega^3(S_i)$ contains $f^2(\ba)\La$ 
and this is isomorphic to $\Omega^{-1}(S_i)$. 
One sees that they have the same dimension and hence
are equal. 
This completes the proof in the case $i\neq j$.

\smallskip

Assume now that $i=j$, then $Q$ has three vertices.
Then $g$ is the product of three $2$-cycles, namely
$$g = \big(\alpha \ f^2(\ba)\big)\big(f(\alpha) \ f(\ba)\big)\big(\ba \ f^2(\alpha)\big).$$ 
By Assumption~\ref{ass}, we have $m_{\alpha}\geq 2$
and $m_{f(\alpha)}\geq 2$. 
Moreover, since $\Lambda$ is not a triangle algebra, we have
$m_{\alpha}+m_{f(\alpha)}\geq 5$. 
One sees similarly as before that $\vf \La$ has dimension$m_{f(\alpha)}n_{f(\alpha)}-1$, 
and then it follows again that $S_i$ has period four.
\end{proof}


To identify the projective $\mathbb{P}_2$
of a minimal bimodule resolution, we need $\Ext^2_{\La}(S_i, S_j)$ for simple modules $S_i, S_j$.
	
\begin{lemma}\label{lem:5.7}
The dimension of $\Ext_{\Lambda}^2(S_i, S_j)$ is equal to the number of arrows
$j\to i$ in the Gabriel quiver of $\La$.
\end{lemma}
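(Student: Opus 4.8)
The plan is to read off $\Ext^2_\Lambda(S_i,S_j)$ directly from the minimal projective resolutions of the simple modules already established in Proposition~\ref{prop:5.4} and Lemmas~\ref{lem:5.5} and~\ref{lem:5.6}. The underlying principle is standard: if $\cdots\to\mathbb{P}_2\to\mathbb{P}_1\to\mathbb{P}_0\to S_i\to 0$ is a \emph{minimal} projective resolution in $\mod\Lambda$, then each differential has image inside the radical of its target, so $\Hom_\Lambda(-,S_j)$ annihilates all differentials and $\dim_K\Ext^2_\Lambda(S_i,S_j)$ equals the multiplicity of $P_j$ as a direct summand of $\mathbb{P}_2$. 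Hence it suffices to prove that for every vertex $i$ the degree-two term of the minimal projective resolution of $S_i$ is $\bigoplus_{\gamma}P_{s(\gamma)}$, the sum running over the arrows $\gamma$ of the Gabriel quiver $Q_\Lambda$ ending at $i$.

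First I would fix $i$ and let $\alpha,\bar\alpha$ be the two arrows of $Q$ with source $i$. By Assumption~\ref{ass} at most one of them is virtual, so there are three cases, each already treated. If neither is virtual, Proposition~\ref{prop:5.4} gives $\mathbb{P}_2=P_{t(f(\alpha))}\oplus P_{t(f(\bar\alpha))}$. If (after relabelling) $\bar\alpha$ is a virtual loop, the four-term sequence of Lemma~\ref{lem:5.5} presents $\mathbb{P}_2$ as the projective cover $P_{t(\alpha)}$ of $\Omega^2(S_i)$. If $\bar\alpha$ is virtual but not a loop, the four-term sequence of Lemma~\ref{lem:5.6} gives $\mathbb{P}_2=P_y$ with $y=t(f(\bar\alpha))$, and this also covers the degenerate sub-case $i=t(\alpha)$ appearing there.

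It then remains to match these projectives with the arrows of $Q_\Lambda$ into $i$. By $2$-regularity and $f^3=\mathrm{id}$, the two arrows of $Q$ ending at $i$ are precisely $f^2(\alpha)$ and $f^2(\bar\alpha)$, with sources $t(f(\alpha))$ and $t(f(\bar\alpha))$, and passing to $Q_\Lambda$ deletes exactly the virtual ones. Virtuality is constant on $g$-cycles (Definition~\ref{def:virtual}), and $g(f^2(\alpha))=\bar\alpha$, $g(f^2(\bar\alpha))=\alpha$ by Lemma~\ref{lem:2.9}(i); hence $f^2(\alpha)$ is virtual iff $\bar\alpha$ is, and $f^2(\bar\alpha)$ is virtual iff $\alpha$ is. Combining this with the local pictures of Remark~\ref{rem:4.2}(i)--(ii) I would check: in the non-virtual case both $f^2(\alpha),f^2(\bar\alpha)$ survive, with sources $t(f(\alpha)),t(f(\bar\alpha))$, matching $\mathbb{P}_2$; in the virtual-loop case $f^2(\alpha)=\bar\alpha$ is deleted and the surviving arrow into $i$ is $f(\alpha)\colon t(\alpha)\to i$; in the virtual non-loop case $f^2(\alpha)$ is deleted (it is virtual by Remark~\ref{rem:4.2}(ii)) and the surviving arrow is $f^2(\bar\alpha)\colon t(f(\bar\alpha))\to i$. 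In each case the sources of the surviving arrows into $i$, counted with multiplicity, are exactly the indices labelling the summands of $\mathbb{P}_2$, which gives the claimed formula.

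The main obstacle is just the case bookkeeping around virtual arrows: one must be certain which of the two arrows through $i$ is removed from $Q_\Lambda$ and that its partner's source is the vertex labelling $\mathbb{P}_2$. All of this is forced by Lemma~\ref{lem:2.9}(i) together with the subquiver shapes recorded in Remark~\ref{rem:4.2}, so no computation is needed beyond what the cited results already provide.
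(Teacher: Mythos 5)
Your proposal is correct and is essentially the paper's own argument: the paper disposes of this lemma with the single sentence ``This follows from the calculation of syzygies of the simple modules,'' i.e.\ exactly the reading-off of $\mathbb{P}_2$ from the minimal resolutions in Proposition~\ref{prop:5.4} and Lemmas~\ref{lem:5.5} and~\ref{lem:5.6} that you carry out. Your case bookkeeping (using $g(f^2(\alpha))=\bar\alpha$ and constancy of virtuality on $g$-cycles to decide which arrow into $i$ survives in $Q_\Lambda$) is the correct way to fill in the details the paper leaves implicit.
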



\begin{proof} 
This follows from the calculation of
syzygies of the simple modules.
\end{proof}


Now we construct the first steps of a minimal
projective bimodule resolution of $\Lambda$.
Then we will show that
$\Omega_{\Lambda}^4(\Lambda) \cong \Lambda$
in $\mod \Lambda^e$.
We shall use the notation introduced in earlier in this section.
Recall the first few steps of a minimal projective resolution
of $\Lambda$ in $\mod \Lambda^e$,
\[
  \bP_3 \xrightarrow{S}
  \bP_2 \xrightarrow{R}
  \bP_1 \xrightarrow{d}
  \bP_0 \xrightarrow{d_0}
  \Lambda \to 0
\]
where

\begin{align*}
  \bP_0
     &= \bigoplus_{i \in Q_0} \Lambda e_i \otimes e_i \Lambda ,
  \\
  \bP_1
        &= \bigoplus_{\alpha \in (Q_{\La})_1} \Lambda e_{s(\alpha)} \otimes e_{t(\alpha)} \Lambda ,
\end{align*}
the homomorphism $d_0$
is defined by
$d_0 ( e_i \otimes e_i ) = e_i$ for all $i \in Q_0$,
and the homomorphism $d : \bP_1 \to \bP_0$
is defined by
\[
  d \big( e_{s(\alpha)} \otimes e_{t(\alpha)} \big)
    = \alpha \otimes e_{t(\alpha)} - e_{s(\alpha)} \otimes \alpha
\]
for any arrow $\alpha$ of the Gabriel quiver $Q_{\La}$.
In particular, we have
$\Omega_{\Lambda}^1(\Lambda) = \Ker d_0$
and
$\Omega_{\Lambda}^2(\Lambda) = \Ker d$.

\medskip

We define now the homomorphism $R : \bP_2 \to \bP_1$.
For each arrow $\alpha$ of $Q$ which is not virtual,
that is, it is an arrow of the Gabriel quiver
$Q_{\La}$, consider the  element in $K Q$
\[
  \mu_{\alpha}: = \ba f(\ba) - c_{\alpha} A_{\alpha} .
\]
Since we work with the Gabriel quiver, we must make substitutions.
Note first that since $\alpha$ is not virtual, no arrow in the $g$-cycle
of $\alpha$ is virtual and therefore $A_{\alpha}$ is a path in $Q_{\La}$.
If
$\ba$ is virtual then we substitute
$\ba = \alpha f(\alpha)$ (using that $c_{\ba}=1$ by assumption).
Note that
if $\ba$ is virtual then $f(\alpha)$ is not virtual (see \ref{rem:4.2} and
\ref{lem:4.3}). 
Similary we substitute  $f(\alpha) = g(\alpha)f(g(\alpha))$ if $f(\alpha)$ is
virtual. \ Recall that $\alpha, f(\alpha)$ cannot be both virtual.

Note also that $\mu_{\alpha} = e_{s(\ba)} \mu_{\alpha} e_{t(f(\ba))}$.
It follows from Proposition \ref{prop:5.1} and Lemma \ref{lem:5.7}
that $\bP_2$ is of the form
\[
  \bP_2
      = \bigoplus_{\alpha \in (Q_{\La})_1} 
       \Lambda e_{s(\ba)} \otimes e_{t(f(\ba))} \Lambda .
\]
We define the homomorphism $R : \bP_2 \to \bP_1$
in $\mod \Lambda^e$ by
\[
  R\big( e_{s(\ba)} \otimes e_{t(f(\ba))}\big) := \varrho (\mu_{\alpha})
\]
for any arrow $\alpha$ of the Gabriel quiver of $\La$,
where $\varrho : K Q \to \bP_1$
is the $K$-linear homomorphism defined 
earlier this section.

It follows from Lemma~\ref{lem:5.3} that
$\Im R \subseteq \Ker d$.

\begin{lemma}
\label{lem:5.8}
The homomorphism $R : \bP_2 \to \bP_1$
induces a projective cover
$\Omega_{\Lambda^e}^2(\Lambda)$
in $\mod \Lambda^e$.
In particular, we have $\Omega_{\Lambda^e}^3(\Lambda) = \Ker R$.
\end{lemma}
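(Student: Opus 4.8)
The plan is to show that $R$ is surjective onto $\Omega^2_{\Lambda^e}(\Lambda) = \Ker d$ and that it is a minimal cover, i.e. that $\Im R \subseteq \rad(\bP_1)$ and no proper direct summand of $\bP_2$ surjects onto $\Ker d$. The inclusion $\Im R \subseteq \Ker d$ is already given by Lemma~\ref{lem:5.3}, so the real content is surjectivity together with minimality; once both hold, the defining property of a projective cover gives $\Omega^3_{\Lambda^e}(\Lambda) = \Ker R$ automatically.

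First I would pin down $\dim_K \Ker d$ by a dimension count. From the exact sequence $\bP_1 \xrightarrow{d} \bP_0 \xrightarrow{d_0} \Lambda \to 0$ one has $\dim_K \Ker d = \dim_K \bP_1 - \dim_K \bP_0 + \dim_K \Lambda$; using $\dim_K \bP_0 = \sum_{i\in Q_0} (\dim_K \Lambda e_i)(\dim_K e_i\Lambda)$, the analogous formula for $\bP_1$ summed over arrows of the Gabriel quiver, and the dimension formula for $\Lambda$ from Proposition~\ref{prop:4.13}(i) together with Corollary~\ref{cor:4.12}, this is a finite bookkeeping computation. On the other side, since $\bP_2$ has one summand $\Lambda e_{s(\ba)}\otimes e_{t(f(\ba))}\Lambda$ per arrow $\alpha$ of $Q_\Lambda$, one computes $\dim_K \bP_2$ similarly, and then $\dim_K \Im R = \dim_K \bP_2 - \dim_K \Ker R$. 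The cleanest route is: (a) check $\Im R \subseteq \rad \bP_1$ directly from the shape of $\varrho(\mu_\alpha)$ — each term $\alpha_1\cdots\alpha_{k-1}\otimes \alpha_{k+1}\cdots\alpha_m$ of $\varrho$ applied to a path of length $\geq 2$ lies in $\rad\Lambda\otimes e\Lambda + \Lambda e\otimes\rad\Lambda$, so this is immediate (one must note $\mu_\alpha$ genuinely has length $\geq 2$ after the substitutions, which is where Assumption~\ref{ass} and the virtual-arrow analysis of Remark~\ref{rem:4.2} enter); (b) identify $\Ker R$ with (an isomorphic copy of) the bimodule that, together with the $\dim_K\bP_2$ count, forces $\dim_K\Im R = \dim_K\Ker d$. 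In practice the slick argument is to note that $\Ker d = \Omega^2_{\Lambda^e}(\Lambda)$ is generated as a bimodule by the elements $\varrho(\mu_\alpha)$ — this is the standard fact that the relations generate the second syzygy — so $R$ is onto; then minimality follows because the $\mu_\alpha$ form a \emph{minimal} set of relations (no $\varrho(\mu_\alpha)$ lies in the bimodule generated by the others plus $\rad$), which can be read off from the basis description of $e_i\Lambda$ in Lemma~\ref{lem:4.7} exactly as in the proof that the relations are minimal.

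The honest way to prove $R$ is onto, avoiding any appeal to a "standard fact" that is really the statement in disguise, is via the $\Ext$ computation: by Proposition~\ref{prop:5.1}, $\bP_2 = \bigoplus_{i,j} \Lambda(e_i\otimes e_j)\Lambda^{\dim_K\Ext^2_\Lambda(S_i,S_j)}$, and Lemma~\ref{lem:5.7} identifies $\dim_K\Ext^2_\Lambda(S_i,S_j)$ with the number of arrows $j\to i$ in $Q_\Lambda$, i.e. exactly the indexing set we used for $\bP_2$ (matching $s(\ba)$ and $t(f(\ba))$ against the source/target of $\mu_\alpha$ via Lemma~\ref{lem:2.9}(i), which gives $g(f^2(\alpha))=\ba$ and hence the correct vertices). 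Thus $\bP_2$ as we have written it \emph{is} the degree-$2$ term of the minimal bimodule resolution; since $R$ maps $\bP_2$ into $\Ker d = \Omega^2_{\Lambda^e}(\Lambda)$ with image in the radical and $\bP_2$ has the correct rank, it suffices to check that $R$ does not factor through a proper summand, equivalently that the top of $\Im R$ has full dimension $\dim_K \bP_2/\rad\bP_2$. This top is spanned by the classes of the $\varrho(\mu_\alpha)$ modulo $\rad^2$-type terms, and the leading term of $\varrho(\mu_\alpha)$ is $e_{s(\ba)}\otimes f(\ba) - \ba\otimes e_{t(f(\ba))}$ (from the length-$2$ part $\ba f(\ba)$, after substitution), together with $-c_\alpha(e\otimes g(\alpha) + \alpha\otimes\cdots)$ from $A_\alpha$ when $A_\alpha$ has length $\geq 2$; these are linearly independent across distinct $\alpha$ because they live in distinct summands $e_{s(\ba)}\bP_1 e_{t(f(\ba))}$ or are separated by the arrow they name. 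Hence $R$ is a projective cover, and $\Omega^3_{\Lambda^e}(\Lambda) = \Ker R$.

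The main obstacle I anticipate is the bookkeeping around virtual arrows in establishing that each $\mu_\alpha$, after the substitutions $\ba = \alpha f(\alpha)$ (when $\ba$ virtual) and $f(\alpha) = g(\alpha)f(g(\alpha))$ (when $f(\alpha)$ virtual), is a well-defined element of $KQ_\Lambda$ of length $\geq 2$ whose $\varrho$-image has a nonzero, correctly-placed leading term — and, relatedly, verifying that the source and target vertices $s(\ba)$, $t(f(\ba))$ attached to the $\alpha$-summand of $\bP_2$ really match the $(i,j)$ for which $\Ext^2_\Lambda(S_i,S_j)\neq 0$. This is precisely the point where one must invoke Assumption~\ref{ass}, Remark~\ref{rem:4.2}, and Lemma~\ref{lem:4.3} to rule out the degenerate configurations (two virtual arrows at a vertex, virtual loops fixed by $g$ in too-short cycles, etc.) that would break the length or placement claims; since we have already excluded the disc, triangle, tetrahedral and spherical algebras, no further exceptional case survives. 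The rest — the two dimension counts and the linear-independence of leading terms — is routine given Corollary~\ref{cor:4.12} and Lemma~\ref{lem:4.7}.
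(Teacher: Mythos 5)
Your overall frame is right --- one must show that $R$ maps onto $\Ker d$ and that the induced map is a projective cover, and the identification of $\bP_2$ via Proposition~\ref{prop:5.1} and Lemma~\ref{lem:5.7} is exactly as in the paper --- but the argument you give for surjectivity does not close. Knowing that $\bP_2$ is abstractly the projective cover of $\Ker d$ and that $\bP_2 \to \Im R$ is a projective cover of $\Im R$ does \emph{not} force $\Im R = \Ker d$: a proper sub-bimodule of $\Ker d$ can have projective cover isomorphic to $\bP_2$ (compare the socle of an indecomposable projective over a symmetric algebra, whose cover is that same projective). The Euler-characteristic computation of $\dim_K \Ker d$ does not rescue this unless you can compute $\dim_K \Ker R$ independently, which is circular. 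The paper's actual mechanism, which is absent from your proposal, is the reduction to the one-sided calculations: since all the bimodules in sight are projective as one-sided modules, $S_i \otimes_\Lambda \Ker d \cong \Omega_\Lambda^2(S_i)$, and Proposition~\ref{prop:5.4} together with Lemmas~\ref{lem:5.5} and~\ref{lem:5.6} show that $S_i \otimes_\Lambda \varrho(\mu_\alpha)$ is precisely the generator of $\Omega_\Lambda^2(S_i)$ constructed there. Hence $S_i \otimes_\Lambda (\Ker d / \Im R) = 0$ for every $i$, and $\Ker d = \Im R$ by Nakayama's lemma.

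The same omission undermines your minimality argument. You anticipate the virtual-arrow bookkeeping as an obstacle to be cleared by Assumption~\ref{ass} and Remark~\ref{rem:4.2}, but it cannot be cleared that way: if $\ba$ is a virtual loop, then Assumption~\ref{ass}(3) forces $m_\alpha n_\alpha \geq 4$, and after the substitution $\ba = \alpha f(\alpha)$ every monomial occurring in $\mu_\alpha$ has length at least $3$, so $\varrho(\mu_\alpha)$ lies entirely in $\rad^2 \bP_1$ and its class in $\rad \bP_1 / \rad^2 \bP_1$ is \emph{zero}. There is no ``nonzero, correctly placed leading term'' in these cases, so linear independence of leading terms cannot be the whole argument. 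The paper's proof accordingly splits: when $\alpha$ and $\ba$ are both non-virtual it argues exactly as you do, via independence in $\rad \bP_1/\rad^2\bP_1$; in the virtual cases it again applies $S_i \otimes_\Lambda -$ and quotes Lemmas~\ref{lem:5.5} and~\ref{lem:5.6} to see that $\varrho(\mu_\alpha)$ is a non-redundant generator. So the single missing idea, needed in both halves of your proof, is the passage to the explicit one-sided syzygies $\Omega^2_\Lambda(S_i)$ already computed earlier in Section~\ref{sec:per}.
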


\begin{proof}
         We know that
$\rad \Lambda^e
 = \rad \Lambda^{\op} \otimes \Lambda
   + \Lambda^{\op} \otimes \rad \Lambda$
(see \cite[Corollary~IV.11.4]{SY}).
It follows from the definition that the generators
$\varrho(\mu_{\alpha})$
of the image of $R$ are elements of $\rad \bP_1$
which are linearly independent in
        $\rad \bP_1 / \rad^2 \bP_1$, provided
        both $\alpha, \ba$ are not virtual.
        Suppose (say) $\ba$ is virtual, then we consider 
	$S_i\otimes_{\La} \varrho(\mu_{\alpha})$. 
	This is precisely the generator of the module $\Omega^2(S_i)$
	as constructed in Lemmas \ref{lem:5.5} and \ref{lem:5.6}.
	Therefore $\varrho(\mu_{\alpha})$ is a generator for the image
        of $R$. 
We  conclude that
$\varrho(\mu_{\alpha})$, $\alpha \in Q_1$,
form a minimal set of generators of the right
$\Lambda^e$-module
$\Omega_{\Lambda^e}^2(\Lambda)$.
Summing up, we obtain that $R : \bP_2 \to \Omega_{\Lambda^e}^2(\Lambda)$
is a projective cover of
$\Omega_{\Lambda^e}^2(\Lambda)$
in $\mod \Lambda^e$.
\end{proof}

By Proposition \ref{prop:5.1}, and the result that simple modules
have $\Omega$-period four, 
we have that $\bP_3$ is of the form
\[
  \bP_3 = \bigoplus_{i \in Q_0} \Lambda e_i \otimes e_i \Lambda .
\]
For each vertex $i \in Q_0$, we define
an element in $\bP_2$. If both arrows $\alpha, \ba$ starting at $i$ are not
virtual (so that also the arrows $f^2(\alpha), f^2(\ba)$ ending at $i$
are not virtual) then we define (as in \cite{ESk-WSA})
\begin{align*}
  \psi_i
   &= \big( e_i \otimes e_{t(f(\alpha))} \big) f^2(\alpha)
     + \big( e_i \otimes e_{t(f(\bar{\alpha}))} \big) f^2(\bar{\alpha})
     - \alpha \big( e_{t(\alpha)} \otimes e_i \big)
     - \bar{\alpha} \big( e_{t(\bar{\alpha})} \otimes e_i \big)
  \\&
    = \big(e_{s(\alpha)} \otimes e_{t(f(\alpha))}\big) f^2(\alpha)
      + \big(e_{s(\bar{\alpha})} \otimes e_{t(f(\bar{\alpha}))}\big) f^2(\bar{\alpha})
      - \alpha \big(e_{s(f(\alpha))} \otimes e_{t(f^2(\alpha))}\big)
  \\& \quad\,
- \bar{\alpha} \big(e_{s(f(\bar{\alpha}))} \otimes e_{t(f^2(\bar{\alpha}))}\big)
     .
\end{align*}
Suppose $\ba$ is virtual and then also 
$f^2(\alpha)$ is virtual, and
$\alpha, f^2(\ba)$ are not virtual. 
In this case, we take the
same formula but omit the terms 
which have virtual arrows
(and idempotents which do not occur in $\bP_2$). 
That is we define
\begin{align*}
        \psi_i &=
      \big( e_i \otimes e_{t(f(\bar{\alpha}))} \big) f^2(\bar{\alpha})
     - \alpha \big( e_{t(\alpha)} \otimes e_i \big)
     .
\end{align*}
Similarly, if $\alpha$ is virtual and then $f^2(\ba)$ is virtual, and
then $\ba, f^2(\alpha)$ are not virtual, we define
\begin{align*} \psi_i
  &= \big( e_i \otimes e_{t(f(\alpha))} \big) f^2(\alpha)
     - \bar{\alpha} \big( e_{t(\bar{\alpha})} \otimes e_i \big).
\end{align*}

We  define now a  homomorphism $S : \bP_3 \to \bP_2$
in $\mod \Lambda^e$ by
\[
  S( e_i \otimes e_i ) = \psi_i
\]
for any vertex $i \in Q_0$.

\begin{lemma}
	\label{lem:5.9}
The homomorphism $S : \bP_3 \to \bP_2$
induces a projective cover of
$\Omega_{\Lambda^e}^3(\Lambda)$
in $\mod \Lambda^e$.
In particular, we have
$\Omega_{\Lambda^e}^4(\Lambda) = \Ker S$.
\end{lemma}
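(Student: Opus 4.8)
The plan is to verify two things: first, that $\Im S \subseteq \Ker R$, so that $S$ does land inside $\Omega_{\Lambda^e}^3(\Lambda)$; and second, that $S$ is a \emph{projective cover} of $\Omega_{\Lambda^e}^3(\Lambda)$, i.e.\ that $S$ is surjective onto $\Ker R$ and that its kernel lies in $\rad \bP_3$. The inclusion $\Im S \subseteq \Ker R$ is the computational core: for each vertex $i$, one must check $R(\psi_i) = 0$ in $\bP_1$. Expanding $R$ applied to the four (or two) terms of $\psi_i$ using $R(e_{s(\ba)} \otimes e_{t(f(\ba))}) = \varrho(\mu_\alpha)$ and the $\Lambda^e$-linearity of $R$, together with the explicit form of $\varrho$ on the monomials $\ba f(\ba)$, $A_\alpha$, and their substituted versions, the claim reduces to a telescoping identity among tensors $u \otimes v$ with $u,v$ initial/terminal subwords of the $B_\bullet$. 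Here I would use Lemma~\ref{lem:2.9} freely (especially $f^2(\alpha) = g^{n_{\ba}-1}(\ba)$, $A_{\ba} f^2(\alpha) = B_{\ba}$, $\alpha A_{g(\alpha)} = B_\alpha$, and the socle identity $c_\alpha B_\alpha = c_{\ba} B_{\ba}$) to cancel the middle terms. The cases where $\ba$ or $\alpha$ is virtual require care because some terms are omitted from $\psi_i$ and substitutions $\ba = \alpha f(\alpha)$ or $f(\alpha) = g(\alpha) f(g(\alpha))$ must be made consistently; but the point is precisely that the omitted term, after substitution, would have been the one already accounted for by a neighbouring $\mu_\bullet$, so the telescoping still closes. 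This is essentially the bimodule analogue of the computations in Lemmas~\ref{lem:5.5} and~\ref{lem:5.6}, and in the all-non-virtual case it is literally the computation from \cite{ESk-WSA}.

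For the projective cover statement, I would argue as follows. By Proposition~\ref{prop:5.1} and the fact (just proved) that every simple module has $\Omega$-period $4$, the third syzygy $\Omega_{\Lambda^e}^3(\Lambda)$ has a minimal projective cover with top $\bigoplus_{i\in Q_0} \Lambda e_i \otimes e_i \Lambda = \bP_3$; so it suffices to show $S$ induces a surjection $\bP_3 \to \Omega_{\Lambda^e}^3(\Lambda) = \Ker R$, equivalently that the images $\psi_i$ generate $\Ker R$ as a $\Lambda$-$\Lambda$-bimodule, equivalently (by Nakayama for the bimodule, using $\rad \Lambda^e = \rad\Lambda^{\op}\otimes\Lambda + \Lambda^{\op}\otimes\rad\Lambda$, cf.\ \cite[Corollary~IV.11.4]{SY}) that the $\psi_i$ are linearly independent modulo $\rad\bP_2$ and their number equals $\dim_K \topp(\Ker R)$. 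Each $\psi_i$ visibly has a term of the form $(e_i \otimes e_{t(\bullet)})\,\theta$ with $\theta$ an arrow, hence is a nonzero element of $\rad\bP_2$; and the leading terms $(e_i\otimes e_{t(f(\alpha))})f^2(\alpha)$ etc.\ sit in pairwise distinct indecomposable summands of $\bP_2$ (indexed by arrows of $Q_\Lambda$) or, when one arrow at $i$ is virtual, in the single surviving summand — so the $\psi_i$ are linearly independent mod $\rad^2\bP_2$. The count then follows because, by Proposition~\ref{prop:5.1} applied one step further, $\topp(\Omega_{\Lambda^e}^3(\Lambda))$ is controlled by $\Ext^3_\Lambda(S_i,S_j)$, and periodicity of period $4$ of the simples forces $\Ext^3_\Lambda(S_i,S_j) \cong \Ext^{-1}_\Lambda \cong D\Hom \cong \Ext^1_\Lambda(S_j,S_i)$-type data giving exactly one summand $\Lambda e_i\otimes e_i\Lambda$ per vertex $i$. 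Combining, $S$ is a projective cover, and $\Omega_{\Lambda^e}^4(\Lambda) = \Ker S$.

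The main obstacle I anticipate is the virtual-arrow bookkeeping in the identity $R(\psi_i)=0$: one must track which of $\ba$, $f(\alpha)$, $f^2(\alpha)$, $f^2(\ba)$ are virtual, make the correct substitutions into the $\mu_\bullet$'s \emph{and} into $\psi_i$, and check that the resulting telescoping among terms indexed by the three arrows meeting near $i$ still cancels after the Gabriel-quiver rewriting. The configuration analysis of Remark~\ref{rem:4.2} and Lemma~\ref{lem:4.3} is exactly what is needed to enumerate these cases (a virtual loop at $i$; a virtual non-loop; two virtual arrows in a $3$-vertex sub-configuration — the last excluded since $\Lambda$ is not a triangle/spherical algebra), and within each case the surviving identity is the one already established in Lemmas~\ref{lem:5.5}–\ref{lem:5.6} at the level of the modules $S_i \otimes_\Lambda -$. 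So the proof is: (1) check $RS = 0$ case-by-case on the $\psi_i$; (2) observe each $\psi_i \in \rad\bP_2 \setminus \rad^2\bP_2$ with distinct tops; (3) match the count of tops with $\bP_3$ via Proposition~\ref{prop:5.1} and periodicity of simples; (4) conclude $S$ is a projective cover and hence $\Omega_{\Lambda^e}^4(\Lambda) = \Ker S$.
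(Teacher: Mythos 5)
Your proposal is correct and follows essentially the same route as the paper: verify $R(\psi_i)=0$ case-by-case according to which arrows at $i$ are virtual (using Lemma~\ref{lem:2.9} and the substitutions for virtual arrows, with the all-non-virtual case quoted from \cite{ESk-WSA}), then observe that the $\psi_i$ lie in $\rad \bP_2$ and are linearly independent in $\rad\bP_2/\rad^2\bP_2$, and conclude minimality from the form of $\bP_3$ forced by Proposition~\ref{prop:5.1} and the period-$4$ property of the simple modules. The only cosmetic slip is the phrase ``linearly independent modulo $\rad\bP_2$'' (the $\psi_i$ lie in $\rad\bP_2$; the relevant independence is modulo $\rad^2\bP_2$), which you state correctly elsewhere.
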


\begin{proof}
We will prove first that
$R(\psi_i) = 0$ for any $i \in Q_0$.
Fix a vertex $i \in Q_0$.
If both arrows starting at $i$ are not virtual, this is identical
        with the calculation in \cite{ESk-WSA}, we will not repeat this
        (note that only arrows in $g$-orbits occur which are not virtual).
Suppose now  that $\alpha$ is not virtual and $\ba$ is virtual.
Then we have, in $\bP_1$, that
\begin{align*}
  R(\psi_i)
   &=  \varrho(\mu_{\bar{\alpha}}) f^2(\bar{\alpha})
        - \alpha \varrho \big(\mu_{f(\alpha)}\big)
   \\&
        = \varrho\big(\alpha f(\alpha)f(\ba)\big)f^2(\ba)
        - \alpha\varrho\big(f(\alpha)f(\ba)f^2(\ba)\big)\\
        &
        + c_{\alpha}\big(-\varrho(A_{\alpha})f^2(\ba)
 + \alpha\varrho(A_{g(\alpha)})\big)
\\ &
       = e_i\otimes f(\alpha) f(\ba)f^2(\ba) - \alpha f(\alpha)f(\ba)\otimes e_i
        + c_{\alpha}(-e_i\otimes A_{g(\alpha)} + A_{\alpha}\otimes e_i)\\
& =0 ,
\end{align*}
because 
$f^2(\ba) = g^{n_{\alpha}-1}(\alpha)$,
$f^2(\alpha) = g^{n_{\ba}-1}(\ba)$,
$f(\alpha)f(\ba)f^2(\ba) = f(\alpha)f^2(\alpha) = c_{g(\alpha)}A_{g(\alpha)}$ 
and
$\alpha f(\alpha)f(\ba) = \ba f(\ba) = c_{\alpha}A_{\alpha}$.
Similarly one shows that
$R(\psi_i)=0$ 
if $\alpha$  is virtual and $\bar{\alpha}$ not virtual.
Hence  $\Im S \subseteq \Ker R$.
Further, it follows from the definition
that the generators $\psi_i$, $i \in Q_0$,
of the image of $S$ are elements of $\rad \bP_2$
which are linearly independent in
$\rad \bP_2 / \rad^2 \bP_2$.
We conclude from the form of $\bP_2$ that
these elements form a minimal set of generators of
$\Ker R = \Omega_{\Lambda^e}^3(\Lambda)$.
Hence $S : \bP_3 \to \Omega_{\Lambda^e}^3(\Lambda)$
is a projective cover of $\Omega_{\Lambda^e}^3(\Lambda)$
in $\mod \Lambda^e$.
\end{proof}

\begin{theorem}
\label{th:5.10}
There is an isomorphism
$\Omega_{\Lambda^e}^4(\Lambda) \cong \Lambda$
in $\mod \Lambda^e$.
In particular, $\Lambda$ is a periodic algebra
of period $4$.
\end{theorem}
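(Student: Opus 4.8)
The plan is to work with the truncated minimal projective bimodule resolution
\[
  \bP_3 \xrightarrow{S} \bP_2 \xrightarrow{R} \bP_1 \xrightarrow{d} \bP_0 \xrightarrow{d_0} \Lambda \to 0
\]
assembled in Lemmas~\ref{lem:5.8} and~\ref{lem:5.9}, for which $\Omega_{\Lambda^e}^4(\Lambda) = \Ker S$, and to identify $\Ker S$ with the bimodule $\Lambda$. Since every simple $\Lambda$-module is $\Omega$-periodic of period $4$ (Proposition~\ref{prop:5.4} and Lemmas~\ref{lem:5.5}, \ref{lem:5.6}), one has $\dim_K\Ext_\Lambda^3(S_i,S_j)=\dim_K\Ext_\Lambda^1(\tau S_i,S_j)=\dim_K D\underline{\Hom}(S_j,\tau^2S_i)=\delta_{ij}$, so by Proposition~\ref{prop:5.1} the term $\bP_3=\bigoplus_{i\in Q_0}\Lambda e_i\otimes e_i\Lambda$ has exactly the shape of $\bP_0$. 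The first thing I would do is a dimension count: the displayed complex being exact, $\dim_K\Omega_{\Lambda^e}^4(\Lambda)$ equals $\dim_K\bP_3-\dim_K\bP_2+\dim_K\bP_1-\dim_K\bP_0+\dim_K\Lambda$. Because $\Lambda$ is symmetric (Proposition~\ref{prop:4.13}) we have $\dim_K\Lambda e_i=\dim_K e_i\Lambda$, hence $\dim_K\bP_3=\dim_K\bP_0$; and the identity $f^2(\ba)=g^{-1}(\alpha)$ (Lemma~\ref{lem:2.9}(i) together with $g^{n_\alpha}(\alpha)=\alpha$) makes $\alpha\mapsto g^{-1}(\alpha)$ a bijection of $(Q_\Lambda)_1$ under which, using Lemma~\ref{lem:5.7} for the shape of $\bP_2$, the summands of $\bP_2$ and $\bP_1$ are matched up to interchanging source and target; thus $\dim_K\bP_2=\dim_K\bP_1$ and $\dim_K\Omega_{\Lambda^e}^4(\Lambda)=\dim_K\Lambda$.

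It therefore suffices to produce a $\Lambda$-$\Lambda$-bimodule monomorphism $\theta\colon\Lambda\to\bP_3$ with $S\theta=0$; by the dimension count its image is then exactly $\Ker S$, so $\Omega_{\Lambda^e}^4(\Lambda)\cong\Lambda$, and the period is precisely $4$ and not a proper divisor, since already $\Omega_\Lambda(S_i)\not\cong S_i$. For $\theta$ I would use the comultiplication attached to the symmetrizing form $\varphi$ of Proposition~\ref{prop:4.13}: with $\cB_i$ the basis of $e_i\Lambda$ from Corollary~\ref{cor:4.12} and $\{b^\vee:b\in\cB_i\}$ the $\varphi$-dual basis of $\Lambda e_i$, put
\[
  \theta(1)=\sum_{i\in Q_0}\ \sum_{b\in\cB_i} b^\vee\otimes b\ \in\ \bigoplus_{i\in Q_0}\Lambda e_i\otimes e_i\Lambda=\bP_3 .
\]
A routine dual-basis computation shows $\theta(1)$ is central in the bimodule $\bP_3$, so $\theta(a):=a\,\theta(1)=\theta(1)\,a$ is a well-defined bimodule homomorphism; it is nonzero, and injective, because $\soc(e_i\Lambda)=\langle B_\alpha\rangle$ is one-dimensional (Lemma~\ref{lem:4.5} and Proposition~\ref{prop:4.9}) and is paired nontrivially by $\varphi$, so the two-sided ideal $\Ker\theta$ cannot meet the socle of $\Lambda$. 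Alternatively, Theorem~\ref{th:2.4} already gives $\Omega_{\Lambda^e}^4(\Lambda)\cong{}_1\Lambda_\sigma$, reducing the statement to checking that, after composing with an inner automorphism, $\sigma$ fixes the idempotents and induces the identity on the Gabriel quiver — which again rests on the explicit form of the resolution.

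The genuinely computational step, and the one I expect to be the main obstacle, is verifying that $S\theta=0$, that is, that $S(\theta(1))=\sum_{i}\sum_{b\in\cB_i} b^\vee\psi_i\,b=0$ in $\bP_2=\bigoplus_{\alpha\in(Q_\Lambda)_1}\Lambda e_{s(\ba)}\otimes e_{t(f(\ba))}\Lambda$. This is carried out by fixing one summand of $\bP_2$ and collecting, over all $i$ and all $b\in\cB_i$, the contributions of each of the (up to) four terms of $\psi_i$, rewriting everything by means of the relations $\alpha f(\alpha)=c_{\ba}A_{\ba}$, the identities of Lemma~\ref{lem:2.9}, and the definition of $\varphi$; the resulting telescoping cancellation is the heart of the argument. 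The three shapes of $\psi_i$ — both arrows at $i$ non-virtual, $\ba$ virtual, $\alpha$ virtual — have to be treated separately, and the virtual cases are the delicate ones, since there one must carry out the substitutions $\ba=\alpha f(\alpha)$ and $f(\alpha)=g(\alpha)f(g(\alpha))$ used to define $R$ and the truncated $\psi_i$, and confirm that the pairing still cancels. Granting this calculation the theorem follows, and with it the assertion that $\Lambda$ is a periodic algebra of period $4$.
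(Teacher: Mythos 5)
Your proposal is correct and follows essentially the same route as the paper: the paper also defines $\theta(e_i)=\sum_{b\in\cB_i}b\otimes b^*$ via the symmetrizing form, proves injectivity by evaluating on the socle generators $\omega_i$, and establishes $S\theta=0$ by exactly the dual-basis telescoping identity $\sum_{b}b\alpha\otimes b^*=\sum_b b\otimes\alpha b^*$ combined with the observation that each arrow occurs once as a left factor (with negative sign) and once as a right factor (with positive sign) among the $\psi_j$ -- which is the computation you defer. The only minor deviation is that the paper obtains $\dim_K\Omega_{\Lambda^e}^4(\Lambda)=\dim_K\Lambda$ by invoking Theorem~\ref{th:2.4} to get $\Omega_{\Lambda^e}^4(\Lambda)\cong{}_1\Lambda_\sigma$, rather than by your (equally valid) Euler-characteristic count on the truncated resolution.
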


\begin{proof}
This  is very similar to the proof of \cite[Theorem~5.9]{ESk2}.
	Let $\vf$ be the symmetrizing $K$-linear 
	form as defined in Proposition \ref{prop:4.13}. 
Then, by general theory, we have
the 
symmetrizing bilinear form
$(-,-) : \Lambda \times \Lambda \to K$ such that
$(x,y) = \varphi(x y)$ for any $x,y \in \Lambda$.
Observe that, for any elements $x \in \cB_i$ and $y \in \cB$,
we have
\[
  (x,y) = \mbox{ the coefficient of $\omega_i$ in $x y$},
\]
when $x y$ is expressed
as a linear combination of the elements of $e_i \cB$ over $K$.
Consider also the dual basis
$\cB^* = \{ b^* \,|\, b \in \cB \}$ of $\Lambda$
such that
$(b,c^*) = \delta_{b c}$ for $b,c \in \cB$.
Observe that, for $x \in e_i \cB$ and $y \in \cB$, the element
$(x,y)$ can only be non-zero if $y = y e_i$.
In particular, if $b \in e_i \cB e_j$ then
$b^* \in e_j \cB e_i$.

For each vertex $i \in Q_0$, we define the element of $\bP_3$
\[
  \xi_i = \sum_{b \in \cB_i} b \otimes b^* .
\]
We note that $\xi_i$ is independent of the basis
of $\Lambda$
(see \cite[part (2a) on the page 119]{ESk2}).
It follows from \cite[part (2b) on the page 119]{ESk2}
that, for any element
$a \in e_i (\rad \Lambda) e_j \setminus e_i (\rad \Lambda)^2 e_j$,
we have
\[
  a \xi_i = \xi_j a .
\]
Consider now the homomorphism
\[
  \theta : \Lambda \to \bP_3
\]
in $\mod \Lambda^e$
such that
$\theta (e_i) = \xi_i$
for any $i \in Q_0$.
Then
$\theta (1_{\Lambda}) = \sum_{i \in Q_0} \xi_i$,
and consequently we have
\[
  a \Big( \sum_{i \in Q_0} \xi_i \Big)
  = \theta(a)
  = \Big( \sum_{i \in Q_0} \xi_i \Big) a
\]
for any element $a \in \Lambda$.
We claim that $\theta$ is a monomorphism.
It is enough to show that $\theta$ is a monomorphism
of right $\Lambda$-modules.
We know that $\Lambda = \bigoplus_{i \in Q_0} e_i \Lambda$
and each $e_i \Lambda$ has simple socle
generated by $\omega_i$.
For each $i \in Q_0$, we have
\begin{align*}
  \theta(\omega_i)
     &= \Big( \sum_{j \in Q_0} \xi_j \Big) \omega_i
     = \xi_i \omega_i
     = \sum_{b \in \cB_i} ( b \otimes b^* ) \omega_i
     = \sum_{b \in \cB_i} b \otimes b^* \omega_i
     = \omega_i \otimes \omega_i
     \neq 0
    .
\end{align*}
Hence the claim follows.
Our next aim is to show that
$S(\xi_i) = 0$ for any $i \in Q_0$,
or equivalently, that
$\Im \theta \subseteq \Ker S = \Omega_{\Lambda^e}^4(\Lambda)$.
Applying arguments from
\cite[part (3) on the pages 119 and 120]{ESk2},
we obtain that
\[
  \sum_{b \in \cB} b ( a^r \otimes a^s ) b^*
     = \sum_{b \in \cB} b \otimes a^{r+s} b^*
\]
for all integers $r,s \geq 0$ and any element
$a = e_p a e_q$ in $\rad \Lambda$, with $p,q \in Q_0$.
In particular, for each arrow $\alpha$ in $Q_1$, we have
\[
  \sum_{b \in \cB} b \alpha \otimes  b^*
     = \sum_{b \in \cB} b \otimes \alpha b^* ,
\]
and hence
\[
  \sum_{b \in \cB_i} b \alpha \otimes  b^*
     = \sum_{b \in \cB_i} b \otimes \alpha b^*
\]
for any $i \in Q_0$.
We note that every arrow $\beta$ in $Q$
occurs once as a left factor of some $\psi_j$
(with negative sign) and once a right factor
of some $\psi_k$ (with positive sign),
because $\beta = f^2 (\alpha)$ for a unique
arrow $\alpha$.
Then, for any $i \in Q_0$,
the following equalities hold
\begin{align*}
  S(\xi_i)
     &= \sum_{b \in \cB_i} S ( b \otimes b^* )
     = \sum_{b \in \cB_i} \sum_{j \in Q_0} S ( b e_j \otimes e_j b^* )
     = \sum_{b \in \cB_i} \sum_{j \in Q_0} b S ( e_j \otimes e_j ) b^*
  \\&
     = \sum_{b \in \cB_i} \sum_{j \in Q_0} b \psi_j b^*
     = \sum_{\alpha \in (Q_{\Lambda})_1} \Bigg[
             \sum_{b \in \cB_i} - ( b \alpha \otimes  b^* )
            + \sum_{b \in \cB_i} b \otimes \alpha b^*
        \Bigg]
     = 0
     .
\end{align*}
Hence, indeed
$\Im \theta \subseteq \Ker S = \Omega_{\Lambda^e}^4(\Lambda)$,
and we obtain a monomorphism
$\theta : \Lambda \to \Omega_{\Lambda^e}^4(\Lambda)$
in $\mod \Lambda^e$.

Finally, it follows from
Theorem~\ref{th:2.4}
and
Proposition~\ref{prop:5.4}
that
$\Omega_{\Lambda^e}^4(\Lambda) \cong {}_1 \Lambda_{\sigma}$
in $\mod \Lambda^e$
for some $K$-algebra automorphism $\sigma$ of $\Lambda$.
Then
$\dim_K \Lambda = \dim_K \Omega_{\Lambda^e}^4(\Lambda)$,
and consequently $\theta$ is an isomorphism.
Therefore, we have
$\Omega_{\Lambda^e}^4(\Lambda) \cong \Lambda$
in $\mod \Lambda^e$.
Clearly, then $\Lambda$ is a periodic algebra
of period $4$.
\end{proof}

\begin{corollary}
\label{cor:5.11}
Let $(Q,f)$ be a triangulation quiver with at least four vertices,
let $m_{\bullet}$ and $c_{\bullet}$ be weight and parameter
functions of $(Q,f)$, 
and let $\Lambda = \Lambda(Q,f,m_{\bullet},c_{\bullet})$
be the associated weighted triangulation algebra.
Then the Cartan matrix $C_{\Lambda}$ of $\Lambda$
is singular.
\end{corollary}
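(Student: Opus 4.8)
The plan is to derive the statement from Theorem~\ref{th:2.5}. First I would treat the case where $\La = \La(Q,f,m_{\bullet},c_{\bullet})$, with $|Q_0| \geq 4$, is \emph{not} a singular tetrahedral or singular spherical algebra; since the singular disc algebra has two vertices and the singular triangle algebra has three, this just means that $\La$ is not a singular disc, triangle, tetrahedral or spherical algebra. Then by Theorem~\ref{th:5.10}, together with Proposition~\ref{prop:3.8} when $\La$ is a non-singular tetrahedral or spherical algebra, $\La$ is a periodic algebra of period $4$, so every non-projective indecomposable module in $\mod\La$ is periodic of period dividing $4$. Moreover $\La$ is symmetric by Proposition~\ref{prop:4.13} (having at least four vertices, it is neither the singular triangle nor the singular spherical algebra), it is indecomposable by the standing assumptions, and it is a representation-infinite tame algebra --- by Theorem~\ref{th:main2} if $\La$ is not exceptional, and by Proposition~\ref{prop:3.8} if $\La$ is a non-singular tetrahedral or spherical algebra. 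Hence, if $C_{\La}$ were non-singular, Theorem~\ref{th:2.5} would force $\mod\La$ to have at most three pairwise non-isomorphic simple modules, contradicting $|Q_0| \geq 4$; so $C_{\La}$ is singular.

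It then remains to handle the two excluded six-vertex algebras, the singular tetrahedral algebra $\La(1)$ and the singular spherical algebra $S(1)$. For these I would first observe that the Cartan matrix of a weighted triangulation algebra $\La(Q,f,m_{\bullet},c_{\bullet})$ does not depend on the parameter function $c_{\bullet}$: by Lemma~\ref{lem:4.7} and Corollary~\ref{cor:4.12}, for each vertex $i$ the indecomposable projective $e_i\La$ has a $K$-basis consisting of $e_i$, of certain monomials in $Q$ determined combinatorially by $(Q,f,m_{\bullet})$ (suitable initial subwords of the paths $B_{\alpha}$, $B_{\bar{\alpha}}$ for the arrows $\alpha,\bar{\alpha}$ starting at $i$), and of a scalar multiple of $B_{\alpha}$, and the terminal vertex of each such basis element is determined by $(Q,f,m_{\bullet})$ alone. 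Hence $\dim_K e_i\La e_j$, and therefore $C_{\La}$, depends only on $(Q,f,m_{\bullet})$. Since $\La(1)$ and the non-singular tetrahedral algebra $\La(\lambda)$ (for $\lambda\in K\setminus\{0,1\}$) arise from the same triangulation quiver and weight function, and likewise $S(1)$ and the non-singular spherical algebra $S(\lambda)$, we get $C_{\La(1)} = C_{\La(\lambda)}$ and $C_{S(1)} = C_{S(\lambda)}$, and these are singular by the first part.

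The one genuinely delicate point is this last reduction: the singular tetrahedral and singular spherical algebras are not periodic, so Theorem~\ref{th:2.5} does not apply to them directly, and one must notice that their Cartan matrices are insensitive to the parameter function and can therefore be read off from a periodic member of the same family.
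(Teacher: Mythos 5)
Your main argument is precisely the paper's proof: the paper derives the corollary in one line from Theorems~\ref{th:2.5} and~\ref{th:5.10}, exactly as you do, by noting that $\Lambda$ is an indecomposable, representation-infinite, tame, symmetric, periodic algebra of period $4$, so a non-singular Cartan matrix would force at most three simple modules via Theorem~\ref{th:2.5}, contradicting $|Q_0|\geq 4$. Your extra treatment of the singular tetrahedral and spherical algebras goes beyond what the paper records (these are excluded by the standing assumptions of Section~\ref{sec:per}); it is sound for $\Lambda(1)$, but for $S(1)$ the hypothesis of Lemma~\ref{lem:4.7}(ii) fails by Proposition~\ref{prop:4.9}, so the claim $C_{S(1)}=C_{S(\lambda)}$ would need a direct verification that the spanning set of initial subwords remains linearly independent, rather than a citation of that lemma.
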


\begin{proof}
This follows from
Theorems \ref{th:2.5}
and \ref{th:5.10}.
\end{proof}


\section{The representation type}\label{sec:reptype}

The aim of this section is to prove Theorem~\ref{th:main2}. 
We start by describing the general strategy. We are given
a weighted triangulation algebra 
$\Lambda = \Lambda(Q,f,m_{\bullet},c_{\bullet})$ of dimension $d$. 
We aim to define an algebraic
family of algebras $\La(t)$ for $t\in K$ in the variety ${\rm alg}_d(K)$
such that for every arrow $\alpha$ of $Q$ we have
\begin{enumerate}[(i)]
 \item
   $\alpha f(\alpha) = c_{\ba}t^{v(\alpha)} A_{\ba} $
   with $v(\alpha)$ a natural number $\geq 1$,
 \item
    $c_{\alpha}B_{\alpha} = c_{\ba}B_{\ba}$, and
 \item
  the zero relations as in Definition \ref{def:2.8} hold.
\end{enumerate}

Then the algebra $A(0)$ is the biserial weighted triangulation algebra
associated to $\La$. Theorem \ref{th:main2} for $\La$ will follow if we can
make sure that $A(t)\cong A(1)$ for any non-zero $t\in K$. 
We will define a map $\vf_t: A(1) \to A(t)$ such 
that $\vf_t(\alpha) = t^{u(\alpha)}\alpha \in A(t)$ where
$u(\alpha)\geq 1$ is a natural number, and extend to products and
linear combinations. 
This will define an algebra isomorphism 
$A(1)\to A(t)$ if and only if for all arrows $\alpha$ 
the following identity holds:
$$v(\alpha) + u(\alpha) + u(f(\alpha)) = u(A_{\ba})
\leqno{(\dagger)}$$
where we define $u(\mu) = \sum_{i=1}^r u(\alpha_i)$
for a monomial $\mu=\alpha_1\ldots \alpha_r$ in $KQ$. 
We deal first with some of the exceptions, in Proposition \ref{prop:6.4} it will be clear why these need to be treated separately.

\bigskip

Let $(Q,f)$ be the triangulation quiver
(as in Example \ref{ex:3.6})
\[
\begin{tikzpicture}
[->,scale=.9]
\coordinate (1) at (0,2);
\coordinate (2) at (-1,0);
\coordinate (2u) at (-.925,.15);
\coordinate (2d) at (-.925,-.15);
\coordinate (3) at (0,-2);
\coordinate (4) at (1,0);
\coordinate (4u) at (.925,.15);
\coordinate (4d) at (.925,-.15);
\coordinate (5) at (-3,0);
\coordinate (5u) at (-2.775,.15);
\coordinate (5d) at (-2.775,-.15);
\coordinate (6) at (3,0);
\coordinate (6u) at (2.775,.15);
\coordinate (6d) at (2.775,-.15);
\fill[fill=gray!20] (1) -- (5u) -- (2u) -- cycle;
\fill[fill=gray!20] (1) -- (4u) -- (6u) -- cycle;
\fill[fill=gray!20] (2d) -- (5d) -- (3) -- cycle;
\fill[fill=gray!20] (3) -- (6d) -- (4d) -- cycle;
\node [fill=white,circle,minimum size=4.5] (1) at (0,2) {\ \quad};
\node [fill=white,circle,minimum size=4.5] (2) at (-1,0) {\ \quad};
\node [fill=white,circle,minimum size=4.5] (3) at (0,-2) {\ \quad};
\node [fill=white,circle,minimum size=4.5] (4) at (1,0) {\ \quad};
\node [fill=white,circle,minimum size=4.5] (5) at (-3,0) {\ \quad};
\node [fill=white,circle,minimum size=4.5] (6) at (3,0) {\ \quad};
\node (1) at (0,2) {1};
\node (2) at (-1,0) {2};
\node (2u) at (-1,0.15) {\ \quad};
\node (2d) at (-1,-0.15) {\ \quad};
\node (3) at (0,-2) {3};
\node (4) at (1,0) {4};
\node (4u) at (1,0.15) {\ \quad};
\node (4d) at (1,-0.15) {\ \quad};
\node (5) at (-3,0) {5};
\node (5u) at (-2.775,0.15) {};
\node (5d) at (-2.775,-0.15) {};
\node (6) at (3,0) {6};
\node (6u) at (2.775,0.15) {};
\node (6d) at (2.775,-0.15) {};
\draw[thick,->]
(1) edge node[below right]{\footnotesize$\alpha$} (2)
(2u) edge node[above]{\footnotesize$\xi$} (5u)
(5u) edge node[above left]{\footnotesize$\delta$} (1)
(5d) edge node[below]{\footnotesize$\eta$} (2d)
(2) edge node[above right]{\footnotesize$\beta$} (3)
(3) edge node[below left]{\footnotesize$\nu$} (5d)
(1) edge node[above right]{\footnotesize$\varrho$} (6u)
(6u) edge node[above]{\footnotesize$\varepsilon$} (4u)
(4) edge node[below left]{\footnotesize$\sigma$} (1)
(4d) edge node[below]{\footnotesize$\mu$} (6d)
(6d) edge node[below right]{\footnotesize$\omega$} (3)
(3) edge node[above left]{\footnotesize$\gamma$} (4)
;
\end{tikzpicture}
\]
where $f$ is the permutation of arrows of order $3$
described by the shaded subquivers.
Then $g$ has four orbits
\begin{align*}
 \cO(\alpha)  &= ( \alpha\ \beta\ \gamma\ \sigma), & 
 \cO(\varrho)  &= ( \varrho\ \omega\ \nu\ \delta), & 
 \cO(\xi)  &= ( \xi\ \eta), & 
 \cO(\varepsilon)  &= ( \varepsilon\ \mu).
\end{align*}
Let $r \geq 2$ be a natural number and
let $m_{\bullet}^r : \cO(g) \to \bN^*$
 be the weight function
given by 
$m_{\cO(\alpha)} = m_{\cO(\varrho)} = m_{\cO(\xi)} = 1$
and
$m_{\cO(\varepsilon)} = r$.
Moreover, let
$c_{\bullet} : \cO(g) \to K^*$
be an arbitrary parameter function.
We consider the weighted triangulation algebra  
\[
  S(r,c_{\bullet}) = \Lambda(Q,f,m_{\bullet}^r,c_{\bullet}).
\]

\begin{lemma}
\label{lem:6.1}
The algebra $S(r,c_{\bullet})$ degenerates to the  biserial
weighted triangulation algebra
$B(Q,f,m_{\bullet}^r,c_{\bullet})$.
In particular, $S(r,c_{\bullet})$ is a tame algebra.
\end{lemma}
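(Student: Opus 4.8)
The plan is to apply Proposition~\ref{prop:2.2} to an explicitly constructed algebraic family $S(r,c_{\bullet})(t)$, $t\in K$, inside $\alg_d(K)$ where $d = \dim_K S(r,c_{\bullet})$, following the general strategy outlined at the start of Section~\ref{sec:reptype}. First I would fix the presentation of $S(r,c_{\bullet}) = \Lambda(Q,f,m_{\bullet}^r,c_{\bullet})$ by the quiver $Q$ and the relations of Definition~\ref{def:2.8}, and introduce exponents $v(\theta)\geq 1$ for each arrow $\theta$, defining $S(r,c_{\bullet})(t)$ by the deformed relations $\theta f(\theta) = c_{\bar\theta}\,t^{v(\theta)} A_{\bar\theta}$ together with the unchanged socle identities $c_\theta B_\theta = c_{\bar\theta} B_{\bar\theta}$ and the unchanged zero relations. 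One checks that for every $t$ this defines an associative algebra of the same dimension $d = \sum_{\cO\in\cO(g)} m_{\cO} n_{\cO}^2$ (by Proposition~\ref{prop:4.13}(i), since the monomial basis of initial subwords of the $B_\theta$ is unaffected), so that $S(r,c_{\bullet})(-)$ is a regular map $K\to\alg_d(K)$, i.e.\ an algebraic family; and that $S(r,c_{\bullet})(0) = B(Q,f,m_{\bullet}^r,c_{\bullet})$, since setting $t=0$ kills exactly the quaternion part $c_{\bar\theta}A_{\bar\theta}$ of each relation $\theta f(\theta) - c_{\bar\theta}A_{\bar\theta}$, leaving the biserial relations $\theta f(\theta)=0$ of Definition~\ref{def:2.10}.

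Next I would show $S(r,c_{\bullet})(t)\cong S(r,c_{\bullet})(1)$ for all $t\in K^*$ by exhibiting, as in the general discussion, exponents $u(\theta)\geq 1$ and the rescaling map $\vf_t(\theta) = t^{u(\theta)}\theta$; this is an algebra isomorphism precisely when the identity $(\dagger)$, namely $v(\theta)+u(\theta)+u(f(\theta)) = u(A_{\bar\theta})$ with $u(A_{\bar\theta}) = \sum_{k} u(g^k(\bar\theta))$ over the relevant initial segment of the $g$-cycle of $\bar\theta$, holds for every arrow $\theta$. Since the $g$-orbits here are $\cO(\alpha)=(\alpha\ \beta\ \gamma\ \sigma)$, $\cO(\varrho)=(\varrho\ \omega\ \nu\ \delta)$, $\cO(\xi)=(\xi\ \eta)$, $\cO(\varepsilon)=(\varepsilon\ \mu)$, and the multiplicities $m_{\cO(\alpha)}=m_{\cO(\varrho)}=m_{\cO(\xi)}=1$, $m_{\cO(\varepsilon)}=r$, the paths $A_{\bar\theta}$ are short except along the $\varepsilon$-$\mu$ cycle where they have length $2r-1$; so one must solve a finite linear system in the twelve unknowns $u(\theta)$ and twelve unknowns $v(\theta)$ with the constraints $u(\theta)\geq 1$, $v(\theta)\geq 1$. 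The point is that there is enough freedom: one can, for instance, put almost all the ``weight'' into the single arrow $\varepsilon$ (or $\mu$) whose $g$-cycle carries the large multiplicity $r$, and balance the rest. I would simply write down one explicit admissible solution and verify $(\dagger)$ case by case.

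Finally, having an algebraic family with $S(r,c_{\bullet})(t)\cong S(r,c_{\bullet})(1) = S(r,c_{\bullet})$ for $t\neq 0$ and $S(r,c_{\bullet})(0) = B(Q,f,m_{\bullet}^r,c_{\bullet})$, Proposition~\ref{prop:2.2} gives that $S(r,c_{\bullet})$ degenerates to $B(Q,f,m_{\bullet}^r,c_{\bullet})$. The latter is a biserial weighted triangulation algebra, hence symmetric special biserial by Proposition~\ref{prop:2.11}(ii), hence tame by Proposition~\ref{prop:2.1}; therefore by Geiss' Theorem (the last sentence of Proposition~\ref{prop:2.2}) $S(r,c_{\bullet})$ is tame as well.

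I expect the main obstacle to be purely bookkeeping: verifying that the constraints $u(\theta)\geq 1$ and $v(\theta)\geq 1$ can be met simultaneously with $(\dagger)$ along the two $g$-cycles of length $2$, especially the one with multiplicity $r$, since there the relation $\varepsilon\mu = c_{\varrho}t^{v(\varepsilon)}A_{\varrho}$ (with $A_{\varrho}$ short) versus $\mu\varepsilon = c_{\xi}t^{v(\mu)}A_{\xi}$ interacts with the long defining word $B_{\varepsilon}$ of length $2r$. This is exactly the sort of configuration that, for the genuinely exceptional algebras, forces one outside the range where the degeneration argument works --- which is presumably why Proposition~\ref{prop:6.4} will treat them separately --- but for $S(r,c_{\bullet})$ with $r\geq 2$ the system is solvable, and once a concrete choice of exponents is fixed the remaining verifications are routine.
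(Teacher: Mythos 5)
Your proposal is correct and takes essentially the same route as the paper, which constructs exactly such a family $A(t)$ with $v=r$ on the $f$-orbits $(\alpha\ \xi\ \delta)$ and $(\beta\ \nu\ \eta)$, $v=3r-4$ on $(\gamma\ \mu\ \omega)$ and $(\sigma\ \varrho\ \varepsilon)$ (whence the need for $r\geq 2$), and rescaling exponents $u(\varepsilon)=u(\mu)=4$, $u(\alpha)=u(\gamma)=u(\varrho)=u(\nu)=r$, $u(\beta)=u(\delta)=2r$, $u(\sigma)=u(\omega)=u(\xi)=u(\eta)=4r$. The one point to adjust is your heuristic for locating the solution: since the socle identities force $u(B_\varepsilon)=r\bigl(u(\varepsilon)+u(\mu)\bigr)$ to equal $u(B_\alpha)$, the two arrows on the multiplicity-$r$ cycle must carry the \emph{smallest} weights, not the largest.
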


\begin{proof}
We write $a = c_{\cO(\alpha)}$,
$b = c_{\cO(\varrho)}$,
$c = c_{\cO(\xi)}$,
$d = c_{\cO(\varepsilon)}$.
For each  $t \in K$,
consider the algebra $A(t)$
given by the quiver $Q$
and the relations:
\begin{align*}
 \alpha \xi &= b t^r \varrho \omega \nu , &
 \xi \delta &= a t^r \beta \gamma \sigma , &
 \delta \alpha &= c t^r \eta ,
 \\
 \beta \nu &= c t^r \xi , &
 \nu \eta &= a t^r \gamma \sigma \alpha , &
 \eta \beta &= b t^r \delta \varrho \omega ,
 \\
 \sigma \varrho &= d t^{3r-4} (\mu \varepsilon)^{r-1} \mu , &
 \varrho \varepsilon &= a t^{3r-4} \alpha \beta \gamma , 
&
 \varepsilon \sigma &= b t^{3r-4} \omega \nu \delta ,
 \\
 \gamma \mu &= b t^{3r-4} \nu \delta \varrho , &
 \mu \omega &= a t^{3r-4} \sigma \alpha \beta , &
 \omega \gamma &= d t^{3r-4} (\varepsilon \mu)^{r-1} \varepsilon ,
\\
 a \alpha \beta \gamma \sigma &= b \varrho \omega \nu \delta , &
 \!\!\!\!\!
 a \beta \gamma \sigma \alpha &= c \xi \eta , &
 a \gamma \sigma \alpha \beta &= b \nu \delta \varrho \omega , 
 \\
 a \sigma \alpha \beta \gamma &= d (\mu \varepsilon)^{r} , 
&
 \!\!\!\!\!
 b \delta \varrho \omega \nu &= c \eta \xi , &
 b \omega \nu \delta \varrho &= d (\varepsilon \mu)^r , 
\\
&&
   \!\!\!\!\!\!\!\!\!\!\!\!\!\!\!\theta f(\theta)g\big(f(\theta)\big) &= 0  
 \mbox{ for } \theta \in Q_1 \setminus \{\delta , \beta \}, 
 \!\!\!\!\!\!\!\!\!\!\!\!\!\!\!\!\!\!\!\!\!\!\!\!\!\!\!\!\!\!\!\!\!\!\!
 \\
&&
   \!\!\!\!\!\!\!\!\!\!\!\!\!\!\!\theta g(\theta)f\big(g(\theta)\big) &= 0 
 \mbox{ for } \theta \in Q_1 \setminus \{\alpha , \nu \}. 
 \!\!\!\!\!\!\!\!\!\!\!\!\!\!\!\!\!\!\!\!\!\!\!\!\!\!\!\!\!\!\!\!\!\!\!
\end{align*}
Note that $3r-4 \geq 1$ because $r \geq 2$.
Then $A(t)$, $t \in K$,
is an algebraic family in the variety $\alg_d(K)$, 
with $d = 4 r + 36$.
Observe also that 
$A(1) = S(r,c_{\bullet})$
and
$A(0) = B(Q,f,m_{\bullet}^r,c_{\bullet})$.
Fix $t \in K \setminus \{ 0 \}$.
Then there exists an isomorphism of $K$-algebras
$\varphi_t : A(1) \to A(t)$ given by
\begin{align*}
 \varphi_t(\alpha) &= t^r \alpha , &
 \varphi_t(\beta) &= t^{2r} \beta , &
 \varphi_t(\gamma) &= t^r \gamma , &
 \varphi_t(\sigma) &= t^{4r} \sigma , 
\\
 \varphi_t(\varrho) &= t^r \varrho , &
 \varphi_t(\omega) &= t^{4r} \omega , &
 \varphi_t(\nu) &= t^r \nu , &
 \varphi_t(\delta) &= t^{2r} \delta , 
\\
 \varphi_t(\xi) &= t^{4r} \xi , &
 \varphi_t(\eta) &= t^{4r} \eta , &
 \varphi_t(\varepsilon) &= t^{4} \varepsilon , &
 \varphi_t(\mu) &= t^{4} \mu .
\end{align*}
Therefore, applying 
Proposition~\ref{prop:2.2},
we conclude that $S(r,c_{\bullet})$
degenerates to 
\linebreak
$B(Q,f,m_{\bullet}^r,c_{\bullet})$,
and $S(r,c_{\bullet})$ is a tame algebra.
\end{proof}

Let $(Q,f)$ be the triangulation quiver
\[
  \xymatrix{
    1
    \ar@(ld,ul)^{\alpha}[]
    \ar@<.5ex>[r]^{\beta}
    & 2
    \ar@<.5ex>[l]^{\gamma}
    \ar@<.5ex>[r]^{\sigma}
    & 3
    \ar@<.5ex>[l]^{\delta}
    \ar@(ru,dr)^{\eta}[]
  } 
\]
(as in Example \ref{ex:3.4}) with $f$-orbits 
$(\alpha\ \beta\ \gamma)$
and
$(\eta\ \delta\ \sigma)$.
Then $g$ has orbits 
\begin{align*}
 \cO(\alpha)  &= (\alpha), & 
 \cO(\beta)  &= (\beta\ \sigma\ \delta\ \gamma), & 
 \cO(\eta)  &= ( \eta ).
\end{align*}
Let $r \geq 3$ be a natural number and
$m_{\bullet}^r : \cO(g) \to \bN^*$
the weight function
given by 
$m_{\cO(\alpha)}^r = 2$, 
$m_{\cO(\beta)}^r = 1$
and
$m_{\cO(\eta)}^r = r$.
Moreover, let
$c_{\bullet} : \cO(g) \to K^*$
be an arbitrary parameter function.
We consider the weighted triangulation algebra
\[
  \Sigma(r,c_{\bullet}) = \Lambda(Q,f,m_{\bullet}^r,c_{\bullet}).
\]

\begin{lemma}
\label{lem:6.2}
$\Sigma(r,c_{\bullet})$ degenerates to the  biserial
weighted triangulation algebra
$B(Q,f,m_{\bullet}^r,c_{\bullet})$.
In particular, $\Sigma(r,c_{\bullet})$ is a tame algebra.
\end{lemma}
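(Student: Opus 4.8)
The plan is to follow the scheme of the proof of Lemma~\ref{lem:6.1}: I will construct an algebraic family of $d$-dimensional algebras $A(t)$, $t \in K$, with $d = \dim_K \Sigma(r,c_\bullet) = \sum_{\cO \in \cO(g)} m_\cO n_\cO^2 = r + 18$, such that $A(1) = \Sigma(r,c_\bullet)$, $A(0) = B(Q,f,m_\bullet^r,c_\bullet)$, and $A(t) \cong A(1)$ for all $t \in K \setminus \{0\}$; then Proposition~\ref{prop:2.2}, together with Proposition~\ref{prop:2.11} (which gives that the biserial algebra $B(Q,f,m_\bullet^r,c_\bullet)$ is tame), yields the claim. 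Write $a = c_{\cO(\alpha)}$, $b = c_{\cO(\beta)}$, $c = c_{\cO(\eta)}$. For $r \geq 3$ the only virtual arrow is the loop $\alpha$, so $\Sigma(r,c_\bullet)$ is presented on the quiver $Q$ (the loop $\alpha$, although not in the Gabriel quiver, is kept as a generator, exactly as in Definition~\ref{def:2.8}) by the first-type relations $\alpha\beta = b\beta\sigma\delta$, $\beta\gamma = a\alpha$, $\gamma\alpha = b\sigma\delta\gamma$, $\eta\delta = b\delta\gamma\beta$, $\delta\sigma = c\eta^{r-1}$, $\sigma\eta = b\gamma\beta\sigma$, the socle identifications $a\alpha^2 = b\beta\sigma\delta\gamma$, $\gamma\beta\sigma\delta = \sigma\delta\gamma\beta$, $c\eta^r = b\delta\gamma\beta\sigma$, and the ten monomial zero relations $\alpha\beta\sigma = \gamma\alpha^2 = \eta\delta\gamma = \delta\sigma\delta = \sigma\eta^2 = \alpha^2\beta = \beta\sigma\eta = \eta^2\delta = \delta\gamma\alpha = \sigma\delta\sigma = 0$.

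To build $A(t)$ I would insert, into each first-type relation $\theta f(\theta) = c_{\bar{\theta}} A_{\bar{\theta}}$, a factor $t^{v(\theta)}$ with $v(\theta) \geq 1$, obtaining $\theta f(\theta) = c_{\bar{\theta}} t^{v(\theta)} A_{\bar{\theta}}$, while keeping the socle identifications and all the (monomial) zero relations unchanged; the structure constants are then polynomial in $t$. Clearly $A(1) = \Sigma(r,c_\bullet)$, and since each $v(\theta) \geq 1$ the first-type relations collapse at $t = 0$ to $\theta f(\theta) = 0$, so $A(0) = B(Q,f,m_\bullet^r,c_\bullet)$ by Definition~\ref{def:2.10}. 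One checks, exactly as for weighted triangulation algebras in \cite{ESk-WSA, ESk6}, that for every $t \in K$ the monomial $K$-basis of $\Sigma(r,c_\bullet)$ from Proposition~\ref{prop:4.13} (the $e_i$, the proper initial subwords of the $A_\theta$, and the socle generators $\omega_i$) remains a $K$-basis of $A(t)$; hence $\dim_K A(t) = d$ for all $t$ and $\{A(t)\}_{t \in K}$ is an algebraic family in $\alg_d(K)$. For $t \neq 0$ I would define $\varphi_t : A(1) \to A(t)$ by $\varphi_t(\theta) = t^{u(\theta)}\theta$ with natural numbers $u(\theta) \geq 1$; this extends to a well-defined algebra homomorphism exactly when the socle identifications are respected, i.e.\ $u(B_\theta) = u(B_{\bar{\theta}})$ for all $\theta$ (the monomial zero relations being respected automatically), and
\[
  v(\theta) + u(\theta) + u(f(\theta)) = u(A_{\bar{\theta}})
\]
holds for every arrow $\theta$, where $u$ is extended to monomials additively; and then $\varphi_t$ is automatically an isomorphism, being a surjective homomorphism between algebras of the same dimension $d$.

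It remains to exhibit positive integers solving this system, which is the only computational point. For the present quiver one may take $u(\alpha) = 2r$, $u(\eta) = 4$, and choose $u(\beta), u(\gamma) \geq 1$ with $u(\beta)+u(\gamma) = 5$ and $u(\sigma), u(\delta) \geq 1$ with $u(\sigma)+u(\delta) = 4r-5$ (possible since $r \geq 3$). Then the socle conditions become $2u(\alpha) = u(\beta)+u(\gamma)+u(\sigma)+u(\delta) = 4r = r\,u(\eta)$ and are satisfied, and the displayed identities force $v(\alpha) = v(\gamma) = v(\beta) = 2r-5 \geq 1$ and $v(\eta) = v(\sigma) = v(\delta) = 1$. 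With these choices $\varphi_t : A(1) \to A(t)$ is an algebra isomorphism for each $t \neq 0$, so by Proposition~\ref{prop:2.2} the algebra $\Sigma(r,c_\bullet) = A(1)$ degenerates to $B(Q,f,m_\bullet^r,c_\bullet) = A(0)$, which is tame by Proposition~\ref{prop:2.11}; hence $\Sigma(r,c_\bullet)$ is tame. The main obstacle is precisely this exponent bookkeeping: one must simultaneously satisfy the identities above and the socle-compatibility conditions $u(B_\theta) = u(B_{\bar{\theta}})$ with all $v(\theta), u(\theta) \geq 1$, and it is the presence of the virtual loop together with a $g$-orbit of length four meeting two $g$-orbits of length one that prevents this case from being covered by the uniform choice of exponents used in the general argument of Proposition~\ref{prop:6.4}.
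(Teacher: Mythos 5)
Your proposal is correct and follows essentially the same strategy as the paper's proof of Lemma~\ref{lem:6.2}: an explicit algebraic one-parameter family $A(t)$ interpolating between $\Sigma(r,c_{\bullet})$ and $B(Q,f,m_{\bullet}^r,c_{\bullet})$, with rescaling isomorphisms $\varphi_t$ for $t\neq 0$ and an appeal to Proposition~\ref{prop:2.2}. The only difference is your choice of exponents ($u(\alpha)=2r$, $u(\eta)=4$, $u(\beta)+u(\gamma)=5$, $u(\sigma)+u(\delta)=4r-5$, giving $v(\alpha)=v(\beta)=v(\gamma)=2r-5$ and $v(\sigma)=v(\delta)=v(\eta)=1$), which I have checked satisfies all the required identities and, being positive for every $r\geq 3$, pleasantly avoids the paper's separate treatment of the cases $r=3$ and $r\geq 4$.
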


\begin{proof}
We abbreviate $a = c_{\cO(\alpha)}$,
$b = c_{\cO(\beta)}$ and $c = c_{\cO(\eta)}$.
We consider two cases.

\smallskip

(1)
Assume $r = 3$.
For each $t \in K$, we denote
by $\Lambda(t)$ the algebra
given by the quiver $Q$
and the relations:
\begin{align*}
  \alpha \beta &= b t \beta \sigma \delta ,&
  \beta \gamma &= a t \alpha ,&
  \gamma \alpha &= b t \sigma \delta \gamma ,
 \\
  \eta \delta &= b t \delta \gamma \beta ,&
	a\alpha^2 &= b(\beta\sigma\delta\gamma), &
	b(\gamma\beta\sigma\delta) &= b(\sigma\delta\gamma\beta), &
\\
	b(\delta\gamma\beta\sigma) &= c\eta^r,&
  \delta \sigma &= c t \eta^2 ,&
  \sigma \eta &= b t \gamma \beta \sigma, &
\\
	\theta f(\theta)g(f(\theta)) &= 0 \mbox{ for } \beta \neq \theta \in Q_1,
      \!\!\!\!\!\!\!\!
& 
&
         \theta g(\theta)f(g(\theta)) = 0 \mbox{ for } \gamma \neq \theta \in Q_1.
      \!\!\!\!\!\!\!\!\!\!\!\!\!\!\!\!\!\!\!\!
      \!\!\!\!\!\!\!\!\!\!\!\!\!\!\!\!\!
&
\end{align*}
Then $\Lambda(t)$, $t \in K$,
is an algebraic family in the variety $\alg_{21}(K)$, 
with $\Lambda(1) = \Sigma(3,c_{\bullet})$,
and 
$\Lambda(0) =  B(Q,f,m_{\bullet}^3,c_{\bullet})$.
Moreover, for each $t \in K \setminus \{0 \}$,
there exists an isomorphism of $K$-algebras
$\varphi_t : \Lambda(1) \to \Lambda(t)$ given by
\begin{align*}
 \varphi_t(\alpha) &= t^6 \alpha , &
 \varphi_t(\beta) &= t^{2} \beta , &
 \varphi_t(\gamma) &= t^3 \gamma , 
\\
 \varphi_t(\eta) &= t^{4} \eta , &
 \varphi_t(\delta) &= t^{4} \delta , &
 \varphi_t(\sigma) &= t^{3} \sigma . 
\end{align*}
Therefore, it follows from 
Proposition~\ref{prop:2.2}
that $\Sigma(3,c_{\bullet})$
degenerates to $B(Q,f,m_{\bullet}^3,c_{\bullet})$,
and $\Sigma(3,c_{\bullet})$ is  tame.

\smallskip

(2)
Assume $r \geq 4$.
For each $t \in K$, we denote
by $A(t)$ the algebra
given by the quiver $Q$
and the relations:
\begin{align*}
  \alpha \beta &= b t^r \beta \sigma \delta ,&
  \beta \gamma &= a t^r \alpha ,&
  \gamma \alpha &= b t^r \sigma \delta \gamma ,
 \\
  \eta \delta &= b t^{2r-6} \delta \gamma \beta ,&
        a\alpha^2 &= b(\beta\sigma\delta\gamma), &
        b(\gamma\beta\sigma\delta) &= b(\sigma\delta\gamma\beta), &
 \\
        b(\delta\gamma\beta\sigma) &= c\eta^r,&
  \delta \sigma &= c t^{2r-6} \eta^{r-1} ,&
  \sigma \eta &= b t^{2r-6} \gamma \beta \sigma, &
 \\
        \theta f(\theta)g(f(\theta)) &= 0 \mbox{ for } \beta \neq \theta \in Q_1,
      \!\!\!\!\!\!\!\!
&& 
        \theta g(\theta)f(g(\theta)) = 0 \mbox{ for } \gamma \neq \theta \in Q_1.
      \!\!\!\!\!\!\!\!\!\!\!\!\!\!\!\!\!\!\!\!
      \!\!\!\!\!\!\!\!\!\!\!\!\!\!\!\!\!
\end{align*}
We note that $2r-6 \geq 1$, because $r \geq 4$.
Then $A(t)$, $t \in K$,
is an algebraic family in the variety $\alg_{d}(K)$, 
with $d = r+18$.
Observe also that 
$A(1) = \Sigma(r,c_{\bullet})$
and 
$A(0) =  B(Q,f,m_{\bullet}^r,c_{\bullet})$.
Further, for each $t \in K \setminus \{0 \}$,
there exists an isomorphism of $K$-algebras
$\psi_t : A(1) \to A(t)$ given by
\begin{align*}
 \psi_t(\alpha) &= t^{3r} \alpha , &
 \psi_t(\beta) &= t^{r} \beta , &
 \psi_t(\gamma) &= t^r \gamma ,
\\
 \psi_t(\eta) &= t^{6} \eta , &
 \psi_t(\delta) &= t^{2r} \delta , &
 \psi_t(\sigma) &= t^{2r} \sigma . 
\end{align*}
Therefore, applying
Proposition~\ref{prop:2.2},
we conclude that $\Sigma(r,c_{\bullet})$
degenerates to 
\linebreak
$B(Q,f,m_{\bullet}^r,c_{\bullet})$,
and $\Sigma(r,c_{\bullet})$ is  tame.
\end{proof}

Let $(Q,f)$ be the triangulation quiver (as in Example \ref{ex:3.1})
\[
  \xymatrix{
    1
    \ar@(ld,ul)^{\alpha}[]
    \ar@<.5ex>[r]^{\beta}
    & 2
    \ar@<.5ex>[l]^{\gamma}
    \ar@(ru,dr)^{\sigma}[]
  } 
\]
with  $f$-orbits
$(\alpha \ \beta \ \gamma)$
and
$(\sigma)$, so that 
the  $g$-orbits are $(\alpha)$ and 
$(\beta \ \sigma \ \gamma)$.
We fix  a natural number $r \geq 4$, and we take
$m_{\bullet}$ to be 
the weight function  
$m_{\alpha} = r$
and
$m_{\beta}  = 1$.
Moreover, for 
$c_{\bullet}$ we take
an 
arbitrary parameter function.
We consider the weighted triangulation algebra 
\[
  \Omega(r,c_{\bullet}) = \Lambda(Q,f,m_{\bullet}^r,c_{\bullet}).
\]

\begin{lemma}
\label{lem:6.3} The algebra
$\Omega(r,c_{\bullet})$ degenerates to the biserial
weighted triangulation algebra
$B(Q,f,m_{\bullet}^r,c_{\bullet})$.
In particular, $\Omega(r,c_{\bullet})$ is  tame.
\end{lemma}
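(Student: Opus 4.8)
The plan is to argue exactly as in Lemmas~\ref{lem:6.1} and~\ref{lem:6.2}. Writing $a = c_{\cO(\alpha)}$ and $b = c_{\cO(\beta)} = c_{\cO(\sigma)} = c_{\cO(\gamma)}$, the algebra $\Omega(r,c_{\bullet}) = \Lambda(Q,f,m_{\bullet}^r,c_{\bullet})$ is presented by $Q$ together with the relations
\[
 \alpha\beta = b\beta\sigma,\qquad \beta\gamma = a\alpha^{r-1},\qquad \gamma\alpha = b\sigma\gamma,\qquad \sigma^2 = b\gamma\beta,
\]
the socle identities $a\alpha^r = b\beta\sigma\gamma$ and $b\sigma\gamma\beta = b\gamma\beta\sigma$, and the eight zero relations $\alpha\beta\sigma = \beta\gamma\beta = \gamma\alpha^2 = \sigma^2\gamma = \alpha^2\beta = \beta\sigma^2 = \gamma\beta\gamma = \sigma\gamma\alpha = 0$; these are the relations of Example~\ref{ex:3.1}(1) with $\alpha^2$ replaced by $\alpha^{r-1}$, since now $m_{\cO(\alpha)} = r$. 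Note that no arrow of $Q$ is virtual (we have $m_\alpha n_\alpha = r \geq 4$ and $m_\beta n_\beta = 3$), so in particular all eight zero relations really do hold in $\Omega(r,c_{\bullet})$, and by Proposition~\ref{prop:4.13} and Proposition~\ref{prop:2.11} both $\Omega(r,c_{\bullet})$ and $B(Q,f,m_{\bullet}^r,c_{\bullet})$ have dimension $d = r + 9$.

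For $t \in K$ I would define the algebra $A(t)$ by the quiver $Q$ and the relations obtained from those above by inserting a factor $t^{v(\theta)}$, $v(\theta) \geq 1$, into each of the four ``quaternion'' relations,
\[
 \alpha\beta = bt^{v(\alpha)}\beta\sigma,\qquad \beta\gamma = at^{v(\beta)}\alpha^{r-1},\qquad \gamma\alpha = bt^{v(\gamma)}\sigma\gamma,\qquad \sigma^2 = bt^{v(\sigma)}\gamma\beta,
\]
keeping the two socle identities and the eight zero relations unchanged. One checks, as in the previous two lemmas, that $A(-) : K \to \alg_d(K)$ is a regular map (the multiplication constants in the monomial basis of Corollary~\ref{cor:4.12} are polynomials in $t$), that $A(1) = \Omega(r,c_{\bullet})$, and that $A(0) = B(Q,f,m_{\bullet}^r,c_{\bullet})$ — at $t=0$ the four quaternion relations become the biserial relations $\theta f(\theta) = 0$, and what is left is precisely the presentation of the biserial weighted triangulation algebra, the zero relations now being consequences of the biserial ones.

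It then remains, for $t \in K \setminus \{0\}$, to produce an isomorphism $\varphi_t : A(1) \to A(t)$. Following the recipe in the introduction to Section~\ref{sec:reptype} I would set $\varphi_t(\theta) = t^{u(\theta)}\theta$ with $u(\theta) \geq 1$ and extend multiplicatively; this is a well-defined algebra map (automatically respecting the zero relations and the second socle identity, as it only rescales monomials) exactly when the four identities $(\dagger)$, namely $v(\theta) + u(\theta) + u(f(\theta)) = u(A_{\overline{\theta}})$, hold together with the ``socle'' condition $r\,u(\alpha) = u(\beta) + u(\sigma) + u(\gamma)$ coming from $a\alpha^r = b\beta\sigma\gamma$. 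Solving this linear system forces $v(\alpha) = v(\beta) = v(\gamma)$ and then determines $v(\sigma)$ in terms of $r$ and these data; it admits a solution with all exponents positive precisely because $r \geq 4$ — for example $v(\alpha) = v(\beta) = v(\gamma) = 1$, $v(\sigma) = 4r - 15$, $u(\alpha) = 4$, $u(\sigma) = 5$, $u(\beta) = 2r - 3$, $u(\gamma) = 2r - 2$ works. (This is the same phenomenon by which the value $r = 3$, which would give the disc algebra, is excluded; cf.\ Remark~\ref{rem:4.1} and Assumption~\ref{ass}.) Since $\varphi_t$ has inverse $\theta \mapsto t^{-u(\theta)}\theta$ it is an isomorphism, so $A(t) \cong A(1)$ for all $t \neq 0$; by Proposition~\ref{prop:2.2}, $\Omega(r,c_{\bullet})$ degenerates to $B(Q,f,m_{\bullet}^r,c_{\bullet})$, which is tame by Proposition~\ref{prop:2.11}, hence so is $\Omega(r,c_{\bullet})$.

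The only real work — purely bookkeeping, as in Lemmas~\ref{lem:6.1} and~\ref{lem:6.2} — is to check that the inserted powers $t^{v(\theta)}$ stay compatible with all eight zero relations for every $t$ (equivalently, that the monomial basis of $A(1)$ remains a basis of each $A(t)$, so that the family genuinely lies in $\alg_d(K)$), and to pin down exponents $u(\theta), v(\theta) \geq 1$ simultaneously satisfying $(\dagger)$, the socle condition, and positivity; the bound $r \geq 4$ is exactly the point where positivity can be arranged.
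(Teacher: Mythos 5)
Your proposal is correct and follows essentially the same strategy as the paper: an algebraic one-parameter family $A(t)$ interpolating between $\Omega(r,c_{\bullet})$ at $t=1$ and $B(Q,f,m_{\bullet}^r,c_{\bullet})$ at $t=0$, a rescaling isomorphism $\varphi_t(\theta)=t^{u(\theta)}\theta$ for $t\neq 0$, and an appeal to Proposition~\ref{prop:2.2}; your exponents $u(\alpha)=4$, $u(\sigma)=5$, $u(\beta)=2r-3$, $u(\gamma)=2r-2$ with $v(\alpha)=v(\beta)=v(\gamma)=1$, $v(\sigma)=4r-15$ do satisfy $(\dagger)$ and the socle condition with everything positive for $r\geq 4$. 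The only (cosmetic) difference is that your choice works uniformly in $r$, whereas the paper splits into the cases $r=4$ and $r\geq 5$ with two different exponent assignments.
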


\begin{proof}
We abbreviate $a = c_{\alpha}$
and $b = c_{\beta}$.
We consider two cases.

\smallskip

(1)
Assume $r = 4$.
For each $t \in K$, we denote
by $\Lambda(t)$ the algebra
given by the quiver $Q$
and the relations:
\begin{align*}
 \sigma^2 &= b t \gamma \beta,
\!\!\!\!\!\!\!\!\!
 &
	\gamma\alpha  &= b t \sigma \gamma ,
 &
 \alpha \beta &= b t \beta \sigma,\!\!\!
 &
 \beta \gamma &= a t \alpha^3 ,
 \\
	&& 
	a\alpha^4 &= b(\beta\sigma\gamma), &
	b(\gamma\beta\sigma)&= b(\sigma\gamma\beta), \!\!\!\!\!\!&
\\
	&&
	\theta f(\theta)g(f(\theta))&=0  \ (\theta \in Q_1),&
	\theta g(\theta)f(g(\theta))&=0 \ (\theta \in Q_1).
\end{align*}
Then $\Lambda(t)$, $t \in K$,
is an algebraic family in the variety $\alg_{13}(K)$, 
with $\Lambda(1) = \Sigma(4,c_{\bullet})$
and 
$\Lambda(0) =  B(Q,f,m_{\bullet}^4,c_{\bullet})$.
Moreover, for each $t \in K^*$,
there exists an isomorphism of $K$-algebras
$\varphi_t : \Lambda(1) \to \Lambda(t)$ given by
\begin{align*}
 \varphi_t(\sigma) &= t^5 \sigma , &
 \varphi_t(\gamma) &= t^{5} \gamma , &
 \varphi_t(\beta) &= t^6 \beta , &
 \varphi_t(\alpha) &= t^{4} \alpha . 
\end{align*}
Therefore, it follows from 
Proposition~\ref{prop:2.2}
that $\Omega(4,c_{\bullet})$
degenerates to $B(Q,f,m_{\bullet}^4,c_{\bullet})$,
and $\Omega(4,c_{\bullet})$ is a tame algebra.

\smallskip

(2)
Assume $r \geq 5$.
For each $t \in K$, we denote
by $A(t)$ the algebra
given by the quiver $Q$
and the relations:
\begin{align*}
\sigma^2 &= b t^r \gamma \beta,
 &
	\gamma\alpha  &= b t^{r-4} \sigma \gamma ,
 &
	\alpha \beta &= b t^{r-4} \beta \sigma,
 \\
	\beta \gamma &= a t^{r-4} \alpha^{r-1} ,
 &
        a\alpha^r &= b(\beta\sigma\gamma), &
        b(\gamma\beta\sigma)&= b(\sigma\gamma\beta),
\\
        &&
\!\!\!\!\!\!\!\!\!\!\!\!
        \theta f(\theta)g(f(\theta))=0  &\ (\theta \in Q_1),
&
        \theta g(\theta)f(g(\theta))&=0 \ (\theta \in Q_1) .
\end{align*}
Then $A(t)$, $t \in K$,
is an algebraic family in the variety $\alg_{d}(K)$, 
with $d = 9+r$.
Observe also that 
$A(1) = \Omega(r,c_{\bullet})$
and 
$A(0) =  B(Q,f,m_{\bullet}^r,c_{\bullet})$.
Further, for each $t \in K \setminus \{0 \}$,
there exists an isomorphism of $K$-algebras
$\psi_t : A(1) \to A(t)$ given by
\begin{align*}
	\psi_t(\sigma) &= t^{r} \sigma , &
 \psi_t(\gamma) &= t^{r} \gamma , &
 \psi_t(\beta) &= t^{2r} \beta , &
 \psi_t(\alpha) &= t^{4} \alpha  . 
\end{align*}
Therefore, applying
Proposition~\ref{prop:2.2},
we conclude that $\Omega(r,c_{\bullet})$
degenerates to 
\linebreak
$B(Q,f,m_{\bullet}^r,c_{\bullet})$,
and $\Omega(r,c_{\bullet})$ is a tame algebra.
\end{proof}


Let $(Q, f)$ be the triangulation quiver
\[
  \xymatrix@R=3.pc@C=1.2pc{
    &&& 1
    \ar[ld]^{\alpha}
    \ar[rrrd]^{\omega}
    \\   
    4
    \ar[rrru]^{\delta}
    \ar@<-.5ex>[rr]_(.6){\eta}
    && 2
    \ar@<-.5ex>[ll]_(.4){\xi}
    \ar[rd]^{\beta}
    &&&& 5  \ar@(ru,dr)^{\varepsilon}[]
    \ar[llld]^{\nu}
    \\
    &&& 3
    \ar[lllu]^{\sigma}
    \ar@/_3ex/[uu]^{\gamma}
  }
\]
with $f$-orbits 
$(\alpha\ \xi\ \delta)$,
$(\beta\ \sigma\ \eta)$, 
$(\gamma\ \omega\ \nu)$
and $(\varepsilon)$.
Then  $g$ has orbits 
\begin{align*}
  &&
  \cO(\alpha) &= (\alpha \ \beta \ \gamma), &
  \cO(\delta) &= (\delta \ \omega \ \varepsilon \ \nu \ \sigma),&
  \cO(\xi) &= (\xi \ \eta).
  &&
\end{align*}
Let $m_{\bullet} : \cO(g) \to \bN^*$
be the weight function
given by 
$m_{\cO(\alpha)} = m_{\cO(\delta)} = m_{\cO(\xi)} = 1$
and let 
$c_{\bullet} : \cO(g) \to K^*$
be an arbitrary parameter function.
We consider the weighted triangulation algebra  
\[
  \Phi(c_{\bullet}) = \Lambda(Q,f,m_{\bullet},c_{\bullet}).
\]

\begin{lemma}
\label{lem:6.4-new} 
The algebra
$\Phi(c_{\bullet})$ degenerates to the  biserial
weighted triangulation algebra
$B(Q,f,m_{\bullet},c_{\bullet})$.
In particular, $\Phi(c_{\bullet})$ is a tame algebra.
\end{lemma}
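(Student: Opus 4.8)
The plan is to imitate the proofs of Lemmas~\ref{lem:6.1}, \ref{lem:6.2} and \ref{lem:6.3}. Write $d = \dim_K \Phi(c_\bullet)$; by Proposition~\ref{prop:4.13}(i), the $g$-orbits having lengths $3,5,2$ and all multiplicities $1$, we get $d = 3^2 + 5^2 + 2^2 = 38$, and likewise $\dim_K B(Q,f,m_\bullet,c_\bullet) = d$ by Proposition~\ref{prop:2.11}(i). I would construct an algebraic family $A(t)$, $t\in K$, in $\alg_d(K)$ with $A(1) = \Phi(c_\bullet)$ and $A(0) = B(Q,f,m_\bullet,c_\bullet)$, and then show $A(t)\cong A(1)$ for every $t\in K\setminus\{0\}$. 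Given this, Proposition~\ref{prop:2.2} yields that $\Phi(c_\bullet)$ degenerates to $B(Q,f,m_\bullet,c_\bullet)$, and since the latter is tame (Proposition~\ref{prop:2.11}), so is $\Phi(c_\bullet)$.

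For the family I would take $A(t) = KQ/I(t)$, where $I(t)$ is generated by the (unchanged) socle relations $c_\theta B_\theta - c_{\bar\theta} B_{\bar\theta}$, the (unchanged) zero relations of Definition~\ref{def:2.8}, and the deformed ``quaternion'' relations
\[
  \theta f(\theta) - c_{\bar\theta}\, t^{v(\theta)} A_{\bar\theta} \qquad (\theta\in Q_1),
\]
for exponents $v(\theta)\ge 1$ to be specified below, constant on the $f$-orbits $(\alpha\ \xi\ \delta)$, $(\beta\ \sigma\ \eta)$, $(\gamma\ \omega\ \nu)$ and $(\varepsilon)$. At $t=1$ this is exactly the defining presentation of $\Phi(c_\bullet)$, while at $t=0$ the deformed relations become $\theta f(\theta)=0$, whereupon all the zero relations of Definition~\ref{def:2.8} become consequences, so $A(0)$ is precisely the biserial weighted triangulation algebra $B(Q,f,m_\bullet,c_\bullet)$ of Definition~\ref{def:2.10}. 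Choosing the common monomial basis of $\Phi(c_\bullet)$ and $B(Q,f,m_\bullet,c_\bullet)$ (initial subwords of the $B_\theta$) identifies the $A(t)$ with points of $\alg_d(K)$ depending polynomially on $t$.

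For $t\neq 0$ I would define $\varphi_t\colon A(1)\to A(t)$ on arrows by $\varphi_t(\theta) = t^{u(\theta)}\theta$ and extend multiplicatively; this is a $K$-algebra automorphism of $KQ$, so it descends to an isomorphism $A(1)\to A(t)$ as soon as it carries the generators of $I(1)$ into $I(t)$ and back. Using $A_{\bar\theta}f^2(\theta) = B_{\bar\theta}$ (Lemma~\ref{lem:2.9}), this holds exactly when the positive integers $u(\theta)$ satisfy the identity $(\dagger)$, i.e. $v(\theta) + u(\theta) + u(f(\theta)) = u(A_{\bar\theta})$ for every arrow $\theta$, together with the condition that the sum of the $u(\theta)$ along each $g$-orbit is one common value $N$ (this last is what keeps the socle relations intact). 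Since $u(A_{\bar\theta}) = N - u(f^2(\theta))$, equation $(\dagger)$ becomes $v(\theta) = N - u(\theta) - u(f(\theta)) - u(f^2(\theta))$, so $v$ is automatically constant on $f$-orbits, as needed; thus it suffices to choose $u$, let this formula define $v$, and check $v\ge 1$. Concretely one may take $N = 10$ and
\[
  u(\alpha) = 1,\ u(\beta) = 2,\ u(\gamma) = 7,\ u(\xi) = 7,\ u(\eta) = 3,
\]
\[
  u(\delta) = 1,\ u(\omega) = 1,\ u(\varepsilon) = 3,\ u(\nu) = 1,\ u(\sigma) = 4,
\]
for which the three $g$-orbit sums all equal $10$ and a direct check gives $v(\theta) = 1$ for every arrow $\theta$; in particular all exponents are $\ge 1$, so $\varphi_t$ is the required isomorphism.

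The one genuinely delicate point is the solvability of $(\dagger)$ in positive integers compatible with constant $g$-orbit sums: because $\Phi$ carries the loop $\varepsilon$ fixed by $f$ alongside the virtual arrows $\xi,\eta$, the $g$-orbits have the unequal lengths $3$, $2$ and $5$, and the bookkeeping forces $u(\varepsilon)$ up to $3$ and $N$ up to $10$. This is exactly why the uniform choice of exponents available for the generic triangulation quivers in Proposition~\ref{prop:6.4} cannot be used, and hence why $\Phi(c_\bullet)$ must be handled separately. Once the exponents are fixed, verifying that $\varphi_t$ respects all three kinds of relations (the deformed quaternion relations via $(\dagger)$, the socle relations via the equal orbit sums, and the zero relations trivially) is a routine check.
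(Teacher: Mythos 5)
Your proposal is correct and follows essentially the same strategy as the paper's proof: an explicit one-parameter family $A(t)$ deforming the quaternion relations by powers $t^{v(\theta)}$, kept rigid by the socle and zero relations, together with a rescaling isomorphism $\varphi_t(\theta)=t^{u(\theta)}\theta$ for $t\neq 0$ determined by $(\dagger)$ and equal $u$-sums over the $g$-orbits. The only difference is the numerical choice: the paper takes $N=20$ with $u$-values $5,5,10$ on $\cO(\alpha)$, $10,10$ on $\cO(\xi)$ and $4,4,4,4,4$ on $\cO(\delta)$ (so $v=1,1,2,8$ on the four $f$-orbits), while your $N=10$ solution gives $v\equiv 1$; both solve the same system, your exponents do check out, and your explanation of why the canonical exponents of Proposition~\ref{prop:6.5} fail here (namely $v(\alpha)\leq 0$ for the orbit meeting the virtual pair and the length-$5$ cycle) is exactly the reason $\Phi(c_{\bullet})$ is treated separately.
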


\begin{proof}
We write 
$a = c_{\cO(\alpha)}$,
$b = c_{\cO(\delta)}$,
$c = c_{\cO(\xi)}$.
For each $t \in K$, 
consider the algebra $A(t)$ 
given by the quiver $Q$
and the relations:
\begin{align*}
 \alpha \xi &= b t \omega \varepsilon \nu \sigma 
 , &
 \xi \delta &= a t \beta \gamma
 , &
 \sigma \eta &= a t \gamma \alpha
 , &
 \omega \nu &= a t^2 \alpha \beta
, \\
 \eta \beta &= b t \delta \omega \varepsilon \nu  
 , &
 \delta \alpha &= c t \eta
 , &
 \beta \sigma &= c t \xi
 , &
 \nu \gamma &= b t^2 \varepsilon \nu \sigma \delta
, \\
 \gamma \omega &= b t^2 \sigma \delta \omega \varepsilon   
 , &&& 
\!\!\!\!\!\!\!\!\!\!\!\!\!\!\!\!\!\!\!\!\!\!\!\!\!\!\!\!\!\!\!\!\!\!\!\!\!\!\!\!\!\!\!
\theta f(\theta) g\big(f(\theta)\big) &= 0 
\mbox{ for } \theta \in Q_1 \setminus \{ \beta , \delta \} ,
\!\!\!\!\!\!\!\!\!\!\!\!\!\!\!\!\!\!\!\!\!\!\!\!\!\!\!\!\!\!\!\!\!\!\!\!\!\!\!\!\!\!\!
\!\!\!\!\!\!\!\!\!\!\!\!\!\!\!\!\!\!\!\!\!\!\!\!\!\!\!\!\!\!\!\!\!\!\!\!\!\!\!\!\!\!\!
\\
 \varepsilon^2 &= b t^8 \nu \sigma \delta \omega , 
 &&& 
\!\!\!\!\!\!\!\!\!\!\!\!\!\!\!\!\!\!\!\!\!\!\!\!\!\!\!\!\!\!\!\!\!\!\!\!\!\!\!\!\!\!\!
\theta g(\theta) f\big(g(\theta)\big) &= 0 
\mbox{ for } \theta \in Q_1 \setminus \{ \alpha , \sigma \} ,
\!\!\!\!\!\!\!\!\!\!\!\!\!\!\!\!\!\!\!\!\!\!\!\!\!\!\!\!\!\!\!\!\!\!\!\!\!\!\!\!\!\!\!
\!\!\!\!\!\!\!\!\!\!\!\!\!\!\!\!\!\!\!\!\!\!\!\!\!\!\!\!\!\!\!\!\!\!\!\!\!\!\!\!\!\!\!
\end{align*}
and in addition
\begin{align*}
	a(\alpha\beta\gamma)&= b(\omega\varepsilon\nu\sigma\delta),  &
	a(\beta\gamma\alpha)&= c(\xi\eta), & 
	b(\sigma\delta\omega\varepsilon\nu)&= a(\gamma\alpha\beta), &
\\ 
b(\delta\omega\varepsilon\nu\sigma)&= c(\eta\xi), &
b(\varepsilon\nu\sigma\delta\omega)&= b(\nu\sigma\delta\omega\varepsilon).
\end{align*}
	Then $A(t)$, $t \in K$,
is an algebraic family in the variety $\alg_{38}(K)$, 
with $A(1) = \Phi(c_{\bullet})$
and 
$A(0) =  B(Q,f,m_{\bullet},c_{\bullet})$.
Moreover, for each $t \in K^*$,
there exists an isomorphism of $K$-algebras
$\varphi_t : A(1) \to A(t)$ given by
\begin{align*}
 \varphi_t(\alpha) &= t^{5} \alpha , & 
 \varphi_t(\beta) &= t^5 \beta , &
 \varphi_t(\gamma) &= t^{10} \gamma , &
 \varphi_t(\xi) &= t^{10} \xi , &
 \varphi_t(\eta) &= t^{10} \eta , \\
 \varphi_t(\delta) &= t^{4} \delta , &
 \varphi_t(\omega) &= t^{4} \omega , &
 \varphi_t(\varepsilon) &= t^{4} \varepsilon , &
 \varphi_t(\nu) &= t^{4} \nu , &
 \varphi_t(\sigma) &= t^4 \sigma . 
\end{align*}
Therefore, it follows from 
Proposition~\ref{prop:2.2}
that $\Phi(c_{\bullet})$
degenerates to $B(Q,f,m_{\bullet},c_{\bullet})$,
and $\Phi(c_{\bullet})$ is tame.
\end{proof}


Let $(Q, f)$ be the triangulation quiver
\[
  \xymatrix@R=3.pc@C=1.2pc{
    &&& 1
    \ar[ld]^{\alpha}
    \ar[rrrd]^{\omega}
    \\   
    4
    \ar[rrru]^{\delta}
    \ar@<-.5ex>[rr]_(.6){\eta}
    && 2
    \ar@<-.5ex>[ll]_(.4){\xi}
    \ar[rd]^{\beta}
    &&&& 5     \ar[llld]^{\nu}
      \ar@<.5ex>[rr]^(.5){\varepsilon}
      && 6
      \ar@<.5ex>[ll]^(.5){\mu}
      \ar@(ru,dr)^{\varrho}[]
    \\
    &&& 3
    \ar[lllu]^{\sigma}
    \ar@/_3ex/[uu]^{\gamma}
  }
\]
with $f$-orbits 
$(\alpha\ \xi\ \delta)$,
$(\beta\ \sigma\ \eta)$, 
$(\gamma\ \omega\ \nu)$,
$(\varepsilon\ \varrho\ \mu)$ .
Then  $g$ has orbits 
\begin{align*}
  &&
  \cO(\alpha) &= (\alpha \ \beta \ \gamma), &
  \cO(\delta) &= (\delta \ \omega \ \varepsilon \ \mu \ \nu \ \sigma),&
  \cO(\xi) &= (\xi \ \eta), &
  \cO(\varrho) &= (\varrho).
  &&
\end{align*}
Let $r \geq 2$ be a natural number and
$m_{\bullet}^r : \cO(g) \to \bN^*$
be the weight function
given by 
$m_{\cO(\alpha)}^r = m_{\cO(\delta)}^r = m_{\cO(\xi)}^r = 1$
and
$m_{\cO(\varrho)}^r = r$.
Moreover, let
$c_{\bullet} : \cO(g) \to K^*$
be an arbitrary parameter function.
We consider the weighted triangulation algebra  
\[
  \Psi(r,c_{\bullet}) = \Lambda(Q,f,m_{\bullet}^r,c_{\bullet}).
\]

\begin{lemma}
\label{lem:6.5-new} 
The algebra
$\Psi(r,c_{\bullet})$ degenerates to the associated biserial
weighted triangulation algebra
$B(Q,f,m_{\bullet}^r,c_{\bullet})$.
In particular, $\Psi(r,c_{\bullet})$ is a tame algebra.
\end{lemma}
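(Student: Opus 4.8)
The proof follows exactly the pattern of Lemmas~\ref{lem:6.1}--\ref{lem:6.4-new}: the plan is to construct an algebraic family $A(t)$, $t \in K$, in $\alg_d(K)$ with $A(1) = \Psi(r,c_{\bullet})$ and $A(0) = B(Q,f,m_{\bullet}^r,c_{\bullet})$, together with algebra isomorphisms $\varphi_t \colon A(1) \to A(t)$ for all $t \in K \setminus \{0\}$, and then to apply Proposition~\ref{prop:2.2}. Here $d = \dim_K \Psi(r,c_{\bullet}) = \sum_{\cO \in \cO(g)} m_{\cO} n_{\cO}^2 = 9 + 36 + 4 + r = 49 + r$ by Proposition~\ref{prop:4.13}. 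Note that $\cO(\xi) = (\xi\ \eta)$ has $m_{\cO(\xi)} n_{\cO(\xi)} = 2$, so $\xi$ and $\eta$ are virtual (with $A_{\xi} = \xi$, $A_{\eta} = \eta$) and are treated exactly as the virtual pair in Lemma~\ref{lem:6.1}, while $\varrho$ is a genuine loop at vertex $6$ with $m_{\cO(\varrho)} n_{\cO(\varrho)} = r \geq 2$.

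For each $t \in K$ I would present $A(t)$ by the quiver $Q$ and by the relations obtained from those of $\Psi(r,c_{\bullet})$ in Definition~\ref{def:2.8} by inserting a power $t^{v(\theta)}$ with $v(\theta) \geq 1$ into each relation $\theta f(\theta) = c_{\bar\theta} A_{\bar\theta}$, by retaining the socle identities $c_{\theta} B_{\theta} = c_{\bar\theta} B_{\bar\theta}$ with no power of $t$, and by keeping the zero relations of Definition~\ref{def:2.8} unchanged. Then $A(t)$, $t \in K$, is an algebraic family in $\alg_{49+r}(K)$, with $A(1) = \Psi(r,c_{\bullet})$ and, since every $v(\theta) \geq 1$, with $A(0)$ equal to the biserial weighted triangulation algebra $B(Q,f,m_{\bullet}^r,c_{\bullet})$ of Definition~\ref{def:2.10}. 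The substantive step is to choose the exponents $v(\theta)$ together with positive integers $u(\theta)$, $\theta \in Q_1$, so that $\varphi_t(\theta) := t^{u(\theta)}\theta$ extends to an algebra homomorphism $A(1) \to A(t)$; as explained at the start of Section~\ref{sec:reptype}, this is equivalent to the identity
\[
  v(\theta) + u(\theta) + u(f(\theta)) = u(A_{\bar\theta})
\]
of $(\dagger)$ holding for every arrow $\theta$, where $u$ is extended additively to monomials. Any such $\varphi_t$ is automatically bijective with inverse $\varphi_{t^{-1}}$, so one obtains $\varphi_t \colon A(1) \xrightarrow{\ \sim\ } A(t)$; then Proposition~\ref{prop:2.2} yields that $\Psi(r,c_{\bullet})$ degenerates to $B(Q,f,m_{\bullet}^r,c_{\bullet})$, and since the latter is tame by Proposition~\ref{prop:2.11}, the second assertion of Proposition~\ref{prop:2.2} (Geiss' theorem) shows that $\Psi(r,c_{\bullet})$ is tame.

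The one real obstacle is combinatorial: writing down an explicit solution of the system $(\dagger)$ — one equation per arrow — in which all $v(\theta), u(\theta)$ are positive integers and the $r$-dependence is correctly propagated. The relevant $g$-cycles are the short ones $\cO(\alpha) = (\alpha\ \beta\ \gamma)$ and $\cO(\xi) = (\xi\ \eta)$, the long one $\cO(\delta) = (\delta\ \omega\ \varepsilon\ \mu\ \nu\ \sigma)$ of length $6$, and the loop orbit $\cO(\varrho) = (\varrho)$; they interact through the $f$-orbit $(\varepsilon\ \varrho\ \mu)$, whose three relations read $\varepsilon \varrho = c_{\cO(\delta)} t^{v(\varepsilon)}\, \nu\sigma\delta\omega\varepsilon$, $\varrho \mu = c_{\cO(\delta)} t^{v(\varrho)}\, \mu\nu\sigma\delta\omega$ and $\mu \varepsilon = c_{\cO(\varrho)} t^{v(\mu)}\, \varrho^{\,r-1}$, so that $u(A_{\bar\mu}) = (r-1)u(\varrho)$ forces the $u$-weights along $\cO(\delta)$ and at $\varrho$ to grow linearly in $r$ and produces exponents of the shape $t^{cr-c'}$ that are $\geq 1$ precisely because $r \geq 2$ (possibly after treating a small value of $r$ separately, as in Lemmas~\ref{lem:6.2} and~\ref{lem:6.3}). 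Once a consistent choice of $(v(\theta), u(\theta))$ is recorded, the verification that $\varphi_t$ sends every defining relation of $A(1)$ to a defining relation of $A(t)$ — including the socle identities, which are consistent automatically because summing $(\dagger)$ around a $g$-cycle equates the total $t$-exponents of $B_{\theta}$ and $B_{\bar\theta}$ — is the same routine bookkeeping already carried out in the preceding lemmas, and no further conceptual difficulty is expected.
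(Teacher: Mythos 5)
Your proposal follows exactly the paper's proof: the paper constructs precisely this algebraic family $A(t)$ in $\alg_{r+49}(K)$ (the relations of $\Psi(r,c_{\bullet})$ with inserted powers $t^{v(\theta)}$, the socle identities $c_{\theta}B_{\theta}=c_{\bar{\theta}}B_{\bar{\theta}}$ kept $t$-free so that $A(0)$ is the biserial algebra rather than its string quotient, and the zero relations of Definition~\ref{def:2.8} including the conditional ones $\mu\varepsilon\mu=0$, $\varepsilon\mu\varepsilon=0$ for $r\geq 3$), and then applies Proposition~\ref{prop:2.2}. The one piece you defer as bookkeeping --- an explicit solution of $(\dagger)$ --- is supplied there by $u(\alpha)=u(\beta)=3r$, $u(\gamma)=u(\xi)=u(\eta)=6r$, $u=2r$ on the six arrows of $\cO(\delta)$, $u(\varrho)=12$, with $v(\theta)=r$ except $v(\gamma)=v(\nu)=2r$ and $v(\varepsilon)=v(\varrho)=v(\mu)=8r-12\geq 4$; no case split on $r$ is needed, and your analysis of the $f$-orbit $(\varepsilon\ \varrho\ \mu)$ correctly predicts this shape.
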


\begin{proof}
We write 
$a = c_{\cO(\alpha)}$,
$b = c_{\cO(\delta)}$,
$c = c_{\cO(\xi)}$,
$d = c_{\cO(\varrho)}$.
For each $t \in K$, 
consider the algebra $A(t)$ 
given by the quiver $Q$
and the relations:
\begin{align*}
 \alpha \xi &= b t^r \omega \varepsilon \mu \nu \sigma 
 , &
 \xi \delta &= a t^r \beta \gamma
 , &
 \delta \alpha &= c t^r \eta
, \\
 \eta \beta &= b t^r \delta \omega \varepsilon \mu \nu  
 , &
 \sigma \eta &= a t^r \gamma \alpha
 , &
 \beta \sigma &= c t^r \xi
, \\
 \gamma \omega &= b t^{2r} \sigma \delta \omega \varepsilon \mu   
 , &
 \omega \nu &= a t^{r} \alpha \beta
 , &
 \nu \gamma &= b t^{2r} \varepsilon \mu \nu \sigma \delta
, \\
 \varepsilon \varrho &= b t^{8r-12} \nu \sigma \delta \omega \varepsilon 
 , &
 \varrho \mu &= b t^{8r-12} \mu \nu \sigma \delta \omega    
 , &
 \mu \varepsilon &= d t^{8r-12} \varrho^{r-1} 
 , \\
 & 
\theta f(\theta) g\big(f(\theta)\big) = 0 
\mbox{ for } \theta \in Q_1 \setminus \{ \beta , \delta, \mu \} ,
\!\!\!\!\!\!\!\!\!\!\!\!\!\!\!\!\!\!\!\!\!\!\!\!\!\!\!\!\!\!\!\!\!\!\!\!\!\!\!\!\!\!\!
\\
 & 
\theta g(\theta) f\big(g(\theta)\big) = 0 
\mbox{ for } \theta \in Q_1 \setminus \{ \alpha , \sigma, \varepsilon \} ,
\!\!\!
\!\!\!\!\!\!\!\!\!\!\!\!\!\!\!\!\!\!\!\!\!\!\!\!\!\!\!\!\!\!\!\!\!\!\!\!\!
\\
	&
\mu\varepsilon \mu = 0, 
\ \
\varepsilon\mu\varepsilon = 0, 
\ \ 
	\mbox{ if } r\geq 3.
\!\!\!\!\!\!\!\!\!\!\!\!\!\!\!\!\!\!\!\!\!\!\!\!\!\!\!\!\!\!\!\!\!\!\!\!\!\!\!\!\!\!\!
\!\!\!\!\!\!\!\!\!\!\!\!\!\!\!\!\!\!\!\!\!\!\!\!\!\!\!\!\!\!\!\!\!\!\!\!\!\!\!\!\!\!\!
\!\!\!\!\!\!\!\!\!\!\!\!\!\!\!\!\!\!\!\!\!\!\!\!\!\!\!\!\!\!\!\!\!\!\!\!\!\!\!\!\!\!\!
\end{align*}
In addition we have the relations
\begin{align*}
	a(\alpha\beta\gamma)&= b(\omega\varepsilon\mu\nu\sigma\delta), &
	a(\beta\gamma\alpha)&=c(\xi\eta), &
	b(\sigma\delta\omega\varepsilon\mu\nu)&= a(\gamma\alpha\beta), &
\\
	b(\delta\omega\varepsilon\mu\nu\sigma) &= c(\eta\xi),&
	b(\varepsilon\mu\nu\sigma\delta\omega)&= b(\nu\sigma\delta\omega\varepsilon\mu), & 
	b(\mu\nu\sigma\delta\omega\varepsilon)&= d\varrho^r .
\end{align*}
We note that $8 r - 12 \geq 1$, because $r \geq 2$.
Then $A(t)$, $t \in K$,
is an algebraic family in the variety $\alg_{n}(K)$, 
with $n = r+49$.
Observe also that 
$A(1) = \Psi(r,c_{\bullet})$
and 
$A(0) =  B(Q,f,m_{\bullet}^r,c_{\bullet})$.
Further, for each $t \in K \setminus \{0\}$,
there is an isomorphism of $K$-algebras
$\varphi_t : A(1) \to A(t)$ given by
\begin{align*}
 \varphi_t(\alpha) &= t^{3r} \alpha , & 
 \varphi_t(\beta) &= t^{3r} \beta , &
 \varphi_t(\gamma) &= t^{6r} \gamma , &
 \varphi_t(\xi) &= t^{6r} \xi , \\
 \varphi_t(\eta) &= t^{6r} \eta , &
 \varphi_t(\varrho) &= t^{12} \varrho , &
 \varphi_t(\delta) &= t^{2r} \delta , &
 \varphi_t(\omega) &= t^{2r} \omega , \\
 \varphi_t(\varepsilon) &= t^{2r} \varepsilon , &
 \varphi_t(\mu) &= t^{2r} \mu , &
 \varphi_t(\nu) &= t^{2r} \nu , &
 \varphi_t(\sigma) &= t^{2r} \sigma . 
\end{align*}
Therefore, it follows from 
Proposition~\ref{prop:2.2}
that $\Psi(r,c_{\bullet})$
degenerates to $B(Q,f,m_{\bullet}^r,c_{\bullet})$,
and $\Psi(r,c_{\bullet})$ is tame.
\end{proof}

Towards the general case, let
$\Lambda = \Lambda(Q,f,m_{\bullet},c_{\bullet})$ 
be an arbitrary weighted triangulation algebra.
We define
\begin{align*} 
	M &:= M_{\Lambda} = 2 \lcm \{ m_{\cO} n_{\cO}  \,|\,  \cO \in \cO(g) \}, \cr
	q(\alpha) &: = m_{\alpha}n_{\alpha}, \ q: Q_1\to \bN^*, \cr
	v(\alpha) &:= M\bigg(1- \frac{1}{q(\alpha)} - \frac{1}{q(f(\alpha))} - 
	\frac{1}{q\big(f^2(\alpha)\big)} \bigg)
\end{align*}
for any $\alpha \in Q_1$. We will see in Proposition \ref{prop:6.5} that as
long as 
$v(\alpha)\geq 1$ for all arrows $\alpha$, we can define 
algebraic family of algebras and show that $\La$ degenerates to the
associated biserial triangulation algebra. First we determine
quivers which have arrows $\alpha$ with  $v(\alpha)\leq 0$.

\medskip

\begin{proposition}
\label{prop:6.4}
Assume that $v(\alpha) \leq 0$ 
for some arrow $\alpha \in Q_1$.
Then $\Lambda$ is isomorphic to one of the algebras
$D(\lambda)$,
$T(\lambda)$,
$S(\lambda)$,
$\Lambda(\lambda)$,
with $\lambda \in K^*$,
$S(r,c_{\bullet})$,
with $r \geq 2$,
$\Sigma(r,c_{\bullet})$, with $r \geq 3$, 
$\Omega(r,c_{\bullet})$, with $r \geq 4$, 
$\Phi(c_{\bullet})$, 
or $\Psi(r,c_{\bullet})$, with $r \geq 2$. 
\end{proposition}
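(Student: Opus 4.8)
The strategy is a finite case analysis of triangulation quivers $(Q,f)$ admitting an arrow $\alpha$ with $v(\alpha)\le 0$, followed by identification of the weight data. Recall $v(\alpha)=M\bigl(1-\tfrac1{q(\alpha)}-\tfrac1{q(f(\alpha))}-\tfrac1{q(f^2(\alpha))}\bigr)$, so $v(\alpha)\le 0$ is equivalent to the inequality
\[
  \frac{1}{q(\alpha)} + \frac{1}{q(f(\alpha))} + \frac{1}{q\bigl(f^2(\alpha)\bigr)} \;\ge\; 1 .
\]
Since each $q(\beta)=m_\beta n_\beta\ge 2$ by Assumption~\ref{ass}(1), the triple $\bigl(q(\alpha),q(f(\alpha)),q(f^2(\alpha))\bigr)$ of integers $\ge 2$ must satisfy $\sum 1/q_i\ge 1$. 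First I would enumerate all such integer triples up to the cyclic symmetry induced by $f$ (note $f^3(\alpha)=\alpha$ forces this to be an $f$-cycle, or, in the degenerate case $f(\alpha)=\alpha$, a fixed point; but a fixed point of $f$ is a loop and then $q(\alpha)=2$ means $\alpha$ is virtual, which is excluded as $\Lambda$'s Gabriel quiver would be affected — I would handle this boundary case explicitly). The admissible triples are, up to order: $(2,2,q)$ for any $q\ge 2$; $(2,3,q)$ with $q\in\{3,4,5,6\}$; $(3,3,3)$; and $(2,3,3)$-type refinements already covered. For each such triple I determine which arrows in the $g$-orbit structure are virtual (namely those $\beta$ with $q(\beta)=2$) and invoke Remark~\ref{rem:4.2} and Lemma~\ref{lem:4.3} to constrain how virtual arrows sit in $Q$.

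The core of the argument is then: \emph{the local configuration of virtual arrows, together with $2$-regularity of $Q$ and $f^3=1$, pins down $Q$ (and $f$) to one of the finitely many quivers already named in Section~\ref{sec:exceptions} and in Lemmas~\ref{lem:6.1}--\ref{lem:6.5-new}.} Concretely: if the triple is $(2,2,q)$ then at least two of $\alpha,f(\alpha),f^2(\alpha)$ are virtual; by Remark~\ref{rem:4.2} two virtual arrows cannot start (or end) at the same vertex, and combining this with Lemma~\ref{lem:4.4} (where a pair of opposite virtual arrows is shown to force the triangle-quiver of Example~\ref{ex:3.3}) narrows the possibilities drastically — this yields the triangle algebras $T(\lambda)$, the disc algebras $D(\lambda)$, and the algebras $\Sigma(r,c_\bullet)$, $\Omega(r,c_\bullet)$ depending on the precise $m_\bullet$. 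If the triple is $(2,3,q)$ there is exactly one virtual arrow in the $f$-cycle; walking along the $g$-orbits emanating from it and using that $g$ preserves virtuality, one reconstructs the spherical quiver of Example~\ref{ex:3.6}, the tetrahedral quiver of Example~\ref{ex:3.2}, or one of the quivers of Lemmas~\ref{lem:6.4-new}, \ref{lem:6.5-new}. The case $(3,3,3)$ with no virtual arrows but still $v(\alpha)\le 0$ forces $q(\beta)=3$ on a whole $f$-cycle and $M$ small; tracing this gives the tetrahedral situation and $S(r,c_\bullet)$, $\Psi(r,c_\bullet)$. In each instance, once $Q$ and $f$ are fixed, the constraint $v(\alpha)\le 0$ together with Assumption~\ref{ass} leaves only the weight functions $m_\bullet$ listed in the statement, and the parameter $c_\bullet$ is reduced modulo the isomorphisms established in Section~\ref{sec:exceptions} (e.g.\ $D(a,b)\cong D(ab,1)$, $T(c_1,c_2,c_3)\cong T(c_1c_2c_3^2,1,1)$, and so on), giving the single parameter $\lambda\in K^*$ where applicable.

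The main obstacle I anticipate is \emph{bookkeeping completeness}: there are genuinely many small cases, and it is easy to miss a configuration of virtual arrows or a quiver with a few extra vertices. The safeguard is to organize the analysis strictly by the integer triple and, within each triple, by the partition of the set $\{\alpha,f(\alpha),f^2(\alpha)\}$ into virtual and non-virtual arrows, then in each subcase to run the same mechanical reconstruction: start from a virtual arrow (or, if none, from an arrow with $q=3$), record its $g$-orbit, use $f^3=1$ and $2$-regularity to close up $Q$, and compare with the finite list. A secondary subtlety is that a virtual arrow $\beta$ has $c_\beta=1$ normalized, so the surviving parameter count must be recomputed after the substitutions of Definition~\ref{def:2.8}; this is exactly the normalization already carried out in each example of Section~\ref{sec:exceptions}, so I would simply cite those computations rather than redo them. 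Once every subcase terminates in a named algebra, the proposition follows.
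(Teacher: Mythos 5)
Your overall strategy---rewriting $v(\alpha)\le 0$ as $\tfrac1{q(\alpha)}+\tfrac1{q(f(\alpha))}+\tfrac1{q(f^2(\alpha))}\ge 1$, enumerating the admissible integer triples, and reconstructing $(Q,f,m_{\bullet})$ from the placement of virtual arrows---is exactly the paper's. But the execution as you describe it contains errors that would derail the case analysis. First, your list of triples omits $(2,4,4)$, and this is precisely the triple from which the triangle algebras $T(\lambda)$ and the spherical algebras $S(\lambda)$, $S(r,c_{\bullet})$ actually arise (in Example~\ref{ex:3.3} the non-virtual $g$-orbits have $m_{\alpha}n_{\alpha}=4$ and the virtual one has $m_{\alpha}n_{\alpha}=2$; likewise in Example~\ref{ex:3.6}). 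Second, your claim that the triple $(2,2,q)$ ``yields the triangle algebras, the disc algebras, and $\Sigma$, $\Omega$'' is wrong: this case cannot occur at all. If $\alpha\ne f(\alpha)$ are both virtual, then $g(\alpha)=\overline{f(\alpha)}$ is virtual (virtuality is constant on $g$-orbits) and $f(\alpha)$ is virtual, so both arrows starting at $t(\alpha)$ are virtual, contradicting Assumption~\ref{ass}; the paper disposes of $(2,2,n)$ in one line for exactly this reason. Your appeal to Lemma~\ref{lem:4.4} here also misreads it: that lemma concerns a virtual pair $\alpha,\bar{\alpha}$ with a common \emph{source}, not two virtual arrows in one $f$-cycle, and its conclusion is that such algebras are infinite-dimensional, not that they are triangle algebras.

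Third, the remaining algebras are distributed over the cases differently from what you state, and one whole branch is missing. The disc algebras $D(\lambda)$ and the algebras $\Omega(r,c_{\bullet})$ come from an $f$-\emph{fixed loop} $\sigma$ with $q(\sigma)=3$, hence $v(\sigma)=0$; your treatment of $f$-fixed points considers only $q=2$ and dismisses it, so this branch never appears in your plan. The tetrahedral algebras $\Lambda(\lambda)$ (and the two-vertex occurrence of $D(\lambda)$) come from $(3,3,3)$, not from $(2,3,q)$; the algebras $S(r,c_{\bullet})$ come from $(2,4,4)$, and $\Psi(r,c_{\bullet})$ from $(2,3,6)$, not from $(3,3,3)$. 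Followed literally, your plan would search for each named algebra under the wrong hypotheses on the $q$-values and would never encounter the configurations that genuinely produce $D(\lambda)$, $\Omega(r,c_{\bullet})$, $T(\lambda)$, $S(\lambda)$ and $S(r,c_{\bullet})$. The organization that works (and that the paper uses) is by the \emph{position} of the distinguished arrow rather than by the raw triple: (a) a virtual loop, (b) a virtual non-loop, (c) an $f$-fixed loop with $q=3$, and (d) an $f$-cycle of length $3$ with all $q$-values equal to $3$; within (a) and (b) the inequality then forces the remaining $q$-values, and $2$-regularity closes up the quiver.
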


\begin{proof}
We  have
$v(\alpha) \leq 0$
if and only if 
$\tfrac{1}{q(\alpha)} + \tfrac{1}{q(f(\alpha))} 
 + \tfrac{1}{q(f^2(\alpha))} \geq 1$.
That is,  (up to rotation)
$\big(q(\alpha) , q(f(\alpha)) , q(f^2(\alpha))\big)$
is one of the triples:
$(2,2,n)$, $n \geq 2$,
$(2,3,3)$,
$(2,3,4)$,
$(2,3,5)$,
$(2,3,6)$,
$(2,4,4)$,
and
$(3,3,3)$.

\smallskip

(1) \ The case $(2, 2, n)$ does not occur:
	Suppose we have $\alpha\neq f(\alpha)$ and they are
	both virtual. Then $g(\alpha)$ and $f(\alpha)$ are
	virtual starting at the same vertex, which contradicts 
	Assumption \ref{ass}.
\smallskip

(2)
	We determine when $q(\alpha)=2$ and $v(\alpha)\leq 0$ when
	$\alpha$ is a loop. 
Then $(Q,f)$ has a subquiver of the form
\[
  \xymatrix{
    \bullet
    \ar@(ld,ul)^{\alpha}[]
    \ar@<.5ex>[r]^{\beta}
    & \bullet
    \ar@<.5ex>[l]^{\gamma}
  } 
\]
with $f$-orbit
$(\alpha\ \beta\ \gamma)$
and  $g(\alpha)=\alpha$.
Moreover,
$m_{\alpha} = 2$.
Further, $\cO(\beta) = \cO(\gamma)$
has length  at least $3$.
Hence we have 
$q(\beta) = m_{\beta} n_{\beta}  = m_{\gamma} n_{\gamma} = q(\gamma)\geq 3$, and
since $v(\alpha) \leq 0$, it is equal to $3$ or $4$, and then $m_{\beta}=1$. 
By Assumption \ref{ass} we can only have $m_{\beta}n_{\beta}=4$. 
Then $(Q,f)$ is the quiver
\[
  \xymatrix{
    \bullet
    \ar@(ld,ul)^{\alpha}[]
    \ar@<.5ex>[r]^{\beta}
    & \bullet
    \ar@<.5ex>[l]^{\gamma}
    \ar@<.5ex>[r]^{\sigma}
    & \bullet
    \ar@<.5ex>[l]^{\delta}
    \ar@(ru,dr)^{\eta}[]
  } 
\]
with $f$-orbits
$(\alpha\ \beta\ \gamma)$
and
$(\eta\ \delta\ \sigma)$,
and  $g$-orbits
$\cO(\alpha) = (\alpha)$,
$\cO(\beta) = (\beta\ \sigma\ \delta\ \gamma)$,
$\cO(\eta) = (\eta)$.
Then $\Lambda$ is isomorphic
to one of the algebras $T(\lambda)$,
for some $\lambda \in K^*$,
or 
$\Sigma(r, c_{\bullet})$ for some $r \geq 3$.

\smallskip

(3) We determine when $q(\xi)=2$ and $v(\xi)\leq 0$
where $\xi$ is not a loop. 
Then $(Q,f)$ contains a subquiver of the form
\[
  \xymatrix@R=3.pc@C=1.8pc{
    & c
    \ar[rd]^{\alpha}
    \\   
    \bullet
    \ar[ru]^{\delta}
    \ar@<-.5ex>[rr]_{\eta}
    && \bullet
    \ar@<-.5ex>[ll]_{\xi}
    \ar[ld]^{\beta}
    \\
    & d
    \ar[lu]^{\nu}
  }
\]
with  $f$-orbits
$(\xi \ \delta \  \alpha)$
and
$(\beta\ \nu\ \eta)$
and  $g$-orbit
$(\xi \ \eta)$.
Since $q(\xi) = 2$,
we have that $\xi$, $\eta$ are virtual arrows.

Assume first that $c = d$.
Clearly, then we have $g$-orbits
$(\alpha \ \beta)$
and
$(\nu\ \delta)$.
It follows from Assumption \ref{ass}
that $\alpha$, $\beta$, $\gamma$, $\delta$
are not virtual.
Hence we have 
$m_{\alpha} n_{\alpha}  = m_{\beta} n_{\beta} \geq 4$
and
$m_{\nu} n_{\nu}  = m_{\delta} n_{\delta} \geq 4$.
But then $v(\xi) \leq 0$ implies 
$q(\alpha) = 4$ and $q(\delta) = 4$.
Thus
$m_{\alpha} n_{\alpha}  = m_{\beta} n_{\beta} = 4$
and
$m_{\nu} n_{\nu}  = m_{\delta} n_{\delta} = 4$,
and consequently $\Lambda$ is isomorphic
to $T(\lambda)$ for some $\lambda \in K^*$.

\smallskip

Assume now that $c \neq d$.
Then the $g$-orbits
$\cO(\alpha) = \cO(\beta)$
and
$\cO(\nu) = \cO(\delta)$
have lengths at least $3$.

Suppose (say) 
$m_{\alpha}n_{\alpha}=3$, so that $g$ as a cycle
$(\alpha \ \beta \ \gamma)$ where $\gamma$ is an arrow $d\mapsto c$. 
It follows from this that the $g$-orbit of $\delta$ has the form
\[
  \big(f^2(\gamma) \ \nu \ \delta \ f(\gamma) \ *\big)
\]
of length $\geq 5$. We assume $v(\xi)\leq 0$ and hence the length
can only be $5$ or $6$. Suppose it has length 5, then $*$ is just a loop, 
$\ve = g(f(\gamma))$, 
the quiver has five vertices and $\Lambda$ is an algebra
of the form $\Phi(c_{\bullet})$. 

Suppose the $g$-orbit of $\delta$ has length $6$, say it is
$(f^2(\gamma) \ \nu \ \delta \ f(\gamma) \ \varepsilon \ \mu)$, 
then we
must have $f(\mu)=\varepsilon$, and $f(\varepsilon)$ is a loop. 
It follows that $Q$ has
six vertices and $\La$ is isomorphic to 
$\Psi(r,c_{\bullet})$ for some $r \geq 2$.

\medskip

Otherwise
$q(\alpha) = m_{\alpha} n_{\alpha}  \geq 4$
and
$q(\delta) =  m_{\delta} n_{\delta} \geq 4$.
But then $v(\xi) \leq 0$ implies 
$q(\alpha) = 4$
and
$q(\delta) = 4$,
and therefore we have 
$m_{\alpha} = 1$, $n_{\alpha} = 4$,
$m_{\delta} = 1$, $n_{\delta} = 4$.
As well 
$m_{\beta} = 1$, $n_{\beta} = 4$,
$m_{\nu} = 1$, $n_{\nu} = 4$.
Summing up, we conclude that 
that $(Q,f)$ 
is the triangulation quiver of the form
\[
  \xymatrix@R=3.pc@C=1.2pc{
    &&& \bullet
    \ar[ld]^{\alpha}
    \ar[rrrd]^{\varrho}
    \\   
    \bullet
    \ar[rrru]^{\delta}
    \ar@<-.5ex>[rr]_(.6){\eta}
    && \bullet
    \ar@<-.5ex>[ll]_(.4){\xi}
    \ar[rd]^{\beta}
    && \bullet
    \ar[lu]^{\sigma}
    \ar@<-.5ex>[rr]_(.4){\mu}
    && \bullet
    \ar@<-.5ex>[ll]_(.6){\varepsilon}
    \ar[llld]^{\omega}
    \\
    &&& \bullet
    \ar[lllu]^{\nu}
    \ar[ur]^{\gamma}
  }
\]
with  $f$-orbits
$(\alpha\ \xi\ \delta)$,
$(\beta\ \nu\ \eta)$,
$(\sigma\ \varrho\ \varepsilon)$,
$(\gamma\ \mu\ \omega)$
and the $g$-orbits
$\cO(\xi) = (\xi\ \eta)$,
$\cO(\alpha) = (\alpha\ \beta\ \gamma\ \sigma)$,
$\cO(\delta) = (\delta\ \varrho\ \omega\ \nu)$,
$\cO(\varepsilon) = (\varepsilon\ \mu)$.
Therefore, $\Lambda$ is isomorphic
to one of the algebras 
$S(\lambda)$, for some $\lambda \in K^*$,
or 
$S(r, c_{\bullet})$ for some $r \geq 2$.

\smallskip
(4)
Assume now that there is a loop $\sigma \in Q_1$
with $f(\sigma) = \sigma$ and $v(\sigma) \leq 0$,
and hence $q(\sigma) = 3$ and $v(\sigma) = 0$.
Then $(Q,f)$ is the triangulation quiver
\[
  \xymatrix{
    1
    \ar@(ld,ul)^{\alpha}[]
    \ar@<.5ex>[r]^{\beta}
    & 2
    \ar@<.5ex>[l]^{\gamma}
    \ar@(ru,dr)^{\sigma}[]
  } 
\]
with  $f$-orbits
$(\alpha \ \beta \ \gamma)$,
$(\sigma)$, 
and 
the $g$-orbits 
$(\alpha)$,
$(\beta \ \sigma \ \gamma)$.
Moreover, we have
$m_{\beta} n_{\beta} = m_{\gamma} n_{\gamma} 
 = m_{\sigma} n_{\sigma} = 3$
and
$m_{\alpha} = m_{\alpha} n_{\alpha}  = r$
for some $r \geq 3$
(by Assumption~\ref{ass}).
Therefore, $\Lambda$ is isomorphic
to one of the algebras 
$D(\lambda)$, for some $\lambda \in K^*$,
or 
$\Omega(r, c_{\bullet})$ for some $r \geq 4$.

\smallskip

(5) The last case to consider is  
where  
$(\eta\ \gamma\ \delta)$
is an $f$-orbit in $Q_1$ of lenght $3$
with 
$v(\eta) \leq 0$
and
$q(\eta) \geq q(\gamma) \geq q(\delta) \geq 3$.
Then
$q(\eta) = q(\gamma) = q(\delta) = 3$. 
We have two cases. 
Assume first that none of $\eta, \gamma, \delta$ is a loop. 
Then by some basic combinatorics, one sees that
$(Q,f)$ is the triangulation quiver 
\[
\begin{tikzpicture}
[scale=.85]
\node (1) at (0,1.72) {$1$};
\node (2) at (0,-1.72) {$2$};
\node (3) at (2,-1.72) {$3$};
\node (4) at (-1,0) {$4$};
\node (5) at (1,0) {$5$};
\node (6) at (-2,-1.72) {$6$};
\coordinate (1) at (0,1.72);
\coordinate (2) at (0,-1.72);
\coordinate (3) at (2,-1.72);
\coordinate (4) at (-1,0);
\coordinate (5) at (1,0);
\coordinate (6) at (-2,-1.72);
\fill[fill=gray!20]
      (0,2.22cm) arc [start angle=90, delta angle=-360, x radius=4cm, y radius=2.8cm]
 --  (0,1.72cm) arc [start angle=90, delta angle=360, radius=2.3cm]
     -- cycle;
\fill[fill=gray!20]
    (1) -- (4) -- (5) -- cycle;
\fill[fill=gray!20]
    (2) -- (4) -- (6) -- cycle;
\fill[fill=gray!20]
    (2) -- (3) -- (5) -- cycle;

\node (1) at (0,1.72) {$1$};
\node (2) at (0,-1.72) {$2$};
\node (3) at (2,-1.72) {$3$};
\node (4) at (-1,0) {$4$};
\node (5) at (1,0) {$5$};
\node (6) at (-2,-1.72) {$6$};
\draw[->,thick] (-.23,1.7) arc [start angle=96, delta angle=108, radius=2.3cm] node[midway,right] {$\nu$};
\draw[->,thick] (-1.87,-1.93) arc [start angle=-144, delta angle=108, radius=2.3cm] node[midway,above] {$\mu$};
\draw[->,thick] (2.11,-1.52) arc [start angle=-24, delta angle=108, radius=2.3cm] node[midway,left] {$\alpha$};
\draw[->,thick]
(1) edge node [right] {$\delta$} (5)
(2) edge node [left] {$\varepsilon$} (5)
(2) edge node [below] {$\varrho$} (6)
(3) edge node [below] {$\sigma$} (2)
(4) edge node [left] {$\gamma$} (1)
(4) edge node [right] {$\beta$} (2)
(5) edge node [right] {$\xi$} (3)
(5) edge node [below] {$\eta$} (4)
(6) edge node [left] {$\omega$} (4)
;
\end{tikzpicture}
\]
with  $f$-orbits described by the shaded triangles
and $g$-orbits described by the white triangles,
and the weight function
$m_{\bullet} : \cO(g) \to \bN^*$
taking value $1$.
Then $\Lambda$ is isomorphic
to a tetrahedral algebra $\Lambda(\lambda)$, 
for some $\lambda \in K^*$.

This leaves the case, where one of the three arrows is a loop.
We label the arrows now as $\alpha, \beta, \gamma$ and we assume
$\alpha$ is a loop, and $f$ has the cycle $(\alpha \ \beta \ \gamma)$
and we have $q(\alpha)=q(\beta) = q(\gamma)=3$. 
Then $g(\beta)$ must be a loop, $\sigma$ say, which then has
to be fixed by $f$. Hence  $Q$ has two vertices,
and   the algebra  be isomorphic to $D(\lambda)$ for some $\lambda \in K^*$. 
\end{proof}

\begin{proposition}
\label{prop:6.5}
Let $\Lambda = \Lambda(Q,f,m_{\bullet},c_{\bullet})$
be a weighted triangulation algebra which is not isomorphic
to one of the algebras 
$D(\lambda)$,
$T(\lambda)$,
$S(\lambda)$,
$\Lambda(\lambda)$,
with $\lambda \in K^*$,
$S(r, c_{\bullet})$ with $r \geq 2$,
$\Sigma(r, c_{\bullet})$ with $r \geq 3$,
$\Omega(r, c_{\bullet})$ with $r \geq 4$, 
$\Phi(c_{\bullet})$, or
$\Psi(r,c_{\bullet})$, for some $r \geq 2$.
Then $\Lambda$ degenerates to the biserial
weighted triangulation algebra
$B = B(Q,f,m_{\bullet},c_{\bullet})$.
In particular, $\Lambda$ is tame.
\end{proposition}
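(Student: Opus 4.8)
The strategy is to exhibit $\Lambda$ as the generic member of an algebraic family $A(t)$, $t\in K$, of $d$-dimensional algebras with $A(1)=\Lambda$ and $A(0)=B$, and then to apply Proposition~\ref{prop:2.2} together with the tameness of $B$ (Proposition~\ref{prop:2.11}); here $d=\dim_K\Lambda=\sum_{\cO\in\cO(g)}m_{\cO}n_{\cO}^2=\dim_K B$ by Proposition~\ref{prop:4.13}(i). The first point is that, since $\Lambda$ is none of the algebras excluded in the statement, the contrapositive of Proposition~\ref{prop:6.4} yields $v(\alpha)\geq 1$ for \emph{every} arrow $\alpha\in Q_1$; note that $v(\alpha)\in\bN$ since each $q(\gamma)=m_{\gamma}n_{\gamma}$ divides $M$, that $v$ is constant on $f$-orbits (because $f^3=1$), and that $q$ is constant on $g$-orbits.

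For $t\in K$, define $A(t)=KQ/I(t)$, where $I(t)$ is generated by the deformed ``quaternion'' relations $\alpha f(\alpha)-c_{\ba}t^{v(\alpha)}A_{\ba}$ for all $\alpha\in Q_1$, the zero relations $\alpha f(\alpha)g(f(\alpha))$ (when $f^2(\alpha)$ is not virtual) and $\alpha g(\alpha)f(g(\alpha))$ (when $f(\alpha)$ is not virtual) of Definition~\ref{def:2.8}, and the socle relations $c_{\alpha}B_{\alpha}-c_{\ba}B_{\ba}$ for all $\alpha\in Q_1$. For $t\neq 0$ the socle relations are already consequences of the deformed quaternion relations — multiply $\alpha f(\alpha)=c_{\ba}t^{v(\alpha)}A_{\ba}$ on the right by $f^2(\alpha)$ and use Lemma~\ref{lem:2.9}(ii),(iii) together with $c_{g(\alpha)}=c_\alpha$ and $v(f(\alpha))=v(\alpha)$ — so $A(1)=\Lambda$. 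Since $v(\alpha)\geq 1$, at $t=0$ the deformed relations become the biserial relations $\alpha f(\alpha)=0$, which absorb all the zero relations, so that $I(0)=J(Q,f,m_{\bullet},c_{\bullet})$ and $A(0)=B$. A routine check, using for every $t$ the spanning set of $e_iA(t)$ given by the initial submonomials of $B_{\alpha}$ and $B_{\ba}$ as in Lemma~\ref{lem:4.7} (resp. Proposition~\ref{prop:2.11}), shows that $\dim_K A(t)=d$ for all $t$ and that $A(-)\colon K\to\alg_d(K)$ is a regular map; hence $A(t)$, $t\in K$, is an algebraic family in $\alg_d(K)$.

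It remains to construct isomorphisms $A(1)\xrightarrow{\sim}A(t)$ for $t\neq 0$. Put $u(\alpha):=M/q(\alpha)\in\bN^*$ and extend $u$ additively to monomials. Define $\vf_t\colon A(1)\to A(t)$ by $\vf_t(\alpha)=t^{u(\alpha)}\alpha$ on arrows and extend multiplicatively; its candidate inverse is $\alpha\mapsto t^{-u(\alpha)}\alpha$. Both maps are well defined — $\vf_t$ then rescales each of the three types of defining relation above by a single power of $t$ (the zero relations are killed, and the socle relations are both scaled by $t^{M}$ since $u(B_{\gamma})=M$) — provided the cocycle identity $v(\alpha)+u(\alpha)+u(f(\alpha))=u(A_{\ba})$ holds for all $\alpha\in Q_1$. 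Now $v(\alpha)+u(\alpha)+u(f(\alpha))=M-M/q(f^2(\alpha))$ after cancellation, whereas $A_{\ba}$ is a path consisting of $q(\ba)-1$ arrows, each lying in the $g$-orbit of $\ba$ and hence of $q$-value $q(\ba)$, so $u(A_{\ba})=(q(\ba)-1)\,M/q(\ba)=M-M/q(\ba)$; finally $q(f^2(\alpha))=q(\ba)$ because $\ba=g(f^2(\alpha))$ by Lemma~\ref{lem:2.9}(i). The identity follows, so $A(t)\cong A(1)=\Lambda$ for all $t\in K\setminus\{0\}$. Applying Proposition~\ref{prop:2.2}, $\Lambda=A(1)$ degenerates to $A(0)=B$, and since $B$ is tame by Proposition~\ref{prop:2.11}, $\Lambda$ is tame.

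I expect the main obstacle to be the verification in the second paragraph that the family $A(t)$ really consists of algebras of the \emph{same} dimension $d$: one must ensure that the degenerate algebra $A(0)$ does not acquire extra dimensions, which is precisely why the (for $t\neq0$ redundant) socle relations $c_{\alpha}B_{\alpha}-c_{\ba}B_{\ba}$ must be retained in $I(t)$, and why the hypothesis $v(\alpha)\geq 1$ — that is, Proposition~\ref{prop:6.4}, which does the bulk of the combinatorial work — is indispensable. By contrast, the cocycle identity, once one chooses $u(\alpha)=M/q(\alpha)$, reduces to the elementary computation above.
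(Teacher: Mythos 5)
Your proposal is correct and follows essentially the same route as the paper: the contrapositive of Proposition~\ref{prop:6.4} gives positivity of $v$, the family $\Lambda(t)$ is defined by the same deformed quaternion relations together with the retained socle and zero relations, and the isomorphisms $\varphi_t(\alpha)=t^{u(\alpha)}\alpha$ with $u(\alpha)=M/q(\alpha)$ are verified via exactly the cocycle identity $u(\alpha)+u(f(\alpha))+v(\alpha)=M-M/q(f^2(\alpha))=u(A_{\ba})$ before invoking Proposition~\ref{prop:2.2}. The only cosmetic difference is that the paper records $v(\alpha)\geq 2$ (which follows since $M$ is twice an lcm, making $v(\alpha)$ even), whereas your weaker bound $v(\alpha)\geq 1$ already suffices for the degeneration at $t=0$.
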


\begin{proof}
Let $M, q,v : Q_1 \to \bN^*$
	be the functions defined above. We set now $u(\alpha):= \frac{M}{q(\alpha)}$, this is an integer. 
It follows from Proposition~\ref{prop:6.4}
and the  assumption
that $v(\alpha) \geq 2$ for any arrow
$\alpha \in Q_1$.
For each $t \in K$, consider
the algebra $\Lambda(t)$
given by the quiver $Q$ and the relations:
\begin{itemize}
 \item
  $\alpha f(\alpha) = c_{\bar{\alpha}} t^{v(\alpha)} A_{\bar{\alpha}}$
  for any arrow $\alpha \in Q_1$,
 \item
  $c_{\alpha} B_{\alpha} = c_{\bar{\alpha}} B_{\bar{\alpha}}$
  for any arrow $\alpha \in Q_1$,
 \item
  $\alpha f(\alpha) g(f(\alpha)) = 0$
  for any arrow $\alpha \in Q_1$ with
  $f^{-1}(\alpha)$ not virtual,
 \item
  $\alpha g(\alpha) f(g(\alpha)) = 0$
  for any arrow $\alpha \in Q_1$ with
  $f(\alpha)$ not virtual.
\end{itemize}
Then $\Lambda(\lambda)$, $\lambda \in K$,
is an algebraic family in the variety $\alg_d(K)$, 
with $d = \sum_{\cO \in \cO(g)} m_{\cO} n_{\cO}^2$.
Observe also that 
$\Lambda(1) = \Lambda$
and
$\Lambda(0) = B$.
Further, we claim that for any  $t \in K^*$,
we have an isomorphism of $K$-algebras 
$\varphi_t :\Lambda(1) \to \Lambda(t)$
given by $\varphi_t(\alpha) = t^{u(\alpha)} \alpha$
	for any arrow $\alpha \in Q_1$, and extension to products.
Indeed, it follows from definition of the function $u$
that
$\varphi_t(B_{\sigma}) = t^M B_{\sigma}$
for any arrow $\sigma \in Q_1$,
and hence 
$\varphi_t(A_{\sigma}) = t^{M-u(f^2(\bar{\sigma}))} A_{\sigma}$,
because
$B_{\sigma} = A_{\sigma} g^{n_{\sigma}-1}(\sigma)$
and
$g^{n_{\sigma}-1}(\sigma) = f^2(\bar{\sigma})$.
Then, for any arrow $\alpha \in Q_1$,
we have the equalities
\begin{align*} 
  \varphi_t\big(\alpha f(\alpha)\big) 
   &= t^{u(\alpha) + u(f(\alpha))} \alpha f(\alpha) 
   = t^{u(\alpha) + u(f(\alpha))} t^{v(\alpha)} c_{\bar{\alpha}} A_{\bar{\alpha}}
   = t^{M-u(f^2(\alpha))} c_{\bar{\alpha}} A_{\bar{\alpha}}
\\&
   =  \varphi_t(c_{\bar{\alpha}} A_{\bar{\alpha}}) ,
\end{align*} 
and hence $\varphi_t$ is a well-defined isomorphism of $K$-algebras.
Therefore, by 
Proposition~\ref{prop:2.2},
$\Lambda$ degenerates to $B$,
and $\Lambda$ is a tame algebra. 
\end{proof}

We shall prove now that every weighted surface algebra 
non-isomorphic to a disc algebra, triangle algebra,
tetrahedral algebra, or spherical algebra
is of non-polynomial growth.
We consider first two distinguished cases.

\begin{example}
\label{ex:6.6}
Let $T$ be the triangulation
\[
\begin{tikzpicture}[scale=1,auto]
\coordinate (o) at (0,0);
\coordinate (c) at (0,-1);
\coordinate (a) at (0,1);
\coordinate (b) at (1,0);
\coordinate (d) at (-1,0);
\draw (a) to node {$1$} (o);
\draw (o) to node {$2$} (c);
\draw (b) arc (0:360:1) node [right] {$3$};
\draw (d) node [left] {$4$};
\node (d) at (0,-1) {$\bullet$};
\node (c) at (0,0) {$\bullet$};
\node (b) at (0,1) {$\bullet$};
\end{tikzpicture}
\]
of the unit disc $D = D^2$ in $\bR^2$
by two triangles
and $\vec{T}$ the orientation
$(1\ 3\ 2)$, 
$(2\ 4\ 1)$
of triangles in $T$.
Then the  triangulation quiver
$(Q,f) = (Q(D,\vec{T}),f)$ is the quiver
\[
  \xymatrix@R=1pc{
   & 1 \ar[rd]^{\beta} \ar@<-.5ex>[dd]_{\xi}  \\
   3   \ar@(ld,ul)^{\varrho}[]  \ar[ru]^{\alpha} 
   && 4   \ar@(ru,dr)^{\gamma}[]  \ar[ld]^{\nu}   \\
   & 2 \ar[lu]^{\delta}  \ar@<-.5ex>[uu]_{\eta}
  } 
\]
with $f$-orbits
$(\alpha\ \xi\ \delta)$,
$(\beta\ \nu\ \eta)$,
$(\varrho)$,
$(\gamma)$.
Then $g$ has two 
orbits, 
$\cO(\alpha) = (\alpha\ \beta\ \gamma\ \nu\ \delta\ \varrho)$ 
and
$\cO(\xi) = (\xi\ \eta)$.
Let
$m_{\bullet} : \cO(g) \to \bN^*$
be the trivial multiplicity function
and $c_{\bullet} : \cO(g) \to K^*$
an arbitrary parameter function.
We write
$c_{\cO(\alpha)} = a$
and $c_{\cO(\xi)} = b$.
Then the weighted surface algebra
$D(a,b)^{(2)} = \Lambda(D,\vec{T},m_{\bullet},c_{\bullet})$
is given by the above quiver and the relations:
\begin{align*}
 \alpha \xi &= a \varrho \alpha \beta \gamma \nu ,
 &
 \xi \delta &= a \beta \gamma \nu \delta \varrho ,
 &
 \delta \alpha &= b \eta ,
 &
 \varrho^2 &= a \alpha \beta \gamma \nu \delta ,
&  \alpha  \xi \eta &= 0 ,
 \\
 \nu \eta &= a \gamma \nu \delta \varrho \alpha ,
 &
 \eta \beta &= a \delta \varrho \alpha \beta \gamma ,
 &
 \beta \nu &= b \xi ,
 &
  \gamma^2 &= a \nu \delta \varrho \alpha \beta ,
& \xi \eta \beta &= 0 ,
 \\
 \xi \delta \varrho &= 0 ,  
 &
 \nu \eta \xi &= 0 ,
 &
 \eta \beta \gamma &= 0 ,
 &
 \varrho^2 \alpha &= 0 ,
 &
  \gamma^2 \nu &= 0 ,
 \\
 \varrho \alpha \xi &= 0 ,
 &
 \eta \xi \delta &= 0 ,
 &
 \gamma \nu \eta &= 0 ,
 &
 \delta \varrho^2 &= 0 ,
 &
 \beta \gamma^2 &= 0 .
\end{align*}
Observe that the algebra
$D(a,b)^{(2)}$
is isomorphic to the algebra
$D(ab,1)^{(2)}$.
Indeed, there is an isomorphism of algebras
$\varphi : D(ab, 1)^{(2)} \to D(a,b)^{(2)}$
given by
$\varphi(\alpha) = \alpha$,
$\varphi(\xi) = b \xi$,
$\varphi(\delta) = \delta$,
$\varphi(\varrho) = \varrho$,
$\varphi(\beta) = \beta$,
$\varphi(\gamma) = \gamma$,
$\varphi(\nu) = \nu$,
$\varphi(\eta) = b \eta$.
For $\lambda \in K^*$,
we set 
$D(\lambda)^{(2)} = D(\lambda, 1)^{(2)}$.
We see now that $D(\lambda,1)^{(2)}$
is given by its Gabriel quiver $Q^{(2)}$
\[
  \xymatrix@R=1pc{
   & 1 \ar[rd]^{\beta} \\
   3   \ar@(ld,ul)^{\varrho}[]  \ar[ru]^{\alpha} 
   && 4   \ar@(ru,dr)^{\gamma}[]  \ar[ld]^{\nu}   \\
   & 2 \ar[lu]^{\delta} 
  } 
\]
and the relations:
\begin{align*}
 \alpha \beta \nu &= \lambda \varrho \alpha \beta \gamma \nu ,
 \!\!\!\!\!\!\!\!\!\!\!\!\!\!\!\!\!& &
 &
  \beta \nu \delta &= \lambda  \beta \gamma \nu \delta \varrho ,
 \!\!\!\!\!\!\!\!\!\!\!\!\!\!\!\!\!& &
 &
 \varrho^2 &= \lambda \alpha \beta \gamma \nu \delta ,
\!\!\!\!\!\!\!\!\!\!\!\!\!\!\!\!\!
 \\
 \nu \delta \alpha &= \lambda \gamma \nu \delta \varrho \alpha ,
 \!\!\!\!\!\!\!\!\!\!\!\!\!\!\!\!\!& &
 &
 \delta \alpha \beta &= \lambda \delta \varrho \alpha \beta \gamma ,
 \!\!\!\!\!\!\!\!\!\!\!\!\!\!\!\!\!& &
 &
 \gamma^2 &= \lambda \nu \delta \varrho \alpha \beta ,
\!\!\!\!\!\!\!\!\!\!\!\!\!\!\!\!\!
 \\
  \alpha \beta \nu \delta \alpha &= 0 ,
  &
  \beta \nu \delta \varrho &= 0 ,
  &
  \nu \delta \alpha \beta \nu &= 0 ,
  &
  \delta \alpha \beta\gamma &= 0 ,
  &
  \varrho^2 \alpha &= 0 ,
  &
  \gamma^2 \nu &= 0 ,
 \\
  \beta \nu \delta \alpha \beta &= 0 ,
  &
  \varrho \alpha \beta \nu &= 0 ,
  &
  \delta \alpha \beta \nu \delta &= 0 ,
  &
  \gamma \nu \delta \alpha &= 0 ,
  &
  \delta \varrho^2 &= 0 ,
  &
  \beta  \gamma^2 &= 0 .
\end{align*}

We consider also the orbit algebra
$D(a,b)^{(1)} = D(a,b)^{(2)}/H$
of $D(a,b)^{(2)}$
with respect to action of the cyclic group
of order $2$ on $D(a,b)^{(2)}$
given by the cyclic rotation of vertices
and arrows of the quiver $Q$:
\begin{align*}
 &&
 (1\ 2) ,
 &&
 (3\ 4) ,
 &&
 (\alpha\ \nu) ,
 &&
 (\beta\ \delta) ,
 &&
 (\xi\ \eta) ,
 &&
 (\gamma\ \varrho) .
 &&
\end{align*}
Then
$D(a,b)^{(1)}$
is given by the triangulation quiver $(Q',f')$
of the form
\[
  \xymatrix{
    1
    \ar@(ld,ul)^{\xi}[]
    \ar@<.5ex>[r]^{\beta}
    & 3
    \ar@<.5ex>[l]^{\alpha}
    \ar@(ru,dr)^{\gamma}[]
  } 
\]
with $f'$-orbits
$(\xi\ \beta\ \alpha)$
and
$(\gamma)$,
and the relations:
\begin{align*}
 \alpha \xi &= a \gamma \alpha \beta \gamma \alpha ,
 \!\!\!\!\!\!\!\!\!\!\!\!\!\!\!\!\!& &
 &
 \xi \beta &= a \beta \gamma \alpha \beta \gamma ,
 &
 \beta \alpha &= b \xi ,
 &
 \gamma^2 &= a \alpha \beta \gamma \alpha \beta ,
 \!\!\!\!\!\!\!\!\!\!\!\!\!\!\!\!\!& &
 \\ 
 \alpha \xi^2 &= 0 ,
 &
 \xi \beta \gamma &= 0 ,
 &
 \gamma^2 \alpha &= 0 ,
 &
 \xi^2 \beta &= 0 ,
 &
 \gamma \alpha \xi &= 0 ,
 &
 \beta \gamma^2 &= 0.
\end{align*}
We note that 
$D(a,b)^{(1)}$
is the weighted triangular algebra
$\Lambda(Q',f',m_{\bullet}',c_{\bullet}')$,
where the weight function
$m_{\bullet}' : \cO(g') \to \bN^*$
is given by
$m_{\cO(\alpha)}' = 2 = m_{\cO(\xi)}'$,
and
the parameter function
$c_{\bullet}' : \cO(g') \to K^*$
by
$c_{\cO(\alpha)}' = a$
and
$c_{\cO(\xi)}' = b$.
Similarly as above, we conclude that
$D(a,b)^{(1)}$
is isomorphic to the algebra
$D(ab,1)^{(1)}$.
For $\lambda \in K^*$,
we set 
$D(\lambda)^{(1)} = D(\lambda, 1)^{(1)}$.
Further, the Gabriel quiver $Q^{(1)}$
of $D(\lambda)^{(1)}$
is the orbit quiver $Q^{(2)}/H$
of the Gabriel quiver $Q^{(2)}$
of $D(\lambda)^{(2)}$
with respect to the induced action of $H$,
and is of the form
\[
  \xymatrix{
    1
    \ar@<.5ex>[r]^{\beta}
    & 3
    \ar@<.5ex>[l]^{\alpha}
    \ar@(ru,dr)^{\gamma}[]
  } 
 .
\]
Hence, $D(\lambda)^{(1)}$ is given by the quiver $Q^{(1)}$
and the relations:
\begin{align*}
 \alpha \beta \alpha &= \lambda \gamma \alpha \beta \gamma \alpha ,
 \!\!\!\!\!\!\!\!\!\!\!\!\!\!\!\!\!& &
 &
 \beta \alpha \beta &= \lambda \beta \gamma \alpha \beta \gamma , 
 \!\!\!\!\!\!\!\!\!\!\!\!\!\!\!\!\!& &
 &
 \gamma^2 &= \lambda \alpha \beta \gamma \alpha \beta ,
 \!\!\!\!\!\!\!\!\!\!\!\!\!\!\!\!\!& &
 \\
 \alpha \beta \alpha \beta \alpha &= 0 ,
 &
 \beta \alpha \beta \gamma &= 0 ,
 &
 \gamma^2 \alpha &= 0 ,
 &
 \beta \alpha \beta \alpha \beta &= 0 ,
 &
 \gamma \alpha \beta \alpha &= 0 ,
 &
 \beta \gamma^2 &= 0 .
\end{align*}
\end{example}

\begin{lemma}
\label{lem:6.7}
For each $\lambda \in K^*$,
the algebras $D(\lambda)^{(1)}$ and $D(\lambda)^{(2)}$
are of non-polynomial growth.
\end{lemma}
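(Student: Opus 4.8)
The plan is to reduce the claim about $D(\lambda)^{(1)}$ and $D(\lambda)^{(2)}$ to known facts about tubular algebras and trivial extensions, exactly in the style of Lemma~\ref{lem:3.7} and Proposition~\ref{prop:3.8}. First I would observe that $D(\lambda)^{(2)}$ is a weighted surface algebra with $6$ vertices and all multiplicities equal to $1$, so by the periodicity theorem (Theorem~\ref{th:5.10}, together with the fact that it is not one of the four exceptional families) it is a symmetric periodic algebra of period $4$; in particular all its non-projective indecomposable modules are periodic and $\Gamma^s_{D(\lambda)^{(2)}}$ consists of stable tubes by Proposition~\ref{prop:2.3}. The same applies to $D(\lambda)^{(1)}$, which is the orbit algebra $D(\lambda)^{(2)}/H$ for the free $\bZ/2$-action written down in Example~\ref{ex:6.6} (here one uses that this action is admissible, e.g. via \cite[Theorem~3.7]{Du1}, so $D(\lambda)^{(1)}$ is again a weighted surface algebra, with $3$ vertices, and periodic of period $4$). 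So both algebras are representation-infinite tame (by Proposition~\ref{prop:6.5}, since neither is among the listed exceptions).

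Next I would identify the algebras up to derived equivalence with trivial extensions of tubular algebras. The natural candidate, paralleling $D(\lambda)\cong\Lambda(\lambda)/H$ and $\Lambda(\lambda)\cong\T(B(\lambda))$ from Examples~\ref{ex:3.1}--\ref{ex:3.2}, is to exhibit $D(\lambda)^{(2)}\cong\T(C)$ for a suitable simply connected (or at least tilted-from-tubular) algebra $C$ obtained by ``cutting'' the quiver $Q^{(2)}$ of $D(\lambda)^{(2)}$ at an appropriate arrow, using the general theory that a symmetric periodic algebra of period $4$ with the right shape is the trivial extension of an algebra $C$ whose repetitive category $\widehat{C}$ contains the relevant convex subcategory (as in the proof of Lemma~\ref{lem:3.7}). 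One then checks, by direct inspection of the relations of $D(\lambda)^{(2)}$ listed in Example~\ref{ex:6.6}, that $C$ is a tubular algebra of tubular type $(2,2,2,2)$ — or that $D(\lambda)^{(2)}$ is itself the trivial extension of an algebra derived equivalent to such a tubular algebra. Once $D(\lambda)^{(2)}\cong\T(C)$ with $C$ tubular, it follows from the general theory of trivial extensions of tubular algebras that $\T(C)$ is tame of \emph{non-polynomial} growth (see \cite{Sk1,Sk2} and \cite[Theorem]{Sk1}): precisely, the stable Auslander--Reiten quiver of such a trivial extension contains families of tubes indexed by $\bP^1(K)$ occurring with unboundedly many ``slopes'', which is incompatible with polynomial growth. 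For $D(\lambda)^{(1)}$ one then invokes that an orbit algebra of a tame non-polynomial-growth symmetric algebra under a finite admissible group action is again of non-polynomial growth (same argument as in Proposition~\ref{prop:3.8}(i) and \cite{ESk-HTA,Du1}), so $D(\lambda)^{(1)}=D(\lambda)^{(2)}/H$ inherits non-polynomial growth. Alternatively, if $D(\lambda)^{(1)}\cong\T(C')$ for a tubular $C'$, one argues directly as for $D(\lambda)^{(2)}$.

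The step I expect to be the main obstacle is the explicit identification of the cut algebra $C$ (and $C'$) and verifying that it has tubular type $(2,2,2,2)$: this requires computing $\widehat{D(\lambda)^{(2)}}$ — or rather recognizing a convex tubular subcategory of the repetitive category — from the $18$-odd relations, and then matching it against Ringel's list of tubular algebras. There is also a subtlety in that, a priori, $C$ might only be \emph{derived equivalent} to a tubular algebra rather than tubular on the nose (this is exactly what happens with $B(\lambda)$ versus $C(\lambda)$ in Proposition~\ref{prop:3.8}(iii)); if so, one passes through \cite[Theorem~3.1]{Ric2} and \cite[Theorem~2.9]{ESk3} to transport non-polynomial growth across the derived equivalence of the trivial extensions, since growth type of self-injective algebras is a derived invariant in this setting. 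A cleaner route, which I would try first to avoid the bookkeeping, is to combine Proposition~\ref{prop:6.5} (tameness) with Proposition~\ref{prop:2.3} and the observation that a representation-infinite symmetric tame algebra all of whose non-projective indecomposables are periodic cannot be of polynomial growth \emph{unless} it is on Skowroński's list of self-injective algebras of polynomial growth (see \cite{Sk1,Sk2}), which consists of socle deformations of orbit algebras of tubular or Euclidean type; one then checks by the dimension formula and the number of simples that $D(\lambda)^{(1)}$ and $D(\lambda)^{(2)}$ do not appear there, or more precisely that they would have to be socle deformations of $\T(\text{tubular})$, which are exactly of non-polynomial growth. Either way the conclusion is that both algebras are tame of non-polynomial growth.
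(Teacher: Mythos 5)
There is a fatal error at the heart of your strategy: you assert that once $D(\lambda)^{(2)}\cong\T(C)$ with $C$ tubular of type $(2,2,2,2)$, it follows that $\T(C)$ is tame of \emph{non-polynomial} growth. The opposite is true. Trivial extensions of tubular algebras (and their orbit algebras under admissible finite group actions) are precisely the standard examples of self-injective algebras of \emph{polynomial} growth --- this is \cite[Theorem]{Sk1}, and it is exactly how Proposition~\ref{prop:3.8}(i) of this paper proves that the non-singular disc, triangle, tetrahedral and spherical algebras \emph{are} of polynomial growth. (The tubular families in $\Gamma^s_{\T(C)}$ are indexed by slopes, but the number of one-parameter families in each dimension $d$ grows only linearly in $d$, so this structure is fully compatible with polynomial growth.) Consequently, if your proposed identification $D(\lambda)^{(2)}\cong\T(C)$ with $C$ tubular were correct, it would prove that $D(\lambda)^{(2)}$ is of polynomial growth, i.e.\ the negation of Lemma~\ref{lem:6.7}; in fact no such identification can exist, and Theorem~\ref{th:6.9} confirms that $D(\lambda)^{(1)}$, $D(\lambda)^{(2)}$ are not on the polynomial-growth list. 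Your ``cleaner alternative route'' collapses for the same reason: socle deformations of trivial extensions of tubular type are again of polynomial growth, so showing that $D(\lambda)^{(i)}$ ``would have to be'' such an algebra argues in the wrong direction.

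The paper's actual argument is of a completely different nature and is worth internalizing: one passes to an explicit \emph{quotient} $A^{(2)}$ of $D(\lambda)^{(2)}$ (killing the commutativity relations so as to obtain a special biserial/string-type algebra), exhibits a Galois covering $F^{(2)}:R\to R/G^{(2)}=A^{(2)}$ by a locally bounded category $R$ with $G^{(2)}$ free abelian of rank $2$, and locates inside $R$ a finite full convex subcategory $B$ which is one of the minimal tame non-polynomial growth algebras from the N\"orenberg--Skowro\'nski classification \cite[Theorem~3.2]{NoS}. By the covering theory of \cite{DS0,DS1,Ga} this forces $A^{(2)}$, and hence its ``parent'' $D(\lambda)^{(2)}$ (non-polynomial growth passes from a quotient to the algebra), to be of non-polynomial growth; the case of $D(\lambda)^{(1)}$ is handled with the same cover $R$ and a larger group $G^{(1)}\supseteq G^{(2)}$. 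Tameness is then supplied separately by the degeneration argument of Proposition~\ref{prop:6.5}. Your proposal contains none of these ingredients, so beyond the sign error on growth type there is no salvageable path to the conclusion in what you wrote.
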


\begin{proof}
We fix $\lambda \in K^*$
and consider the quotient algebra $A^{(2)}$ of $D(\lambda)^{(2)}$
given by its Gabriel quiver $Q^{(2)}$
and the zero-relations:
\begin{align*}
 \alpha \beta \nu  &= 0 ,
 &
  \beta \nu \delta  &= 0 ,
 &
 \nu \delta \alpha  &= 0 ,
 &
 \delta \alpha \beta  &= 0 ,
 &
 \varrho^2  &= 0 ,
 &
 \gamma^2  &= 0 ,
 \\
 \varrho \alpha \beta \gamma \nu  &= 0 ,
 &
 \alpha \beta \gamma \nu \delta  &= 0 ,
 &
 \beta \gamma \nu \delta \varrho  &= 0 ,
 &
 \gamma \nu \delta \varrho \alpha  &= 0 ,
 &
 \nu \delta \varrho \alpha \beta  &= 0 ,
 &
 \delta \varrho \alpha \beta \gamma  &= 0 .
\end{align*}
Then $A^{(2)}$ admits the Galois covering
\[
  F^{(2)} : R \to R / G^{(2)} = A^{(2)}
\]
where $R$ is the locally bounded category given by
the infinite quiver
\[
  \xymatrix@R=1.5pc{
    & \vdots \ar[d]^{\delta} && \vdots \ar[d]^{\delta} && \vdots \ar[d]^{\delta} \\
    \cdots & 3 \ar[l]_{\varrho} \ar[d]^{\alpha} 
      && 3 \ar[ll]_{\varrho} \ar[d]^{\alpha}
     &&  3 \ar[ll]_{\varrho}  \ar[d]^{\alpha}
    & \cdots \ar[l]_{\varrho} \\
    & 1 \ar[d]^{\beta} && 1 \ar[d]^{\beta} && 1 \ar[d]^{\beta} \\
    \cdots \ar[r]^{\gamma}  & 4 \ar[rr]^{\gamma} \ar[d]^{\nu} 
      && 4 \ar[rr]_{\gamma} \ar[d]^{\nu}
     &&  4 \ar[r]_{\gamma}  \ar[d]^{\nu}
    & \cdots \\
    & 2 \ar[d]^{\delta} && 2 \ar[d]^{\delta} && 2 \ar[d]^{\delta} \\
    \cdots & 3 \ar[l]_{\varrho} \ar[d]^(.4){\alpha} 
      && 3 \ar[ll]_{\varrho} \ar[d]^(.4){\alpha}
     &&  3 \ar[ll]_{\varrho}  \ar[d]^(.4){\alpha}
    & \cdots \ar[l]_{\varrho} \\
    & \vdots && \vdots  && \vdots 
  } 
\]
and the induced relations, and $G^{(2)}$ is the free abelian group 
of rank $2$ generated by the obvious horizontal and vertical shifts. 
We observe now that $R$ contains the full convex subcategory $B$ 
given by the quiver
\[
  \xymatrix@R=1.5pc{
    1 \ar[d]^{\beta} && 1 \ar[d]^{\beta} && 1 \ar[d]^{\beta} \\
    4 \ar[rr]^{\gamma} \ar[d]^{\nu} 
      && 4 \ar[rr]_{\gamma} \ar[d]^{\nu}
     &&  4  \ar[d]^{\nu} \\
    2 && 2 && 2 
  } 
\]
and the relation $\gamma^2 = 0$, which is a minimal non-polynomial 
growth algebra of type (1) in \cite[Theorem~3.2]{NoS}. 
Hence, by general theory (see \cite{DS0,DS1,Ga}), 
$A^{(2)}$ is an algebra of non-polynomial growth.

Similarly, consider the quotient algebra $A^{(1)}$ of $D(\lambda)^{(1)}$ 
given by its Gabriel quiver $Q^{(1)}$ 
and the relations:
\begin{align*}
 \alpha \beta \alpha &= 0 ,
 &
 \beta \alpha \beta &= 0 ,
 &
 \gamma \alpha \beta \gamma \alpha &= 0 ,
 &
 \alpha \beta \gamma \alpha \beta &= 0 ,
 &
 \beta \gamma \alpha \beta \gamma &= 0 .
\end{align*}
Then $A^{(1)}$ admits the Galois covering
\[
  F^{(1)} : R \to R / G^{(1)} = A^{(1)}
\]
where $R$ is the locally bounded category considered above 
and $G^{(1)}$ is the free abelian group of rank $2$ generated
by the obvious horizontal and vertical shifts such that $G^{(2)}$ 
is a subgroup of $G^{(1)}$ and $G^{(1)}/G^{(2)}$ is the Klein group
$(\bZ/2\bZ) \oplus (\bZ/2\bZ)$. 
Then we conclude as above that $A^{(1)}$ is of non-polynomial growth, 
and consequently $D(\lambda)^{(1)}$ is of
non-polynomial growth.

The algebras $D(\lambda)^{(1)}$ and $D(\lambda)^{(2)}$, 
for $\lambda \in K^*$, are tame algebras 
(see Proposition~\ref{prop:6.5}).
\end{proof}


Let $(Q,f)$ be a triangulation quiver,
$m_{\bullet} : \cO(g) \to \bN^*$
a weight function,
and
$c_{\bullet} : \cO(g) \to K^*$
a parameter function.
We consider the quotient algebra
\[
   \Gamma(Q,f,m_{\bullet},c_{\bullet}) = KQ/L(Q,f,m_{\bullet},c_{\bullet}),
\]
where $L(Q,f,m_{\bullet},c_{\bullet})$
is the ideal in the path algebra $K Q$ of $Q$ over $K$
generated by the elements $\alpha f(\alpha)$ and $A_{\alpha}$,
for all arrows $\alpha \in Q_1$.
Then $\Gamma(Q,f,m_{\bullet},c_{\bullet})$ is a string algebra,
which we call the string algebra of the 
weighted triangulation algebra
$\Lambda(Q,f,m_{\bullet},c_{\bullet})$.
We note that it is the largest string
quotient algebra of
$\Lambda(Q,f,m_{\bullet},c_{\bullet})$,
with respect to dimension,
and the Gabriel quiver of
$\Gamma(Q,f,m_{\bullet},c_{\bullet})$ 
is obtained from $Q$ by removing all virtual arrows.
Observe also that
$\Gamma(Q,f,m_{\bullet},c_{\bullet})$
is a quotient algebra of the biserial
weighted triangulation algebra
$B(Q,f,m_{\bullet},c_{\bullet})$.

\begin{theorem}
\label{th:6.8}
Let
$\Lambda = \Lambda(Q,f,m_{\bullet},c_{\bullet})$
be a weighted triangulation algebra
which is not isomorphic to one of the algebras
$D(\lambda)$,
$D(\lambda)^{(1)}$,
$D(\lambda)^{(2)}$,
$T(\lambda)$,
$S(\lambda)$,
$\Lambda(\lambda)$,
for $\lambda \in K^*$.
Then 
$\Gamma = \Gamma(Q,f,m_{\bullet},c_{\bullet})$
is of non-polynomial growth.
In particular, $\Lambda$
is of non-polynomial growth.
\end{theorem}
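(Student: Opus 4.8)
The plan is to exhibit, inside the string algebra $\Gamma = \Gamma(Q,f,m_\bullet,c_\bullet)$, a ``large enough'' convex string subalgebra (or a Galois covering of $\Gamma$ containing one) whose representation type is known to be of non-polynomial growth, and then invoke the covering/convex-subcategory machinery (see \cite{DS0,DS1,Ga,Sk1}) together with the classification of minimal non-polynomial growth string algebras in \cite{NoS}. The point is that, since $\Lambda$ degenerates to the biserial algebra $B = B(Q,f,m_\bullet,c_\bullet)$ (Proposition~\ref{prop:6.5}) and $\Gamma$ is a quotient of $B$, it suffices by Geiss' theorem (via Proposition~\ref{prop:2.2}) to prove the statement for $\Gamma$; and a non-polynomial growth convex subcategory of (a covering of) $\Gamma$ forces $\Gamma$ itself to have non-polynomial growth.

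The first step is a structural one: one must understand the Gabriel quiver $Q_\Gamma$, obtained from $Q$ by deleting the virtual arrows, together with the $f$- and $g$-cycle combinatorics governing the relations $\alpha f(\alpha)=0$ and $A_\alpha = 0$. I would organize the argument by the combinatorial type of the triangulation quiver. Because the excluded algebras $D(\lambda), D(\lambda)^{(1)}, D(\lambda)^{(2)}, T(\lambda), S(\lambda), \Lambda(\lambda)$ are exactly the ones associated to the ``small'' triangulation quivers (two or three vertices, the tetrahedron, and the two four-vertex disc quivers of Example~\ref{ex:6.6}), every remaining $(Q,f)$ has enough vertices and enough non-virtual arrows that one can locate, after passing to a suitable Galois covering $R \to R/G = \Gamma$ by the fundamental group of an appropriate ``unfolding'' of $Q$, a full convex subcategory isomorphic to one of the critical non-polynomial growth string algebras from the list in \cite[Theorem~3.2]{NoS} (of types (1)--(3), e.g.\ the algebra appearing in Lemma~\ref{lem:6.7}, or the analogous pg-critical strip/tube patterns). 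This is precisely the mechanism used in the proof of Lemma~\ref{lem:6.7} for the exceptional disc algebras $D(\lambda)^{(1)}, D(\lambda)^{(2)}$, and the general case should follow the same template, only with a case analysis on the shape of $(Q,f)$ rather than a single explicit picture.

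The key steps, in order, are: (1) reduce from $\Lambda$ to $\Gamma$ using Proposition~\ref{prop:6.5} and the fact that $\Gamma$ is a quotient of $B$ — both lie in the closure of the $\GL_d$-orbit, and if $\Gamma$ has non-polynomial growth so does $\Lambda$ (tameness of both is already known); (2) classify the triangulation quivers $(Q,f)$ not in the excluded list and, in each family, choose a Galois covering $F:R\to\Gamma$ unwinding the $g$-cycles into infinite strips; (3) inside $R$ identify a full convex subcategory which is one of the minimal non-polynomial growth string algebras of \cite{NoS}; (4) conclude by the covering theory that $\Gamma$, hence $\Lambda$, is tame of non-polynomial growth. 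The main obstacle I expect is step (2)--(3): giving a uniform argument, across all triangulation quivers of size $\geq 4$ (plus the three-vertex ones not already excluded), that a pg-critical convex subcategory always appears — this requires a careful combinatorial lemma about $g$-cycles of length $\geq 3$ producing a strip pattern with a subquadratic-growth piece, and one must check that the deletion of virtual arrows and the zero relations $A_\alpha=0$ do not destroy the relevant subcategory. The verification that no small exceptional quiver slips through (so that the hypothesis really does exclude only $D(\lambda), D(\lambda)^{(i)}, T(\lambda), S(\lambda), \Lambda(\lambda)$) is bookkeeping but must be done with care, mirroring the case analysis in Proposition~\ref{prop:6.4}.
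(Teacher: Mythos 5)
Your proposed route is genuinely different from the paper's, and it has two concrete gaps. First, the reduction from $\Gamma$ to $\Lambda$ as you set it up does not work. You pass through $B=B(Q,f,m_{\bullet},c_{\bullet})$ and invoke degeneration (Proposition~\ref{prop:2.2} / Geiss): but Geiss' theorem only transfers \emph{tameness} from the degeneration $B$ up to $\Lambda$; non-polynomial growth of a degeneration does not propagate back to the deforming algebra (degenerations can only become more complicated, so $B$ being of non-polynomial growth says nothing about $\Lambda$). The correct, and much simpler, observation — made explicitly in the paper just before the theorem — is that $\Gamma$ is the largest string \emph{quotient of $\Lambda$ itself}, so $\mod\Gamma$ is a full exact subcategory of $\mod\Lambda$ and non-polynomial growth passes up directly. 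Second, and more seriously, the entire content of the theorem is the case analysis you defer: you say the general case ``should follow the same template'' as Lemma~\ref{lem:6.7} and flag the uniform existence of a pg-critical convex subcategory in a covering as ``the main obstacle.'' That obstacle is the theorem. Note also that your organizing principle (the excluded algebras are exactly those on ``small'' quivers, so every remaining quiver is large) is false: the exclusions depend on the weight function, not just on $(Q,f)$. For instance the two-vertex disc quiver with $m_{\alpha}\ge 4$, the three-vertex quiver of Example~\ref{ex:3.4} with $m_{\mathcal{O}(\eta)}=r\ge 3$, and the spherical quiver with $m_{\mathcal{O}(\mu)}=r\ge 2$ all survive the hypotheses and must be treated.

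For comparison, the paper does not use coverings and pg-critical convex subcategories for Theorem~\ref{th:6.8} at all (that machinery appears only in Lemma~\ref{lem:6.7} for $D(\lambda)^{(1)},D(\lambda)^{(2)}$). Instead it uses the elementary band criterion from the proof of \cite[Lemma~1]{Sk0}: a string algebra is of non-polynomial growth once one exhibits two primitive walks $v,w$ of the bound quiver $(Q_{\Gamma},I_{\Gamma})$ such that $vw$ and $wv$ are again primitive walks. The proof is then a direct, explicit construction of such pairs, organized by the local configuration of virtual arrows (no virtual arrows; a virtual loop; a pair of virtual arrows, subdivided further by the lengths $m_{\alpha}n_{\alpha}$ of the adjacent $g$-cycles), with the excluded algebras ruled out exactly when the construction would fail. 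If you want to salvage your approach you would have to carry out an analogous case analysis anyway, so the primitive-walk criterion is the more economical tool here; I recommend adopting it.
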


\begin{proof}
We have the presentation
$\Gamma = K Q_{\Gamma} / I_{\Gamma}$,
where $Q_{\Gamma}$ is the Gabriel quiver
of $\Gamma$ and
$I_{\Gamma} = L(Q,f,m_{\bullet},c_{\bullet}) \cap K Q_{\Gamma}$.
Observe that $I_{\Gamma}$ is the ideal 
in the path algebra $K Q_{\Gamma}$
generated by paths $\alpha f(\alpha)$ and $A_{\alpha}$
for all non-virtual arrows $\alpha$ in $Q_1$.
By general theory, in order to prove that $\Gamma$
is of non-polynomial growth, it is sufficient to indicate 
two primitive walks $v$ and $w$ of the bound quiver
$(Q_{\Gamma},I_{\Gamma})$
such that $v w$ and $w v$ are also primitive
walks (see the proof of \cite[Lemma~1]{Sk0}.
We consider several cases.

\smallskip

(1)
Assume $m_{\alpha} n_{\alpha}  \geq 3$
for all $\alpha \in Q_1$.
If $|Q_0| \geq 3$,
then the required primitive walks are constructed
in the proof of \cite[Proposition~10.2]{ESk-WSA}.
If $|Q_0| = 2$, then
$(Q,f)$ is of the form
\[
   \xymatrix{ 
      1 \ar@(dl,ul)[]^{\alpha} \ar@/^1.5ex/[r]^{\beta} 
     & 2 \ar@/^1.5ex/[l]^{\gamma} \ar@(ur,dr)[]^{\sigma}
   }
\]
with $f$-orbits
$(\alpha\ \beta\ \gamma)$
and
$(\sigma)$.
Since $\Lambda$ is not isomorphic
to a disc algebra $D(\lambda)$, 
we have 
$m_{\alpha} \geq 4$
or
$m_{\beta} \geq 2$.
If $m_{\alpha} \geq 4$,
we may take
$v = \alpha \gamma^{-1} \sigma \beta^{-1}$
and
$w = \alpha^2 \gamma^{-1} \sigma \beta^{-1}$.
For $m_{\beta} \geq 2$,
we may take
$v = \alpha \gamma^{-1} \sigma \beta^{-1}$
and
$w = \alpha \gamma^{-1} \sigma \beta^{-1} \gamma^{-1} \sigma \beta^{-1}$.

We may then assume that $|Q_0| \geq 3$.

\smallskip

(2)
Assume now that there is a virtual loop $\alpha$ in $Q_1$.
Then $(Q,f)$ contains a subquiver of the form
\[
   \xymatrix{ 1 \ar@(dl,ul)[]^{\alpha} \ar@/^1.5ex/[r]^{\beta} & 2 \ar@/^1.5ex/[l]^{\gamma}}
\]
with $f$-orbit
$(\alpha\ \beta\ \gamma)$,
$\beta = g(\gamma)$,
and 
$g(\beta) \neq g^{-1}(\gamma)$.
In particular, we have $n_{\beta} \geq 4$.
In the special case
$m_{\beta} n_{\beta}  =4$,
$(Q,f)$ is the quiver
\[
  \xymatrix{
    1
    \ar@(ld,ul)^{\alpha}[]
    \ar@<.5ex>[r]^{\beta}
    & 2
    \ar@<.5ex>[l]^{\gamma}
    \ar@<.5ex>[r]^{\sigma}
    & 3
    \ar@<.5ex>[l]^{\delta}
    \ar@(ru,dr)^{\eta}[]
  } 
\]
with $f$-orbits
$(\alpha\ \beta\ \gamma)$
and
$(\eta\ \delta\ \sigma)$,
and  $g$-orbits
$(\alpha)$, 
$(\beta\ \sigma\ \delta\ \gamma)$,  
$(\eta)$.
Since $\Lambda$ is not isomorphic
to a triangle algebra $T(\lambda)$, 
we conclude that $\eta$ is not virtual,
and hence 
$m_{\eta} \geq 4$.
Then we may take
$v = \delta \beta^{-1} \gamma^{-1} \sigma$
and
$w = \delta \beta^{-1} \gamma^{-1} \sigma \eta^{-1}$.
We also note that, if 
$(Q,f)$ is the above quiver and
$m_{\beta} \geq 2$,
then we may take
$v = \delta \beta^{-1} \gamma^{-1} \sigma$
and
$w = \delta \beta^{-1} \gamma^{-1} \sigma \delta \sigma$.
Hence we may
assume that $|Q_0| \geq 4$.
Clearly, then $n_{\beta} \geq 5$,
and hence 
$m_{\beta} n_{\beta} \geq 5$.
Let $\sigma = g(\beta)$,
$\delta = g^{-1}(\gamma)$,
and $\xi = f(\sigma)$.
Assume $\xi$ is a virtual arrow.
Then $(Q,f)$ admits a subquiver of the form
\[
   \xymatrix@R=1pc{ 
      && 3 \ar@<-.5ex>[dd]_{\xi} \ar[rd]^{\omega} \\
      1 \ar@(dl,ul)[]^{\alpha} \ar@/^1.5ex/[r]^{\beta} & 
      2 \ar@/^1.5ex/[l]^{\gamma} \ar[ur]^{\sigma} &&
      5 \ar[dl]^{\nu} \\     
      && 4 \ar@<-.5ex>[uu]_{\eta} \ar[lu]^{\delta}
   }
\]
with  $f$-orbits
$(\alpha\ \beta\ \gamma)$,
$(\sigma\ \xi\ \delta)$,
$(\omega\ \nu\ \eta)$.
Then we conclude that
$n_{\beta} \geq 7$.
Let $u$ be the path 
$g^3(\beta) g^4(\beta) \dots g^{n_{\beta}-4}(\beta)
 = g(\omega) \dots g^{-1}(\nu)$
of length $\geq 1$ from $5$ to $5$.
Observe that
$\gamma \beta$,
$\sigma \omega$,
$\nu \delta$
and $u$ are non-zero paths in $\Gamma$.
Then we may take the primitive walks
$v = \nu \delta \beta^{-1} \gamma^{-1} \sigma \omega$
and
$w =\nu \delta \beta^{-1} \gamma^{-1} \sigma \omega u^{-1}$.
Finally, assume that the arrow $\xi$ is not virtual,
and so $\xi$ is an arrow of $Q_{\Gamma}$.
Let $p$ be the path 
$g^2(\beta) g^3(\beta) \dots g^{n_{\beta}-3}(\beta)
 = g(\sigma) \dots g^{-1}(\delta)$
of length $\geq 1$.
We note that $\sigma p \delta$ is a non-zero path
of $\Gamma$ because it is of length $n_{\beta} - 2$.
We may take the required primitive walks as follows
$v = \delta \beta^{-1} \gamma^{-1} \sigma p$
and
$w =\delta \beta^{-1} \gamma^{-1} \sigma  p \xi^{-1} p$.

\smallskip

(3)
Assume now that 
there is a pair $\xi, \eta$ of virtual arrows  in $Q_1$.
Then $(Q,f)$ contains a subquiver of the form
\[
  \xymatrix@R=3.pc@C=1.8pc{
    & c
    \ar[rd]^{\alpha}
    \\   
    a
    \ar[ru]^{\delta}
    \ar@<-.5ex>[rr]_{\eta}
    && b
    \ar@<-.5ex>[ll]_{\xi}
    \ar[ld]^{\beta}
    \\
    & d
    \ar[lu]^{\nu}
  }
\]
with  $f$-orbits
$(\alpha\ \xi\ \delta)$
and
$(\beta\ \nu\ \eta)$.
Consider first the case
when $c = d$, so 
$(Q,f)$ is of the form
\[
  \xymatrix@R=3.pc@C=1.8pc{
    & c
    \ar@<-.5ex>[rd]_{\alpha}
    \ar@<-.5ex>[ld]_{\nu}
    \\   
    a
    \ar@<-.5ex>[ru]_{\delta}
    \ar@<-.5ex>[rr]_{\eta}
    && b
    \ar@<-.5ex>[ll]_{\xi}
    \ar@<-.5ex>[lu]_{\beta}
  }
\]
with  $g$-orbits
$(\xi\ \eta)$,
$(\alpha\ \beta)$,
$(\nu\ \delta)$.
It follows from our general assumption
that $\alpha$, $\beta$, $\gamma$, $\delta$
are not virtual arrows,
and hence 
$m_{\alpha} n_{\alpha} \geq 4$
and
$m_{\delta} n_{\delta} \geq 4$.
Moreover, because $\Lambda$ is not isomorphic
to a triangle algebra $T(\lambda)$,
we may assume that
$m_{\alpha} n_{\alpha}  \geq 6$.
Then
$v = \alpha \beta \delta^{-1} \nu^{-1}$
and
$w = \alpha \beta  \alpha \beta \delta^{-1} \nu^{-1}$
is a required pair of primitive walks.

Assume now that $c \neq d$.
Then
$n_{\alpha} \geq 3$
and
$n_{\delta} \geq 3$.
We note that if 
$\cO(\alpha) = \cO(\delta)$,
then 
$n_{\alpha} = n_{\delta} \geq 6$.
Consider first the case when one of 
$m_{\alpha} n_{\alpha}$
or
$m_{\delta} n_{\delta}$,
say $m_{\alpha} n_{\alpha}$,
is equal to $3$.
Then $(Q,f)$ contains a subquiver of the form
\[
  \xymatrix@R=3.pc@C=1.8pc{
    & c
    \ar[rd]^{\alpha}
    \ar@/^2.5ex/[rrrd]^{\varrho}
    \\   
    a
    \ar[ru]^{\delta}
    \ar@<-.5ex>[rr]_{\eta}
    && b
    \ar@<-.5ex>[ll]_{\xi}
    \ar[ld]^{\beta}
    && x \ar@/^2.5ex/[llld]^{\omega}
    \\
    & d
    \ar[lu]^{\nu}
    \ar@/_12ex/[uu]^{\gamma}
  }
\]
with  $f$-orbits
$(\alpha\ \xi\ \delta)$,
$(\beta\ \nu\ \eta)$,
$(\gamma\ \varrho\ \omega)$,
and $n_{\delta} \geq 5$.
We denote by $u$ the path 
$g^2(\delta) \dots g^{n_{\delta}-3}(\delta)$
of length $\geq 1$ from $x$ to $x$.
Observe that 
$\nu \delta \varrho$,
$\omega \nu \delta$
and $u$
are
non-zero paths in $\Gamma$.
Then we may choose the pair of primitive walks
$v = \nu \delta \gamma^{-1}$
and
$w =\nu \delta \varrho p^{-1} \omega$.
We consider now the case when
$m_{\alpha} n_{\alpha} = 4 = m_{\delta} n_{\delta}$.
Then 
$n_{\alpha} = 4 = n_{\delta}$,
$m_{\alpha} = 1 = m_{\delta}$,
and $(Q,f)$ 
is the quiver of the form
\[
  \xymatrix@R=3.pc@C=1.2pc{
    &&& c
    \ar[ld]^{\alpha}
    \ar[rrrd]^{\varrho}
    \\   
    a
    \ar[rrru]^{\delta}
    \ar@<-.5ex>[rr]_(.6){\eta}
    && b
    \ar@<-.5ex>[ll]_(.4){\xi}
    \ar[rd]^{\beta}
    && x
    \ar[lu]^{\sigma}
    \ar@<-.5ex>[rr]_(.4){\mu}
    && y
    \ar@<-.5ex>[ll]_(.6){\varepsilon}
    \ar[llld]^{\omega}
    \\
    &&& d
    \ar[lllu]^{\nu}
    \ar[ur]^{\gamma}
  }
\]
with  $f$-orbits
$(\alpha\ \xi\ \delta)$,
$(\beta\ \nu\ \eta)$,
$(\gamma\ \mu\ \omega)$,
$(\sigma\ \varrho\ \varepsilon)$,
and hence $g$-orbits
$(\xi\ \eta)$,
$(\alpha\ \beta\ \gamma\ \sigma)$,
$(\delta\ \varrho\ \omega\ \nu)$,
$(\varepsilon\ \mu)$.
Since $\Lambda$ is not isomorphic
to a spherical algebra 
$S(\lambda)$, 
we have $m_{\eta} n_{\eta} \geq 4$.
We note that 
$\nu \delta$
and
$\gamma \sigma$
are
non-zero paths in $\Gamma$.
Then we may take a required pair of primitive walks as follows:
$v = \gamma^{-1} \nu \delta \sigma^{-1}$
and
$w = \mu \varepsilon \gamma^{-1} \nu \delta \sigma^{-1}$ .

Assume now that 
$n_{\alpha} \geq 4$
and
$n_{\delta} \geq 5$.
We have two cases to consider .
Let
$\cO(\alpha) \neq \cO(\delta)$.
We denote
\begin{align*}
 \gamma &= g(\beta) , &
 \sigma &= g^{-1}(\alpha) , &
 \mu &= f(\gamma) , &
 \omega &= f(\mu) , &
 \varrho &= f(\sigma) , &
 \varepsilon &= f(\varrho) .
\end{align*}
Then $(Q,f)$ contains a subquiver 
\[
  \xymatrix@R=.5pc@C=1.2pc{
    &&& c
    \ar[lddd]^{\alpha}
    \ar[rrrdd]^{\varrho}
    \\ \\   
    &&&& x
    \ar[luu]^{\sigma}
    && y
    \ar@<-.5ex>[ll]^(.6){\varepsilon}
    \\   
    a
    \ar[rrruuu]^{\delta}
    \ar@<-.5ex>[rr]_(.6){\eta}
    && b
    \ar@<-.5ex>[ll]_(.4){\xi}
    \ar[rddd]^{\beta}
    \\
    &&&& z
    \ar@<-.5ex>[rr]^(.4){\mu}
    && t
    \ar[llldd]^{\omega}
    \\ \\   
    &&& d
    \ar[llluuu]^{\nu}
    \ar[uur]^{\gamma}
  }
\]
where $y \neq t$, and possibly $x = z$.
Moreover, we have the path
$q = g^2(\delta) \dots g^{n_{\delta}-3}(\delta)$
of length $\geq 1$ from $y$ to $t$,
and the subpath $p$ of
$\alpha g(\alpha) \dots g^{n_{\alpha}-1}(\alpha)$
of length $\geq 0$ from $z$ to $x$.
Then we may choose a required pair of primitive walks
as follows:
$v = \sigma \delta^{-1} \nu^{-1} \gamma p$
and
$w =  \sigma \delta^{-1} \nu^{-1} \gamma p \varepsilon^{-1} q \mu^{-1} p$.

Finally, assume that
$\cO(\alpha) = \cO(\delta)$.
Consider first the case
$n_{\alpha} = 6$.
Then $(Q,f)$ is of the form
\[
  \xymatrix@R=3.pc@C=1.8pc{
    & c 
    \ar@(lu,ur)^{\varrho}[]
    \ar[rd]^{\alpha}
    \\   
    a
    \ar[ru]^{\delta}
    \ar@<-.5ex>[rr]_{\eta}
    && b
    \ar@<-.5ex>[ll]_{\xi}
    \ar[ld]^{\beta}
    \\
    & d
    \ar[lu]^{\nu}
    \ar@(rd,ld)^{\gamma}[]
  }
\]
with
$f(\varrho) = \varrho$,
$f(\gamma) = \gamma$,
and
$\cO(\alpha) = (\alpha\ \beta\ \gamma\ \nu\ \delta\ \varrho)$.
Since $\Lambda$ is not isomorphic
to an algebra 
$D(\lambda)^{(2)}$, 
with $\lambda \in K^*$, we have also
$m_{\alpha} \geq 2$.
Then we may take a required pair of primitive walks as follows:
$v = \alpha \beta \gamma^{-1} \nu \delta \varrho^{-1}$
and
$w = \alpha \beta \gamma^{-1} \nu \delta \varrho^{-1} 
          \alpha \beta \gamma \nu \delta \varrho$.
Assume now that 
$n_{\alpha} \geq 7$.
Then  one of the arrows
$\gamma = g(\beta)$ or $\varrho = g(\delta)$,
say $\gamma$, 
is not fixed by $f$.
Hence we have a triangle
\[
  \xymatrix@R=3.pc@C=1.8pc{
    & d
    \ar[rd]^{\gamma}
    \\   
    y
    \ar[ru]^{\omega}
    && x
    \ar[ll]^{\mu}
  }
\]
with
$\mu = f(\gamma)$
and
$\omega = f(\mu)$.
Consider the subpaths of
$\alpha g(\alpha) \dots g^{n_{\alpha}-1}(\alpha)$: 
$p = g(\delta) \dots g^{n_{\alpha}-1}(\alpha)$
and
$q = g(\gamma) \dots g^{-1}(\omega)$.
Then we choose a required pair of primitive walks
as follows: 
$v = \alpha \beta \omega^{-1} q^{-1} \gamma^{-1} \nu \delta p^{-1}$
and
$w = \alpha \beta \gamma q \mu^{-1} q \omega \nu \delta p^{-1}$.
\end{proof}

We have also the following consequence of 
Propositions \ref{prop:3.8} and \ref{prop:3.9},
Lemma~\ref{lem:6.7},
and Theorem~\ref{th:6.8}.

\begin{theorem}
\label{th:6.9}
Let
$\Lambda = \Lambda(Q,f,m_{\bullet},c_{\bullet})$
be a weighted triangulation algebra.
Then the following statements are equivalent:
\begin{enumerate}[(i)]
 \item
  $\Lambda$ is of polynomial growth.
 \item
  $\Lambda$ is isomorphic to a non-singular disc,
  triangle, tertahedral, or spherical algebra. 
\end{enumerate}
\end{theorem}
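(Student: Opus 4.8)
The theorem asserts the equivalence of two statements about a weighted triangulation algebra $\Lambda = \Lambda(Q,f,m_{\bullet},c_{\bullet})$: polynomial growth, and being isomorphic to a non-singular disc, triangle, tetrahedral, or spherical algebra. The strategy is to prove the two implications separately, and both directions are essentially already assembled from the results collected earlier in the section, so the proof should be short.

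For the implication (ii)$\Rightarrow$(i), I would invoke Proposition~\ref{prop:3.8}(i), which states directly that every non-singular disc algebra $D(\lambda)$, tetrahedral algebra $\Lambda(\lambda)$, triangle algebra $T(\lambda)$, and spherical algebra $S(\lambda)$ with $\lambda \in K \setminus \{0,1\}$ is of polynomial growth. Nothing further is needed here.

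For the implication (i)$\Rightarrow$(ii), I would argue by contraposition: assume $\Lambda$ is \emph{not} isomorphic to a non-singular disc, triangle, tetrahedral, or spherical algebra, and show it is of non-polynomial growth. There are a few exceptional cases to peel off first. If $\Lambda \cong D(1)$ or $\Lambda \cong \Lambda(1)$ (the singular disc or singular tetrahedral algebra), then $\Lambda$ is of non-polynomial growth by Proposition~\ref{prop:3.9}(i). If $\Lambda$ is the singular triangle algebra $T(1)$ or the singular spherical algebra $S(1)$, these are excluded from the definition of weighted triangulation algebra altogether (by Proposition~\ref{prop:4.9} and the surrounding discussion), so this case does not arise. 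If $\Lambda \cong D(\lambda)^{(1)}$ or $\Lambda \cong D(\lambda)^{(2)}$ for some $\lambda \in K^*$, then $\Lambda$ is of non-polynomial growth by Lemma~\ref{lem:6.7}. In all remaining cases, $\Lambda$ is not isomorphic to any of $D(\lambda)$, $D(\lambda)^{(1)}$, $D(\lambda)^{(2)}$, $T(\lambda)$, $S(\lambda)$, $\Lambda(\lambda)$ for $\lambda \in K^*$, and so Theorem~\ref{th:6.8} applies and yields that $\Lambda$ is of non-polynomial growth.

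I do not expect a genuine obstacle here, since the theorem is a packaging result. The one point requiring a little care is the bookkeeping of the exceptional families: one must check that the complement of the list in statement (ii) (non-singular disc/triangle/tetrahedral/spherical) is correctly covered by the union of Proposition~\ref{prop:3.9}, Lemma~\ref{lem:6.7}, and Theorem~\ref{th:6.8} — in particular that the singular members of the disc and tetrahedral families land in Proposition~\ref{prop:3.9}, that the two auxiliary disc-type families $D(\lambda)^{(1)}, D(\lambda)^{(2)}$ land in Lemma~\ref{lem:6.7}, and that the singular triangle and spherical algebras are simply not weighted triangulation algebras in the first place. Once this case division is laid out explicitly, the proof is immediate.
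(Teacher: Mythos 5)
Your proposal matches the paper's own argument: the paper derives Theorem~\ref{th:6.9} precisely as a consequence of Propositions~\ref{prop:3.8} and \ref{prop:3.9}, Lemma~\ref{lem:6.7}, and Theorem~\ref{th:6.8}, with the same case division you describe (including the exclusion of $T(1)$ and $S(1)$ as non-symmetric and the treatment of $D(\lambda)^{(1)}$, $D(\lambda)^{(2)}$ via Lemma~\ref{lem:6.7}). The bookkeeping you flag is exactly the content of the paper's one-line citation, so your proof is correct and essentially identical.
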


\section*{Acknowledgements}

Both authors thank the program "Research in Pairs" of MFO Oberwolfach, and 
as well the Faculty of Mathematics 
and Computer Science of the 
Nicolaus Copernicus University in Toru\'{n} for support. 


\end{document}